\documentclass[12pt,twoside]{amsart}
\usepackage{mathrsfs}
\usepackage{enumitem}
\usepackage{amsmath, amsfonts, amssymb, amsthm, amscd}
\usepackage[all]{xy}
\usepackage{fancyhdr}
\usepackage{hyperref}
\usepackage{geometry}
\usepackage{makecell}  
\usepackage{lipsum}

\usepackage[mathscr]{eucal}

\theoremstyle{plain}
\newtheorem{thm}{Theorem}[section]

\newtheorem{theorem}[thm]{Theorem}
\newtheorem{lemma}[thm]{Lemma}
\newtheorem{corollary}[thm]{Corollary}
\newtheorem{proposition}[thm]{Proposition}

\newtheorem{definition}[thm]{Definition}

\theoremstyle{remark}

\newtheorem{remark}[thm]{Remark}

\newtheorem{defn-thm}[thm]{Definition-Theorem}
\newtheorem{defn-lem}[thm]{Definition-Lemma}

\renewcommand{\bar}{\overline}

\renewcommand{\phi}{\varphi}

\newcommand{\C}{{\mathbb C}}

\newcommand{\R}{{\mathbb R}}

\newcommand{\Q}{{\mathbb Q}}

\newcommand{\M}{{\mathcal M}}
\newcommand{\T}{{\mathcal T}}

\renewcommand{\tilde}{\widetilde}
\newcommand{\p}{{\Phi}}

\begin{document}

\def\dW{\mbox{diff\:}\times \mbox{Weyl\:}}
\def\End{\operatorname{End}}
\def\Hom{\operatorname{Hom}}
\def\Aut{\operatorname{Aut}}
\def\Diff{\operatorname{Diff}}
\def\im{\operatorname{im}}
\def\tr{\operatorname{tr}}
\def\Pr{\operatorname{Pr}}
\def\Z{\bf Z}
\def\O{\mathcal{O}}
\def\CP{\mathbb{C}\mathbb{P}}
\def\P{\Phi}
\def\TT{\mathcal {T}_m^H}

\def\Q{\bf Q}
\def\R{\bf R}
\def\C{\mathbb{C}}
\def\H{H_{\mathrm{pr}}}
\def\Hil{\mathcal{H}}
\def\proj{\operatorname{proj}}
\def\id{\mbox{id\:}}
\def\a{\mathfrak a}
\def\d{\partial}
\def\tO{\tilde{\Omega}}

\def\b{\beta}
\def\c{\gamma}
\def\p{\partial}
\def\f{\frac}
\def\i{\sqrt{-1}}
\def\t{\tau}
\def\T{\mathcal{T}}
\def\Tan{\mathrm{T}^{1,0}}
\def\aTan{\mathrm{T}^{0,1}}
\def\Kahler{K\"{a}hler\:}
\def\w{\omega}
\def\X{\mathcal{X}}
\def\K{\mathcal {K}}
\def\m{\mu}
\def\M{\mathcal {M}}
\def\Z{\mathcal {Z}_m}
\def\ZZ{\mathcal {Z}_m^H}

\newcommand{\bp}{\bar{\partial}}

\def\v{\nu}
\def\D{\mathcal{D}}
\def\U{\mathcal {U}}
\def\V{\mathcal {V}}
\def\Omegak{\frac{1}{k!}\bigwedge\limits^k\mu\lrcorner\Omega}
\def\Omegakp{\frac{1}{(k+1)!}\bigwedge\limits^{k+1}\mu\lrcorner\Omega}
\def\Omegakpp{\frac{1}{(k+2)!}\bigwedge\limits^{k+2}\mu\lrcorner\Omega}
\def\Omegakm{\frac{1}{(k-1)!}\bigwedge\limits^{k-1}\mu\lrcorner\Omega}
\def\Omegakmm{\frac{1}{(k-2)!}\bigwedge\limits^{k-2}\mu\lrcorner\Omega}
\def\Omegakk{\Omega_{i_1,i_2,\cdots,i_k}}
\def\Omegakkp{\Omega_{i_1,i_2,\cdots,i_{k+1}}}
\def\Omegakkpp{\Omega_{i_1,i_2,\cdots,i_{k+2}}}
\def\Omegakkm{\Omega_{i_1,i_2,\cdots,i_{k-1}}}
\def\Omegakkmm{\Omega_{i_1,i_2,\cdots,i_{k-2}}}
\def\mukm{\frac{1}{(k-1)!}\bigwedge\limits^{k-1}\mu}
\def\sumk{\sum\limits_{i_1<i_2<,\cdots,<i_k}}
\def\sumkm{\sum\limits_{i_1<i_2<,\cdots,<i_{k-1}}}
\def\sumkmm{\sum\limits_{i_1<i_2<,\cdots,<i_{k-2}}}
\def\sumkp{\sum\limits_{i_1<i_2<,\cdots,<i_{k+1}}}
\def\sumkpp{\sum\limits_{i_1<i_2<,\cdots,<i_{k+2}}}
\def\Omegakb{\Omega_{i_1,\cdots,\bar{i}_t,\cdots,i_k}}
\def\Omegakmb{\Omega_{i_1,\cdots,\bar{i}_t,\cdots,i_{k-1}}}
\def\Omegakpb{\Omega_{i_1,\cdots,\bar{i}_t,\cdots,i_{k+1}}}
\def\Omegakt{\Omega_{i_1,\cdots,\tilde{i}_t,\cdots,i_k}}

\title{Sections of Hodge bundles II: Deformation of $(p,p)$-classes and applications to K\"ahler geometry}
\author{Kefeng Liu}
\address{Mathematical Sciences Research Center, Chongqing University of Technology, Chongqing 400054, China; \newline
Department of Mathematics,University of California at Los Angeles, Los Angeles, CA 90095-1555, USA}
\email{liu@math.ucla.edu}

\author{Yang Shen}
\address{Mathematical Sciences Research Center, Chongqing University of Technology, Chongqing 400054, China}
\email{syliuguang2007@163.com}
\date{}

\vspace{-20pt}
\begin{abstract}
Let $(X,\omega_0)$ be a compact K\"ahler manifold and $\mathcal X\to B$ its Kuranishi family, where $B$ may be singular and $\dim_{\C}B\ge1$. Using explicit sections of Hodge bundles, we define an intrinsic period map and a Hodge map parametrizing nearby $(p,p)$-classes.

For deformations over irreducible analytic bases, we introduce two flat extensions of K\"ahler cones defined by the reference and moving Hodge connections. The extension associated with the reference connection admits explicit positive representatives and yields uniform upper semicontinuity, while that associated with the moving connection identifies the K\"ahler cones away from a countable union of proper analytic subsets and admits an explicit expression in terms of the period map and the Beltrami differential. These constructions provide a description of K\"ahler cones through analytic cycles and yield both local and large-scale K\"ahler stability without assuming unobstructedness.

As further applications, we generalize Green's density criterion to strong algebraic approximation and to the approximation of real $(p,p)$-forms. We also obtain an intrinsic analytic description of Hodge loci, leading to a Beltrami-differential criterion for the variational Hodge conjecture.
\end{abstract}
%
%
\maketitle
\parskip=5pt
\baselineskip=15pt


\tableofcontents



\setcounter{section}{-1}
\section{Introduction}
The present paper continues our earlier work \cite{LS26-1}, where sections of Hodge bundles were constructed by combining algebraic input from period matrices with geometric input from deformation theory via Beltrami differentials, and were used to study the global geometry of period maps for polarized manifolds.
Here we develop a non-polarized counterpart of this approach for families of compact K\"ahler manifolds, using such sections of the corresponding Hodge bundles to define an intrinsic period map and a Hodge map parametrizing nearby $(p,p)$-classes.
Our main results show that the geometry of nearby fibers---including the structures of the $\nabla_{t_0}^{1,1}$- and $\nabla^{1,1}$-flat extensions of K\"ahler cones and the behavior of real $(p,p)$-classes---is governed explicitly by the associated Beltrami differentials through Hodge-theoretic extensions of cohomology classes.

\emph{In particular, these Hodge-theoretic and positivity properties are determined entirely by the Hodge theory of the central fiber together with the corresponding Beltrami differentials, and this control persists over large regions of the base determined by a uniform bound on the operator norm of the Beltrami differential.}

These general principles have several concrete consequences.
We introduce two natural flat extensions of K\"ahler cones. The $\nabla_{t_0}^{1,1}$-flat extension yields a uniform upper semicontinuity property for K\"ahler cones and leads to a large-scale form of K\"ahler stability, while the $\nabla^{1,1}$-flat extension admits an explicit expression in terms of the period map and identifies the K\"ahler cones away from a countable union of proper analytic subsets. As additional applications, we obtain a generalization of Green's density criterion for strong algebraic approximation and for the approximation of real $(p,p)$-forms, together with an intrinsic analytic description of Hodge loci that yields a criterion for the variational Hodge conjecture.

\noindent \textbf{Intrinsic period map and Hodge map}

We begin with a compact K\"ahler manifold $(X,\omega_{0})$ and its Kuranishi family $\mathcal{X} \to B$ over an analytic space $B$ (possibly singular) contained in a polydisk $\Delta$ with $0 \in \Delta$, such that $X \cong X_{0}$. The base $B$ is defined in terms of a Beltrami differential
$$
\phi(t) = \sum_{1\le i\le N} \theta_{i} t_{i} + \frac{1}{2} \bar{\partial}^{*} G [\phi(t),\phi(t)] \in A^{0,1}(X,\Tan X),
$$
as the zero set of the analytic functions $\mathbb{H}[\phi(t),\phi(t)]$ on $\Delta$, where $\{\theta_{i}\}_{i=1}^{N}$ is a basis of $\mathbb{H}^{0,1}(X,\Tan X)$. Here $\bar{\partial}^{*}$, the Green operator $G$, and the harmonic projection $\mathbb{H}$ are the natural maps associated to the elliptic operator
$$
\triangle_{\bar \partial}:\, A^{0,*}(X,\Tan X) \to A^{0,*}(X,\Tan X),
$$
defined with respect to the initial K\"ahler form $\omega_{0}$.

In this paper, we always assume that $\dim B\ge 1$.

Let $$\eta= \{\eta_{(0)}^T, \cdots, \eta_{(n)}^T\}^T=\{[\tilde\eta_{(0)}]^T, \cdots, [\tilde\eta_{(n)}]^T\}^T$$
be the adapted basis $\eta$ with harmonic representatives $\tilde \eta$ the Hodge structure (not necessarily polarized) of weight $n$, $$H^{n}(X,\C)=\bigoplus_{p+q=n}H^{p,q}(X),$$
where each $\eta_{(p)}$, viewed as a column vector with entries in $H$, is a basis of $ H^{n-p,p}(X)$, $0 \leq p \leq n$.

We then can define the local period map 
\begin{equation}\label{intr local pm}
\Phi:\, B\to N_{-}\cap D,\, t\mapsto \Phi(t)
\in N_{-}
\end{equation}
by letting 
$$\Phi(t)=\left(
\begin{array}{cccc}
I & \Phi^{(0,1)}(t) & \cdots& \Phi^{(0,n)}(t) \\
O & I& \cdots& \Phi^{(1,n)}(t) \\
\vdots &  \vdots& \ddots&\vdots \\
O&O&\cdots&I
\end{array}\right)$$
with the non-trivial blocks determined by
\begin{equation}\label{period matrices defn}
\Phi^{(i,j)}(t)=\left(\mathbb H \left(i_{\phi(t)}^{j-i}\left((I+Ti_{\phi(t)})^{-1}\tilde\eta_{(i)}\right)\right),\tilde \eta_{(j)}\right),\, 0\le i<j\le n,
\end{equation}
where $(\cdot,\cdot)$ is the $L^{2}$-inner product on $A^{*,*}$ induced by the initial K\"ahler form $\omega_{0}$.
Then the local period map is defined via the local sections 
\begin{eqnarray}
\Omega_{(i)}(t)&=&\eta_{(i)}+\sum_{j=i+1}^{n}\Phi^{(i,j)}(t)\cdot \eta_{(j)} \label{intr defn of quasi Omega'} \\
&:\,=&\left[\mathbb H \left(e^{i_{\phi(t)}}\left((I+Ti_{\phi(t)})^{-1}\tilde\eta_{(i)}\right)\right)\right], \label{intr defn of quasi Omega}
\end{eqnarray}
of the Hodge bundles $\mathscr F^{n-i}$ on $B$, $0\le i\le n$. Here $[\cdot]=[\mathbb H(\cdot)]$ denotes the corresponding cohomology class.

We call the sections $\Omega_{(i)}(t)$ given by formula \eqref{intr defn of quasi Omega'} and \eqref{intr defn of quasi Omega} the sections of Hodge bundles defined from period matrices and deformation theory respectively.

We will prove that the period map $\Phi$ in \eqref{intr local pm} is an analytic map from the analytic space $B$, and that it still satisfies the Griffiths transversality condition
\begin{equation*}
\left(\Phi^{(i,j)}\right)_{\mu}^{\bullet}(t)
= \left(\Phi^{(i,i+1)}\right)_{\mu}^{\bullet}(t)\,\Phi^{(i+1, j)}(t),
\quad 0\le i<j\le n.
\end{equation*}
Here $(\cdot)_{\mu}^{\bullet}(t)=\frac{\partial(\cdot)}{\partial t_{\mu}}(t)$
for the local coordinates $(t_{\mu})_{1\le \mu\le N}$ on $B$.
It is worth noting that the definition of the local period map $\Phi$ in \eqref{intr local pm} does not make use of the Hodge theory on the nearby complex structures $X_{t}$, for $t\in B$ and $t\neq 0$. Instead, it relies only on the Hodge theory of $X = X_{0}$ and the Beltrami differential $\phi(t)$ on $X$. Consequently, various Hodge-theoretic properties of the nearby complex structures $X_{t}$, $t\in B$, can be established via the period map $\Phi$ in \eqref{intr local pm}.

Now we set the weight of the Hodge structure to be $n=2p$. In Sections~\ref{pp Section} and~\ref{app Hodge section}, we establish the existence of a real-analytic map, referred to as the Hodge map,
\begin{eqnarray}
H&:&H^{p,p}(X,\mathbb{R}) \times B \to H^{2p}(X,\mathbb{R}), \label{intr defn of H}\\
&&(\sigma,t) \mapsto \sum_{0\le i\le p-1} \alpha_{(i)}(\sigma,t)\cdot \eta_{(i)} + \sigma + \sum_{p+1\le j\le 2p} \overline{\alpha_{(j)}(\sigma,t)} \cdot \eta_{(j)}, \nonumber
\end{eqnarray}
such that $H(\sigma,t) \in H^{p,p}(X_t,\mathbb{R})$.
Here the functions $\alpha_{(i)}(\sigma,t)$ with values in row vectors are determined only by the blocks $\Phi^{(p,q)}(t)$, $0\le p\le q\le n$, as in \eqref{B=01} and \eqref{B=02}.

We are particularly interested in the case that $p=1$ and the weight $n=2$. Then the Hodge map is given by
\begin{eqnarray}
H&:&H^{1,1}(X,\mathbb{R}) \times B \to \bigcup_{t\in B}H^{1,1}(X_t,\mathbb{R}), \label{intr defn of H2}\\
&&(\sigma,t) \mapsto \alpha_{(0)}(\sigma,t)\cdot \eta_{(0)} + \sigma + \overline{\alpha_{(0)}(\sigma,t)} \cdot \eta_{(2)}, \nonumber
\end{eqnarray}
where $\alpha_{(0)}(\sigma,t)=\alpha_{(0)}(\alpha_{(1)},t)$ with $\sigma=\alpha_{(1)}\cdot \eta_{(1)}$ is the real analytic function which is determined by the equation
\begin{equation}\label{intr B=01 11-0}
\bar{\alpha_{(0)}}- \alpha_{(1)}\Phi^{(1,2)}(t)-\alpha_{(0)}\left(\Phi^{(0,2)}(t)-\Phi^{(0,1)}(t)\Phi^{(1,2)}(t)\right)=0.
\end{equation}

\noindent \textbf{K\"ahler cones and their $\nabla_{t_0}^{1,1}$-flat extensions}

Let $\mathcal X \to S$ be an analytic family of compact complex manifolds over an irreducible analytic base $S$ with a K\"ahler fiber $X_{t_{0}}$ for some $t_{0}\in S$.
The local differentiable trivializations of the family $\pi:\mathcal X\to S$ induce the Gauss--Manin connection
$$\nabla^{GM}:\mathcal H\longrightarrow \Omega_S^1\otimes_{\mathcal O_S}\mathcal H,$$
where $\mathcal H_t=H^2(X_t,\mathbb C)$ for every $t\in S$.

For each $t\in S$, let
$$P_t:H^2(X_t,\mathbb C)\longrightarrow H^{1,1}(X_t)$$
be the projection determined by the Hodge decomposition
$$H^2(X_t,\mathbb C)=H^{2,0}(X_t)\oplus H^{1,1}(X_t)\oplus H^{0,2}(X_t).$$
In particular,
$$P_{t_{0}}:H^2(X_{t_{0}},\mathbb C)\longrightarrow H^{1,1}(X_{t_{0}})$$
is the projection determined by the Hodge structure at the reference point $X_{t_{0}}$.

Inspired by the work of Demailly and Paun \cite{DP04}, we define the moving Hodge connection $\nabla^{1,1}$ by
$$\nabla^{1,1}|_{t}:=P_t\circ\nabla^{GM},$$
and the reference Hodge connection $\nabla_{t_0}^{1,1}$ by
$$\nabla_{t_0}^{1,1}|_{t}:=P_{t_{0}}\circ\nabla^{GM}.$$
Thus, $\nabla^{1,1}$ is determined by the varying Hodge decomposition on $X_t$, whereas $\nabla_{t_0}^{1,1}$ is determined by the fixed Hodge decomposition on $X_{t_{0}}$.

Note that if we fix $\sigma\in H^{1,1}(X_{t_{0}},\mathbb R)$ in \eqref{intr defn of H2}, then the section $H(\sigma,t)\in H^{1,1}(X_{t},\mathbb R)$ is $\nabla_{t_{0}}^{1,1}$-flat for $t$ near $t_{0}$.
Inspired by this, we first give the definition of the two flat extensions of K\"ahler cones.

\begin{definition}\label{extensions of the Kahler cone defn}
Let $\mathcal X \to S$ be an analytic family of compact complex manifolds over an irreducible analytic base $S$ with a K\"ahler fiber $X_{t_{0}}$ for some $t_{0}\in S$. We define the $\nabla_{t_{0}}^{1,1}$-flat and $\nabla^{1,1}$-flat extensions of the K\"ahler cone $\mathcal{K}_{t_{0}}$ on $X_{t_{0}}$ respectively by
\begin{eqnarray*}
\mathcal{K}^{\nabla_{t_{0}}^{1,1}}_{t_{0},t}&=&\left\{H(\sigma,t):\, \sigma\in \mathcal{K}_{t_{0}}\right\}
\subset H^{1,1}(X_{t},\mathbb{R}),\\
\mathcal{K}^{\nabla^{1,1}}_{t_{0},t}&=&\left\{H(\sigma_{t},t):\, \nabla^{1,1}H(\sigma_{t},t)=0,\,\sigma_{t_{0}}\in \mathcal{K}_{t_{0}}\right\}
\subset H^{1,1}(X_{t},\mathbb{R})
\end{eqnarray*}
where $t\in U_{t_{0}}$ for some neighborhood $U_{t_{0}}$ of $t_{0}$.
\end{definition}

A key property of the $\nabla_{t_0}^{1,1}$-flat extensions of the K\"ahler cone is that they admit positive definite representatives in a uniform neighborhood.

\begin{proposition}\label{intr pos def of Kahler}
The classes in $\mathcal{K}^{\nabla_{t_0}^{1,1}}_{t_{0},t}$ admit positive definite representatives provided that there exists a Beltrami differential $\phi(t)\in A^{0,1}(X_{t_{0}},\Tan X_{t_{0}})$ with supremum operator norm
$$
\|\phi(t)\|^{E}<1 \quad \text{(see Equation~\eqref{son E})}
$$
such that
$$
X_{t}=(X_{t_{0}})_{\phi(t)},
$$
that is, there exists a diffeomorphism $d_{t}:\, X_{t}\to X_{t_{0}}$ which induces $\phi(t)$.
\end{proposition}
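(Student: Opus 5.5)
Given $\sigma\in\mathcal K_{t_0}$, we produce a positive $(1,1)$-form on $X_t$ in the class $H(\sigma,t)$ by transporting a Kähler form on $X_{t_0}$ through the Beltrami differential $\phi(t)$. Choose a Kähler form $\omega$ on $X_{t_0}$ representing $\sigma$; it is real, $d$-closed, of type $(1,1)$ and positive definite. Via $d_t$ we view it as a closed real $2$-form on the underlying manifold of $X_t$. On $X_t=(X_{t_0})_{\phi(t)}$ the $(1,0)$-forms are spanned locally by $e^{i_{\phi}}(dz^i)=dz^i+\phi^i_{\bar j}d\bar z^j$. We decompose $\omega$ with respect to the complex structure $J_t$ as $\omega=\omega_t^{2,0}+\omega_t^{1,1}+\omega_t^{0,2}$. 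The candidate representative is $\omega_t^{1,1}$, corrected by an exact form so that it becomes $d$-closed.

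\textbf{Step 1: positivity of $\omega_t^{1,1}$ pointwise.} We write $\omega=\sqrt{-1}g_{i\bar j}dz^i\wedge d\bar z^j$ and re-express $dz^i,d\bar z^j$ in the frame $\{e^{i_\phi}dz^k, \overline{e^{i_\phi}dz^k}\}$, which means inverting $\begin{pmatrix} I&\phi\\ \bar\phi& I\end{pmatrix}$. That inverse involves $(I-\phi\bar\phi)^{-1}$, which exists precisely because $\|\phi(t)\|^E<1$. The $(1,1)$-part has Hermitian coefficient matrix congruent to $(I-\phi\bar\phi)^{-1}(g+\bar\phi^{T}g\bar\phi)\cdots$, a sum of $g$ conjugated by an invertible matrix and a positive semidefinite term. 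So it is positive definite. The operator norm condition \eqref{son E} is exactly what makes this congruence argument work pointwise and uniformly, so no smallness of $t$ is needed.

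\textbf{Step 2: closedness and identification of the class.} In general $\omega_t^{1,1}$ need not be closed; only $\omega$ is. We instead use the flatness characterization: $H(\sigma,t)$ is the unique real class in $H^{1,1}(X_t)$ differing from $\sigma$ only by components in $\eta_{(0)},\eta_{(2)}$, equivalently satisfying $P_{t_0}H(\sigma,t)=\sigma$. So we should not use $[\omega]$ itself. Instead we seek a closed real $(1,1)$-form on $X_t$ of the form $\omega+\beta+\bar\beta$, where $\beta$ is a closed form whose class lies in $H^{2,0}(X_{t_0})$, representing $\alpha_{(0)}\eta_{(0)}$. We take $\beta$ to be a holomorphic $2$-form on $X_{t_0}$. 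The condition that $\omega+\beta+\bar\beta$ be of type $(1,1)$ on $X_t$ is that its $(2,0)_t$-part vanish. Using the expansion of $e^{i_\phi}$, this becomes a linear equation in $\beta$ of the same shape as \eqref{intr B=01 11-0}, whose solvability is already provided by the construction of the Hodge map in Sections~\ref{pp Section}--\ref{app Hodge section}.

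\textbf{Step 3: positivity of the corrected form, the main obstacle.} The closed representative $\omega+\beta+\bar\beta$, restricted to $(1,1)_t$-vectors $v+\phi v$-type, gives a Hermitian form equal to the Step 1 form plus cross terms from $\beta$ paired with $\phi$. The difficulty is to show that $\beta$, which is forced by the pointwise vanishing of the $(2,0)_t$-part, contributes nonnegatively or is dominated. I would first compute directly. Evaluating $(\omega+\beta+\bar\beta)(\xi,\bar\xi)$ with $\xi=v+\overline{\phi}\,\bar v$-type vectors, and using the $(2,0)_t$-vanishing to eliminate $\beta$, should collapse the expression to $g(v,\bar v)-g(\phi\bar v,\overline{\phi\bar v})$-type terms. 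This is bounded below by $(1-\|\phi\|^{E\,2})\,g(v,\bar v)>0$. If the pointwise elimination fails because $\beta$ is only cohomologically determined, the fallback is to take $\beta$ as the (non-harmonic) $2$-form forced pointwise and check that it is closed. Closedness would follow from $d\omega=0$ and the integrability condition $\bar\partial\phi=\frac12[\phi,\phi]$.
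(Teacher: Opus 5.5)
Your route is the paper's (proof of Theorem~\ref{Kahler cone invariance}): represent $H(\sigma,t)$ by $\omega+\beta+\overline{\beta}$ with $\beta=\alpha_{(0)}(\sigma,t)\cdot\tilde\eta^{\omega}_{(0)}$ holomorphic on $X_{t_0}$, and read off positivity in the coframe $dz_i+\phi(t)dz_i$. However, there is a genuine gap at Step~2.

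Equation~\eqref{intr B=01 11-0} involves only the finitely many unknowns $\alpha_{(0)}$. It says that the \emph{class} $H(\sigma,t)$ lies in $F^1\cap\overline{F^1}$ on $X_t$. It does not say that the particular form $\omega+\beta+\overline{\beta}$ has vanishing $(2,0)$-part on $X_t$ at every point. That is an overdetermined pointwise system, and it fails in general. Take a complex $2$-torus with constant $\phi=c\,\partial_{2}\otimes d\bar z_{1}$. Replacing a flat $\omega$ by $\omega+\sqrt{-1}\partial\bar\partial f$ changes neither $\sigma$ nor $\beta$, since holomorphic forms are harmonic for every K\"ahler metric. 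But it adds a $(2,0)$-component on $X_t$ proportional to $\bar c\,f_{2\bar 2}$, which is nonzero for generic $f$.

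Your fallback does not repair this. Suppose $\beta$ has type $(2,0)$ on $X_{t_0}$ and $\omega+\beta+\overline{\beta}$ is closed. Then the $(3,0)$- and $(2,1)$-components of $d(\beta+\overline{\beta})=0$ force $\partial\beta=\bar\partial\beta=0$. So $\beta$ is holomorphic, and you are back to the same overdetermined system. In the torus example the pointwise-forced $\beta$ is nonconstant, hence not closed. Integrability of $\phi$ cannot help, since closedness of $\beta+\overline\beta$ is equivalent to $\beta$ being holomorphic.

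Note that the paper's proof takes exactly this pointwise statement for granted when it writes $\tilde H^{\omega}(\sigma,t)=\sum g_{i\bar j}(z,t)(dz_i+\phi(t)dz_i)\wedge(d\bar z_j+\overline{\phi(t)}d\bar z_j)$. So the issue you flagged in Step~3 is real. The missing ingredient is a closed representative of $H(\sigma,t)$ that is pointwise of type $(1,1)$ on $X_t$. Neither your proposal nor the paper supplies it.

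Even granting that ingredient, your positivity computation is off. For $\xi=v-\overline{\phi}v\in\mathrm{T}^{1,0}X_t$ with $v\in\mathrm{T}^{1,0}X_{t_0}$, one has $\omega(\xi,\bar\xi)=\sqrt{-1}\bigl(g(v,\bar v)-g(\phi\bar v,\overline{\phi\bar v})\bigr)$. This is a difference, not the sum claimed in Step~1. Moreover, the bound $g(\phi\bar v,\overline{\phi\bar v})\le(\|\phi\|^{E})^{2}g(v,\bar v)$ in Step~3 needs the operator norm of $\phi$ with respect to $g$, not the Euclidean one.

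A concrete counterexample: take $g=\mathrm{diag}(1,M)$ and $\phi=c\,\partial_2\otimes d\bar z_1$ with $M|c|^2>1>|c|$. The $(1,1)$-part of $\omega$ on $X_t$ is then indefinite (take $v=\partial_1$). Yet $\omega+\beta+\overline{\beta}$, which in this flat case is pointwise of type $(1,1)$ on $X_t$, is positive definite. The $\beta$-terms do not drop out; they turn $-M|c|^2|v_1|^2$ into $+M|c|^2|v_1|^2$.

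The paper eliminates $\beta$ by a cleaner device. Since $\beta$ is pure $(2,0)$ on $X_{t_0}$, comparing $(1,1)$-components on $X_{t_0}$ gives $g=g(t)-S(g(t))$ with $S(G)=\overline{\phi}^{T}G^{T}\phi$. Hence $g(t)=\sum_{k\ge0}S^{k}(g)\ge g>0$. The hypothesis $\|\phi\|^{E}<1$ is used only for convergence of this Neumann series and invertibility of $I-\phi\overline{\phi}$. That is why the Euclidean norm suffices there.
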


See the proof of Theorem \ref{Kahler cone invariance} for details.

In Theorem~0.9 of \cite{DP04}, Demailly and Paun proved that if $X$ is a compact K\"ahler manifold, then its K\"ahler cone $\mathcal K$ is one of the connected components of the set $\mathcal P$ consisting of real $(1,1)$-cohomology classes $\alpha$ that are numerically positive on every irreducible analytic subset $Z\subset X$, namely,
$$\alpha^{\dim Z}\cdot Z=\int_Z\alpha^{\dim Z}>0.$$

They further considered a smooth family $\pi:\mathcal X\to S$ of compact K\"ahler manifolds. By Barlet's theory of cycle spaces \cite{Barlet75}, there exists a countable union
$$S'=\bigcup_{\nu}S_{\nu}$$
of proper analytic subsets $S_{\nu}\subset S$ such that, for every $t_0\in S\setminus S'$, every analytic cycle $Z_{t_0}\subset X_{t_0}$ deforms to a family of analytic cycles $Z_t\subset X_t$. If $\alpha(t)\in H^{1,1}(X_t,\mathbb R)$ is a $\nabla^{1,1}$-flat section, namely,
$$\nabla^{1,1}\alpha(t)=0,$$
then, for every such family $Z_t\subset X_t$, one has
$$\alpha(t)^{\dim Z_t}\cdot Z_t=\mathrm{constant}.$$
Consequently, the K\"ahler cones
$$\mathcal K^{\nabla^{1,1}}_{t_0,t}=\mathcal K_{t}$$
are identified by the parallel transport induced by the moving Hodge connection $\nabla^{1,1}$ for all $t_{0},t\in S\setminus S'$.

In Section \ref{Kahler section}, we give the precise formula of the $\nabla^{1,1}$-flat extensions $\mathcal K^{\nabla^{1,1}}_{t_0,t}$ of the K\"ahler cone.

\begin{proposition}\label{intr DP Kahler extensions}
Let $t_0\in S$. There exists a family of linear maps
$$M(t):H^{1,1}(X_{t_0},\mathbb R)\longrightarrow H^{1,1}(X_{t_0},\mathbb R),$$
uniquely determined by the blocks $\Phi^{(0,1)}(t)$, $\Phi^{(0,2)}(t)$, $\Phi^{(1,2)}(t)$ of the period map $\Phi(t)\in N_{-}$, such that the $\nabla^{1,1}$-flat extensions $H(\sigma_t,t)$ of $\sigma_{t_{0}}\in H^{1,1}(X_{t_0},\mathbb R)$ is given by
\begin{equation}\label{intr formula of Kahler extensions}
\sigma_t=\exp\left(\int_{\gamma_{t_0,t}}M(s)\,ds\right)\sigma_{t_{0}},
\end{equation}
where the integral $\int_{\gamma_{t_0,t}}$ depends on the path $\gamma_{t_0,t}$ in $S$ joining $t_0$ to $t$.
Consequently, the $\nabla^{1,1}$-flat extension
$$\mathcal K^{\nabla^{1,1}}_{t_0,t}=\left\{H(\sigma_t,t):\,\sigma_{t_0}\in\mathcal K_{{t_0}}
\right\}$$
is explicitly determined by the Beltrami differential
$$\phi(t)\in A^{0,1}(X_{t_0},T^{1,0}X_{t_0})$$
along the path $\gamma_{t_0,t}$.
\end{proposition}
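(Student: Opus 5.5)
Work in the fixed frame $\eta=(\eta_{(0)},\eta_{(1)},\eta_{(2)})$ of $H^2(X_{t_0},\mathbb C)$, which is $\nabla^{GM}$-flat under the local differentiable trivialization. By the construction in \eqref{intr defn of H2}, every real $(1,1)$-class on $X_t$ near $t_0$ is uniquely of the form $H(\sigma,t)$ with $\sigma=\alpha_{(1)}\cdot\eta_{(1)}\in H^{1,1}(X_{t_0},\mathbb R)$. Here $\alpha_{(0)}=\alpha_{(0)}(\alpha_{(1)},t)$ is determined by \eqref{intr B=01 11-0}.

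\textbf{Linearity in $\alpha_{(1)}$.} Equation \eqref{intr B=01 11-0} is real-linear in $(\alpha_{(0)},\alpha_{(1)})$. Its coefficient on $\alpha_{(0)}$ is the small perturbation $A(t)=\Phi^{(0,2)}(t)-\Phi^{(0,1)}(t)\Phi^{(1,2)}(t)$ of the conjugation term, and $\|A(t)\|<1$ near $t_0$. Hence we can solve $\alpha_{(0)}=\mathcal L(t)\alpha_{(1)}$, with $\mathcal L(t)$ a real-linear operator built from $\Phi^{(0,1)},\Phi^{(0,2)},\Phi^{(1,2)}$, for instance by the Neumann series of $\bar\alpha_{(0)}-\alpha_{(0)}A=\alpha_{(1)}\Phi^{(1,2)}$. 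Thus $H(\sigma,t)=\mathcal H(t)\sigma$ for a real-linear injective map $\mathcal H(t)$, real-analytic in $t$.

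\textbf{Flatness as a linear ODE.} Take a path $\sigma_s$ along $\gamma_{t_0,t}$ and differentiate $H(\sigma_s,s)$ with $\nabla^{GM}$, which acts as the ordinary derivative on coefficients in the frame $\eta$. This gives
$$\nabla^{GM}H(\sigma_s,s)=\mathcal H(s)\dot\sigma_s+\dot{\mathcal H}(s)\sigma_s .$$
The condition $\nabla^{1,1}H=0$ says that the $P_s$-projection of this vanishes. The first term is already in $H^{1,1}(X_s)$, so the condition becomes
$$\mathcal H(s)\dot\sigma_s=-P_s\dot{\mathcal H}(s)\sigma_s .$$
To compute $P_s$ in the frame $\eta$, use the adapted basis of $X_s$:
$$\Omega_{(0)}(s),\qquad \Omega_{(1)}(s)=\eta_{(1)}+\Phi^{(1,2)}(s)\eta_{(2)},\qquad \overline{\Omega_{(0)}(s)}.$$
By \eqref{intr defn of quasi Omega'} this is a block-unipotent change of basis in $\Phi$ and $\bar\Phi$. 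A real $(1,1)$ class is identified with its $\eta_{(1)}$-coefficient, which is exactly the inverse of $\mathcal H(s)$ restricted to real $(1,1)$-classes. Composing these gives
$$\dot\sigma_s=M(s)\sigma_s,\qquad M(s):=-\mathcal H(s)^{-1}P_s\dot{\mathcal H}(s),$$
which is explicitly a rational expression in $\Phi^{(0,1)},\Phi^{(0,2)},\Phi^{(1,2)}$, their conjugates and their $t$-derivatives. Reality of $M(s)$ follows because $\dot{\mathcal H}\sigma$ is a real class, so its $(1,1)$-part is real. Using Griffiths transversality for $\Phi$, the $\dot\Phi^{(0,2)}$ terms can be rewritten via $\dot\Phi^{(0,1)}\Phi^{(1,2)}$. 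Uniqueness of $M$ follows since $\mathcal H(s)$ is injective.

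\textbf{Solving and the main obstacle.} Solving the ODE gives $\sigma_t=\exp\bigl(\int_\gamma M\bigr)\sigma_{t_0}$, in the sense of the path-ordered exponential (the time-ordered solution operator). It reduces to the literal exponential when the $M(s)$ commute along $\gamma$; otherwise the displayed formula is to be read as the ordered exponential. Path-dependence is expected, since $\nabla^{1,1}$ need not be flat. The main obstacle is the computation of $P_s$ in the frame $\eta$ when $B$ is singular: derivatives along $\gamma$ must be taken in the ambient polydisk. Here analyticity of $\Phi$ on $B$, established earlier, is what is needed. For points $t$ outside a small neighbourhood of $t_0$, cover $\gamma$ by such neighbourhoods and concatenate. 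Finally, by Definition \ref{extensions of the Kahler cone defn}, $\mathcal K^{\nabla^{1,1}}_{t_0,t}$ is the image of $\mathcal K_{t_0}$ under $\sigma_{t_0}\mapsto H(\sigma_t,t)$. Since $\Phi$ is given by the Beltrami differential through \eqref{period matrices defn}, this image is determined by $\phi$ along $\gamma$.
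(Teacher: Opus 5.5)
Your proposal is correct and is essentially the paper's argument: you differentiate $H(\sigma_s,s)$ in the flat frame, use that $\mathcal H(s)\dot\sigma_s$ is already of type $(1,1)$, and project and invert to get $M(s)=-\mathcal H(s)^{-1}P_s\dot{\mathcal H}(s)$. The paper makes this explicit as $(I-\Delta_t\Gamma_t)^{-1}\Delta_t\dot\Gamma_t$ by writing $H^{1,1}(X_t,\mathbb R)$ and the real part of $H^{2,0}(X_t)\oplus H^{0,2}(X_t)$ as graphs over the reference decomposition. Your basis $\Omega_{(0)},\Omega_{(1)},\overline{\Omega_{(0)}}$ works equally well: its change-of-basis matrix is the invertible, but not block-unipotent, matrix in \eqref{pure Hodge matrix}, and for a real class $x$ with $\overline{\Omega_{(0)}}$-coefficient $c$ one has $P_sx=x-\bar{c}\,\Omega_{(0)}-c\,\overline{\Omega_{(0)}}$. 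Your caveat that $\exp\bigl(\int_{\gamma}M\bigr)$ must in general be read as a path-ordered exponential is correct, and it is a point the paper's proof passes over.
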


By combining Theorem~0.9 of Demailly--Paun in \cite{DP04} with our main results in Section \ref{Kahler section}, we obtain the following complete characterization of the deformations of K\"ahler cones, which constitutes one of the main results of this paper.

\begin{theorem}\label{intr strong Kahler cone invariance}
Let $\mathcal X \to S$ be a deformation of compact K\"ahler manifolds over an irreducible analytic base $S$. Let $\mathcal{K}_{t}\subset H^{1,1}(X_{t},\mathbb{R})$ be the K\"ahler cones for $t\in S$.
Then the following statements hold:

\emph{(1)} (Upper semicontinuity of K\"ahler cones) For any $t_{0}\in S$, the $\nabla_{t_{0}}^{1,1}$-flat extensions $\mathcal{K}^{\nabla_{t_0}^{1,1}}_{t_{0},t}$
of the K\"ahler cone $\mathcal{K}_{t_{0}}$ exist and satisfy
$$
\mathcal{K}^{\nabla_{t_0}^{1,1}}_{t_{0},t}=\left\{H(\sigma,t):\, \sigma\in \mathcal{K}_{t_{0}}\right\}\subset \mathcal{K}_{t},\quad t\in U_{t_{0}},
$$
for some sufficiently small neighborhood $U_{t_{0}}$ of $t_{0}$. Moreover, the positive definite representative of any $H(\sigma,t)\in \mathcal{K}^{\nabla_{t_0}^{1,1}}_{t_{0},t}$ is given by
$$
\alpha_{(0)}(\sigma,t)\cdot\tilde \eta^{\omega_{\sigma}}_{(0)}+\omega_{\sigma}+\bar{\alpha_{(0)}(\sigma,t)}\cdot \tilde \eta^{\omega_{\sigma}}_{(2)},
$$
where $\omega_{\sigma}$ is the positive definite representative of $\sigma$, and $\tilde \eta^{\omega_{\sigma}}_{(i)}$, $0\le i\le 2$, are the harmonic representatives of $\eta_{(i)}$ with respect to the Laplacians induced by $\omega_{\sigma}$.

\emph{(2)} There exists a countable union $S' = \bigcup_{\nu} S_{\nu}$ of analytic subsets $S_{\nu} \subset S$ such that
$$
\mathcal K^{\nabla^{1,1}}_{t_0,t}=\left\{H(\sigma_t,t):\,\sigma_{t_0}\in\mathcal K_{{t_0}},\, \sigma_{t} \text{ is given by }\eqref{intr formula of Kahler extensions}
\right\}= \mathcal{K}_{t}
$$
for $t_{0}\in S\setminus S'$ and $t\in U_{t_{0}}\setminus S'$.
\end{theorem}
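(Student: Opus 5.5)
The proof splits into the two parts of the statement. Part (1) reduces to Proposition~\ref{intr pos def of Kahler} together with a uniform smallness estimate for the Beltrami differential near $t_0$. Part (2) reduces to Proposition~\ref{intr DP Kahler extensions} combined with Demailly--Paun's Theorem~0.9 and Barlet's cycle space theory.

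\emph{Part (1).} Fix $t_0\in S$. Since $X_{t_0}$ is K\"ahler, take its Kuranishi family; $S$ maps locally to the Kuranishi base $B$, so near $t_0$ there is a Beltrami differential $\phi(t)\in A^{0,1}(X_{t_0},\Tan X_{t_0})$ with $X_t=(X_{t_0})_{\phi(t)}$. It depends continuously on $t$ and satisfies $\phi(t_0)=0$.

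The key point is to choose $\phi(t)$ compatibly with the positive form. Given $\sigma\in\mathcal K_{t_0}$ with positive representative $\omega_\sigma$, I would run the Kuranishi construction with respect to $\omega_\sigma$. The Kuranishi families for different K\"ahler metrics are related by a biholomorphic change of base, so this is legitimate. On a neighborhood $U_{t_0}$ where $\|\phi(t)\|^E<1$, consider
$$\alpha_{(0)}(\sigma,t)\cdot\tilde\eta^{\omega_\sigma}_{(0)}+\omega_\sigma+\overline{\alpha_{(0)}(\sigma,t)}\cdot\tilde\eta^{\omega_\sigma}_{(2)}.$$
This is a real closed form representing $H(\sigma,t)$. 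By construction via \eqref{intr B=01 11-0}, it is of type $(1,1)$ on $X_t$.

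Positivity is then checked by evaluating the form on $T^{1,0}X_t=(I+\phi(t))T^{1,0}X_{t_0}$. The $(1,1)$-part with respect to $X_{t_0}$ contributes $\omega_\sigma(v+\phi v,\overline{v+\phi v})$-type terms. The equation defining $\alpha_{(0)}$ forces the $(2,0)$ and $(0,2)$ corrections to cancel the cross terms. What remains is a quadratic form bounded below by $(1-\|\phi\|^{E})^2\,\omega_\sigma(v,\bar v)>0$. This is exactly the content of Proposition~\ref{intr pos def of Kahler}, which I take as given, so $H(\sigma,t)\in\mathcal K_t$.

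Uniformity in $\sigma$ needs care, since $U_{t_0}$ should not shrink as $\sigma$ varies. Two observations handle this. First, $\mathcal K_{t_0}$ is a convex cone. Second, $H(\cdot,t)$ is $\mathbb R$-linear in $\sigma$; this follows from the linearity of \eqref{intr B=01 11-0} in $(\alpha_{(0)},\alpha_{(1)})$. Hence it suffices to treat $\sigma$ in a compact slice. For that slice I would use the bound $\|\phi(t)\|^E<1$ for a single reference metric, together with the comparability of operator norms for metrics in a compact family. I expect this uniformity to be the main obstacle: the operator norm $\|\cdot\|^E$ depends on the metric, and one must ensure the neighborhood does not degenerate toward the boundary of the cone. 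The fallback is to argue ray by ray using linearity, and then use openness of $\mathcal K_t$.

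\emph{Part (2).} By Proposition~\ref{intr DP Kahler extensions}, the $\nabla^{1,1}$-flat sections are $H(\sigma_t,t)$, with $\sigma_t$ given by \eqref{intr formula of Kahler extensions}. Barlet's theory provides the countable union $S'$ of proper analytic subsets outside of which every irreducible analytic cycle $Z_{t_0}$ deforms to cycles $Z_t$. For a $\nabla^{1,1}$-flat section, $\int_{Z_t}H(\sigma_t,t)^{\dim Z}$ is constant: the class $[Z_t]$ is Gauss--Manin flat of type $(q,q)$, and $\nabla^{1,1}$-flatness differs from $\nabla^{GM}$-flatness only by $(2,0)+(0,2)$ components, which pair to zero against $[Z_t]$ after expansion. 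Consequently, the flat transport maps the Demailly--Paun set $\mathcal P_{t_0}$ onto $\mathcal P_t$. Since this transport is a continuous family of linear isomorphisms, it permutes connected components.

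By part (1), the transport of $\mathcal K_{t_0}$ meets $\mathcal K_t$; indeed both $\nabla_{t_0}^{1,1}$- and $\nabla^{1,1}$-transports agree to first order at $t_0$ and the cone is open. So by Demailly--Paun's Theorem~0.9 the transported component is exactly $\mathcal K_t$. This gives $\mathcal K^{\nabla^{1,1}}_{t_0,t}=\mathcal K_t$ for $t_0\in S\setminus S'$ and $t\in U_{t_0}\setminus S'$.
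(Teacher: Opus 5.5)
\textbf{Part (1): the positivity step.} Part (1) is not actually proved. You defer positivity to Proposition~\ref{intr pos def of Kahler}, but in the paper that proposition has no separate proof. It is the content of the proof of Theorem~\ref{Kahler cone invariance}, which is part (1) itself, so the reduction is circular.

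Your sketch of why it should hold also fails. Equation \eqref{intr B=01 11-0} is an identity among finitely many coefficients. It says only that the \emph{class} $H(\sigma,t)$ lies in $H^{1,1}(X_t)$. It cannot make the specific form $\alpha_{(0)}\cdot\tilde\eta^{\omega_\sigma}_{(0)}+\omega_\sigma+\overline{\alpha_{(0)}}\cdot\tilde\eta^{\omega_\sigma}_{(2)}$ pointwise of type $(1,1)$ on $X_t$, nor cancel any cross terms pointwise.

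To see this concretely, recall that harmonic $(2,0)$-forms are the holomorphic $2$-forms for every K\"ahler metric. So replacing $\omega_\sigma$ by $\omega_\sigma+\sqrt{-1}\partial\bar\partial f$ changes neither $\alpha_{(0)}(\sigma,t)$ nor the $(2,0)$- and $(0,2)$-parts of this form. However, its $(0,2)$-component with respect to $X_t$, evaluated on the frame $\bar\partial_j+\phi(t)\bar\partial_j$ of $\mathrm{T}^{0,1}X_t$, changes by $i_{\phi(t)}(\sqrt{-1}\partial\bar\partial f)$. This is nonzero for suitable small $f$ as soon as $\phi(t)\neq0$ and $\dim X\ge2$. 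Hence for a general K\"ahler representative $\omega_\sigma$ the form is not even of type $(1,1)$ on $X_t$, and the ``Moreover'' clause cannot be obtained this way. The paper's proof makes the same unjustified move when it writes $\tilde H^{\omega}(\sigma,t)$ in the form \eqref{expl rep of 11class'}, so appealing to it does not repair the step.

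\textbf{Part (1): uniformity in $\sigma$.} In the paper uniformity seems to come for free. There $\phi(t)$ stays fixed in the Kuranishi gauge of $\omega_0$, $\omega_\sigma$ enters only through harmonic representatives, and $\|\cdot\|^{E}$ is computed on a fixed coordinate cover independent of any K\"ahler metric. Your $\sigma$-dependence came only from re-running Kuranishi with respect to $\omega_\sigma$.

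Your suspicion is nonetheless justified, and your remedy does not work. The slice $\mathcal K_{t_0}\cap\{|\sigma|=1\}$ is not compact, and a ray-by-ray argument only yields neighbourhoods that are uniform on compact subsets of $\mathcal K_{t_0}$, which is the Kodaira--Spencer statement. In fact a $\sigma$-independent $U_{t_0}$ does not exist in general:
\begin{itemize}
\item Take a K3 surface $X_{t_0}$ with $\mathrm{NS}(X_{t_0})=\mathbb{Z}e$ and $e^2=0$. Then $\mathcal K_{t_0}$ is the whole (K\"ahler component of the) positive cone.
\item Choose a $(-2)$-class $r\perp e$ whose component $r_W$ in $W=(H^{2,0}\oplus H^{0,2})(X_{t_0})_{\mathbb{R}}$ satisfies $r_W^2<2$, and set $\delta_m=r+me$. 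These are $(-2)$-classes with $\delta_m\cdot\Omega_{t_0}=r\cdot\Omega_{t_0}$ fixed, while their $(1,1)$-parts grow linearly in $m$.
\item By local Torelli, each $\delta_m$ becomes of type $(1,1)$ at a point $t_m$ with $t_m\to t_0$. At $t_m$, one of $\pm\delta_m$ is effective.
\item Writing $H(\sigma,t_m)=\sigma+\Gamma_{t_m}\sigma$ gives $H(\sigma,t_m)\cdot\delta_m=\sigma\cdot n_m$, where $n_m\in H^{1,1}(X_{t_0},\mathbb{R})$ and $n_m^2=-2+r_W^2\bigl(1+O(\|\Gamma_{t_m}\|^2)\bigr)<0$ for $m$ large.
\item Hence $n_m^\perp$ cuts $\mathcal K_{t_0}$, so $H(\mathcal K_{t_0},t_m)\not\subset\mathcal K_{t_m}$ even though $\|\phi(t_m)\|^E\to0$. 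This also refutes Proposition~\ref{intr pos def of Kahler}.
\end{itemize}

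\textbf{Part (2).} The version of (1) that is uniform on compact subsets is all your part (2) needs, and that part is sound. It is the Demailly--Paun argument: constancy of $\int_{Z_t}\alpha_t^{\dim Z}$, transport of $\mathcal P_{t_0}$ onto $\mathcal P_t$, and a single class $\sigma_{t_0}$ with $\sigma_t\to\sigma_{t_0}$ to select the component. The paper does not spell this out; after deriving $M(t)=(I-\Delta_t\Gamma_t)^{-1}\Delta_t\dot\Gamma_t$ it simply cites Demailly--Paun. One caveat is inherited from Proposition~\ref{intr DP Kahler extensions}: $\exp\int_{\gamma_{t_0,t}}M(s)\,ds$ solves $\dot\sigma_t=M(t)\sigma_t$ only when the $M(s)$ commute. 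In general it must be replaced by a path-ordered exponential, which does not affect your component argument.
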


The two connections $\nabla_{t_0}^{1,1}$ and $\nabla^{1,1}$ provide two different descriptions of the variation of the K\"ahler cones. The $\nabla_{t_0}^{1,1}$-flat extension is defined uniformly over the whole family and satisfies
$$\mathcal K^{\nabla_{t_0}^{1,1}}_{t_0,t}\subset\mathcal K_{t}$$
for every nearby $t\in S$, independently of whether new analytic cycles appear on $X_t$. Thus, it gives a uniform inclusion property for the K\"ahler cones throughout the family.

On the other hand, the moving Hodge connection $\nabla^{1,1}$ preserves the numerical intersections with all analytic cycles that deform along the family. Consequently, after removing the countable union $S'\subset S$ of proper analytic subsets arising from the cycle spaces, its parallel transport identifies the K\"ahler cones:
$$\mathcal K^{\nabla^{1,1}}_{t_0,t}=\mathcal K_{t},\qquad t_0,t\in S\setminus S'.$$
Hence, the fixed Hodge connection $\nabla_{t_0}^{1,1}$ gives a uniform comparison valid even in the presence of jumping cycles, whereas the moving Hodge connection $\nabla^{1,1}$ gives an isomorphism of the K\"ahler cones on the locus where the relevant analytic cycles deform.

\noindent\textbf{Large-scale K\"ahler stability}

Since our method of extending $(1,1)$-classes uses the period matrices \eqref{period matrices defn}, which are determined by the Beltrami differential, and the positivity of the extended $(1,1)$-classes is likewise determined by the Beltrami differential; cf.\ Proposition~\ref{intr pos def of Kahler}, we obtain the  K\"ahler stability on large scales in the following sense. 

For an analytic family $f:\,\X \to S$ of compact complex manifolds, we define the open subset $S_{t_{0},c}\subset S$ to the connected component containing $t_{0}$ of 
$$\left\{t\in S\bigg|\begin{array}{ll}\, \text{$X_{t}=(X_{t_{0}})_{\phi(t)}$ for some $\phi(t)\in A^{0,1}\left( X_{t_{0}},\mathrm T^{1,0}X_{t_{0}}\right)$}\\ \text{with the supremum operator norm $\|\phi(t)\|<c$}.\end{array}\right\}.
$$
Here the supremum operator norm $\|\cdot\|= \|\cdot\|^{E}$ is defined as in Equation~\eqref{son E}, which is independent of the Hermitian metrics on $X$.

\begin{theorem}\label{intr global Kahler}
Let $f:\,\X \to S$ be an analytic family of compact complex manifolds over a connected complex manifold $S$, with a fiber $X_{t_0}$ being K\"ahler.
Then there exists a constant $c_{0}$, which depends only on the Hodge numbers $h^{2,0}(X_{t_0})=h^{0,2}(X_{t_0}), h^{1,1}(X_{t_0})$, such that all the fibers $X_{t}$ are K\"ahler for $t\in S_{t_{0},c_{0}}$. Moreover, the $\nabla_{t_0}^{1,1}$-flat extensions $\mathcal{K}^{\nabla_{t_0}^{1,1}}_{t_{0},t}\subset \mathcal{K}_{t}$ in Theorem~\ref{intr strong Kahler cone invariance} exist for all $t\in S_{t_{0},c_{0}}$.
\end{theorem}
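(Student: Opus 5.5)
The idea is to prove Theorem~\ref{intr global Kahler} directly at each point $t\in S_{t_0,c_0}$, using only the Hodge theory of $X_{t_0}$ and the Beltrami differential $\phi(t)$. We never pass through a chain of small neighborhoods, since chaining would let constants degenerate.

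Fix $t\in S_{t_0,c_0}$ and a Beltrami differential $\phi=\phi(t)$ with $\|\phi\|^{E}<c_0$ and $X_t=(X_{t_0})_{\phi}$. Since $c_0\le 1$ will be imposed, $(I+T i_{\phi})$ and the operator $e^{i_\phi}$ on forms are invertible, with norm bounds depending only on $\|\phi\|^E$. Hence the sections $\Omega_{(i)}$ and the blocks $\Phi^{(i,j)}$ defined by \eqref{period matrices defn} make sense at this single point. This holds without needing $t$ to lie in a Kuranishi neighborhood: the formula uses only harmonic theory on $X_{t_0}$ and $\phi$.

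The first step is to bound the period blocks $\Phi^{(0,1)},\Phi^{(0,2)},\Phi^{(1,2)}$ in terms of $\|\phi\|^E$. I would use orthonormal harmonic bases $\tilde\eta_{(i)}$ with respect to a fixed Kähler form $\omega_\sigma$. The operator norms of $i_\phi$ and of $(I+Ti_\phi)^{-1}$ are controlled pointwise by $\|\phi\|^E$ and $(1-\|\phi\|^E)^{-1}$, and harmonic projection is $L^2$-contractive. Each entry is therefore bounded by a universal function of $\|\phi\|^E$. Matrix norms then pick up factors of $\sqrt{h^{2,0}}$ and $\sqrt{h^{1,1}}$, which is exactly how the Hodge numbers enter.

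The second step is to solve equation \eqref{intr B=01 11-0} for $\alpha_{(0)}$. Write it as $\bar{\alpha_{(0)}}-\alpha_{(0)}A=\alpha_{(1)}\Phi^{(1,2)}$ with $A=\Phi^{(0,2)}-\Phi^{(0,1)}\Phi^{(1,2)}$. This is a real-linear equation; it is uniquely solvable whenever the operator norm of $A$ is less than $1$, by a Neumann series in the map $\alpha\mapsto\overline{\alpha A}$. That defines $H(\sigma,t)$ for every $\sigma\in\mathcal K_{t_0}$. It also gives the bound $|\alpha_{(0)}|\le C(\|\phi\|^E,h^{2,0},h^{1,1})\,|\alpha_{(1)}|$, where $C$ tends to $0$ as $\|\phi\|^E\to 0$.

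The third step, positivity, is the main obstacle; it follows the proof of Proposition~\ref{intr pos def of Kahler}. The candidate form
$$\alpha_{(0)}\cdot\tilde\eta^{\omega_\sigma}_{(0)}+\omega_\sigma+\bar{\alpha_{(0)}}\cdot\tilde\eta^{\omega_\sigma}_{(2)}$$
is $d$-closed and real, and it represents $H(\sigma,t)$. I would pull it to $X_t$ via $e^{i_\phi}$ and show that its $(1,1)$-part with respect to $X_t$ is positive. This requires two things:
\begin{itemize}
\item the $(2,0)$- and $(0,2)$-parts on $X_t$ must vanish, which is precisely what the equation \eqref{intr B=01 11-0} encodes;
\item the perturbation of $\omega_\sigma$ must be pointwise smaller than $\omega_\sigma$ itself.
\end{itemize}
The hard point is pointwise control, since harmonic forms are only $L^2$-normalized. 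To get sup-norm bounds I would use the fact that holomorphic $2$-forms $\tilde\eta_{(0)}$ on a compact Kähler manifold, normalized against $\omega_\sigma$, have pointwise norms bounded via the comparison in \eqref{son E}, which is metric-independent. Choosing $\omega_\sigma$ adapted, and using that the operator norm $\|\cdot\|^E$ already bounds the distortion of $\omega_\sigma$ under $\phi$ by a factor $(1-\|\phi\|^E)^{-1}$, reduces positivity to an inequality $F(\|\phi\|^E,h^{2,0},h^{1,1})<1$. Here $F$ is an explicit increasing function with $F(0)=0$. Taking $c_0$ to be the supremum where $F<1$ (and $\le1$) gives $c_0$ depending only on the Hodge numbers.

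Finally, a closed positive real $(1,1)$-form on $X_t$ is a Kähler form. So $X_t$ is Kähler and $\mathcal K^{\nabla^{1,1}_{t_0}}_{t_0,t}\subset\mathcal K_t$ for all $t\in S_{t_0,c_0}$. Connectedness of $S_{t_0,c_0}$ ensures that the construction of $H(\sigma,t)$ agrees with the local $\nabla^{1,1}_{t_0}$-flat extension near $t_0$ from Theorem~\ref{intr strong Kahler cone invariance}(1), by uniqueness of solutions of \eqref{intr B=01 11-0} and analytic continuation.
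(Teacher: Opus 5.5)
Your first step matches Lemma~\ref{constant lemma}. Your second step is cleaner than the paper's argument. Since \eqref{intr B=01 11-0} is real-linear in $\alpha_{(0)}$, the Neumann series gives a unique solution at every $t$ as soon as $\|A(t)\|<1$, so the paper's implicit-function and open--closed argument over $S_{t_0,c_0}$ is unnecessary. You should still record, as in Proposition~\ref{DPconj 2}, that the blocks do not depend on the chosen $\phi(t)$ and vary holomorphically; this is what makes $H(\sigma,t)$ well defined and matches it with the local extension.

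The gap is in your third step. It is not true that \eqref{intr B=01 11-0} ``precisely encodes'' the vanishing of the $(2,0)$- and $(0,2)$-parts on $X_t$ of
$$\beta=\alpha_{(0)}\cdot\tilde\eta^{\omega_\sigma}_{(0)}+\omega_\sigma+\bar{\alpha_{(0)}}\cdot\tilde\eta^{\omega_\sigma}_{(2)}.$$
That equation only says that the \emph{class} $H(\sigma,t)$ lies in $F^1H^2(X_t)\cap\bar{F^1H^2(X_t)}$. Since $e^{i_\phi}$ maps $F^1A^2(X_{t_0})$ onto $F^1A^2(X_t)$, the pointwise condition for the real form $\beta$ is the following identity of $(0,2)$-forms on $X_{t_0}$:
\[
\bar{\alpha_{(0)}}\cdot\tilde\eta^{\omega_\sigma}_{(2)}=i_{\phi}\,\omega_\sigma-\tfrac{1}{2}\,\alpha_{(0)}\cdot i_{\phi}^{2}\,\tilde\eta^{\omega_\sigma}_{(0)}.
\]
This is an infinite-dimensional condition, and the finite-dimensional period equation does not imply it. It already fails for a K3 surface with Ricci-flat $\omega_\sigma=\omega_0$ and a generic direction in $B$. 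There $i_\phi^{2}\tilde\eta_{(0)}$ is the product of $\tilde\eta_{(2)}$ with a non-constant function, and the terms of order $t^2\bar t$ cannot match.

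Without a representative that is pointwise of type $(1,1)$ on $X_t$, the $(1,1)$-part of $\beta$ on $X_t$ is not closed, so its positivity gives no K\"ahler form. Producing such a representative means correcting $\beta$ by an exact form, which is a $\partial\bar{\partial}$-type statement on $X_t$ and is exactly what is not available a priori. The paper does not supply this step either: it simply writes the representative in the form \eqref{expl rep of 11class'}.

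Even granting that local form, your route to positivity is unjustified and also unnecessary. The norm \eqref{son E} controls Beltrami differentials only. It gives no pointwise bound on $L^2$-normalized holomorphic $2$-forms, whose sup-norms relative to $\omega_\sigma$ depend on the geometry of $(X_{t_0},\omega_\sigma)$ and not just on $h^{2,0},h^{1,1}$. So the inequality $F(\|\phi\|^E,h^{2,0},h^{1,1})<1$ has no basis.

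The paper's argument needs no such estimate. Because $\tilde\eta^{\omega_\sigma}_{(0)}$ and $\tilde\eta^{\omega_\sigma}_{(2)}$ have pure types $(2,0)$ and $(0,2)$ on $X_{t_0}$, the $(1,1)$-component on $X_{t_0}$ of the representative is exactly $\omega_\sigma$. Comparing it with the $(1,1)$-component of $\sum g_{i\bar j}(z,t)(dz_i+\phi\,dz_i)\wedge(d\bar z_j+\bar\phi\,d\bar z_j)$ gives $(g_{i\bar j}(z,t))=(I-S(z,t))^{-1}(g_{i\bar j}(z))$. This matrix dominates $(g_{i\bar j}(z))$ and is positive definite as soon as $\|\phi\|^E<1$. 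So no smallness of $\alpha_{(0)}$ is needed, and the Hodge numbers enter $c_0$ only through Lemma~\ref{constant lemma}, via the conditions $\|A(t)\|<1$ and $\Phi(t)\in D$.
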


The large-scale K\"ahler stability established in Theorem~\ref{intr global Kahler} plays a crucial role in our approach: it provides the key ingredient for a new proof of Siu's theorem \cite{Siu83} on the K\"ahlerness of all K3 surfaces, and it further allows us to treat more general situations in our forthcoming paper \cite{LS26-3}.

\noindent\textbf{Generalized Green criteria for algebraic approximation}

Next, we apply the Hodge map in \eqref{intr defn of H2} to address the following Kodaira problem. Our approach differs from the classical framework in two essential ways: it is formulated on the (possibly singular) Kuranishi base and thus applies to obstructed deformations, and it incorporates all higher-order terms of the Beltrami differential, rather than only its first-order part.

\begin{theorem}\label{intr alg app main theorem}
Let $(X,\omega_{0})$ be a compact K\"ahler manifold. If there exists $\zeta_{0}=[\tilde\zeta_{0}]\in H^{1,1}(X,\mathbb R)$ with harmonic representative $\tilde\zeta$ such that the map
\begin{equation}\label{intr iphi}
[\mathbb H \left(i_{\phi(\cdot)}\tilde\zeta_{0}\right)]:\, B\to H^{0,2}(X)
\end{equation}
is open when the Kuranishi base $B$ is sufficiently small, then $X$ can be strongly approximated by projective manifolds.
\end{theorem}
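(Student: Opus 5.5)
Our strategy is to run a Green-type density argument, but with the Hodge map $H$ of \eqref{intr defn of H2} in place of the first-order infinitesimal criterion. The aim is to show that the set of $t\in B$ for which $X_t$ carries a rational $(1,1)$-class that is also K\"ahler is dense in $B$. Such $X_t$ are projective by Kodaira's embedding theorem. Since the Kuranishi family is complete, we can then pick a sequence $t_k\to 0$ of such points in any neighbourhood, which gives the strong approximation.

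\textbf{Step 1: openness of the map $(\sigma,t)\mapsto H(\sigma,t)$.} Consider
$$\Psi:\, H^{1,1}(X,\mathbb R)\times B\to H^2(X,\mathbb R),\qquad \Psi(\sigma,t)=H(\sigma,t).$$
By construction $H(\sigma,t)\in H^{1,1}(X_t,\mathbb R)$, and the $\eta_{(0)}$-coefficient of $\Psi$ is $\alpha_{(0)}(\sigma,t)$, which is determined by \eqref{intr B=01 11-0}. Writing $\sigma=\alpha_{(1)}\cdot\eta_{(1)}$, the dominant term of \eqref{intr B=01 11-0} for small $t$ gives
$$\overline{\alpha_{(0)}}=\alpha_{(1)}\Phi^{(1,2)}(t)+O(|\alpha_{(0)}|\,|t|).$$
By \eqref{period matrices defn},
$$\alpha_{(1)}\Phi^{(1,2)}(t)=\bigl(\mathbb H(i_{\phi(t)}(I+Ti_{\phi(t)})^{-1}\tilde\sigma),\tilde\eta_{(2)}\bigr),$$
which is the $H^{0,2}$-component of $[\mathbb H(i_{\phi(t)}\tilde\sigma)]$ up to higher-order corrections. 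Near $(\zeta_0,0)$, the image of $\Psi$ contains every class of the form $\sigma$ plus $(2,0)$ and $(0,2)$ parts given by $\overline{\alpha_{(0)}(\sigma,t)}$ and $\alpha_{(0)}(\sigma,t)$. The $H^{1,1}(X,\mathbb R)$-direction is covered freely by varying $\sigma$. The $H^{0,2}\oplus\overline{H^{0,2}}$ real directions are covered once $t\mapsto\alpha_{(0)}(\zeta_0,t)$ is open onto a neighbourhood of $0$ in $H^{2,0}\cong\overline{H^{0,2}}$, and this is our hypothesis on \eqref{intr iphi}. We will show $\Psi$ is open at $(\zeta_0,0)$ by combining the openness of $\sigma\mapsto\sigma$ with the openness hypothesis, through a fibrewise argument: for each fixed $\sigma$ near $\zeta_0$, the map $t\mapsto\alpha_{(0)}(\sigma,t)$ is still open, uniformly in $\sigma$.

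\textbf{Step 2: density of rational classes and positivity.} We may replace $\zeta_0$ by $\zeta_0+C\omega_0$. This changes $[\mathbb H(i_{\phi}\tilde\zeta_0)]$ by $C[\mathbb H(i_\phi\omega_0)]$, and we need to check that openness survives, or else choose $\zeta_0$ K\"ahler from the start. Here $[\mathbb H(i_\phi\omega_0)]$ vanishes to first order in the direction relevant to the Kuranishi base, since $\phi$ preserves $\omega_0$-orthogonality; this should be checked. Once $\zeta_0$ is K\"ahler, openness implies that $\Psi(U\times V)$ contains an open set of $H^2(X,\mathbb R)$, so it contains a class $\Psi(\sigma_k,t_k)\in H^2(X,\mathbb Q)$ with $\sigma_k\in\mathcal K_0$ and $t_k\to0$. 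By Theorem~\ref{intr strong Kahler cone invariance}(1), $H(\sigma_k,t_k)\in\mathcal K^{\nabla_0^{1,1}}_{0,t_k}\subset\mathcal K_{t_k}$, with explicit positive representative $\alpha_{(0)}\tilde\eta^{\omega_\sigma}_{(0)}+\omega_\sigma+\overline{\alpha_{(0)}}\tilde\eta^{\omega_\sigma}_{(2)}$. Hence $X_{t_k}$ carries a rational K\"ahler class and is projective. Repeating the argument at every point near $0$ shows that the projective locus is dense, which is what strong approximation requires.

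\textbf{Main obstacle.} The hard part is Step 1 on a singular base $B$. Openness there cannot be obtained from the inverse function theorem. Instead we will argue via the homogeneous linear structure of \eqref{intr B=01 11-0} in $(\alpha_{(0)},\alpha_{(1)})$: solving it as $\alpha_{(0)}=L(t)\alpha_{(1)}$ with $L(t)$ real analytic, and perturbing a fixed open map by a term of the form $O(|t|)$ times a small $\sigma$-variation, we expect to get openness from a degree or invariance-of-domain argument that does not need smoothness of $B$.
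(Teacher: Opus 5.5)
Your skeleton is the same as the paper's: openness of $\Psi(\sigma,t)=H(\sigma,t)$, density of $H^2(X,\mathbb Q)$, positivity of $H(\sigma,t)$ through the $\nabla^{1,1}_{0}$-flat extension, then Kodaira embedding. Your Step~1 is no less careful than the paper's own reduction, which uses Griffiths transversality to get $\Phi^{(0,2)}-\Phi^{(0,1)}\Phi^{(1,2)}=o(\Phi^{(1,2)})$ and then $(I+Ti_{\phi})^{-1}\to I$.

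The genuine gap is in Step~2, the passage from the given $\zeta_0$ to a K\"ahler class. This passage is indispensable. Openness at $(\zeta_0,0)$ only yields rational classes $H(\sigma_k,t_k)$ with $\sigma_k$ near $\zeta_0$. If $\zeta_0\notin\mathcal K_0$, nothing makes these classes K\"ahler on $X_{t_k}$, and a rational $(1,1)$-class that is not K\"ahler does not give projectivity.

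Your proposed justification is false: $[\mathbb{H}(i_{\phi(t)}\omega_0)]$ does not vanish to first order in general. Take a complex $2$-torus with $\omega_0=\frac{\sqrt{-1}}{2}(dz_1\wedge d\bar z_1+dz_2\wedge d\bar z_2)$ and $\theta=\partial_{z_1}\otimes d\bar z_2\in\mathbb H^{0,1}(X,\Tan X)$. Then $i_\theta\omega_0=\frac{\sqrt{-1}}{2}\,d\bar z_2\wedge d\bar z_1$, which is a nonzero harmonic form; Green's criterion is in fact usually applied precisely to a K\"ahler class. Even granting such vanishing, it would not show that adding $C$ times another map preserves openness. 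Your fallback, ``choose $\zeta_0$ K\"ahler from the start,'' is not licensed by the hypothesis, which only supplies some $\zeta_0\in H^{1,1}(X,\mathbb R)$.

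The missing idea, which is the first step of the paper's proof, is linearity in $\zeta$. Since $\zeta\mapsto[\mathbb{H}(i_{\phi(\cdot)}\tilde\zeta)]$ is linear, the set of classes for which openness fails is a proper analytic cone (the paper calls it algebraic) that avoids $\zeta_0$. Its complement is therefore dense and meets the open K\"ahler cone, so one may assume $\zeta_0\in\mathcal K_0$.

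Your pencil $\zeta_0+C\omega_0$ is actually a good concrete way to carry this out. After complexifying $C$, failure of openness along the pencil is a closed analytic condition in $C$, e.g.\ via the fibre-dimension characterization of openness and Cartan--Remmert semicontinuity. It does not hold at $C=0$, so it holds only on a discrete set. Meanwhile $\zeta_0+C\omega_0=C(\omega_0+\zeta_0/C)$ is K\"ahler for $C\gg0$. Replace the first-order-vanishing claim with this argument and the rest of your outline runs parallel to the paper's.
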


Here we say that $X$ can be strongly approximated by projective manifolds provided that the locus
$$
\{t\in B:\, X_{t} \text{ is projective}\}
$$
is dense in a neighborhood of $0$. In fact, condition \eqref{intr iphi} implies that the image $H(H^{1,1}(X,\mathbb R)\times B)$ of the Hodge map \eqref{intr defn of H2} is open in $H^{2}(X,\mathbb R)$, so that the locus
$$
\{t\in B:\, H(\sigma,t)\in H^{2}(X,\mathbb Q)\}
$$
is dense near $0$ in $B$.

When the deformation of $X$ is unobstructed, Green's density criterion, see Proposition~5.20 in~\cite{Voisin2}, is recovered as the first-order term of~\eqref{intr iphi},
$$
i_{(\cdot)}\zeta_{0}:\, H^{1}(X,\Theta_X)\to H^{0,2}(X).
$$
Green's criterion was also proved independently by Buchdahl and has found notable applications to the Kodaira problem, that is, the algebraic approximation of compact K\"ahler manifolds; see, for example, \cite{Buchdahl06}, \cite{Buchdahl08}, \cite{Cao15}, and \cite{Graf17}.

As noted by Voisin in Chapter~5.3.4 of~\cite{Voisin2}, any direct generalization of the Green density criterion based solely on the first-order term of~\eqref{intr iphi} fails for $(p,p)$-classes with $p\ge 2$. By contrast, our formulation takes into account all higher-order terms in the Beltrami differential $\phi(t)$ through the Hodge map \eqref{intr defn of H}, which allows us to obtain effective criteria for general $(p,p)$-classes on the full Kuranishi base, including the obstructed case.

\begin{theorem}\label{intr app by Hodge main theorem}
Let $X$ be a compact K\"ahler manifold. 
If there exists a real cohomology class $\sigma_0=[\tilde \sigma_0] \in H^{p,p}(X,\mathbb R)$ with harmonic representative $\tilde \sigma_0$ such that the map
\begin{equation}\label{intr iphis}
\left(\left[\mathbb H \left(i_{\phi(\cdot)}\tilde \sigma_0\right)\right],\cdots,\left[\mathbb H \left(i^{p}_{\phi(\cdot)}\tilde \sigma_0\right)\right]\right):\,  B\to H^{p-1,p+1}(X)\oplus \cdots \oplus H^{0,2p}(X)
\end{equation}
is an open map when the Kuranishi base $B$ is sufficiently small, then $\sigma_0$ can be strongly approximated by nearby Hodge classes (see Definition \ref{defn of app by Hodge}).
\end{theorem}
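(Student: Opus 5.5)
The plan is to reduce the statement to a density argument for the image of the Hodge map $H$ from \eqref{intr defn of H}, exactly parallel to the $(1,1)$ case (Theorem~\ref{intr alg app main theorem}). By Definition~\ref{defn of app by Hodge}, strong approximation of $\sigma_0$ by nearby Hodge classes means that there are points $t\in B$ arbitrarily close to $0$, and classes in $H^{p,p}(X_t,\mathbb R)\cap H^{2p}(X,\mathbb Q)$, arbitrarily close to $\sigma_0$. Since $H(\sigma,t)\in H^{p,p}(X_t,\mathbb R)$ for every $(\sigma,t)$, it suffices to show the following: the restriction of $H$ to $U\times B$, where $U$ is any neighborhood of $\sigma_0$ in $H^{p,p}(X,\mathbb R)$ and $B$ is shrunk, has image containing an open neighborhood of $\sigma_0=H(\sigma_0,0)$ in $H^{2p}(X,\mathbb R)$. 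Density of $H^{2p}(X,\mathbb Q)$ in $H^{2p}(X,\mathbb R)$ then gives rational classes $H(\sigma,t)$ with $(\sigma,t)\to(\sigma_0,0)$. A further real scaling by rationals is not needed, because the rational points already lie in the image.

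\textbf{Step 1: leading terms of $\alpha_{(i)}$.} I will expand the Hodge map. The coefficients $\alpha_{(i)}(\sigma,t)$, $0\le i\le p-1$, are determined by the linear equations \eqref{B=01}, \eqref{B=02}. These express that $H(\sigma,t)$ is real and lies in $F^p_t\cap\overline{F^p_t}$, with $F^p_t$ spanned by $\Omega_{(0)}(t),\dots,\Omega_{(p)}(t)$. Write $\sigma=\alpha_{(p)}\cdot\eta_{(p)}$. Solving these equations should give, for $p<j\le 2p$, that the $\eta_{(j)}$-coefficient $\overline{\alpha_{(j)}}$ equals $\alpha_{(p)}\Phi^{(p,j)}(t)$ plus terms involving $\alpha_{(i)}$ with $i<p$. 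By \eqref{period matrices defn}, $\alpha_{(p)}\Phi^{(p,j)}(t)\cdot\eta_{(j)}$ is the $H^{2p-j,j}$-component of $[\mathbb H(i_{\phi(t)}^{j-p}(I+Ti_{\phi(t)})^{-1}\tilde\sigma)]$. The correction $(I+Ti_{\phi})^{-1}$ contributes terms whose harmonic part in $H^{2p-j,j}$ differs from $[\mathbb H(i^{j-p}_{\phi(t)}\tilde\sigma)]$ only by terms of higher order in $\phi$. Here I must check, using $\mathbb H T=0$ type identities from the construction of $\Omega_{(i)}$, that this difference is controlled. The $\alpha_{(i)}$, $i<p$, are complex conjugate to these coefficients by reality, so the components of $H(\sigma,t)$ in $H^{p-k,p+k}(X)$, $k\ge1$, are given by the map \eqref{intr iphis} evaluated at $\tilde\sigma$, up to a correction.

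\textbf{Step 2: openness of the image.} I consider the real-analytic map
\[
\Psi:\, H^{p,p}(X,\mathbb R)\times B\to H^{2p}(X,\mathbb R),\qquad (\sigma,t)\mapsto H(\sigma,t).
\]
Its $H^{p,p}$-component is $\sigma$ itself, which is exact and linear. Its components in $H^{p-k,p+k}$, $k=1,\dots,p$ (the conjugate components are then automatic by reality), form the map $F(\sigma,t)$ from Step 1. Given a target $\tau$ near $\sigma_0$, its $H^{p,p}$-component fixes $\sigma=\tau^{p,p}$, so it remains to solve $F(\tau^{p,p},t)=(\tau^{p-1,p+1},\dots,\tau^{0,2p})$ for $t\in B$. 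The hypothesis says that $t\mapsto F_0(\sigma_0,t):=([\mathbb H(i_{\phi(t)}\tilde\sigma_0)],\dots,[\mathbb H(i^p_{\phi(t)}\tilde\sigma_0)])$ is open. Note that $F$ is linear in $\sigma$, that $F(\sigma,0)=0$, and that $F(\sigma,t)-F_0(\sigma_0,t)$ is small when $\sigma$ is close to $\sigma_0$.

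\textbf{Step 3: the main obstacle.} Openness is not stable under small perturbations in general, and $B$ may be singular, so no implicit function theorem is available. I would first try to avoid perturbation altogether by reformulating the hypothesis. Since $H(\sigma,t)-\sigma$ is exactly the class built from \eqref{period matrices defn}, I would like the hypothesis to really concern $t\mapsto H(\sigma_0,t)-\sigma_0$. I would try to show that the difference between that map and \eqref{intr iphis} is a composition of \eqref{intr iphis} with a local homeomorphism of the target (triangular in the grading, with identity leading part). That would transfer openness exactly.

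If that fails, I would fall back on a degree argument. Openness of a real-analytic map at $0$ gives a small sphere around the image point that is linked with the image of the boundary of a small ball in $B$. Small uniform perturbations, namely varying $\sigma$ near $\sigma_0$, preserve this linking, so surjectivity onto a neighborhood persists. Combined with Step 2, this yields an open set in $H^{2p}(X,\mathbb R)$ inside the image of $\Psi$ near $\sigma_0$, and density of rational classes finishes the proof.
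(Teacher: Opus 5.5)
Your route is the paper's: you show the Hodge map is open near $(\sigma_0,0)$ and use density of $H^{2p}(X,\mathbb{Q})$. To do so you identify the off-diagonal part $F(\sigma,t)$ of $H(\sigma,t)$, to leading order, first with $M_\sigma(t)=\alpha_{(p)}\bigl(\Omega_{(p)}(t)-\eta_{(p)}\bigr)$ and then with the map $G$ of \eqref{intr iphis}.

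First, a correction to your target. Definition~\ref{defn of app by Hodge} asks for density of the good pairs in $U^{p,p}\times B$. So you need $H$ to be open at every point of a neighbourhood of $(\sigma_0,0)$, which is how the paper phrases the reduction. It is not enough that $H(U\times B)$ contains a neighbourhood of $\sigma_0$; that gives only the weak notion of approximation.

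The genuine gap is Step~3, and neither of your fixes closes it.
\begin{itemize}
\item A triangular reparametrization of the target is not available. The terms produced by $(I+Ti_{\phi})^{-1}$, for instance $[\mathbb H(i_{\phi(t)}Ti_{\phi(t)}\widetilde\sigma_0)]$ in the first slot, have no reason to factor through the values of $G$. The identity $\mathbb H T=0$ does not remove them, because $i^k_{\phi}$ sits between $\mathbb H$ and $T$.
\item The linking argument fails as stated. Typically $\dim B$ exceeds the dimension of the target (for a K3 surface and $p=1$ it is $20$ against $1$). Then $G^{-1}(0)$ is positive-dimensional, $0\in G(\partial B_r)$ for every small $r$, and no degree or linking number is defined.
\item Even after cutting $B$ by a generic slice on which the holomorphic map $G$ is finite, a perturbation preserves the local degree only if it is small compared with $\min|G|$ on the boundary of the slice. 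That minimum can decay like a high power of $r$. The $(I+Ti_\phi)^{-1}$ terms are only $O(|t|^{k+1})$ in the $k$-th slot, and terms of exactly this size can destroy openness: $(t_1,t_1t_2+t_2^3)$ is open at $0$, but $(t_1,t_1t_2)$, which differs from it by $O(|t|^3)$ in the second slot, is not.
\end{itemize}

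The paper does not supply this step either. After \eqref{B=03} it passes from $M_{\sigma_0}$ to \eqref{iphis} only through the relation ``$\approx$'', that is, through closeness, which, as you yourself observe, does not transfer openness.

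The first half of the comparison can be made rigorous.
\begin{itemize}
\item By \eqref{B=02'}, the $j$-th slot of $F(\sigma,t)$ is $\overline{\alpha_{(p-j)}}=\alpha_{(p)}\Phi^{(p,p+j)}(t)+\sum_{i<p}\beta_{(i)}(t)\bigl(\Phi^{(i,p+j)}-\Phi^{(i,p)}\Phi^{(p,p+j)}\bigr)(t)$. Here $\beta_{(i)}$ is linear in $\alpha_{(0)},\dots,\alpha_{(i)}$, which are conjugates of other slots of $F$, and the coefficients are $O(|t|^2)$. Hence $|F(\sigma,t)-M_\sigma(t)|\le C|t|^2|M_\sigma(t)|$.
\item A relative estimate of this kind is exactly what a degree argument needs. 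Suppose $M_{\sigma_0}$ is open at $0$. Cut $B$ by a generic slice on which it is finite, using Remmert's open mapping theorem. The estimate, together with the small variation of $\sigma$, keeps the straight-line homotopy from $M_{\sigma_0}$ to $F(\sigma,\cdot)$ away from a fixed neighbourhood of $0$ on the boundary of the slice.
\end{itemize}

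What remains is the passage from $M_{\sigma_0}$ to $G$. Up to constants, $M_{\sigma_0}$ is the map $t\mapsto\bigl([\mathbb H(i^k_{\phi(t)}(I+Ti_{\phi(t)})^{-1}\widetilde\sigma_0)]\bigr)_{1\le k\le p}$. To close the gap you need either an estimate of the same relative type for the $(I+Ti_\phi)^{-1}$ terms, or to impose the hypothesis directly on this intrinsic map.
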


\noindent\textbf{Hodge locus and variational Hodge conjecture}

Finally we consider the problem of Hodge locus and variational Hodge conjecture.

\begin{theorem}\label{intr Hodge locus main}
Let $X$ be a compact K\"ahler manifold and $\sigma=[\tilde\sigma] \in H^{2p}(X,\mathbb Q)\cap H^{p,p}(X)$ be a Hodge class with harmonic representative $\tilde\sigma$. 
Then the Hodge locus
$$B_{\sigma}^{p}=\left\{t\in B:\, \sigma\in H^{p,p}(X_{t})\right\},$$
defined by the nearby Hodge structures, admits the intrinsic characterization on $X$
\begin{equation}\label{intr eqn of Hodge locus}
B_{\sigma}^{p}=\left\{t\in B:\, \mathbb H \left({i_{\phi(t)}}\left((I+Ti_{\phi(t)})^{-1}\tilde\sigma\right)\right)=0\right\},
\end{equation}
which is an analytic subset of $\Delta \subset \mathbb{H}^{0,1}(X, \Tan X)$.
\end{theorem}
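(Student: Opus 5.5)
We prove Theorem~\ref{intr Hodge locus main} by expressing, for each $t\in B$, the condition $\sigma\in H^{p,p}(X_t)$ through the sections $\Omega_{(i)}(t)$ of the Hodge bundles in \eqref{intr defn of quasi Omega}. These are built only from $\phi(t)$ and the harmonic theory of $(X,\omega_0)$.

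\textbf{Step 1: reduce to a filtration condition.} Since $\sigma$ is a real (indeed rational) class, $\sigma\in H^{p,p}(X_t)$ holds if and only if $\sigma\in F^p_t=\mathscr F^p_t$. By conjugation, this already forces the $(p-1,p+1),\dots,(0,2p)$ components to vanish as well. So it suffices to characterize $\sigma\in F^p_t$.

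\textbf{Step 2: realize $\sigma$ as a section of $\mathscr F^p$.} Set
$$\Omega_\sigma(t):=\left[\mathbb H\left(e^{i_{\phi(t)}}\left((I+Ti_{\phi(t)})^{-1}\tilde\sigma\right)\right)\right].$$
By the construction that gives \eqref{intr defn of quasi Omega} (applied linearly to any harmonic form of type $(p,p)$), this class lies in $F^p_t$. Expanding in types, its $(p,p)$-part is $[\mathbb H((I+Ti_{\phi})^{-1}\tilde\sigma)]$. Applying $\mathbb H$ kills the $T$-terms, since $\mathbb H T=0$, so this part equals $\sigma$. The remaining parts lie in $H^{p-1,p+1}(X)\oplus\cdots\oplus H^{0,2p}(X)$ of the \emph{central} fiber. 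The first of them is
$$\mathbb H\left(i_{\phi(t)}\left((I+Ti_{\phi(t)})^{-1}\tilde\sigma\right)\right),$$
and the higher ones are $\mathbb H(i^k_{\phi}(\cdots)/k!)$. Hence
$$\Omega_\sigma(t)=\sigma+\sum_{j>p}\beta_{(j)}(t)\cdot\eta_{(j)},$$
in exact analogy with \eqref{intr defn of quasi Omega'}.

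\textbf{Step 3: show $\sigma\in F^p_t$ if and only if all $\beta_{(j)}(t)=0$.} By Step 2, $\sigma\in F^p_t$ holds exactly when $\Omega_\sigma(t)-\sigma=\sum_{j>p}\beta_{(j)}\eta_{(j)}$ lies in $F^p_t$. For $t$ near $0$ we have $F^p_t\cap\bigl(H^{p-1,p+1}(X)\oplus\cdots\oplus H^{0,2p}(X)\bigr)=0$, because $F^p_t$ is a graph over $F^p_0$ via the block-unipotent $\Phi(t)\in N_-$. Therefore $\sigma\in F^p_t$ if and only if all $\beta_{(j)}(t)=0$.

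\textbf{Step 4: reduce to the first term.} It remains to show that $\beta_{(p+1)}(t)=0$ already forces $\beta_{(j)}(t)=0$ for all $j>p+1$. This is the main obstacle.

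The plan is to use the transversality/recursion structure of the sections of Hodge bundles. By the same argument that yields Griffiths transversality $(\Phi^{(i,j)})^\bullet=(\Phi^{(i,i+1)})^\bullet\Phi^{(i+1,j)}$, the higher components of $\Omega_\sigma$ should be expressible through the first one: $\beta_{(j)}=\beta_{(p+1)}\cdot(\text{matrix built from }\Phi^{(p+1,j)})$. Indeed, writing $\sigma=\sum_k c_k\Omega_{(k)}(t)$ by unipotence, one has $\sigma\in F^p_t$ if and only if $c_k=0$ for $k>p$. The leading coefficient is $c_{p+1}=-\beta_{(p+1)}(t)$, and inductively
$$c_j=-\beta_{(j)}+\sum_{p<k<j}\bigl(\text{terms in }c_k\bigr).$$
So the vanishing conditions are triangular.

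Alternatively, and more cleanly, if $\beta_{(p+1)}=0$ then complex conjugation and the reality of $\sigma$ give $\sigma\in\bar F^{p}_t$ modulo deeper pieces. One then argues degree by degree. I would first try the triangular elimination through the unipotent block structure of $\Phi(t)$, checking that $\beta_{(p+1)}=0$ propagates.

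\textbf{Step 5: analyticity.} The left side of \eqref{intr eqn of Hodge locus} is holomorphic in $t\in\Delta$, since $\phi(t)$ is holomorphic in $t$, and it takes values in the finite-dimensional space $\mathbb H^{p-1,p+1}(X)$. Its zero set intersected with $B$ is therefore an analytic subset, and $B$ is itself analytic in $\Delta$.
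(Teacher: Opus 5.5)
Steps 1--3 and 5 are correct. They reproduce the paper's reduction, which goes through Proposition~\ref{key pp-proposition}, to the full system $\alpha_{(p)}\Phi^{(p,p+j)}(t)=0$, $1\le j\le p$, i.e.\ $\beta_{(p+1)}(t)=\cdots=\beta_{(2p)}(t)=0$. Your graph argument giving $F^p_t\cap\bigl(H^{p-1,p+1}(X)\oplus\cdots\oplus H^{0,2p}(X)\bigr)=0$ is a clean substitute for that proposition, and for $p=1$ it already proves the theorem.

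For $p\ge 2$, however, Step~4 is the entire content of the statement, and neither of your two routes closes it. The triangular elimination is circular. Once $c_{p+1}=0$, you just have $c_{p+2}=-\beta_{(p+2)}$, so triangularity only restates Step~3 (all $c_k$ vanish iff all $\beta_{(j)}$ vanish). It gives no implication $\beta_{(p+1)}=0\Rightarrow\beta_{(p+2)}=0$. The asserted identity $\beta_{(j)}=\beta_{(p+1)}\cdot(\text{matrix built from }\Phi^{(p+1,j)})$ is not what transversality says. The conjugation idea fails because $\beta_{(p+1)}(t)=0$ is not the vanishing of a Hodge component of $\sigma$ on $X_t$. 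It mixes the moving filtration with the fixed decomposition on $X_0$ (for $p=2$ it says exactly $\sigma\in F^2_t\oplus H^{0,4}(X)$), so the reality of $\sigma$ gives no leverage. Nothing in Steps 1--3 forces $\beta_{(p+2)}(t)=0$ from $\beta_{(p+1)}(t)=0$ at a single point.

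The missing idea is to differentiate. Multiplying the transversality relation \eqref{trans 2p} on the left by $\alpha_{(p)}$ gives $\partial_\mu\beta_{(j)}=(\partial_\mu\beta_{(p+1)})\,\Phi^{(p+1,j)}(t)$. This is a relation between derivatives, not between the functions themselves. Let $Z=\{t\in B:\beta_{(p+1)}(t)=0\}$. Every tangent vector $v$ to $Z$ at a regular point satisfies $v(\beta_{(p+1)})=0$, hence $v(\beta_{(j)})=0$. So each $\beta_{(j)}$ is locally constant on the regular locus of $Z$, and therefore constant on each irreducible component of $Z$. Now shrink $B$ so that every irreducible component of $Z$ passes through $0$, where $\beta_{(j)}(0)=0$ since $\Phi(0)=I$. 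Then $\beta_{(j)}\equiv 0$ on $Z$, i.e.\ $Z=B^p_\sigma$.

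This is the point where the paper invokes Griffiths transversality; it phrases it as ``$\Phi^{(p,p+1)}\equiv 0$ near $0$ forces $\Phi^{(p,p+j)}\equiv 0$''. The argument along $Z$ is what actually makes that step work. It yields \eqref{intr eqn of Hodge locus} as an equality of germs at $0$, that is, after shrinking $B$.
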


With the above characterization of the Hodge locus, we obtain equivalent conditions for the variational Hodge conjecture. 
Here we restrict to the case where $Z \subset X$ is a smooth analytic subvariety of codimension $p$, and the conjecture asserts that the Hodge locus $B^{p}_{\sigma_{Z}}$ coincides with the locus $\mathrm{Def}(X,Z)$ of deformations of $Z$ as submanifolds within deformations of $X$.

\begin{theorem}\label{intr vhc main}
Let $X$ be a compact K\"ahler manifold, and let $Z \subset X$ be a smooth analytic subvariety. Then the variational Hodge conjecture holds for $X$ at $Z$ if and only if the implication
\begin{equation}\label{intr vhc criterion}
\mathbb{H}_{N_{Z|X}}\left( \phi(t)\big|_{N_{Z|X}} \right) \neq 0 \;\Longrightarrow\; \mathbb{H}\left(i_{\phi(t)} \tilde{\sigma}_{Z} \right) \neq 0
\end{equation}
holds for every $t \in B$, where $\sigma_{Z} = [\tilde{\sigma}_{Z}]$ denotes the Hodge class associated to $Z$.
\end{theorem}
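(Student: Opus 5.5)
Writing $\mathrm{Def}(X,Z)\subset B$ for the deformation locus, the plan is to reduce the statement to a comparison of two analytic subsets of $B$: the Hodge locus $B^p_{\sigma_Z}$, and $\mathrm{Def}(X,Z)$, the set of $t$ such that $Z$ deforms to a submanifold $Z_t\subset X_t$. The inclusion $\mathrm{Def}(X,Z)\subset B^p_{\sigma_Z}$ always holds, because the class of an analytic cycle on $X_t$ is of type $(p,p)$ on $X_t$. So the variational Hodge conjecture at $Z$ is exactly the reverse inclusion. Equivalently, in contrapositive form: for $t\in B$,
$$
t\notin \mathrm{Def}(X,Z)\;\Longrightarrow\; t\notin B^p_{\sigma_Z}.
$$
The proof then consists of identifying each side of this implication with the corresponding side of \eqref{intr vhc criterion}.

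\textbf{Step 1 (the Hodge locus side).} I will use Theorem~\ref{intr Hodge locus main}, which gives $t\in B^p_{\sigma_Z}$ if and only if $\mathbb H\bigl(i_{\phi(t)}((I+Ti_{\phi(t)})^{-1}\tilde\sigma_Z)\bigr)=0$. I then need to replace this by the simpler condition $\mathbb H(i_{\phi(t)}\tilde\sigma_Z)=0$, as a statement about $t$.

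The idea is to expand $(I+Ti_{\phi(t)})^{-1}=\sum_k(-Ti_{\phi(t)})^k$ and use the integrability equation $\bar\partial\phi=\frac12[\phi,\phi]$ together with $T=\bar\partial^*G$. This should show that the higher terms contribute only elements of $\operatorname{im}\bar\partial+\operatorname{im}\bar\partial^*$ after projection, modulo terms that vanish whenever the first-order projection vanishes. Because $\tilde\sigma_Z$ is $d$-closed, I expect $\mathbb H(i_\phi\tilde\sigma_Z)=0$ to imply $i_\phi\tilde\sigma_Z=\bar\partial T i_\phi\tilde\sigma_Z$, and inductively that every higher term is $\bar\partial$-exact. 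The converse should follow in the same way. This gives $B^p_{\sigma_Z}=\{t:\mathbb H(i_{\phi(t)}\tilde\sigma_Z)=0\}$.

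\textbf{Step 2 (the deformation side).} I will identify $\mathrm{Def}(X,Z)$ with $\{t:\mathbb H_{N_{Z|X}}(\phi(t)|_{N_{Z|X}})=0\}$. Here $\phi(t)|_{N_{Z|X}}\in A^{0,1}(Z,N_{Z|X})$ is the restriction of $\phi(t)$ to $Z$ followed by projection to the normal bundle, and $\mathbb H_{N_{Z|X}}$ is harmonic projection with respect to the restricted metric.

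The approach follows the standard Kodaira-type argument. $Z$ deforms into $X_t$ exactly when there is a section $s$ of the normal bundle (via an exponential tubular map) whose graph is $\phi(t)$-holomorphic. The linearized equation is $\bar\partial s=\phi(t)|_{N_{Z|X}}$, which is solvable if and only if the harmonic part of the right-hand side vanishes. To handle the full nonlinear equation I will run a Kuranishi-type iteration, with correction $s=\bar\partial^*G\,(\phi(t)|_{N_{Z|X}}+\text{higher order in }s)$, and solve it by the implicit function theorem in Hölder spaces. The obstruction then reduces to the harmonic projection of the full (not merely first-order) restricted Beltrami differential.

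\textbf{Step 3 (assembly).} Substituting Steps 1 and 2 into the contrapositive inclusion shows that $B^p_{\sigma_Z}\subset\mathrm{Def}(X,Z)$ holds if and only if \eqref{intr vhc criterion} holds for every $t$.

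\textbf{Main obstacle.} I expect Step 2, the nonlinear identification of $\mathrm{Def}(X,Z)$ with the vanishing of $\mathbb H_{N_{Z|X}}(\phi(t)|_{N_{Z|X}})$, to be the hardest part. The difficulty is that the nonlinear terms in $s$ could in principle produce further obstructions. My first attempt will be to choose the tubular diffeomorphism adapted to $\phi(t)$, so that the equation becomes exactly $\bar\partial s=\phi(t)|_{N}$ with $\phi$ rewritten in the new coordinates. If that fails, I will instead define $\mathrm{Def}(X,Z)$ through this equation and interpret the criterion accordingly. A secondary concern is whether Step 1 is genuinely equivalent to Theorem~\ref{intr Hodge locus main}; if it is not, I will state the criterion with $(I+Ti_{\phi})^{-1}$ included.
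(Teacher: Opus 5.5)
Your reduction is the same as the paper's. Since $\mathrm{Def}(X,Z)\subset B^p_{\sigma_Z}$ always holds, one only has to show $B^p_{\sigma_Z}\subset\mathrm{Def}(X,Z)$. Theorem~\ref{Hodge locus main} describes $B^p_{\sigma_Z}$ as the zero set of $\mathbb H\bigl(i_{\phi(t)}(I+Ti_{\phi(t)})^{-1}\tilde{\sigma}_Z\bigr)$, and $\mathrm{Def}(X,Z)$ is the zero set of $\mathbb H_{N_{Z|X}}(\phi(t)|_{N_{Z|X}})$. For the latter the paper simply quotes Clemens~\cite{Clemens}, \S 11, and you should do the same rather than re-running a nonlinear iteration. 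Your fallback of \emph{defining} $\mathrm{Def}(X,Z)$ by that equation would change the theorem.

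The genuine gap is Step~1, and the mechanism you sketch fails for three reasons.

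\textbf{Wrong operator.} In the paper $T=\bar\partial^*G\partial$, not $\bar\partial^*G$. With the correct $T$, the form $\bar\partial T i_\phi\tilde{\sigma}_Z$ has type $(p,p+1)$, not the type $(p-1,p+1)$ of $i_\phi\tilde{\sigma}_Z$.

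\textbf{No $\bar\partial$-exactness.} Even with your $T$, $\mathbb H(i_\phi\tilde{\sigma}_Z)=0$ only gives $i_\phi\tilde{\sigma}_Z=\bar\partial\bar\partial^*G(i_\phi\tilde{\sigma}_Z)+\bar\partial^*\bar\partial G(i_\phi\tilde{\sigma}_Z)$. The last term does not drop out, because $\bar\partial(i_\phi\tilde{\sigma}_Z)=i_{\bar\partial\phi}\tilde{\sigma}_Z=\tfrac12 i_{[\phi,\phi]}\tilde{\sigma}_Z$ is nonzero in general.

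\textbf{Zero sets need not coincide.} More fundamentally, $\mathbb H\bigl(i_\phi(I+Ti_\phi)^{-1}\tilde{\sigma}_Z\bigr)=\mathbb H(i_\phi\tilde{\sigma}_Z)-\mathbb H(i_\phi Ti_\phi\tilde{\sigma}_Z)+\cdots$. The correction is of order $|t|^2$ and has no reason to vanish on $\{\mathbb H(i_{\phi(t)}\tilde{\sigma}_Z)=0\}$. Functions that agree to first order can have different zero sets (compare $t_1$ and $t_1+t_2^2$). What you need is the set-theoretic equality $\{t\in B:\mathbb H(i_{\phi(t)}\tilde{\sigma}_Z)=0\}=B^p_{\sigma_Z}$, and nothing in the proposal proves it.

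The paper gives no argument here either. It passes between the two expressions only by saying they are close for small $t$, which has the same defect. Your fallback, keeping $(I+Ti_{\phi(t)})^{-1}$ in the criterion, is what actually follows from Theorem~\ref{Hodge locus main} and Clemens. However, it is a different criterion from \eqref{intr vhc criterion}, not a proof of it.
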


Since the celebrated work of Bloch~\cite{Bloch72}, semi-regularity has provided a classical sufficient condition for the variational Hodge conjecture. Further examples satisfying this condition have been studied in~\cite{DK16} and~\cite{Kloosterman22}. By contrast, Theorem~\ref{intr vhc main} gives an explicit necessary and sufficient criterion in terms of the Beltrami differential, without assuming semi-regularity and allowing the Kuranishi base to be singular. In particular, it applies to genuinely nontrivial situations in which $\mathrm{Def}(X,Z)=B_{\sigma_Z}^{p}\subsetneq B$ and provides a concrete method for constructing new examples of the variational Hodge conjecture.

\noindent\textbf{Final remark}

The results of this paper show that the variation of Hodge structures, K\"ahler cones, Hodge classes, and analytic cycles can be described explicitly in terms of the Hodge theory and elliptic operator theory of a fixed reference fiber together with the associated Beltrami differential.

Traditionally, Beltrami differentials have been used mainly to study integrability and unobstructedness of deformations. Our framework shows that they can also be used to investigate variations of Hodge structures, K\"ahler cones, and Hodge classes even when the deformation space is a singular Kuranishi base. 
Since Beltrami differentials naturally belong to the infinite-dimensional space
$$
A^{0,1}(X,T^{1,0}X),
$$
a natural further direction is to seek smaller and more effective parameter spaces for the present framework, possibly finite-dimensional in favorable situations, in the spirit of Kuranishi reduction and homotopy transfer in Kodaira--Spencer deformation theory, while retaining the explicit geometric information encoded by the Beltrami differential.

\noindent\textbf{Organization of the paper}

This paper is organized as follows. Section~\ref{SHB} recalls the global construction of Hodge bundles from \cite{LS26-1}, obtained both from the matrix representation of the image of the period map and from deformation theory. 
Section~\ref{pp Section} provides a real-analytic parametrization of $(p,p)$-classes via a Hodge map that incorporates explicit higher-order Beltrami terms. 
In Section~\ref{Kahler section}, we study the upper semicontinuity of K\"ahler cones under deformation, describing their behavior under parallel transport via the Gauss--Manin connection and refining results of \cite{DP04}. 
As a preparatory step, we also prove the K\"ahler stability theorem without assuming unobstructedness, which serves mainly as a warm-up for the cone-theoretic analysis. 
In Section~\ref{large KS}, we establish a global K\"ahler stability theorem together with the existence of $\nabla_{t_0}^{1,1}$-flat extensions of K\"ahler cones over a large region in the base.
Section~\ref{algebraic app section} formulates a generalization of Green’s criterion for strong algebraic approximation in terms of the Beltrami differential. Section~\ref{app Hodge section} gives a criterion for approximating real $(p,p)$-classes by nearby Hodge classes. Finally, in Section~\ref{Hodge locus Hodge conj}, we give an intrinsic analytic description of the Hodge locus and formulate a criterion for the variational Hodge conjecture using the Beltrami differential along the normal bundle.



\section{Sections of Hodge bundles}\label{SHB}

In this section, we recall the global construction of Hodge bundles in \cite{LS26-1}, which are constructed both from the matrix representation of the image of the period map and from deformation theory.

In this paper, the period domain $D$ is the set of Hodge structures
\begin{eqnarray}
&&H=\bigoplus_{p+q=n}H^{p,q},\, H^{p,q}=\bar{H^{q,p}}\\
&\Longleftrightarrow&F^{n}\subset \cdots \subset F^{0}=H,\, F^{p}\oplus \bar{F^{n-p+1}}=H
\end{eqnarray}
of certain type on the complex linear space $H$, which are not necessarily polarized.
Moreover, the period domain $D$ can be realized as the flag domain $$D = G_\mathbb{R} / V$$ in the flag manifold $$\check{D}= G_{\C} / B,$$ where $G_\C$ is a complex Lie group with $B\subset G_\C$ a parabolic subgroup and $G_\mathbb{R}$ is a real Lie subgroup of $G_\C$ with $V = B \cap G_{\mathbb{R}}$ a compact Lie subgroup of $G_\mathbb{R}$.

The Hodge structure $(H=\oplus_{p+q=n}H_o^{p,q})$ at a fixed point $o$ in $D \subseteq \check{D}$ induces a Hodge structure of weight zero on the Lie algebra $\mathfrak{g}$ of $G_{\mathbb{C}}$ as
$$\mathfrak{g} = \bigoplus_{k \in \mathbb{Z}} \mathfrak{g}^{k, -k}, \quad \mathfrak{g}^{k, -k} = \{X \in \mathfrak{g} \mid X H_o^{p, q} \subseteq H_o^{p+k, q-k}, \ \forall\, p+q=n\}.$$ 
Then the Lie algebra $\mathfrak{b}$ of $B$ can be identified with $\bigoplus_{k \geq 0} \mathfrak{g}^{k, -k}$ and the holomorphic tangent space $\mathrm{T}^{1,0}_o{D}$ of ${D}$ at the base point $o$ is naturally isomorphic to
$$\mathfrak{g} / \mathfrak{b} \simeq \oplus_{k \geq 1} \mathfrak{g}^{-k,k} \triangleq \mathfrak{n}_-.$$

We denote the corresponding unipotent group by 
$$N_- = \exp(\mathfrak{n}_-)$$ 
which is identified to $N_-(o)$, the unipotent orbit of the base point $o$, and is considered as a complex Euclidean space inside $\check{D}$.
It can be proved that the complex Euclidean space $N_-$ is Zariski open in $\check{D}$, using the method in the proof of Proposition 1.8 in \cite{LS26-1}.

Now we consider a family of compact K\"ahler manifolds $f:\, \mathcal X \to \Delta$ over a polydisk in $\C^{N}$. Throughout this paper, the radius $\epsilon$ of the polydisk $\Delta$ is taken to be the same in each coordinate direction, and it may be chosen arbitrarily small. Then we have a period map
$$\Phi:\, \Delta \to N_{-}\cap D,$$
with values in $N_{-}$ for small $\Delta$.

To give an explicit description of the image of the period map $\Phi$ in $N_{-}$, we fix an adapted basis of the Hodge decomposition at the base point $0\in \Delta$ and denote it by
$$
\eta = \{\eta_{(0)}^T, \eta_{(1)}^T, \cdots, \eta_{(n)}^T\}^T,
$$
where for each $0 \leq p \leq n$, $\eta_{(p)}$, viewed as a column vector with entries in $H$, is a basis of $ H^{n-p,p}(X_{0}) $.
Here the transpose operator $T$ is used solely to arrange the basis elements so that $\eta$ is regarded as a column vector.

Under the above basis $\eta$, the image of the local period map $\Phi:\, \Delta \to N_{-}\cap D$,
$$
\P(t) = \left[\begin{array}{ccc}
I & & \left( \Phi^{(p,q)}(t) \right)_{p < q} \\
& \ddots & \\
& & I
\end{array}\right] \in N_-,\,t\in \Delta
$$
gives a basis
\begin{equation}\label{lm section}
  \Omega_{(p)}(t)=\eta_{(p)}+\sum_{k\ge 1} \Phi^{(p,p+k)}(t)\cdot \eta_{(p+k)},
\end{equation}
of the filtration $ F^{n-p} H^{n}(X_{t}, \C) $ modulo $ F^{n-p+1} H^{n}(X_{t}, \C) $ for $ 0 \leq p \leq n $. 

We refer to $\Omega_{(p)}(t)$ in \eqref{lm section} as holomorphic sections of the Hodge bundles $\mathcal{F}^{n-p}$ over $\Delta$.

Note that the Hodge filtration $F^{\bullet}$ on the cohomology group $H^{n}(X_{t}, \C)$ is induced by the complex structure of $X_{t}$, which is determined by the Beltrami differential $\phi(t)\in A^{0,1}(X,\Tan X)$. The family $\phi(t)$ of the Beltrami differentials depends holomorphically on $t\in \Delta$ with $\phi(0)=0$, and satisfies the integrability condition
$$\bar{\partial} \phi(t) - \frac{1}{2}[\phi(t), \phi(t)] = 0.$$  
In fact, for any $p\in X$, we can
pick a local holomorphic coordinate $(U,z_1,...,z_n)$ near $p$ with $z_{i}(p)=0$, and then the complex structure of $X_{t}$ is equivalent to the following
\begin{align}
&{\mathrm{T}^{*}}^{1,0}(X_{t})|_{U}=\text{Span}_{\mathbb{C}}\{dz_1+\phi(t)
dz_1,\cdots,dz_n+\phi(t) dz_n\}, \label{basis of cotang}\\
&{\mathrm{T}^{*}}^{0,1}(X_{t})|_{U}=\text{Span}_{\mathbb{C}}\{d\bar{z}_1+\bar\phi(t)
d\bar{z}_1,\cdots,d\bar{z}_n+\bar\phi(t) d\bar{z}_n\} \label{basis of cotang bar}
\end{align}

For $\phi\in A^{0,1}(X,\Tan X)$, let 
$$i_\phi:\, A^{p,q}(X)\to A^{p-1,q+1}(X)$$
be the contraction map, and $\rho_{\phi}=e^{i_\phi}$ be the exponential of the contraction map,
\begin{align} \label{exponentialphi}
\rho_{\phi}(\sigma)= e^{i_\phi}\sigma= \sum_{k\geq
0}\frac{1}{k!}i_{\phi}^k\sigma,
\end{align}
for $\sigma \in A^{p,q}(X)$.

\begin{definition}\label{son defn}
Let $\{(U;z)\}$ be a finite cover of a compact complex manifold $X$ with holomorphic coordinates, and for each open subset $U$ there exists an open subset $V$ with $V\subset \bar{V}\subset U$ such that $\{V\}$ still covers $X$. Let $\phi \in A^{0,1}(X,\mathrm T^{1,0}X)$ be a Beltrami differential with local expression
$$\phi|_{U}=\sum_{ij}\phi_{\bar j}^{i}(z){\partial_{i}}\otimes d\bar{z_{j}},$$
Then the supremum operator norm of $\phi$ is defined by
\begin{equation}\label{son E}
\|\phi\|^{E}=\max_{U}\max_{z\in \bar{V}} \sigma_{\mathrm{max}}\left(\phi_{\bar j}^{i}(z)\right),
\end{equation}
where $\sigma_{\mathrm{max}}(A)$ denotes the largest singular value of a complex square matrix $A$, i.e. the largest square root of the eigenvalues of $\bar{A}^{T}A$. 

\end{definition}
 
In \cite{LS26-1} we also define the supremum operator norm of the Beltrami differential 
$\phi$ on a compact K\"ahler manifold $X$ by
\begin{equation}\label{supremum operator norm}
\|\phi\|^{\omega} := \sup_{x\in X} \|\phi\|^{\omega}_{2,x},
\end{equation}
where the pointwise operator norm is given by
\begin{equation*}
\|\phi\|^{\omega}_{2,x} := \sup_{v\in \mathrm T_{x}^{0,1}X}
\frac{\|\phi(v)\|_{x}^{\omega}}{\|v\|_{x}^{\omega}}, \quad x\in X,
\end{equation*}
and $\|\cdot\|_{x}^{\omega}$ denotes the Hermitian metric on 
$\mathrm T_{x}^{1,0}X$ and $\mathrm T_{x}^{0,1}X$ induced by the K\"ahler form $\omega$ on $X$.

In fact, the norm $\|\phi\|^{E}$ is equal to the norm in \eqref{supremum operator norm} with the K\"ahler metric $\omega$ replaced by the standard Euclidean metric on $\mathrm TX|_{U}$. The superscript $E$ therefore refers to the Euclidean metric.

Since $X$ is compact, any two smooth Hermitian metrics on $TX$ are uniformly equivalent. Hence there exist constants $0<c\le C$, depending only on the chosen covering $\{V\subset U\}$ and the K\"ahler form $\omega$, such that
$$
c\,\|\phi\|^{\omega}\le \|\phi\|^{E}\le C\,\|\phi\|^{\omega}.
$$
In \cite{LS26-1}, we restricted to polarized manifolds, where the K\"ahler form $\omega$ is fixed, and thus wrote $\|\phi\|=\|\phi\|^{\omega}$. In the present paper, we allow arbitrary K\"ahler forms on $X$ and therefore adopt the following convention.

\noindent\textbf{Convention.} \emph{Throughout this paper, we write $\|\phi\|=\|\phi\|^{E}$, which is independent of the K\"ahler forms on $X$.}

In \cite{LS26-1}, we have proved the following properties of the operator $\rho_{\phi}=e^{i_\phi}$ for the integrable Beltrami differential $\phi\in A^{0,1}(X,\Tan X)$ (see also \cite{LR}, \cite{LRY} and \cite{LZ18}):
\begin{itemize}
\item The restriction to the Hodge filtration
$$e^{i_\phi}:\, F^pA^{p+q}(X)\rightarrow F^pA^{p+q}(X_\phi)$$
is an isomorphism for $p,q\ge 0$;

\item For any smooth form $\sigma \in A^{p,q}(X)$, the
corresponding form
$\rho_{\phi}(\sigma)=e^{i_{\phi}}(\sigma)$ in $F^pA^{p+q}(X_\phi)$ is
$d$-closed, if and only if
\begin{equation}\label{obstructionequation2}
\left\{ \begin{array}{lr} {\partial}\sigma=0,\\
\bar{\partial}\sigma =-\partial(\phi\lrcorner \sigma).
\end{array} \right.
\end{equation}

\item Let $\phi\in
A^{0,1}(X,\mathrm T^{1,0}X)$ be a Beltrami differential with
the supremum operator norm $\|\phi\|<1$, and $T :\,= \bar\partial^{*}G\partial$ be the operator from the harmonic theory on compact complex manifolds. Then the operator
$I+Ti_{\phi}$ is invertible on the Hilbert space of $L^2$ integrable $(p,q)$-forms. In particular, for any harmonic form $\sigma_0\in \mathbb H^{p,q}(X)$, the resulting $(p,q)$-form $(I+Ti_{\phi})^{-1}\sigma_0$ is the solution of the equations
\eqref{obstructionequation2}, and hence
\begin{align*}
\sigma(\phi)=e^{i_{\phi}}\left((I+Ti_{\phi})^{-1}\sigma_0\right) \in F^pA^{p+q}(X_\phi)
\end{align*}
is $d$-closed with $\sigma(0)=\sigma_0$.
\end{itemize}
 
For the adapted
basis $\eta = \{\eta_{(0)}^T, \eta_{(1)}^T, \cdots, \eta_{(n)}^T\}^T$ of the Hodge decomposition at the base point $0\in \Delta$, we choose $\tilde \eta_{(p)}$ as a basis of harmonic $(n-p,p)$-forms on $X=X_{0}$ representing the cohomology class $\eta_{(p)} = [\tilde \eta_{(p)}]$ at the base point for $0\le p\le n$. Then we have the following sections $\tO_{(p)}$ of the Hodge bundles $\mathcal{F}^{n-p}$ over $\Delta$,
\begin{align}
\tO_{(p)}(t)=&\left[\mathbb H \left(e^{i_{\phi(t)}}\left((I+Ti_{\phi(t)})^{-1}\tilde \eta_{(p)}\right)\right)\right]\nonumber \\
=&\eta_{(p)} + \sum_{k\ge 1}\frac{1}{k!}\left[\mathbb H\left(i_{\phi(t)}^k (I+Ti_{\phi(t)})^{-1}\tilde \eta_{(p)} \right)\right], \label{def section} 
\end{align}
for $0\le p\le n$.
Here $$\mathbb H:\, A^{p,q}(X)\to \mathbb H^{p,q}(X)$$ denotes the harmonic projection on $X$.

By comparing the constant terms of the expansions of $\Omega_{(p)}(t)$ in~\eqref{lm section} and $\tO_{(p)}(t)$ in~\eqref{def section}, we have the following theorem.

\begin{theorem}\label{main bundle}
Let the notations be as above. Then the sections $\Omega_{(p)}(t)$ in~\eqref{lm section}, defined via the matrix representation of the image $\P(t)$ in $N_-$, coincide with the sections $\tO_{(p)}(t)$ in~\eqref{def section}, which are defined using the Beltrami differential $\phi(t)$ associated with the complex structure $X_{t} = X_{\phi(t)}$. In particular, we have that
\begin{equation}\label{contraction k=Phi k}
\frac{1}{k!}\mathbb H\left(i_{\phi(t)}^k (I+Ti_{\phi(t)})^{-1}\tilde \eta_{(p)} \right)=\Phi^{(p,p+k)}(t)\cdot \tilde\eta_{(p+k)},
\end{equation}
and the sections $\tO_{(p)}(t)$ are holomorphic for all $t\in \Delta$.
\end{theorem}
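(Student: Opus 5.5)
The plan is to show that both $\Omega_{(p)}(t)$ and $\tO_{(p)}(t)$ lie in $F^{n-p}H^n(X_t,\C)$ and are normalized in the same way modulo $F^{n-p+1}H^n(X,\C)$ of the central fiber; the two normalizations then force equality.

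First, $\tO_{(p)}(t)$ is a cohomology class in $F^{n-p}H^n(X_t,\C)$. By the properties of $e^{i_\phi}$ recalled above, $(I+Ti_{\phi(t)})^{-1}\tilde\eta_{(p)}$ solves \eqref{obstructionequation2}, since $\|\phi(t)\|<1$ for $t$ in the small polydisk. Hence the form $e^{i_{\phi(t)}}\big((I+Ti_{\phi(t)})^{-1}\tilde\eta_{(p)}\big)$ is $d$-closed and lies in $F^{n-p}A^n(X_t)$. Its class on the underlying manifold is represented by its harmonic projection $\mathbb H$ on $X$, because a $d$-closed form and its $\omega_0$-harmonic part differ by an exact form. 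Thus $\tO_{(p)}(t)\in F^{n-p}H^n(X_t,\C)$.

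Next, expand by type on $X$. We have $(I+Ti_\phi)^{-1}\tilde\eta_{(p)}=\tilde\eta_{(p)}-Ti_\phi(\cdots)$, and $T=\bar\partial^*G\partial$ has image orthogonal to harmonic forms. Therefore the $(n-p,p)$-harmonic component of the class is exactly $\eta_{(p)}$. The $k$-th term $\frac1{k!}i_\phi^k(\cdot)$ has type $(n-p-k,p+k)$, so its harmonic projection is a combination of $\tilde\eta_{(p+k)}$; write it as $C^{(p,p+k)}(t)\tilde\eta_{(p+k)}$. Then
\[
\tO_{(p)}(t)=\eta_{(p)}+\sum_{k\ge1}C^{(p,p+k)}(t)\,\eta_{(p+k)},
\]
so $\tO_{(p)}(t)$ has no components along $\eta_{(0)},\dots,\eta_{(p-1)}$. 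The section $\Omega_{(p)}(t)$ in \eqref{lm section} has exactly the same shape, by the definition of $\Phi(t)\in N_-$. Moreover, the block unipotent matrix $\Phi(t)$ determines a unique flag, and $F^{n-p}H^n(X_t)$ is spanned by $\Omega_{(0)},\dots,\Omega_{(p)}$.

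The key step is uniqueness. Take the difference $\tO_{(p)}(t)-\Omega_{(p)}(t)$. It lies in $F^{n-p}H^n(X_t)$ and in $\mathrm{span}\{\eta_{(p+1)},\dots,\eta_{(n)}\}=\bar{F^{p+1}H^n(X)}$ at the base point. Writing it as $\sum_{i\le p}c_i\Omega_{(i)}(t)$ and comparing $\eta_{(0)},\dots,\eta_{(p)}$ coefficients, which form a triangular unipotent system, gives $c_i=0$ for all $i$. Hence $\tO_{(p)}=\Omega_{(p)}$. This is precisely where $\Phi(t)\in N_-$, i.e.\ the transversality $F^{n-p}_t\cap \bar{F^{p+1}_0}=0$, is used. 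I expect this to be the main point to justify carefully, since it requires $t$ close to $0$. Comparing coefficients then yields \eqref{contraction k=Phi k}.

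Finally, holomorphicity follows because $\Phi(t)$ comes from the holomorphic period map, since $\Omega_{(p)}$ is the normalized section of a holomorphic subbundle in $N_-$ coordinates. Alternatively, one could argue directly from the holomorphic dependence of $\phi(t)$, but the identification with $\Phi$ suffices.
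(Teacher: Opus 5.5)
Your proof is correct and is essentially the paper's argument. The paper justifies the theorem only by ``comparing the constant terms'': both sections lie in $F^{n-p}H^n(X_t,\C)$ and have the normalized shape $\eta_{(p)}+\sum_{k\ge1}(\cdots)\,\eta_{(p+k)}$. Your observation that $\mathbb H\big((I+Ti_{\phi})^{-1}\tilde\eta_{(p)}\big)=\tilde\eta_{(p)}$, together with your triangular-coefficient uniqueness step in the $N_-$ chart, is exactly the detail behind that remark. Note also that, once $\Phi(t)\in N_-$ is granted for small $\Delta$ (as the paper assumes), the uniqueness step is purely algebraic, so the only smallness inputs are $\Phi(\Delta)\subset N_-$ and $\|\phi(t)\|<1$.
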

\begin{remark}\label{intrinsic of pm}
Equation~\eqref{contraction k=Phi k} may be viewed as providing an intrinsic definition of the period map
$$\Phi:\, \Delta \to N_{-}\cap D, \, t\mapsto \left( \Phi^{(p,q)}(t) \right)_{0\le p,q\le n},$$
which depends only on the K\"ahler properties of the base manifold $X$ and the integrable Beltrami differential $\phi(t)\in A^{0,1}(X,\Tan X)$ on $X$. The strength of equation~\eqref{contraction k=Phi k} lies in the fact that it does not require the K\"ahlerness of $X_{t}$, which is used in the local study of the period map by Griffiths in \cite{Griffiths69}.
\end{remark}

In the following, we will frequently use the derivatives of the matrix blocks appearing in the image of the period map in $N_{-}$. For convenience, we adopt the notation
$$
(\Psi)_{\mu}^{\bullet}(t) = \frac{\partial \Psi}{\partial t_{\mu}}(t) = \left( \frac{\partial \psi_{ij}}{\partial t_{\mu}}(t) \right)_{\substack{1 \le i \le m \\ 1 \le j \le l}}, \quad 1 \le \mu \le N,
$$
where $\Psi(t) = \left( \psi_{ij}(t) \right)_{\substack{1 \le i \le m \\ 1 \le j \le l}}$ is a matrix-valued function.

\begin{lemma}\label{lm derivative lemma}
For the period map
$$\Phi:\, \Delta \to N_{-}\cap D, \, t\mapsto \left( \Phi^{(p,q)}(t) \right)_{0\le p,q\le n},$$
we have that 
\begin{align}\label{lm derivative}
(\Phi^{(p, p+i)})_{\mu}^{\bullet}(t) = (\Phi^{(p, p+1)})_{\mu}^{\bullet}(t) \cdot \Phi^{(p+1, p+i)}(t),
\end{align}
for any $0\le  p <p+i\le n$ and $1\le \mu\le N$.
\end{lemma}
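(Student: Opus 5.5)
The plan is to obtain \eqref{lm derivative} from Griffiths transversality, applied to the holomorphic sections $\Omega_{(p)}(t)$ of Theorem~\ref{main bundle}. Along the way we use that $\Omega_{(p+1)}(t)$ is a basis of $F^{n-p-1}H^n(X_t,\C)$ modulo $F^{n-p}H^n(X_t,\C)$.

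First, the Gauss--Manin connection is trivial in the fixed basis $\eta$ on the simply connected polydisk $\Delta$. Differentiating \eqref{lm section} therefore gives
\begin{equation*}
\frac{\partial \Omega_{(p)}}{\partial t_\mu}(t)=\sum_{k\ge1}(\Phi^{(p,p+k)})^{\bullet}_{\mu}(t)\cdot \eta_{(p+k)} .
\end{equation*}
By Theorem~\ref{main bundle} the $\Omega_{(p)}(t)$ are holomorphic sections of $\mathcal F^{n-p}$. Griffiths transversality, which we would check directly from the $d$-closed forms $e^{i_{\phi(t)}}((I+Ti_{\phi(t)})^{-1}\tilde\eta_{(p)})$ by differentiating in $t_\mu$, says that $\partial_{t_\mu}\Omega_{(p)}(t)\in F^{n-p-1}H^n(X_t,\C)$.

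Second, $\{\Omega_{(q)}(t)\}_{q\le p+1}$ is a basis of $F^{n-p-1}$, by the block unipotent form of $\Phi(t)$. So there are coefficient matrices $A_q(t)$ with
\begin{equation*}
\partial_{t_\mu}\Omega_{(p)}(t)=\sum_{q\le p+1}A_q(t)\,\Omega_{(q)}(t).
\end{equation*}
We expand the right-hand side in the basis $\eta$ using \eqref{lm section} and compare components.
\begin{itemize}
\item The $\eta_{(q)}$-component with $q\le p$ is $0$ on the left. Inductively from the smallest $q$, and using the identity blocks on the diagonal, this forces $A_q=0$ for $q\le p$.
\item The $\eta_{(p+1)}$-component then gives $A_{p+1}=(\Phi^{(p,p+1)})^\bullet_\mu$.
\item The $\eta_{(p+i)}$-component gives $(\Phi^{(p,p+i)})^\bullet_\mu=A_{p+1}\Phi^{(p+1,p+i)}$.
\end{itemize}
This is exactly \eqref{lm derivative}.

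The main obstacle is justifying transversality without using the Hodge theory of $X_t$, since $X_t$ need not be known to be K\"ahler a priori. Here the right thing to use is the statement that $F^{n-p-1}H^n(X_t)$ is spanned by classes of $d$-closed forms in $F^{n-p-1}A^n(X_\phi)$. The $t_\mu$-derivative of $e^{i_{\phi}}\sigma(t)$, where $\sigma(t)\in A^{n-p,p}(X)$, is
\begin{equation*}
e^{i_\phi}\bigl(i_{\partial_\mu\phi}\sigma+\partial_\mu\sigma\bigr),
\end{equation*}
which lies in $e^{i_\phi}F^{n-p-1}A^n(X)=F^{n-p-1}A^n(X_\phi)$ and is $d$-closed. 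Its class thus lies in the image of $F^{n-p-1}$-cohomology. Identifying that image with the Hodge filtration uses only that the period point stays in $N_-\cap D$ for small $t$, as asserted earlier in this section.
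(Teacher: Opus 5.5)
Your proposal is correct and is essentially the paper's argument. The paper simply invokes Griffiths transversality for the matrix representation of $\Phi(t)$ in $N_-$, with details deferred to \cite{LS26-1}; this amounts to expanding $\partial_{t_\mu}\Omega_{(p)}(t)\in F^{n-p-1}H^n(X_t,\C)$ in the basis $\{\Omega_{(q)}(t)\}_{q\le p+1}$ and comparing $\eta$-components, exactly as you do.

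Your closing discussion of avoiding the Hodge theory of $X_t$ is not needed here. This section assumes a family of compact K\"ahler manifolds, so \eqref{lm section} already gives bases of the Hodge filtration. Had it been needed, identifying the image of filtered cohomology with the span of the $\Omega_{(q)}(t)$ would follow from a dimension count, such as semicontinuity of $h^{p,q}$, and not from $\Phi(t)\in N_-\cap D$.
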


The proof of Lemma~\ref{lm derivative lemma} consists of applying Griffiths transversality \cite{Griffiths69} to the matrix representation of the image of the period map in $N_-\cap D$. See Section~2 of \cite{LS26-1} for details.

From Lemma \ref{lm derivative lemma}, we have the following lemma on the estimates of the blocks of the image of the period map.

\begin{lemma}\label{order0}
Let the notations be as above. Then we have the following expansions of the blocks,
\begin{align}\label{lm expansion}
 \P^{(p, p+i)}(t)=O(|t|^{i}), \text{ for } 0\le p<p+i\le n.
\end{align}
\end{lemma}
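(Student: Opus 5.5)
We argue by induction on $i$, using Lemma~\ref{lm derivative lemma} together with the normalization $\Phi(0)=I$. The normalization holds because $\phi(0)=0$. Formula \eqref{contraction k=Phi k} then gives $\Phi^{(p,p+k)}(0)\cdot\tilde\eta_{(p+k)}=\frac{1}{k!}\mathbb H\bigl(i_{0}^k\tilde\eta_{(p)}\bigr)=0$ for $k\ge1$, so all off-diagonal blocks vanish at $t=0$. Since each $\Phi^{(p,q)}(t)$ is holomorphic on $\Delta$ (Theorem~\ref{main bundle}), we may expand in power series around $0$. The claim $\Phi^{(p,p+i)}(t)=O(|t|^i)$ is equivalent to the vanishing at $0$ of all partial derivatives of order $<i$.

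Base case $i=1$: $\Phi^{(p,p+1)}$ is holomorphic and vanishes at $0$, so $\Phi^{(p,p+1)}(t)=O(|t|)$.

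Inductive step: assume $\Phi^{(q,q+j)}(t)=O(|t|^j)$ for all $q$ and all $1\le j<i$. By \eqref{lm derivative}, for each $\mu$,
$$(\Phi^{(p,p+i)})^{\bullet}_{\mu}(t)=(\Phi^{(p,p+1)})^{\bullet}_{\mu}(t)\cdot\Phi^{(p+1,p+i)}(t).$$
The first factor is holomorphic, hence $O(1)$. The second factor is $O(|t|^{i-1})$ by the induction hypothesis with $j=i-1$. So every first partial derivative of $\Phi^{(p,p+i)}$ is $O(|t|^{i-1})$. Equivalently, in the Taylor expansion of each $\partial_\mu\Phi^{(p,p+i)}$, every homogeneous term of degree $<i-1$ vanishes.

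Integrating, the homogeneous parts of $\Phi^{(p,p+i)}$ of degrees $1,\dots,i-1$ have all first derivatives zero. A homogeneous polynomial of positive degree whose first derivatives all vanish is itself zero, so these parts vanish. The degree-$0$ part is $\Phi^{(p,p+i)}(0)=0$. Hence $\Phi^{(p,p+i)}(t)=O(|t|^{i})$.

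The only point needing care is that the identity \eqref{lm derivative} is used on the smooth polydisk $\Delta$ on which the family and the period map are defined, where holomorphic power series make sense. Lemma~\ref{lm derivative lemma} is stated there, so no obstruction arises. For a singular Kuranishi base $B\subset\Delta$, the same estimate follows by restriction, since the blocks in \eqref{period matrices defn} are defined on all of $\Delta$ by the same formula.
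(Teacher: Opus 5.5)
Your proof is correct and is essentially the paper's argument: the off-diagonal blocks vanish at $t=0$, and induction on $i$ via the transversality identity \eqref{lm derivative} turns the bound $O(|t|^{i-1})$ on the derivatives into $O(|t|^{i})$ for the block; your homogeneous-component argument just makes explicit the paper's ``no constant term'' integration step. Your closing remark on a singular $B$ goes beyond the lemma. Note that off $B$ the transversality identity is not established for the quasi-period map, so there the estimate on $\Delta$ should instead be read off directly from the factor $i_{\phi(t)}^{i}$ in \eqref{period matrices defn} together with $\phi(t)=O(|t|)$.
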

\begin{proof}
At $t=0$, the matrix representation in $N_-$ of $\P(0)$ is the identity matrix, and hence $\P^{(p, p+i)}(0)=0$ for $0\le p< p+i\le n$. This implies that the expansions of the blocks $\P^{(p, p+i)}(t)$ have no constant terms, and that equation \eqref{lm expansion} holds for $i=1$, i.e.
\begin{equation}\label{lm expansion 1}
\P^{(p, p+1)}(t)=O(|t|).
\end{equation}

Now we prove the proposition inductively on $i$. We may assume that equation \eqref{lm expansion} holds for $i=s$, i.e. $\P^{(p, p+s)}(t)=O(|t|^{s})$, where $1\le s < n$. Let us consider the case for $i=s+1$.

By equation \eqref{lm derivative} in Lemma \ref{lm derivative lemma}, we have
\begin{align*}
\left(\Phi^{(p,p+s+1)}\right)_\mu^\bullet(t)= \left(\Phi^{(p,p+1)}\right)_\mu^\bullet(t) \Phi^{(p+1, p+s+1)}(t),
\end{align*}
where $\P^{(p+1, p+s+1)}(t)=O(|t|^{s})$ by induction, and, by \eqref{lm expansion 1}, $\left(\Phi^{(p,p+1)}\right)_\mu^\bullet(t)$ is bounded when $t$ is near the origin. Therefore we conclude that
$$\left(\Phi^{(p,p+s+1)}\right)_\mu^\bullet(t)=O(|t|^{s}),$$
which implies that
$$\P^{(p, p+s+1)}(t)=O(|t|^{s+1}),$$
since $\P^{(p, p+s+1)}(t)$ has no constant term. This finishes the proof by induction.
\end{proof}

From the proof of the above lemma, we obtain the following stronger expansion of the blocks:
\begin{align}\label{lm expansion strong}
 \P^{(p, p+i)}(t)=O\left(|t \P^{(p+1, p+i)}(t)|\right), \quad \text{for } 0 \le p < p+i \le n.
\end{align}


\section{Characterization of \texorpdfstring{$(p,p)$}{(p,p)}-classes}\label{pp Section}
In this section, we establish a real-analytic parametrization for $(p,p)$-classes in a family of compact K\"ahler manifolds. Specifically, given any $(p,p)$-class on the central fiber, we construct a real analytic map—the Hodge map—defined on a small neighborhood in the deformation space, whose image consists of $(p,p)$-classes on nearby fibers. This analytic description admits an explicit expansion involving higher-order terms in the Beltrami differentials, which leads to further applications in the geometry of compact K\"ahler manifolds.

Now we consider the local analytic family $f:\,\X\to \Delta$ over the polydisk 
$$\Delta=\{z=(z_{1},\cdots,z_{N})\in \C^{N}:\, |z_{i}|<\epsilon,1\le i\le N\},$$
and the corresponding period map of weight $2p$
\begin{equation}\label{pm wt 2p}
\Phi:\,\Delta\to N_{-}\cap D,\, t\mapsto \left( \Phi^{(i,j)}(t) \right)_{0\le i,j\le 2p},
\end{equation}
which maps $t$ to the Hodge structure $H^{2p}(X,\C)=\oplus_{k+l=2p}H^{k,l}(X_{t})$, where $X\cong X_{0}=f^{-1}(0)$.

Under the adapted basis 
$$\eta= \{\eta_{(0)}^T, \cdots, \eta_{(2p)}^T\}^T$$
with respect to the Hodge decomposition $$H^{2p}(X,\C)=\bigoplus_{k+l=2p}H^{k,l}(X),$$ we have the following characterizations of the elements in $H^{2p}(X,\mathbb R)$ and $H^{p,p}(X,\mathbb R)$, where
$$H^{p,p}(X,\mathbb R)=H^{p,p}(X)\cap H^{2p}(X,\mathbb R).$$

\begin{lemma}\label{classes}
For any $\sigma \in H^{2p}(X,\mathbb C)$, there exist row vectors $\alpha_{(0)}, \alpha_{(1)}, \cdots, \alpha_{(2p)}$ such that
$$\sigma = \alpha_{(0)}\cdot \eta_{(0)}+\alpha_{(1)}\cdot \eta_{(1)}+\cdots +\alpha_{(2p)}\cdot \eta_{(2p)}.$$
Moreover we have that

(i) $\sigma \in H^{2p}(X,\mathbb R)$ if and only if $\alpha_{(i)}=\bar{\alpha_{(2p-i)}}$ for $0\le i<p$ and $\alpha_{(p)}=\bar{\alpha_{(p)}}$ is real;

(ii) $\sigma \in H^{p,p}(X,\mathbb R)$ if and only if $\alpha_{(i)}=\bar{\alpha_{(2p-i)}}=0$ for $0\le i<p$, and $\alpha_{(p)}=\bar{\alpha_{(p)}}$ is real.
\end{lemma}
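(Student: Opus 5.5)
The plan is to prove the lemma in three steps: fix a conjugation-compatible adapted basis, expand an arbitrary class in it, and then compare the expansion with its complex conjugate using uniqueness of coordinates.

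\textbf{Step 1: choice of basis.} The statement implicitly requires the adapted basis $\eta$ to be compatible with complex conjugation; without this, the relation $\alpha_{(i)}=\bar{\alpha_{(2p-i)}}$ has no meaning. Since $H^{2p-i,i}(X)=\bar{H^{i,2p-i}(X)}$, I first fix bases $\eta_{(i)}$ of $H^{2p-i,i}(X)$ for $0\le i<p$. I then define
$$\eta_{(2p-i)}:=\bar{\eta_{(i)}},$$
which is a basis of $H^{i,2p-i}(X)$. For the middle piece, $H^{p,p}(X)$ is stable under conjugation, so it is the complexification of $H^{p,p}(X,\mathbb R)=H^{p,p}(X)\cap H^{2p}(X,\mathbb R)$. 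I therefore choose $\eta_{(p)}$ to be a real basis, i.e.\ $\bar{\eta_{(p)}}=\eta_{(p)}$. The harmonic representatives can be chosen the same way, since conjugation commutes with the Laplacian of the K\"ahler metric $\omega_0$. I will state explicitly that this normalization is assumed throughout the paper.

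\textbf{Step 2: expansion.} Because the Hodge decomposition $H^{2p}(X,\C)=\bigoplus_{k+l=2p}H^{k,l}(X)$ is a direct sum, the concatenation $\eta$ is a basis of $H^{2p}(X,\C)$. Hence every $\sigma$ can be written as $\sigma=\sum_i\alpha_{(i)}\cdot\eta_{(i)}$ with uniquely determined row vectors $\alpha_{(i)}$.

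\textbf{Step 3: reality and type conditions.} For (i), a class $\sigma$ is real if and only if $\bar\sigma=\sigma$. Using Step 1, I compute
$$\bar\sigma=\sum_i\bar{\alpha_{(i)}}\cdot\bar{\eta_{(i)}}=\sum_{i}\bar{\alpha_{(2p-i)}}\cdot\eta_{(i)},$$
where the middle term $i=p$ uses $\bar{\eta_{(p)}}=\eta_{(p)}$. By uniqueness of coordinates, $\bar\sigma=\sigma$ holds exactly when $\alpha_{(i)}=\bar{\alpha_{(2p-i)}}$ for all $i$. This is the stated condition, with $\alpha_{(p)}$ real.

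For (ii), $\sigma\in H^{p,p}(X)$ holds exactly when all components outside $H^{p,p}(X)$ vanish, i.e.\ $\alpha_{(i)}=0$ for $i\ne p$. Combining this with (i) gives the claim.

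There is no real obstacle in this argument. The only delicate point is Step 1: making explicit that the adapted basis is normalized by $\bar{\eta_{(i)}}=\eta_{(2p-i)}$, with $\eta_{(p)}$ real, and that this normalization is consistent with the earlier definitions of the adapted basis.
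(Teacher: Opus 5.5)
Your proposal is correct and essentially matches the paper's proof. The paper also takes $\bar{\eta_{(i)}}=\eta_{(2p-i)}$ for all $0\le i\le 2p$ (so $\eta_{(p)}$ is a real basis) as part of what ``adapted basis'' means, obtains (i) by comparing the coefficients of $\sigma$ and $\bar\sigma$, and obtains (ii) by combining (i) with the vanishing of $\alpha_{(p+1)},\dots,\alpha_{(2p)}$ that follows from $\sigma\in F^pH^{2p}(X,\C)$; the only difference is that you state the conjugation normalization explicitly, which the paper leaves implicit.
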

\begin{proof}
(i) Since $\eta$ is the adapted basis with respect to the Hodge decomposition, we have that $$\bar{\eta_{(i)}}=\eta_{(2p-i)},\, 0\le i\le 2p.$$
Hence 
$\sigma \in H^{2p}(X,\mathbb R)$ if and only if
\begin{eqnarray*}
\sigma=\bar{\sigma}=\bar{\alpha_{(0)}}\cdot {\eta_{(2p)}}+\bar{\alpha_{(1)}}\cdot {\eta_{(2p-1)}}+\cdots+\bar{\alpha_{(2p-1)}}\cdot {\eta_{(1)}} + \bar{\alpha_{(2p)}}\cdot {\eta_{(0)}},
\end{eqnarray*}
which is equivalent to that $$\alpha_{(0)}=\bar{\alpha_{(2p)}}, \alpha_{(1)}=\bar{\alpha_{(2p-1)}},\cdots, \alpha_{(p-1)}=\bar{\alpha_{(p+1)}}$$ and $\alpha_{(p)}=\bar{\alpha_{(p)}}$ is real.

(ii) First $\sigma \in H^{p,p}(X,\mathbb R)$ if and only if $\sigma \in H^{2p}(X,\mathbb R)$ and $\sigma \in F^{p}H^{2p}(X,\C)$. Note that 
$$\sigma=\alpha_{(0)}\cdot \eta_{(0)}+\cdots +\alpha_{(p)}\cdot \eta_{(p)}+\cdots +\alpha_{(2p)}\cdot \eta_{(2p)} \in F^{p}H^{2p}(X,\C)$$ 
if and only if $\alpha_{(p+1)}=0,\cdots,\alpha_{(2p)}=0$. Then (ii) follows from (i).
\end{proof}

Let $\sigma_0\in H^{p,p}(X,\mathbb R)$. Then there exists a row vector $\alpha_{(p)}^0$, $\alpha_{(p)}^0=\bar{\alpha}_{(p)}^0$, such that $\sigma_0=\alpha_{(p)}^0 \cdot \eta_{(p)}$. We are interested in the cohomological class $\sigma \in H^{2p}(X,\mathbb R)$ near $\sigma_0$ which is also $(p,p)$-class on $X_{t}$ for $t$ near $0\in \Delta$.

Let $\{\Omega_{(0)}(t), \Omega_{(1)}(t),\cdots,\Omega_{(2p)}(t)\}$ be the adapted basis of $H^{2p}(X,\C)$ with respect to the Hodge filtration at $X_t$, given by the period map \eqref{pm wt 2p}, 
\begin{equation}\label{Omega0-2p}
\Omega_{(i)}(t)=\eta_{(i)}+\sum_{j>i}\Phi^{(i,j)}(t)\cdot \eta_{(j)}.
\end{equation}
 Then, for any real class
$$\sigma = \alpha_{(0)}\cdot \eta_{(0)}+\alpha_{(1)}\cdot \eta_{(1)}+\cdots +\alpha_{(2p)}\cdot \eta_{(2p)}\in H^{2p}(X,\mathbb R),$$
there exist row vectors $\beta_{(0)}(t),\beta_{(1)}(t),\cdots,\beta_{(2p)}(t)$ depending on $t\in \Delta$ such that
\begin{eqnarray}
  \sigma &=& \beta_{(0)}(t)\cdot \Omega_{(0)}(t)+\beta_{(1)}(t)\cdot \Omega_{(1)}(t)+\cdots +\beta_{(2p)}(t)\cdot \Omega_{(2p)}(t).\nonumber
\end{eqnarray}
Therefore, we arrive at the following \textbf{inductive system of equations}:
\begin{equation}\label{Bk}
\left\{
\begin{aligned}
\beta_{(0)}(t)&=\alpha_{(0)},  \\
\beta_{(1)}(t)&=\alpha_{(1)}-\beta_{(0)}(t) \Phi^{(0,1)}(t),  \\
&\ \ \vdots \\
\beta_{(k)}(t)&=\alpha_{(k)} -\sum_{i=0}^{k-1}\beta_{(i)}(t) \Phi^{(i,k)}(t),\\
&\ \ \vdots \\
\beta_{(2p)}(t)&=\alpha_{(2p)}-\sum_{i=0}^{2p-1} \beta_{(i)}(t) \Phi^{(i,2p)}(t),
\end{aligned}\right.
\end{equation}
where each equation uniquely determines $\beta_{(k)}(t)$ from the previously obtained $\beta_{(i)}(t)$ with $i<k$. 
Consequently, all $\beta_{(i)}(t)$ are determined recursively from the fixed data $\alpha_{(j)}$ and the image 
$\Phi(t) = \big(\Phi^{(i,j)}(t)\big)_{i,j}$ of the period map.

\begin{proposition}\label{key pp-proposition}
Any cohomological class
\begin{eqnarray}
 \sigma &=& \alpha_{(0)}\cdot \eta_{(0)}+\alpha_{(1)}\cdot \eta_{(1)}+\cdots +\alpha_{(2p)}\cdot \eta_{(2p)} \nonumber\\
    &=& \beta_{(0)}(t)\cdot \Omega_{(0)}(t)+\beta_{(1)}(t)\cdot \Omega_{(1)}(t)+\cdots +\beta_{(2p)}(t)\cdot \Omega_{(2p)}(t) \label{alpha,beta}
\end{eqnarray}
in $H^{2p}(X,\mathbb C)$ is a real $(p,p)$-class on $X_{t}$, if and only if the following equations hold
\begin{equation}\label{B=01} 
\left\{
\begin{aligned} 
 \beta_{(0)}(t)&=\alpha_{(0)}  \\
\beta_{(1)}(t)&=\alpha_{(1)}-\beta_{(0)}(t) \Phi^{(0,1)}(t)  \\
& \cdots \cdots \\
\beta_{(p)}(t)&=\alpha_{(p)} -\sum_{i=0}^{p-1}\beta_{(i)}(t) \Phi^{(i,p)}(t),\\
\end{aligned}\right.
\end{equation}
\begin{equation}\label{B=02} 
\left\{
\begin{aligned} 
\bar{\alpha_{(p)}} &=\alpha_{(p)}\\
 \bar{\alpha_{(p-1)}} &=\sum_{i=0}^{p} \beta_{(i)}(t)\Phi^{(i,p+1)}(t)\\
  \bar{\alpha_{(p-2)}} &=\sum_{i=0}^{p} \beta_{(i)}(t)\Phi^{(i,p+2)}(t) \\
   & \cdots \cdots \\
  \bar{\alpha_{(0)}}&=\sum_{i=0}^{p} \beta_{(i)}(t)\Phi^{(i,2p)}(t).
\end{aligned}\right.
\end{equation}
\end{proposition}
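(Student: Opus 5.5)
The plan is to apply Lemma~\ref{classes}(ii) on $X_t$, using the adapted basis $\Omega_{(i)}(t)$ of \eqref{Omega0-2p} in place of $\eta_{(i)}$. The first step is to record what the Hodge decomposition on $X_t$ looks like in this basis. By Theorem~\ref{main bundle}, $\Omega_{(0)}(t),\dots,\Omega_{(k)}(t)$ span $F^{2p-k}H^{2p}(X_t,\C)$. The conjugate filtration is spanned by $\overline{\Omega_{(2p)}(t)},\dots,\overline{\Omega_{(2p-k)}(t)}$, and $H^{p,p}(X_t)=F^p\cap\overline{F^p}$.

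A real class $\sigma$ lies in $H^{p,p}(X_t)$ if and only if $\sigma\in F^pH^{2p}(X_t,\C)$, since reality then gives $\sigma=\bar\sigma\in\overline{F^p}$. In the expansion \eqref{alpha,beta}, the condition $\sigma\in F^p$ says exactly that $\beta_{(j)}(t)=0$ for $p+1\le j\le 2p$. This step holds because the $\Omega_{(i)}(t)$ form a basis, which follows from the unipotent upper-triangular shape of $\Phi(t)$ in $N_-$. So the statement reduces to two claims. First, $\sigma$ is real with $\beta_{(j)}(t)=0$ for $j>p$. Second, this is equivalent to \eqref{B=01} together with \eqref{B=02}.

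The second step is to compare coefficients of $\eta_{(k)}$ on both sides of \eqref{alpha,beta}. Expanding $\Omega_{(i)}(t)$ gives $\alpha_{(k)}=\beta_{(k)}(t)+\sum_{i<k}\beta_{(i)}(t)\Phi^{(i,k)}(t)$, which is the triangular system \eqref{Bk}. Its first $p+1$ rows are exactly \eqref{B=01}; these always hold and define $\beta_{(0)},\dots,\beta_{(p)}$. Substituting $\beta_{(j)}(t)=0$ for $j>p$ into rows $k=p+1,\dots,2p$ gives $\alpha_{(k)}=\sum_{i=0}^p\beta_{(i)}(t)\Phi^{(i,k)}(t)$. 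Conversely, if these identities hold, then by induction on $k$ the recursion \eqref{Bk} forces $\beta_{(k)}(t)=0$ for all $k>p$.

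The last step is to impose reality through Lemma~\ref{classes}(i): $\alpha_{(p)}$ is real and $\alpha_{(k)}=\overline{\alpha_{(2p-k)}}$. Writing $k=p+m$ turns the identities above into $\overline{\alpha_{(p-m)}}=\sum_{i\le p}\beta_{(i)}\Phi^{(i,p+m)}$, which is \eqref{B=02}. For the converse direction, one uses \eqref{B=02} to define $\alpha_{(p+m)}:=\overline{\alpha_{(p-m)}}$, so that $\sigma$ is real by construction and the vanishing of $\beta_{(j)}$ for $j>p$ follows from the previous step.

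The main obstacle is interpretive rather than computational: one must read the proposition with $\sigma$ taken as real, equivalently with $\alpha_{(2p-i)}=\overline{\alpha_{(i)}}$ imposed. Only then does \eqref{B=02} encode both reality and membership in $F^p$. One also needs the fact, from Theorem~\ref{main bundle}, that the $\Omega_{(i)}(t)$ really are adapted to the Hodge filtration of $X_t$, even though $X_t$ is not assumed Kähler beforehand.
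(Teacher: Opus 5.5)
Your proposal is correct and follows the paper's proof essentially line by line. You reduce to $\sigma\in H^{2p}(X,\mathbb{R})\cap F^pH^{2p}(X_t,\C)$, i.e.\ reality together with $\beta_{(j)}(t)=0$ for $p<j\le 2p$, and then read \eqref{B=01} and \eqref{B=02} off the triangular system \eqref{Bk} using $\alpha_{(p+m)}=\overline{\alpha_{(p-m)}}$. Your caveat is also accurate: the equivalence only makes sense with the reality constraint $\alpha_{(2p-i)}=\overline{\alpha_{(i)}}$ built in, since \eqref{B=02} never involves $\alpha_{(p+1)},\dots,\alpha_{(2p)}$, and this is exactly the convention the paper uses implicitly when it passes from \eqref{Bk} to \eqref{B=02} and later defines the Hodge map.
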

\begin{proof}
Equation~\eqref{alpha,beta} implies the inductive system~\eqref{Bk}, whose first $p$ equations coincide with~\eqref{B=01}.

Under the identification $H^{2p}(X,\mathbb R)\simeq H^{2p}(X_t,\mathbb R)$, we have that
$\sigma$ is a real $(p,p)$-class on $X_{t}$ if and only if $\sigma\in H^{2p}(X,\mathbb R)\cap  F^p H^{2p}(X_t,\mathbb C)$, which is equivalent to 
\begin{align}
&\alpha_{(i)}=\bar{\alpha_{(2p-i)}},\,0\le i<p;\,\alpha_{(p)}=\bar{\alpha_{(p)}}; \label{real char}\\
&\beta_{(p+1)}(t)=0,\beta_{(p+2)}(t)=0,\beta_{(2p)}(t)=0. \label{Fp char}
\end{align}
Equations \eqref{Fp char} and the last $p$ equations of Equations \eqref{Bk} imply that
\begin{equation}\label{B=01'} 
\left\{
\begin{aligned}   \alpha_{(p+1)}&=\sum_{i=0}^{p} \beta_{(i)}(t)\Phi^{(i,p+1)}(t) \\
  \alpha_{(p+2)}&=\sum_{i=0}^{p+1} \beta_{(i)}(t)\Phi^{(i,p+2)}(t)= \sum_{i=0}^{p} \beta_{(i)}(t)\Phi^{(i,p+2)}(t)\\
   & \cdots \cdots \\
 \alpha_{(2p)} &=\sum_{i=0}^{2p-1} \beta_{(i)}(t)\Phi^{(i,2p)}(t)= \sum_{i=0}^{p} \beta_{(i)}(t)\Phi^{(i,2p)}(t).
\end{aligned}\right.
\end{equation}
Hence Equations \eqref{real char} and \eqref{Fp char} are equivalent to Equations \eqref{B=02}.
\end{proof}

\begin{remark}\label{keypropositionremark}
From Equation \eqref{Bk}, we know that $\beta_{(k)}(t)$ is holomorphic in $t\in \Delta$, and real analytic in $\alpha_{(0)},\cdots,\alpha_{(k)}$ for $0\le k\le p$. In fact, $\beta_{(k)}(t)$ is a linear combination of $\alpha_{(0)},\cdots,\alpha_{(k)}$ with coefficients the holomorphic functions of $t$.
Here we consider $$\alpha_{(i)}=\mathrm{Re}\alpha_{(i)} +\sqrt{-1}\mathrm{Im}\alpha_{(i)},\, \bar{\alpha_{(i)}}=\mathrm{Re}\alpha_{(i)} -\sqrt{-1}\mathrm{Im}\alpha_{(i)}\ 1\le i\le 2p$$ as real variables. 

Therefore, Equations~\eqref{B=02} are characterized as the zeros of certain functions that are real analytic in $\alpha_{(i)},\bar{\alpha_{(i)}}$, for $0 \le i \le p$, and holomorphic in $t \in \Delta$.
\end{remark}

\begin{theorem}\label{real approximation}
Let $f:\, \mathcal X\to \Delta$  be a family of compact K\"ahler manifolds, over a sufficiently small polydisc $\Delta\subset \C^{N}$, where $X\cong X_{0}$. Then there exists a real analytic map
\begin{eqnarray}
H&:&H^{p,p}(X,\mathbb R) \times \Delta\to H^{2p}(X,\mathbb R), \label{defn of H}\\
&&(\sigma,t)\mapsto \sum_{0\le i\le p-1}\alpha_{(i)}(\sigma,t)\cdot \eta_{(i)}+\sigma +\sum_{p+1\le j\le 2p}\bar{\alpha_{(j)}(\sigma,t)}\cdot \eta_{(j)},\nonumber
\end{eqnarray}
such that $$H(\sigma,t)\in H^{p,p}(X_t,\mathbb R).$$
\end{theorem}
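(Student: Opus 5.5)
The plan is to read Proposition~\ref{key pp-proposition} as a linear system in the unknown row vectors $\alpha_{(0)},\dots,\alpha_{(p-1)}$ and solve it by a Neumann series. Fix $\sigma=\alpha_{(p)}\cdot\eta_{(p)}\in H^{p,p}(X,\mathbb R)$, so $\alpha_{(p)}=\bar{\alpha_{(p)}}$ is real. The candidate class is
$$\alpha_{(0)}\cdot\eta_{(0)}+\cdots+\alpha_{(p-1)}\cdot\eta_{(p-1)}+\sigma+\bar{\alpha_{(p-1)}}\cdot\eta_{(p+1)}+\cdots+\bar{\alpha_{(0)}}\cdot\eta_{(2p)} .$$
By Lemma~\ref{classes}(i) it is automatically real, and the first equation of \eqref{B=02} holds trivially. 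By Proposition~\ref{key pp-proposition} it therefore suffices to find $\alpha_{(0)},\dots,\alpha_{(p-1)}$, depending real-analytically on $(\sigma,t)$, that satisfy the remaining $p$ equations of \eqref{B=02}, where the $\beta_{(i)}(t)$ are defined from \eqref{B=01}.

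\textbf{Step 1: linear structure.} By Remark~\ref{keypropositionremark}, each $\beta_{(i)}(t)$, $0\le i\le p$, is a linear combination of $\alpha_{(0)},\dots,\alpha_{(i)}$ with coefficients holomorphic in $t$. Writing $x=(\alpha_{(0)},\dots,\alpha_{(p-1)})\in\C^{m}$, the equations
$$\bar{\alpha_{(p-k)}}=\sum_{i=0}^{p}\beta_{(i)}(t)\Phi^{(i,p+k)}(t),\qquad 1\le k\le p,$$
therefore take the form
$$\bar x = x\,A(t)+\alpha_{(p)}\,C(t),$$
where $A(t)$ and $C(t)$ are matrices whose entries are polynomials in the blocks $\Phi^{(i,j)}(t)$. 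Every term on the right contains at least one factor $\Phi^{(i,p+k)}(t)$, and $\Phi(0)=I$ in $N_-$, so $A(0)=0$ and $C(0)=0$. Lemma~\ref{order0} gives the more precise bound $A(t),C(t)=O(|t|)$.

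\textbf{Step 2: solve.} Conjugating turns the system into the real-linear fixed point equation
$$x=\bar x\,\bar{A(t)}+\alpha_{(p)}\,\bar{C(t)}=:\mathcal L_t(x)+c_t(\sigma).$$
Here $\mathcal L_t$ is an $\mathbb R$-linear operator on $\C^{m}\cong\mathbb R^{2m}$ with $\|\mathcal L_t\|\le C|t|$, and $c_t(\sigma)$ is $\mathbb R$-linear in $\sigma$. After shrinking $\Delta$ so that $\|\mathcal L_t\|\le 1/2$, the operator $I-\mathcal L_t$ is invertible, and
$$x(\sigma,t)=(I-\mathcal L_t)^{-1}c_t(\sigma)=\sum_{k\ge0}\mathcal L_t^{k}c_t(\sigma)$$
is the unique solution. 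It is linear in $\sigma$ and real-analytic in $t$, since $\mathcal L_t$ and $c_t$ are built from holomorphic functions of $t$ and their conjugates and the Neumann series converges uniformly. Equivalently, one can take the real determinant of $I-\mathcal L_t$, which is nonzero near $0$, and apply Cramer's rule.

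\textbf{Step 3: define $H$.} Set $\alpha_{(i)}(\sigma,t):=x_i(\sigma,t)$ for $0\le i\le p-1$ and $\alpha_{(2p-i)}:=\bar{\alpha_{(i)}}$, so that the coefficient of $\eta_{(j)}$ for $j>p$ is $\overline{\alpha_{(2p-j)}(\sigma,t)}$; this matches \eqref{defn of H} after relabelling. Then $H(\sigma,t)$ is real-analytic, real by Lemma~\ref{classes}(i), and satisfies \eqref{B=01}--\eqref{B=02}, so Proposition~\ref{key pp-proposition} gives $H(\sigma,t)\in H^{p,p}(X_t,\mathbb R)$. As a byproduct, $H(\sigma,0)=\sigma$.

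The main obstacle is Step~2. The system is only $\mathbb R$-linear because $\bar x$ appears, so one must make sure the smallness of $\mathcal L_t$ is uniform in $t$ on a fixed polydisc. This comes from the vanishing $\Phi^{(i,j)}(0)=0$ for $i<j$ together with continuity of the period map, which is what allows $\Delta$ to be taken sufficiently small.
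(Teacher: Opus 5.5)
Your proposal is correct and is essentially the paper's argument. Both reduce to the last $p$ equations of Proposition~\ref{key pp-proposition} and use $\Phi^{(i,j)}(0)=0$ for $i<j$ to see that the system in $\alpha_{(0)},\dots,\alpha_{(p-1)}$ and their conjugates is a perturbation of the identity (the paper's $\mathrm{Jac}(0)=I$); the paper then invokes the Implicit Function Theorem, whereas you use the $\mathbb R$-linearity of the system to invert $I-\mathcal L_t$ directly. Your linear solve gives linearity in $\sigma$ and a polydisc uniform over all of $H^{p,p}(X,\mathbb R)$ outright, which the paper's implicit-function argument near $(\alpha_{(p)}^0,0)$ obtains only through that same linearity; for real-analyticity, rely on Cramer's rule (or analyticity of matrix inversion) rather than uniform convergence of the Neumann series, which by itself does not preserve real-analyticity.
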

\begin{proof}
Let $\sigma=\alpha_{(p)}\cdot \eta_{(p)}\in H^{p,p}(X,\mathbb R)$, and let
\begin{equation}\label{zetaA}
 \zeta=\alpha_{(0)}\cdot \eta_{(0)} +\cdots + \alpha_{(p)}\cdot \eta_{(p)}+\cdots + \alpha_{(2p)}\cdot \eta_{(2p)}
\end{equation}
(not necessarily real) be near $\sigma$ in $H^{2p}(X,\mathbb C)$.

By Proposition \ref{key pp-proposition}, we have that $\zeta\in H^{p,p}(X_t,\mathbb R)$ if and only if Equations \eqref{B=01} and \eqref{B=02} hold.

Note that one can deduce inductively from Equations \eqref{B=01} that
\begin{equation}\label{alg B=01'}
\beta_{(i)}= \alpha_{(i)} -\sum_{l=0}^{i-1}\alpha_{(l)}\sum_{\mu=0}^{i-l-1}(-1)^{\mu}\sum_{\substack{j_{1},\cdots,j_{\mu}\\ l<j_{1}<\cdots<j_{\mu}<i}}\Phi^{(l,j_{1})}(t)\Phi^{(j_{1},j_{2})}(t)\cdots\Phi^{(j_{\mu},i)}(t)
\end{equation}
which is linear in $\alpha_{(0)},\cdots,\alpha_{(i)}$ and holomorphic in $t$, $0\le i\le p$.

Let $$F_{j}=\bar{\alpha_{(p-j)}}-\sum_{i=0}^{p} \beta_{(i)}(t)\Phi^{(i,p+j)}(t),1\le j\le p.$$
Then $F_{j}$ is real analytic in $\alpha_{(0)},\cdots,\alpha_{(p)}, \bar{\alpha_{(p-j)}}$ and holomorphic in $t$, with Jacobians
\begin{eqnarray*}
\frac{\partial F_{j}}{\partial \bar{\alpha_{(p-i)}}}&=&\delta_{ij}I\\
\frac{\partial F_{j}}{\partial {\alpha_{(p-i)}}}&=&-\Phi^{(p-i,p+j)}(t)+\\
&&\sum_{k=p-i+1}^{p}\sum_{\mu=0}^{k-p+i+1}(-1)^{\mu}\sum_{\substack{j_{1},\cdots,j_{\mu}\\ p-i< j_{1}<\cdots<j_{\mu}<k}}\Phi^{(p-i,j_{1})}(t)\cdots\Phi^{(j_{\mu},k)}(t)\Phi^{(k,p+j)}(t).
\end{eqnarray*}
Hence at $t=0$ we have that
$$\frac{\partial F_{j}}{\partial \bar{\alpha_{(p-i)}}}\bigg|_{t=0}=\delta_{ij}I,\, \frac{\partial F_{j}}{\partial {\alpha_{(p-i)}}}\bigg|_{t=0}=O,\, 1\le i,j\le p.$$
Here we use the estimates $\Phi^{(i,p+j)}(t)=O(|t|^{p+j-i})$ in \eqref{lm expansion}.

Therefore the full Jacobian 
$$\mathrm{Jac}(t)=\left(
\begin{array}{cc}
\left(\frac{\partial F_{j}}{\partial \bar{\alpha_{(p-i)}}}\right)_{\substack{1\le j\le p\\1\le i\le p}} &  \left(\frac{\partial F_{j}}{\partial {\alpha_{(p-i)}}}\right)_{\substack{1\le j\le p\\1\le i\le p}} \\
\left(\frac{\partial \bar{F_{j}}}{\partial \bar{\alpha_{(p-i)}}} \right)_{\substack{1\le j\le p\\1\le i\le p}} & \left(\frac{\partial \bar{F_{j}}}{\partial {\alpha_{(p-i)}}}\right)_{\substack{1\le j\le p\\1\le i\le p}} \\
\end{array}\right)$$
depends only on the non-trivial blocks of the period matrices $\left(\Phi^{(p,q)}(t)\right)_{0\le p\le q\le n}$, with initial value 
$\mathrm{Jac}(0)=I$ the identity matrix.

By the Implicit Function Theorem, if the polydisc $\Delta\subset \C^{N}$ is small enough, the equations 
$$F_{j}(\alpha_{(0)}, \ldots, \alpha_{(p)}, t) = 0, \quad 1 \le j \le p,$$
determines functions
$$\alpha_{(i)} = \alpha_{(i)}(\alpha_{(p)}, t),\, t\in \Delta, \quad 0 \le i \le p-1,$$
which are real analytic in the real variables $\bar{\alpha}_{(p)} = \alpha_{(p)}$ and in $t$, and satisfy 
$$\sum_{0\le i\le p-1}\alpha_{(i)}(\alpha_{(p)}, t)\cdot \eta_{(i)}+\alpha_{(p)} \cdot \eta_{(p)} +\sum_{p+1\le j\le 2p}\bar{\alpha_{(j)}(\alpha_{(p)}, t)}\cdot \eta_{(j)}\in H^{p,p}(X_t,\mathbb R)$$
with initial data $\alpha_{(i)}(\alpha_{(p)}^0, 0) = 0$.

Therefore, the map $H$ in \eqref{defn of H} is well-defined with the functions $\alpha_{(i)}(\sigma, t) = \alpha_{(i)}(\alpha_{(p)}, t)$ for $\sigma = \alpha_{(p)} \cdot \eta_{(p)}$.
\end{proof}

\begin{definition}
The map $H$ in \eqref{defn of H} is called Hodge map.
\end{definition}

\begin{remark}\label{real approximation remark}
(1) The smooth variation of the cohomology groups $H^{p,q}(X_t)$ for $t \in \Delta$ can also be deduced from the classical results of Kodaira and Spencer~\cite{KS3}. By contrast, our construction for $(p,p)$-classes provides an explicit expansion involving all higher-order terms of the Beltrami differential, and the associated Hodge map $H$ in \eqref{defn of H} is defined on a uniform polydisc $\Delta$ for all classes in $H^{p,p}(X,\mathbb{R})$. This uniformity plays a key role in our applications to the geometry of compact K\"ahler manifolds.

(2) Note that Implicit Function Theorem also holds for functions
$$F_{j}(\alpha_{(0)}, \ldots, \alpha_{(p)}, t), \quad 1 \le j \le p,$$
when $t$ are variables in the singular analytic space $B$, e.g. the Kuranishi space which will be introduced below. Hence Theorem \ref{real approximation} also holds for the Kuranishi space $B$.
\end{remark} 


\section{K\"ahler stability and upper semicontinuity of K\"ahler cones}\label{Kahler section}
This section is devoted to two complementary descriptions of the variation of K\"ahler cones under deformation. First, using the reference Hodge connection $\nabla_{t_0}^{1,1}$, we prove a uniform upper semicontinuity property: the $\nabla_{t_0}^{1,1}$-flat extension of the K\"ahler cone of a reference fiber is contained in the K\"ahler cones of all sufficiently nearby fibers. This also yields a proof of K\"ahler stability that does not require unobstructedness, namely that sufficiently small deformations of a compact K\"ahler manifold remain K\"ahler.

Second, using the moving Hodge connection $\nabla^{1,1}$ introduced by Demailly--Paun, we describe the parallel transport of K\"ahler cones on the locus where analytic cycles deform and give an explicit formula for the corresponding flat extensions in terms of the period map. In this way, we refine and extend Theorem~0.9 of \cite{DP04} by distinguishing the uniform inclusion furnished by $\nabla_{t_0}^{1,1}$ from the identification of K\"ahler cones furnished by $\nabla^{1,1}$.

In \cite{KS3}, Kodaira and Spencer established the stability of the K\"ahler property under small deformations of complex structures using the theory of families of elliptic operators. 
Later, in Remark~1 on page~180 of \cite{MorrowKodaira}, Kodaira suggested that a more elementary argument based on power series methods should also be possible. 
Recently, Rao, Wan, and Zhao~\cite{RaoWanZhao22} provided a direct proof in response to Kodaira’s question.

In this section, we construct the quasi-period map of weight two in \eqref{qpm construction}. 
This construction leads first to a proof of K\"ahler stability without assuming the smoothness of the deformation base, which serves mainly as a warm-up for the subsequent analysis. 
More importantly, it allows us to establish the upper semicontinuity of K\"ahler cones in families, and the argument naturally extends beyond the local setting to deformation families over comparatively large regions.

First we review the Kuranishi's theorem of existence of deformation of compact complex manifolds over analytic base. One can refer to Chapter 4.3 of \cite{MorrowKodaira} for details.

Let $X$ be a compact complex manifold. Let $\bp,\bp^{*}, \triangle_{\bp}=\bp \bp^*+\bp^*\bp$ be the standard operators from complex geometry acting on $A^{p,q}(X,\Tan X)$. Let $G$ be the Green's operator such that 
$$I_{A^{p,q}(X,\Tan X)}=\mathbb H + G\triangle_{\bp}=\mathbb H + \triangle_{\bp}G.$$
We fix a basis $\theta_{1},\dots,\theta_{N}$ of $\mathbb{H}^{0,1}(X,\Tan X)$. 
Consider the smooth section $\phi(t) \in A^{0,1}(X,\Tan X)$ of the form
$$
\phi(t)=\phi_{1}(t)+\phi_{2}(t)+\cdots+\phi_{\mu}(t)+\cdots, \quad t\in \Delta \subset \C^{N},
$$
where each homogeneous term is given by
$$
\phi_{\mu}(t)=\sum_{i_{1}+\cdots+i_{N}=\mu}
\phi_{i_{1}\cdots i_{N}}\,t_{1}^{i_{1}}\cdots t_{N}^{i_{N}},
$$
and the $\phi_{\mu}(t)$ are determined inductively by
\begin{eqnarray}
\phi_{1}(t)&=&\sum_{i}\theta_{i}t_{i},\nonumber\\
\phi_{\mu}(t)&=&\frac{1}{2}\bp^{*}G\left(\sum_{1\le \nu\le \mu-1}
\left[\phi_{\nu}(t),\phi_{\mu-\nu}(t)\right]\right).
\end{eqnarray}
Equivalently,
\begin{equation}\label{Kuranishi phi}
\phi(t)=\sum_{i}\theta_{i}t_{i}+\frac{1}{2}\bar{\partial}^{*}G[\phi(t),\phi(t)].
\end{equation}
For $\Delta$ of sufficiently small polyradius $\epsilon$, the section $\phi(t)$ depends holomorphically on $t\in \Delta$.

\begin{theorem}[Kuranishi]\label{Kuranishi}
\

(1) Let $X$ be a compact complex manifold, and $\phi(t)\in A^{0,1}(X,\Tan X)$ be defined as \eqref{Kuranishi phi}. 
Define the analytic space $B$ containing $0$ by
 $$B = \left\{ t \in \Delta:\, \mathbb H[\phi(t), \phi(t)] = 0 \right\}.$$
  Then for each $t \in B$, the Beltrami differential $\phi(t)$ defines a complex structure $X_t$ on the differentiable manifold $X$.

(2) Let $\psi$ be any Beltrami differential, i.e. $\psi\in A^{0,1}(X,\Tan X)$ satisfying the integrability condition
  $$\bar{\partial} \psi - \frac{1}{2}[\psi, \psi] = 0.$$
  Then $\psi$ defines a complex structure $X_{\psi}$ on $X$. If $\|\psi\|_{k}$ is sufficiently small in a suitable Sobolev norm (Page 159 of \cite{MorrowKodaira}), then there exists a unique $a \in \mathcal{F}^0$ (the group of smooth diffeomorphisms isotopic to the identity) such that $a^* \psi = \phi(t)$ for some $t \in B$. Hence $X_{\psi}$ is isomorphic to $X_t=X_{\phi(t)}$ as complex manifolds.
\end{theorem}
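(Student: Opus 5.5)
The plan is to follow the classical argument in Chapter~4 of \cite{MorrowKodaira}. Part (1) reduces to a convergence estimate followed by a contraction argument. Part (2) reduces to a gauge-fixing (slice) step followed by a uniqueness argument for the Kuranishi equation \eqref{Kuranishi phi}. Throughout I would work in a H\"older norm $\|\cdot\|_{k,\alpha}$ on $A^{0,*}(X,\Tan X)$, because in that norm two estimates hold:
\[
\|[\phi,\psi]\|_{k-1,\alpha}\le C\|\phi\|_{k,\alpha}\|\psi\|_{k,\alpha},
\qquad
\|\bp^{*}G\chi\|_{k,\alpha}\le C\|\chi\|_{k-1,\alpha}.
\]

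\textbf{Convergence of $\phi(t)$.} First I would show that the inductively defined series $\phi(t)=\sum_\mu\phi_\mu(t)$ converges in $\|\cdot\|_{k,\alpha}$ for $|t|<\epsilon$. The tool is a majorant series $A(t)=\frac{b}{16c}\sum_{\nu\ge1}\frac{c^\nu}{\nu^2}(t_1+\cdots+t_N)^\nu$, which satisfies $A(t)^2\ll \frac{b}{c}A(t)$. By induction on $\mu$ this gives $\phi(t)\ll A(t)$, hence holomorphic dependence on $t$.

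\textbf{Integrability on $B$ (part (1)).} Set $\psi=\bp\phi-\frac12[\phi,\phi]$. Applying $\bp$ to \eqref{Kuranishi phi} and using the Hodge decomposition $I=\mathbb H+\bp\bp^*G+\bp^*\bp G$, one gets
\[
\psi=-\tfrac12\mathbb H[\phi,\phi]-\tfrac12\bp^*G\,\bp[\phi,\phi].
\]
On $B$ the harmonic term vanishes. Next, $\bp[\phi,\phi]=2[\bp\phi,\phi]=2[\psi,\phi]+[[\phi,\phi],\phi]$, and the last bracket vanishes by the graded Jacobi identity. Hence
\[
\psi=-\bp^*G[\psi,\phi].
\]
The estimates above then give $\|\psi\|_{k,\alpha}\le C\|\phi(t)\|_{k,\alpha}\|\psi\|_{k,\alpha}$. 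Shrinking $\epsilon$ so that $C\|\phi(t)\|<1$ forces $\psi=0$. So $\phi(t)$ is integrable and defines a complex structure $X_t$.

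\textbf{Completeness (part (2)).}

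1. \emph{Gauge fixing.} For a small integrable $\psi$, produce a diffeomorphism $a\in\mathcal F^0$ near the identity with $\bp^*(a^*\psi)=0$. To do this, parametrize diffeomorphisms near the identity by small vector fields $\xi$ via the exponential map, and consider $\xi\mapsto \bp^*(\exp(\xi)^*\psi)$. Its linearization at $\psi=0$ is essentially $\xi\mapsto\bp^*\bp\xi$, which is an isomorphism from the $L^2$-complement of $H^0(X,\Tan X)$ onto $\bp^*$-exact forms. The implicit function theorem in Sobolev spaces then yields $a$, unique in the slice.

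2. \emph{Reduction to the Kuranishi equation.} Put $\psi'=a^*\psi$. This form is integrable and satisfies $\bp^*\psi'=0$, so
\[
\psi'=\mathbb H\psi'+\bp^*G\bp\psi'=\mathbb H\psi'+\tfrac12\bp^*G[\psi',\psi'].
\]
Define $t$ by $\mathbb H\psi'=\sum_i\theta_it_i$. Then $\psi'$ and $\phi(t)$ both solve the same equation, and a contraction estimate in the small ball gives uniqueness, so $\psi'=\phi(t)$.

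3. \emph{Landing in $B$.} Since $[\phi(t),\phi(t)]=2\bp\phi(t)$ is $\bp$-exact, its harmonic part vanishes, so $t\in B$. Thus $X_\psi\cong X_{\phi(t)}$.

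I expect the gauge-fixing step 1 to be the main obstacle. Its linearization has kernel $H^0(X,\Tan X)$, so it must be handled on a complement. One also has to track the loss of derivatives in $\exp(\xi)^*\psi$, which is why the Sobolev norm $\|\cdot\|_k$ appears in the hypothesis. My first attempt would be to apply the Banach-space implicit function theorem on $H^{k+1}$ vector fields orthogonal to holomorphic ones, with $k$ large.
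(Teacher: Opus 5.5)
Your proposal is correct and is essentially the classical Kuranishi argument from Chapter~4 of Morrow--Kodaira, which is exactly what the paper cites in place of a proof: the majorant $A(t)^2\ll \frac{b}{c}A(t)$ for convergence, the identity $\psi=-\bar{\partial}^{*}G[\psi,\phi]$ on $B$ for integrability, then slice gauge fixing $\bar{\partial}^{*}(a^{*}\psi)=0$ followed by uniqueness of small solutions of the Kuranishi equation for completeness. The one point worth recording is that you obtain uniqueness of $a$ only within the slice $L^2$-orthogonal to $H^0(X,\Theta_X)$, and this is the correct form of the claim: unqualified uniqueness in $\mathcal F^0$, as literally stated, fails whenever $X_{\phi(t)}$ has nontrivial automorphisms isotopic to the identity (e.g.\ translations of a complex torus with $\psi=0$).
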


The analytic family $\{X_{t}:\, t\in B\}$ is called Kuranishi family with Kuranishi base $B$.

Now we assume that $X$ is compact K\"ahler. Let $\eta_{(0)},\eta_{(1)},\eta_{(2)}$ be the adapted basis of the Hodge decomposition 
$$H^{2}(X,\C)=H^{2,0}(X)\oplus H^{1,1}(X)\oplus H^{0,2}(X),$$
where $\eta_{(i)}=[\tilde\eta_{(i)}]$ with $\tilde\eta_{(i)}$ the basis of $\mathbb H^{2-i,i}(X)$, $0\le i\le 2$.
Let $\Omega_{(i)}(t)$, $0\le i\le 2$ be defined by 
\begin{equation}\label{qpm construction}
\Omega_{(i)}(t)=\left[\mathbb H \left(e^{i_{{\phi(t)}}}\left((I+Ti_{\phi(t)})^{-1}\tilde\eta_{(i)}\right)\right)\right],\, t\in \Delta.
\end{equation}
Explicitly, we have that
\begin{eqnarray*}
\Omega_{(0)}(t)&=& \eta_{(0)} + \left[\mathbb H \left(i_{\phi(t)}\left((I+Ti_{\phi(t)})^{-1}\tilde\eta_{(0)}\right)\right)\right]+\left[\mathbb H \left(i_{\phi(t)}^{2}\left((I+Ti_{\phi(t)})^{-1}\tilde\eta_{(0)}\right)\right)\right]\\
\Omega_{(1)}(t)&=& \eta_{(1)} + \left[\mathbb H \left(i_{\phi(t)}\left((I+Ti_{\phi(t)})^{-1}\tilde\eta_{(1)}\right)\right)\right]\\
\Omega_{(2)}(t)&=& \eta_{(2)}.
\end{eqnarray*}
By the definition of $\phi(t)$ in \eqref{Kuranishi phi}, we have that $\Omega_{(i)}(t)$ is holomorphic in $t\in \Delta$, $0\le i\le 2$. Moreover there exist matrices $\Phi^{0,1}(t),\Phi^{0,2}(t),\Phi^{1,2}(t)$ such that
\begin{eqnarray*}
\Omega_{(0)}(t)&=& \eta_{(0)} + \Phi^{0,1}(t)\cdot\eta_{(1)}+ \Phi^{0,2}(t)\cdot\eta_{(2)}\\
\Omega_{(1)}(t)&=& \eta_{(1)} + \Phi^{1,2}(t)\cdot\eta_{(2)}\\
\Omega_{(2)}(t)&=& \eta_{(2)}.
\end{eqnarray*}
In fact, 
$$ \Phi^{i,j}(t)=\left(\mathbb H \left(i_{\phi(t)}^{j-i}\left((I+Ti_{\phi(t)})^{-1}\tilde\eta_{(i)}\right)\right),\tilde\eta_{(j)}\right),\, 0\le i<j\le 2,$$
which are holomorphic in $t$, where $(\cdot,\cdot)$ is the Hermitian form on $A^{2}(X)$.

The quasi-period map $\Phi:\,\Delta \to N_{-}\cap D$ is given by mapping $t$ to the matrix 
$$\left(
\begin{array}{ccc}
I & \Phi^{0,1}(t) & \Phi^{0,2}(t) \\
O& I& \Phi^{1,2}(t)\\
O&O&I
\end{array}\right),
$$which takes values in $D$ for the radius $\epsilon$ of $\Delta$ sufficiently small, where $D$ denotes the classifying space of weight-two (non-polarized) Hodge structures on $H^{2}(X, \mathbb{C})$.

We call the above ``quasi-period'' map because it represents the usual periods of integrals on $X$ only when $t\in B$ in the sense of Griffiths \cite{Griffiths1,Griffiths2}. Precisely, when $t\in B$, $\phi(t)$ represents an integrable complex structure on $X$, which defines a filtration $F^{\bullet}$ on $A^{2}(X_{t})$ such that
$$e^{i_\phi(t)}: F^pA^{2}(X_{t})\rightarrow F^p A^{2}(X_{t}),$$
is an isomorphism for $0\le p\le 2$. Moreover, from the results in Section \ref{SHB}, we have that 
\begin{equation}\label{section weight 2}
e^{i_{{\phi(t)}}}\left((I+Ti_{\phi(t)})^{-1}\tilde\eta_{(i)}\right)\in F^{2-i} A^{2}(X_{t})
\end{equation}
is $d$-closed, and hence its cohomological classes $[\cdots]=[\mathbb H\cdots]$ lies in
$F^{2-i} H^{2}(X_{t},\C)$, for $0\le i\le 2$.
Therefore we have the following well-defined period map
\begin{equation}\label{qpm 2 defn}
\Phi:\, B\to  N_{-}\cap D,\, t\mapsto \left(\Phi^{(p,q)}(t)\right)_{0\le p\le q\le 2}.
\end{equation}
\begin{proposition}\label{qpm 2}
For the period map \eqref{qpm 2 defn}, we have the Griffiths transversality 
\begin{equation}\label{trans 2}
\left(\Phi^{0,2}\right)_{\mu}^{\bullet}(t)= \left(\Phi^{0,1}\right)_{\mu}^{\bullet}(t)\Phi^{1,2}(t),
\end{equation}
where $(\cdot)_{\mu}^{\bullet}=\frac{\partial}{\partial t_{\mu}}$ is the tangent vector in the Zariski tangent space of $B$ at $t$,
$$\mathrm T_{t}B =\left\{\frac{\partial}{\partial t_{\mu}}\in T_{t}\Delta:\, \frac{\partial}{\partial t_{\mu}}f=0,\,\forall\, f\in \mathcal I(B)\right\}.$$
Hence we have the estimates
 $$\P^{(0,1)}(t),\P^{(1,2)}(t)=O(|t|),\P^{(0,2)}(0)=O(|t|^{2}),\,t\in B$$
of the blocks $\Phi^{i,j}(t)$ as in Lemma \ref{order0}.
\end{proposition}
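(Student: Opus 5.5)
We will prove the transversality relation \eqref{trans 2} by adapting the argument behind Lemma~\ref{lm derivative lemma} to the possibly singular base $B$, working along Zariski tangent directions. For each $t\in B$, the classes $\Omega_{(0)}(t)$, $\Omega_{(1)}(t)$, $\Omega_{(2)}(t)$ form an adapted basis of the Hodge filtration of $X_t$: by \eqref{section weight 2} we have $\Omega_{(0)}(t)\in F^{2}H^2(X_t,\C)$ and $\Omega_{(1)}(t)\in F^{1}H^2(X_t,\C)$. The subspace $F^2$ has the right dimension, since $h^{2,0}$ is constant for small deformations of a K\"ahler manifold by upper semicontinuity together with the constancy of $b_2$; alternatively, one reads this off from the block-unipotent shape of $\Phi(t)$.

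Griffiths transversality should then say that
$$\frac{\partial}{\partial t_\mu}\Omega_{(0)}(t)\in F^{1}H^2(X_t,\C)$$
for every Zariski tangent vector $\partial/\partial t_\mu\in\mathrm T_tB$. To justify this without Hodge theory on $X_t$, we will differentiate the explicit $d$-closed representatives $\Theta(t)=e^{i_{\phi(t)}}\big((I+Ti_{\phi(t)})^{-1}\tilde\eta_{(0)}\big)\in F^2A^2(X_t)$. Write $\Theta(t)=e^{i_{\phi(t)}}\sigma(t)$ with $\sigma(t)$ of type $(2,0)$. Then
$$\partial_\mu\Theta=e^{i_\phi}\big(i_{\partial_\mu\phi}\sigma+\partial_\mu\sigma\big),$$
because the contractions commute, and this form lies in $e^{i_\phi}F^1A^2(X)=F^1A^2(X_t)$. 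The derivative of a $d$-closed family is $d$-closed, and its class is $\partial_\mu\Omega_{(0)}$. However, the class of a $d$-closed form in $F^1A^2(X_t)$ lying in $F^1H^2(X_t)$ uses the $\partial\bar\partial$-lemma on $X_t$.

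This is the main obstacle. To avoid using K\"ahlerness of $X_t$, we will instead argue purely with matrices, as in \cite{LS26-1}. The idea is to pass through holomorphic extensions. The functions $\Phi^{(i,j)}$ are holomorphic on $\Delta$. The identity
$$\big(\Omega_{(0)}\wedge\Omega_{(0)}\big)\text{-type relations}\quad\text{or}\quad \text{the }F^2\text{-condition}$$
cuts out $B$, and tangent vectors in $\mathrm T_tB$ annihilate $\mathcal I(B)$. So it suffices to show that the holomorphic function
$$G_\mu(t)=\partial_\mu\Phi^{0,2}-\partial_\mu\Phi^{0,1}\,\Phi^{1,2}$$
vanishes on the smooth locus of a suitable germ. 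At smooth points of $B$, the family is locally a genuine smooth family of compact complex manifolds near $0$, hence K\"ahler for small $t$ (Kodaira--Spencer, or the warm-up stability to be proved). There classical Griffiths transversality gives
$$\partial_\mu\Omega_{(0)}=a\,\Omega_{(0)}+b\,\Omega_{(1)}.$$
Comparing coefficients of $\eta_{(1)}$ gives $b=\partial_\mu\Phi^{0,1}$, and comparing coefficients of $\eta_{(2)}$ gives $\partial_\mu\Phi^{0,2}=b\,\Phi^{1,2}$. This yields \eqref{trans 2} on each irreducible component's smooth locus in tangent directions. It then extends by continuity and analyticity to Zariski tangent vectors, with $\partial/\partial t_\mu$ understood as directional derivatives of the ambient holomorphic functions. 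At singular points, we would instead verify the $\partial\bar\partial$-type step directly from $\Theta(t)$: we use that the $\eta_{(2)}$-component of $\partial_\mu\Theta$ is $(\partial_\mu\Theta,\tilde\eta_{(2)})$, and that the pairing with $\tilde\eta_{(2)}$ only sees $i^2_\phi$ terms.

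Finally, the estimates follow verbatim from the proof of Lemma~\ref{order0}. The blocks $\Phi^{0,1}$ and $\Phi^{1,2}$ vanish at $0$ and are holomorphic, so they are $O(|t|)$. Then \eqref{trans 2} gives $\partial_\mu\Phi^{0,2}=O(|t|)$, and together with $\Phi^{0,2}(0)=0$ this gives $\Phi^{0,2}(t)=O(|t|^2)$ on $B$.
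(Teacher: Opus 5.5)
Your argument has a genuine gap at the singular points of $B$. Because the statement is phrased for Zariski tangent vectors, that is exactly where it says more than the smooth case. From the regular locus, continuity and linearity in $v$ give \eqref{trans 2} only for $v$ in the span of the limits of tangent spaces $\mathrm{T}_{t'}B_{\mathrm{reg}}$ as $t'\to t$. This span can be strictly smaller than $\mathrm{T}_tB$. For the cusp $\{t_1^2=t_2^3\}$, the defining function has vanishing differential at $0$, so $\mathrm{T}_0B=\C^2$, while every limiting tangent line is the $t_2$-axis. Nothing in your argument controls the direction $\partial/\partial t_1$ there. Your fallback sentence about verifying ``the $\partial\bar\partial$-type step'' through the pairing with $\tilde\eta_{(2)}$ is not an argument. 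The intermediate reduction is also misformulated. First, \eqref{trans 2} is not expected for a fixed coordinate direction $\partial/\partial t_\mu$ transverse to $B$. Second, $B$ is cut out by $\mathbb{H}[\phi(t),\phi(t)]=0$, not by relations among the $\Omega_{(i)}$.

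The missing idea is that the Griffiths-type argument you set aside works directly at every $t\in B$ and every $v\in\mathrm{T}_tB$. This is the paper's route: write the $F^{2-i}$-classes via the explicit $d$-closed representatives \eqref{section weight 2} and run Griffiths' proof of transversality. What makes it valid for Zariski tangent vectors is the following. For every smooth test form $\beta$, the function $t'\mapsto\int_X d\Theta(t')\wedge\beta$ is holomorphic on $\Delta$ and vanishes on $B$, hence lies in $\mathcal{I}(B)$. So its $v$-derivative vanishes at $t$, which gives $d(\partial_v\Theta(t))=0$. Combine this with your computation $\partial_v\Theta=e^{i_{\phi}}(i_{\partial_v\phi}\sigma+\partial_v\sigma)\in e^{i_{\phi(t)}}F^1A^2(X)=F^1A^2(X_t)$ and with $\partial_v\Omega_{(0)}(t)=[\mathbb{H}(\partial_v\Theta(t))]$. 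Together these show that $\partial_v\Omega_{(0)}(t)$ is the class of a $d$-closed form in $F^1A^2(X_t)$.

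Your perceived obstacle then disappears: no $\partial\bar\partial$-lemma on $X_t$, and no K\"ahlerness of $X_t$, is needed. Take $F^1H^2(X_t,\C)$ to be the image of $H^2(F^1A^\bullet(X_t),d)$, as in \eqref{Hodge filt 2}. Such a class lies in it by definition. This image equals the span of $\Omega_{(0)}(t),\Omega_{(1)}(t)$: it contains them, and its dimension $\dim E_\infty^{2,0}+\dim E_\infty^{1,1}$ is at most $h^{2,0}(X_t)+h^{1,1}(X_t)\le h^{2,0}(X)+h^{1,1}(X)$ by upper semicontinuity. Your coefficient comparison then gives \eqref{trans 2}.

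Finally, for the estimates you need not integrate \eqref{trans 2} along paths in the singular space $B$. By its defining formula, $\Phi^{0,2}(t)$ involves $i_{\phi(t)}^{2}$ applied to a uniformly bounded form, and $\phi(t)=O(|t|)$. Hence $\Phi^{0,2}(t)=O(|t|^2)$ already on $\Delta$.
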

\begin{proof}
Since the classes in $F_t^{2-i} H^2(X_t, \mathbb{C})$ have already been constructed in~\eqref{section weight 2}, they can be written locally as
$$
\sum_{|I| \le 2-i} f_{IJ}(t) \left( dz_I + e^{i_{\phi(t)}} dz_I \right) \wedge \left( d\bar{z}_J + \overline{e^{i_{\phi(t)}} dz_J} \right).
$$
Applying the proof of Proposition~(1.11) in~\cite{Griffiths2}—Griffiths’ original argument for the transversality for the period map—we obtain the Griffiths transversality for our quasi-period map:
$$
\left( \Omega_{(0)} \right)_{\mu}^{\bullet}(t) = \left( \Phi^{0,1} \right)_{\mu}^{\bullet}(t) \cdot \Omega_{(1)}(t),
$$
which implies \eqref{trans 2}.
\end{proof}

As an application of the period map \eqref{qpm 2 defn} and the corresponding Hodge map introduced in Section~\ref{pp Section}, we establish the following theorem, which serves as a warm-up for the proof of Theorem~\ref{Kahler cone invariance} below.

\begin{theorem}\label{Kahler stability}
Let $(X,\omega_{0})$ be a compact K\"ahler manifold whose Kuranishi base $B$ has positive dimension. Then all the complex manifolds $X_{\psi}$ sufficiently near $X$ as given in (2) of Theorem \ref{Kuranishi} are also K\"ahler.
\end{theorem}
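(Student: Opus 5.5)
By Kuranishi's theorem (Theorem \ref{Kuranishi}(2)), any $X_\psi$ sufficiently near $X$ is biholomorphic to some $X_t=X_{\phi(t)}$ with $t\in B$ small. It therefore suffices to show that $X_t$ is K\"ahler for every $t$ in a small neighborhood of $0$ in $B$. The plan is to take the K\"ahler class $\sigma_0=[\omega_0]\in H^{1,1}(X,\mathbb R)$ and transport it with the Hodge map of Theorem \ref{real approximation}; by Remark \ref{real approximation remark}(2) this is valid on the singular base $B$. This gives the class
$$H(\sigma_0,t)=\alpha_{(0)}(\sigma_0,t)\cdot\eta_{(0)}+\sigma_0+\overline{\alpha_{(0)}(\sigma_0,t)}\cdot\eta_{(2)}\in H^{1,1}(X_t,\mathbb R),$$
with $\alpha_{(0)}(\sigma_0,0)=0$ and $\alpha_{(0)}$ real analytic in $t$. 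I would then exhibit an explicit representative of this class and show it is a positive $(1,1)$-form on $X_t$.

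\textbf{Step 1: a closed real representative.} The natural candidate is the form
$$\tilde\omega_t=\alpha_{(0)}(\sigma_0,t)\cdot\tilde\eta_{(0)}+\omega_0+\overline{\alpha_{(0)}(\sigma_0,t)}\cdot\tilde\eta_{(2)},$$
where the $\tilde\eta_{(i)}$ are the $\omega_0$-harmonic representatives; we may take $\omega_0$ harmonic. This form is $d$-closed and real, it represents $H(\sigma_0,t)$, and it tends to $\omega_0$ in $C^0$ as $t\to0$. It is not, however, of type $(1,1)$ on $X_t$. So I would take its $(1,1)_t$-component
$$\omega_t:=\pi^{1,1}_t\tilde\omega_t,$$
decomposing with respect to the coframes \eqref{basis of cotang}, \eqref{basis of cotang bar}.

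\textbf{Step 2: $\omega_t$ is closed.} The $(2,0)_t$-part of $\tilde\omega_t$ is exact for the following reason. The class $H(\sigma_0,t)$ has zero $(2,0)_t$- and $(0,2)_t$-components, by \eqref{B=02}. To turn this into a form-level statement I would rewrite $\tilde\omega_t$ using the $d$-closed forms $e^{i_{\phi(t)}}(I+Ti_{\phi(t)})^{-1}\tilde\eta_{(i)}\in F^{2-i}A^2(X_t)$ from \eqref{section weight 2}, together with the coefficients $\beta_{(i)}(t)$ from \eqref{Bk}. This expresses the real class as $\beta_{(0)}\Omega_{(0)}+\beta_{(1)}\Omega_{(1)}$, and its conjugate, at the level of explicit closed forms in $F^1A^2(X_t)$, modulo $d$-exact forms. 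Using these as a representative instead avoids projecting at all. Concretely, I would take
$$\omega_t=\mathrm{Re}\Big(2\beta_{(0)}(t)\,e^{i_{\phi(t)}}(I+Ti_{\phi(t)})^{-1}\tilde\eta_{(0)}+\beta_{(1)}(t)\,e^{i_{\phi(t)}}(I+Ti_{\phi(t)})^{-1}\tilde\eta_{(1)}\Big)$$
and adjust it by $\partial_t$/$\bar\partial_t$-exact corrections so that it becomes a genuine $d$-closed real $(1,1)_t$-form in the class $H(\sigma_0,t)$. If this adjustment proves messy, the fallback is the $\partial\bar\partial$-type argument: on $X_t$ the $(2,0)_t$-part $\gamma$ of a closed form whose class has no $(2,0)$-component satisfies $\bar\partial_t\gamma=0$ with $[\gamma]=0$, and it can be killed by subtracting $d$ of a small form. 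I expect this step to be the main obstacle, precisely because no Hodge theory on $X_t$ is supposed to be used.

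\textbf{Step 3: positivity.} Every correction term carries a factor $\phi(t)$, $\beta_{(0)}(t)=\alpha_{(0)}$ (which tends to $0$), or $\Phi^{(i,j)}(t)=O(|t|)$, and the inverse $(I+Ti_{\phi(t)})^{-1}$ is bounded once $\|\phi(t)\|<1$. Hence $\omega_t\to\omega_0$ uniformly as $t\to0$ in $B$. Since $\omega_0$ is positive definite and positivity of a real $(1,1)_t$-form is an open condition when read in the coframes $dz_i+\phi(t)dz_i$, the form $\omega_t$ is positive definite for $t$ small. It is therefore a K\"ahler form on $X_t$, and this proves Theorem \ref{Kahler stability}.
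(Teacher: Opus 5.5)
Your reduction to $t\in B$, the transport of $[\omega_0]$ by the Hodge map, and the positivity argument of your Step 3 are exactly the paper's proof. The difference is that the paper has no Step 2. It asserts directly that the $\omega_0$-harmonic form $\alpha_{(0)}(\sigma_0,t)\cdot\tilde\eta_{(0)}+\omega_0+\overline{\alpha_{(0)}(\sigma_0,t)}\cdot\tilde\eta_{(2)}$ can be written as $\sum_{i,j}g_{i\bar j}(z,t)(dz_i+\phi(t)dz_i)\wedge(d\bar z_j+\overline{\phi(t)}d\bar z_j)$, i.e.\ that it is already of type $(1,1)$ on $X_t$, and then concludes by continuity. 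Your Step 1 objection to this is well founded. On a surface with $h^{2,0}=0$ that form is $\omega_0$ itself, and $\omega_0\wedge e^{i_{\phi(t)}}(dz_1\wedge dz_2)=-(i_{\phi(t)}\omega_0)\wedge dz_1\wedge dz_2$. Hence it is of type $(1,1)_t$ exactly when $i_{\phi(t)}\omega_0$ vanishes pointwise, whereas only its harmonic part is forced to vanish.

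So the proof rests on your Step 2, and as written it has a genuine gap. Your explicit candidate has a spurious factor $2$, which makes it represent $H(\sigma_0,t)+\mathrm{Re}(\beta_{(0)}(t)\Omega_{(0)}(t))$ instead of $H(\sigma_0,t)$. Even without that factor it is still not of type $(1,1)_t$, and the adjustment that should fix this is never specified. The fallback rests on a false claim. If $\gamma+\omega^{1,1}+\overline{\gamma}$ is closed on $X_t$, closedness gives $\partial_t\gamma=0$ and $\bar\partial_t\gamma=-\partial_t\omega^{1,1}$. So $\gamma$ is in general neither $\bar\partial_t$- nor $d$-closed, and $[\gamma]$ is meaningless. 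That $\gamma$ can be removed by $d$ of a small form is exactly what needs proof; phrased this way it looks like a $\partial\bar\partial$-lemma on $X_t$ with estimates, which you do not have. For the same reason the smallness claimed in Step 3 is not established: the $C^0$ size of an exact correction is controlled by a primitive, not merely by factors of $\phi(t)$ or $\beta_{(0)}(t)$.

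The missing idea is to split a primitive by $t$-type. Let
$a_t=\beta_{(0)}(t)\,e^{i_{\phi(t)}}(I+Ti_{\phi(t)})^{-1}\tilde\eta_{(0)}+\beta_{(1)}(t)\,e^{i_{\phi(t)}}(I+Ti_{\phi(t)})^{-1}\tilde\eta_{(1)}$.
This is a closed form in $F^1A^2(X_t)$ with $[a_t]=H(\sigma_0,t)$, since $\beta_{(2)}(t)=0$ for a real $(1,1)$-class. The class is real, so $a_t-\overline{a_t}=d\xi_t$ with $\xi_t=d^*G(a_t-\overline{a_t})$, where $G$ is the Green operator of the $\omega_0$-Laplacian; moreover $\overline{\xi_t}=-\xi_t$. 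Split $\xi_t=\xi_t^{1,0}+\xi_t^{0,1}$ by type on $X_t$ and set
\[
\omega_t:=a_t-d\xi_t^{1,0}=\overline{a_t}+d\xi_t^{0,1}.
\]
This form lies in $F^1A^2(X_t)\cap\overline{F^1A^2(X_t)}=A^{1,1}(X_t)$. It is real because $\overline{\xi_t^{1,0}}=-\xi_t^{0,1}$, it is closed, and it represents $H(\sigma_0,t)$; no Hodge theory on $X_t$ enters. Since $a_0=\omega_0$ is real and $\phi(t)\to0$ smoothly ($T$ being bounded on H\"older spaces), $a_t-\overline{a_t}\to0$ in $C^{0,\alpha}$. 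Schauder estimates then give $\xi_t\to0$ in $C^{1,\alpha}$, hence $\omega_t\to\omega_0$ uniformly, and your Step 3 applies verbatim. With this step inserted your argument is complete. It differs from the paper's in exactly one place: it replaces the assertion that the $\omega_0$-harmonic representative is already of type $(1,1)_t$ by an explicitly constructed closed real $(1,1)_t$-representative.
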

\begin{proof}
From Kuranishi's theorem, Theorem \ref{Kuranishi}, we only need to show that $X_{t}$ is K\"ahler for $t\in B$ and the radius $\epsilon$ of $B$ sufficiently small.

Since the image of $\Phi$ lies in $D$, we have the Hodge structure 
$$H^{2}(X,\C)=H^{2,0}(X_{t})\oplus H^{1,1}(X_{t})\oplus H^{0,2}(X_{t}),$$
for $t\in B$, where the Hodge structure is induced by the Hodge filtration $F^{\bullet}$ on $A^{2}(X_{t})$ introduced as above.

Let $\sigma_{0}=[\omega_{0}]=\alpha_{(1)}^{0}\cdot \eta_{(1)}\in H^{1,1}(X,\mathbb R)$ be the K\"ahler class. 
Applying the proof of Theorem \ref{real approximation} to the period map 
$$\Phi:\,B \to N_{-}\cap D,$$ there exist a real analytic Hodge map 
$$H:\, H^{1,1}(X,\mathbb R) \times B\to \bigcup_{t\in B}H^{1,1}(X_t,\mathbb R).$$
See also Remark \ref{real approximation remark}.
Hence $$H(\sigma_{0},t)=\alpha_{(0)}(\sigma_{0},t)\cdot \eta_{(0)}+\sigma_{0}+\bar{\alpha_{(0)}(\sigma_{0},t)}\cdot \eta_{(2)}\in H^{1,1}(X_t,\mathbb R)$$ is a real analytic extension of $\sigma_{0}$ over $t\in B$, where the real analytic function $\alpha_{(0)}(\sigma_{0},t)$ is determined by
\begin{equation}\label{B=01 11-0}
\bar{\alpha_{(0)}}- \alpha_{(1)}^{0}\Phi^{(1,2)}(t)-\alpha_{(0)}\left(\Phi^{(0,2)}(t)-\Phi^{(0,1)}(t)\Phi^{(1,2)}(t)\right)=0.
\end{equation}

Explicitly, we have the harmonic representative of $H(\sigma_{0},t)$,
\begin{eqnarray}\label{rep of 11class 1}
\tilde H(\sigma_{0},t)&=&\alpha_{(0)}(\sigma_{0},t)\cdot \tilde\eta_{(0)}+\omega_{0}+\bar{\alpha_{(0)}(\sigma_{0},t)}\cdot \tilde\eta_{(2)}.
\end{eqnarray}
Note that, in our setting, the harmonic representative of $H(\sigma_{0},t)$ is taken with respect to the base fiber $X$, in contrast to the approach in the proof of Theorem~4.6 of~\cite{MorrowKodaira}.

From \eqref{basis of cotang} and \eqref{basis of cotang bar}, we see that the harmonic representative can be locally written by
$$\tilde H(\sigma_{0},t)=\sum_{ij}g_{i\bar j}(z,t)\left(dz_{i}+\phi(t)dz_{i}\right)\wedge\left(d\bar{z_{j}}+\bar{\phi(t)}d\bar{z_{j}}\right),$$
where $g_{i\bar j}(z,t)$ is real analytic in $t$, as both the row vector $\alpha_{(0)}(\sigma_{0},t)$ and the bases of ${\mathrm{T}^*}^{1,0}X_t$, ${\mathrm{T}^*}^{0,1}X_t$ depend real analytically on $t$.

Since 
$$\tilde H(\sigma_{0},0)=\omega_{0}=\sum_{ij}g_{i\bar j}(z,0)\,dz_{i}\wedge d\bar{z}_{j}$$ 
is positive definite, there exists $\epsilon_{0}>0$ such that the Hermitian matrix $\left(g_{i\bar j}(z,t)\right)$ remains positive definite for $|t|<\epsilon_{0}$. Consequently, the harmonic representative $\tilde H(\sigma_{0},t)$ is positive definite for all $t\in B$ provided that the radius of $B$ is less than $\epsilon_{0}$. In particular, $(X_{t}, H(\sigma_{0},t))$ is K\"ahler for every $t\in B$.
\end{proof}

Now we consider an analytic family $\mathcal X \to S$ of compact K\"ahler manifolds over an irreducible analytic base $S$.
Recall that we have defined the $\nabla_{t_{0}}^{1,1}$-flat extensions $\mathcal{K}^{\nabla_{t_0}^{1,1}}_{t_{0},t}$ of the K\"ahler cone $\mathcal{K}_{t_{0}}$ for $t_{0}\in S$ in Definition \ref{extensions of the Kahler cone defn} by
$$
\mathcal{K}^{\nabla_{t_0}^{1,1}}_{t_{0},t}
=
\left\{ H(\sigma,t)=\alpha_{(0)}(\sigma,t)\cdot \eta_{(0)}+\sigma+\bar{\alpha_{(0)}(\sigma,t)}\cdot \eta_{(2)}:\, \sigma\in \mathcal{K}_{t_{0}}\right\}
.
$$
Then we can prove that the positivity of the harmonic representatives $\tilde H(\sigma,t)$ of the elements $H(\sigma,t)$, with respect to the K\"ahler form representing $\sigma\in \mathcal{K}_{t_{0}}$, depends only on the Beltrami differential $\phi(t)$.
This gives the extensions of the K\"ahler cone $\mathcal{K}_{t_{0}}$, which strengthens Theorem~0.9 in \cite{DP04} by Demailly--Paun.

\begin{theorem}[Upper semicontinuity of K\"ahler cones]\label{Kahler cone invariance}
Let $\mathcal X \to S$ be a deformation of compact K\"ahler manifolds over an irreducible analytic base $S$. Then the section $t\mapsto \mathcal{K}_{t} \subset H^{1,1}(X_{t},\mathbb{C})$ of K\"ahler cones
is upper semicontinuity under parallel transport with respect to the $(1,1)$-projection $\nabla_{t_0}^{1,1}$ of the Gauss–Manin connection (see Section 5 of \cite{DP04}). Precisely, for any $t_{0}\in S$, there exists a sufficiently small neighborhood $U$ of $t_{0}$, such that the $\nabla_{t_0}^{1,1}$-flat extensions $\mathcal{K}^{\nabla_{t_0}^{1,1}}_{t_{0},t}$ of the K\"ahler cone $\mathcal{K}_{t_{0}}$ exist
and satisfy that
\begin{equation}\label{local of K cone}
\mathcal{K}^{\nabla_{t_0}^{1,1}}_{t_{0},t}\subset \mathcal{K}_{t}.
\end{equation}
for all $t\in U$.
\end{theorem}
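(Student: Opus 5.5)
The plan is to repeat the proof of Theorem~\ref{Kahler stability} with two changes. First, the base fiber becomes $X_{t_0}$ equipped with an arbitrary K\"ahler form representing $\sigma\in\mathcal K_{t_0}$. Second, positivity is controlled uniformly in $\sigma$ through the Beltrami differential. Near $t_0$ we realize the family as $X_t=(X_{t_0})_{\phi(t)}$, with $\phi(t)\in A^{0,1}(X_{t_0},\Tan X_{t_0})$ depending holomorphically on $t$ and $\phi(t_0)=0$. To do this we pull the family back to the Kuranishi family of $X_{t_0}$ via Theorem~\ref{Kuranishi}; the base $S$ may be singular, which is allowed by Remark~\ref{real approximation remark}(2).

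\textbf{Step 1 (existence of the extension).} With $t_0$ as base point, Theorem~\ref{real approximation} and Remark~\ref{real approximation remark} give a Hodge map $H(\sigma,t)=\alpha_{(0)}(\sigma,t)\cdot\eta_{(0)}+\sigma+\overline{\alpha_{(0)}(\sigma,t)}\cdot\eta_{(2)}\in H^{1,1}(X_t,\mathbb R)$. It is defined on a neighborhood $U$ of $t_0$ that is uniform in $\sigma$. Equation \eqref{B=01 11-0} is real-linear in $(\alpha_{(0)},\alpha_{(1)})$, so $\alpha_{(0)}(\sigma,t)$ is real-linear in $\sigma$. Consequently $\mathcal K^{\nabla^{1,1}_{t_0}}_{t_0,t}$ is a well-defined convex cone for all $t\in U$. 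Each section $t\mapsto H(\sigma,t)$ has $P_{t_0}$-component of its $t$-derivative equal to zero, so it is $\nabla^{1,1}_{t_0}$-flat.

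\textbf{Step 2 (the representative).} Fix $\sigma\in\mathcal K_{t_0}$ with K\"ahler representative $\omega_\sigma$. Let $\tilde\eta^{\omega_\sigma}_{(i)}$ be the $\omega_\sigma$-harmonic representatives. The form $\tilde H=\alpha_{(0)}\cdot\tilde\eta^{\omega_\sigma}_{(0)}+\omega_\sigma+\overline{\alpha_{(0)}}\cdot\tilde\eta^{\omega_\sigma}_{(2)}$ is $d$-closed and represents $H(\sigma,t)$. It is therefore a real closed form on $X_t$ whose class is of type $(1,1)$ on $X_t$. We want its $(1,1)$-part with respect to $X_t$ to be positive, and its $(2,0)$ and $(0,2)$ parts with respect to $X_t$ to vanish (or be corrected). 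To arrange this, we work in the frames \eqref{basis of cotang}, \eqref{basis of cotang bar}. There we expand $\tilde H$ in the basis $(dz_i+\phi dz_i)\wedge(d\bar z_j+\bar\phi d\bar z_j)$ together with the pure-type pieces.

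Rather than require an exactly $(1,1)$ representative at this stage, we use a weaker positivity criterion. It suffices to show that the $X_t$-$(1,1)$-component $g_{i\bar j}(z,t)$ is positive definite. Then we replace $\tilde H$ by a $d$-closed real $(1,1)$-form on $X_t$ in the same class, obtained by $\partial_t\bar\partial_t$-correction on the K\"ahler-type fiber. Alternatively, following Theorem~\ref{Kahler stability}, we argue directly that $\tilde H$ is already $(1,1)$ on $X_t$ via \eqref{section weight 2}.

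The coefficients $g_{i\bar j}$ are obtained from $\omega_\sigma$ by the linear algebra $(I-\phi\bar\phi)^{-1}$-type change of frame. This is invertible as soon as $\|\phi(t)\|<1$, and then the $\omega_\sigma$-term stays positive definite. This is exactly Proposition~\ref{intr pos def of Kahler}.

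\textbf{Step 3 (uniformity).} The perturbation terms $\alpha_{(0)}\cdot\tilde\eta^{\omega_\sigma}_{(0)}$ and its conjugate must be small relative to $\omega_\sigma$, uniformly over the cone. By homogeneity we may normalize $\sigma$; linearity of $\alpha_{(0)}$ in $\sigma$ together with $\alpha_{(0)}=O(|t|)$ (from $\Phi^{(1,2)}=O(|t|)$) gives $|\alpha_{(0)}(\sigma,t)|\le C|t|\,|\sigma|$. The main obstacle is that $\tilde\eta^{\omega_\sigma}_{(0)}$ depends on $\omega_\sigma$, and pointwise $\omega_\sigma$-norms are not uniform as $\sigma$ approaches the boundary of the cone. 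I would handle this as follows. Holomorphic $(2,0)$-forms are harmonic for every K\"ahler metric, so $\tilde\eta^{\omega_\sigma}_{(0)}=\tilde\eta_{(0)}$ is independent of $\sigma$. Each pointwise inequality $|\alpha_{(0)}\tilde\eta_{(0)}|_{\omega_\sigma}<\tfrac12$ is open in $t$. Hence for each fixed $\sigma$ we obtain positivity on a neighborhood $U_\sigma$, and this already yields the inclusion \eqref{local of K cone} pointwise in $\sigma$. If full uniformity is needed, the fallback is to use the $\|\phi(t)\|$-control of the $\omega_\sigma$-term.
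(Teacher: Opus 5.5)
Your Step~1 is fine: $\alpha_{(0)}(\sigma,t)$ is real-linear in $\sigma$, the neighbourhood on which $H(\cdot,t)$ is defined does not depend on $\sigma$, and $P_{t_0}H(\sigma,t)=\sigma$ gives $\nabla_{t_0}^{1,1}$-flatness.

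The gap is uniformity in $\sigma$, which is the whole content of \eqref{local of K cone} on a single neighbourhood $U$. Step~3 only yields a neighbourhood $U_\sigma$ for each fixed $\sigma$. That is classical K\"ahler stability of one class, and $\bigcap_\sigma U_\sigma$ need not contain a neighbourhood of $t_0$. The fallback in Step~2 is false. Let $g_\sigma$ be the Hermitian form of $\omega_\sigma$, and take $U=u-\overline{\phi(t)}\,u\in T^{1,0}X_t$ with $u\in T^{1,0}X_{t_0}$. The $X_t$-$(1,1)$-part of $\omega_\sigma$ evaluates to $g_\sigma(u,u)-g_\sigma(\phi\bar u,\phi\bar u)$. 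The condition $\|\phi(t)\|^{E}<1$ gives invertibility of the frame change $I-\phi\bar\phi$, but says nothing about the sign of this form. You need the operator norm of $\phi(t)$ measured in $g_\sigma$ to be $<1$, and that fails as $g_\sigma$ degenerates towards $\partial\mathcal K_{t_0}$. For instance, at a point take $g_\sigma=\mathrm{diag}(1,\varepsilon)$ and $\phi(\bar e_2)=c\,e_1$ with $\varepsilon<c^2<1$.

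The paper's $(I-S)^{-1}$ computation runs in the opposite direction. It asserts that $\tilde H^{\omega}(\sigma,t)$ is pointwise of type $(1,1)$ on $X_t$ with coefficient matrix $g(t)$. It then reads off $\omega_\sigma=g(t)-S(g(t))$ from the $X_{t_0}$-$(1,1)$-part and concludes $g(t)=(I-S)^{-1}g_\sigma\ge g_\sigma$. That assertion is your ``alternative'', but \eqref{section weight 2} does not supply it: it concerns the forms $e^{i_{\phi(t)}}(I+Ti_{\phi(t)})^{-1}\tilde\eta_{(i)}$, not the $X_{t_0}$-harmonic representative. Moreover, the assertion would force the holomorphic form $\alpha_{(0)}\cdot\tilde\eta_{(0)}$ to equal $\sum_{i,j}g_{i\bar j}(z,t)\,dz_i\wedge\overline{\phi(t)}\,d\bar z_j$, which fails in general.

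In fact the gap cannot be closed, because the uniform inclusion appears to be false in general; here is a sketch. Take the Kuranishi family of a K3 surface and $t_0$ with $\mathrm{Pic}(X_{t_0})=0$. Then $\mathcal K_{t_0}$ is the K\"ahler component of the positive cone in $V=H^{1,1}(X_{t_0},\mathbb{R})$. Let $W$ be the real part of $H^{2,0}\oplus H^{0,2}$ at $t_0$. Then $V\perp W$ for the cup product $Q$, $Q|_W>0$, and $H(x,t)=x+\Gamma_t x$ with $\Gamma_t:V\to W$ small.

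Suppose $X_t$ carries $\delta\in H^{1,1}(X_t)\cap H^2(X,\mathbb{Z})$ with $\delta^2=-2$. Then $\delta=\delta_V+\Gamma_t\delta_V$, and $Q(H(x,t),\delta)=Q(x,u)$ with $u=\delta_V+\Gamma_t^{\dagger}\Gamma_t\delta_V$. One computes $Q(u,u)=-2+Q(\delta_W,\delta_W)+Q(\Gamma_t^{\dagger}\delta_W,\Gamma_t^{\dagger}\delta_W)$. If $Q(\delta_W,\delta_W)<1$ and $t$ is close to $t_0$, then $Q(u,u)<0$, so $Q(\cdot,u)$ changes sign on $\mathcal K_{t_0}$. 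Since $\delta$ or $-\delta$ is effective on $X_t$, this gives $H(\mathcal K_{t_0},t)\not\subset\mathcal K_t$.

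Now let $E_n$ be the set of points $s$ such that some $t$ with $|t-s|<1/n$ carries a $(-2)$-class $\delta$ with $Q(P_{W_s}\delta,P_{W_s}\delta)<1$. Each $E_n$ is open and contains the dense $(-2)$-Noether--Lefschetz locus. By Baire, some $t_0$ with $\mathrm{Pic}(X_{t_0})=0$ lies in $\bigcap_n E_n$, and for this $t_0$ no $U$ works.

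What does hold is the fixed-$\sigma$ statement your Step~3 gives. By openness of $\bigcup_t\mathcal K_t$ plus compactness, one also gets uniformity on subcones $\Sigma$ with $\overline{\Sigma}\setminus\{0\}\subset\mathcal K_{t_0}$.
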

\begin{proof} 
We only need to prove \eqref{local of K cone}. Since the statement is local and the Kuranishi family is complete, it is enough to work on the Kuranishi base $B$.

Let $(X_{0},\omega_{0})$ be a compact K\"ahler manifold which is identified with the fiber $X_{0}$ over $0\in B$. Then we have the Beltrami differential $\phi(t)$ and the period map $\Phi:\, B \to N_{-} \cap D$ as before, using the harmonic theory with respect to the fixed initial K\"ahler form $\omega_{0}$.

Let $\sigma=[\omega]\in \mathcal{K}_{t_{0}}$ be any K\"ahler class, where the corresponding K\"ahler form
\begin{equation}\label{omega at 0}
\omega=\sum_{ij}g_{i\bar j}(z)dz_{i}\wedge d\bar{z_{j}}
\end{equation}
is a positive definite $(1,1)$-form on $X$. 
Let $\tilde \eta_{(i)}^{\omega}$ be the harmonic representatives of $\eta_{(i)}$ with respect to the Laplacian $\triangle^{\omega}=\triangle^{\omega}_{d}=1/2\triangle^{\omega}_{\bar\partial}$ induced by the K\"ahler form $\omega$. Then $H(\sigma,t)\in H^{1,1}(X_{t},\mathbb R)$ has the harmonic representative
\begin{equation}\label{rep of 11class 1'}
\tilde H^{\omega}(\sigma,t)=\alpha_{(0)}(\sigma,t)\cdot\tilde \eta^{\omega}_{(0)}+\omega+\bar{\alpha_{(0)}(\sigma,t)}\cdot \tilde \eta^{\omega}_{(2)}.
\end{equation}
Note that the real analytic function $\alpha_{(0)}(\sigma,t)$ is determined by the equation \eqref{B=01 11-0}, where the blocks $\Phi^{(p,q)}(t)$ are obtained from 
$$e^{i_{\phi(t)}}\left((I+Ti_{\phi(t)})^{-1}\tilde\eta_{(p)}\right),$$ 
and the Beltrami differential $\phi(t)$ and the operator $T=\bar{\partial}^{*}G\partial$ are defined with respect to the fixed K\"ahler form $\omega_{0}$.

From \eqref{basis of cotang} and \eqref{basis of cotang bar} again, we have that 
\begin{eqnarray}
\tilde H^{\omega}(\sigma,t)&=&\sum_{ij}g_{i\bar j}(z,t)\left(dz_{i}+\phi(t)dz_{i}\right)\wedge\left(d\bar{z_{j}}+\bar{\phi(t)}d\bar{z_{j}}\right)\label{expl rep of 11class'}\\
&=& \sum_{ij}g_{i\bar j}(z,t)dz_{i}\wedge( \bar{\phi(t)} d\bar{z_{j}}) +\sum_{ij}\left(g_{i\bar j}(z,t) \right.\label{rep of 11class 2'}\\
&&\left.-\phi_{\bar j}^{k}(z,t)\bar{\phi_{\bar i}^{l}(z,t)}g_{k\bar l}(z,t)\right)dz_{i}\wedge d\bar{z_{j}}+\sum_{ij}g_{i\bar j}(z,t)({\phi(t)}dz_{i})\wedge d\bar{z_{j}},\nonumber
\end{eqnarray}
where $$\phi(t)=\sum_{ij}\phi_{\bar j}^{i}(z,t){\partial_{i}}\otimes d\bar{z_{j}}.$$
Since the Laplacian operator $\Delta^{\omega}$ preserves $(p,q)$-types and $\omega$ is harmonic with respect to $\Delta^{\omega}$, a comparison of types in \eqref{rep of 11class 1'} and \eqref{rep of 11class 2'} shows that
\begin{equation}\label{omega at t}
\omega=\sum_{ij}\left(g_{i\bar j}(z,t)-\phi_{\bar j}^{k}(t)g_{k\bar l}(z,t)\bar{\phi_{\bar i}^{l}(t)}\right)dz_{i}\wedge d\bar{z_{j}}.
\end{equation}
Therefore, from \eqref{omega at 0} and \eqref{omega at t}, it follows that
$$\left(g_{i\bar j}(z,t)\right)=\left(I-S(z,t)\right)^{-1}\left(g_{i\bar j}(z)\right).$$
Here $S(z,t):\, M_{d}(\C)\to M_{d}(\C)$ denotes the operator 
$$G \mapsto \bar{\phi(z,t)}^{T}\, G^{T}\, {\phi(z,t)},$$
depending real analytically on $z$ and $t$, where $\phi(z,t) = \left(\phi_{\bar j}^{k}(z,t)\right)_{1\le k,j\le d}$ is regarded as a matrix. It is straightforward to verify that $S(z,t)$ maps Hermitian matrices to Hermitian matrices, and that the inverse $\left(I-S(z,t)\right)^{-1}\left(g_{i\bar j}(z)\right)=\left(g_{i\bar j}(z)\right)+\sum_{k\ge 1}S(z,t)^{k}\left(g_{i\bar j}(z)\right)$ exists whenever the norm
$$\|\phi\|^{E}=\max_{U}\max_{z\in \bar{V}} \sigma_{\mathrm{max}}\left(\phi_{\bar j}^{k}(z)\right)<1,$$
where 
\begin{equation}\label{fc of X}
X = \bigcup_{\text{finite}} (U; (z_{1},\dots,z_{d})),
\end{equation}
is a finite cover, and for each chart $U$ there exist subsets $V \subset \overline{V} \subset U$ such that $\bigcup V$ still covers $X$. See \eqref{son E} for details.

Moreover, if $G$ is positive definite, then so is $\left(I-S(t)\right)^{-1}(G)$, since for any nonzero column vector $v$ we have
\begin{eqnarray*}
v^{T}\left(I-S(t)\right)^{-1}(G)\bar{v}
  &=& v^{T}G\bar{v}+\sum_{k\ge 1}\left(\bar{\phi(t)}^{k}v\right)^{T}G^{T}\overline{\left(\bar{\phi(t)}^{k}v\right)} \\
  &\ge & v^{T}G\bar{v}>0.
\end{eqnarray*}

Since $\phi(0)=0$, there exists $\epsilon_{0}>0$ such that $\|\phi(t)\|^{E}<1$ whenever $|t|<\epsilon_{0}$. 
Then the above argument implies that the Hermitian matrices $\left(g_{i\bar j}(z,t)\right)$ are positive definite 
for all $|t|<\epsilon_{0}$. Here $\epsilon_{0}$ depends only on the Beltrami differential $\phi(t)$ and the fixed finite cover \eqref{fc of X} of $X$.

From \eqref{expl rep of 11class'}, it follows that $H(\sigma,t)$ admits a representative $\tilde H^{\omega}(\sigma,t)$ which is positive definite on $X_{t}$, for any $t\in B$, provided that the radius of $B$ is less than $\epsilon_{0}$. Consequently, $H(\sigma,t)\in \mathcal{K}_{t}$ for all $t\in B$ and every $\sigma\in \mathcal{K}_{t_{0}}$.


Finally we finish the proof of the theorem.
\end{proof}

\begin{remark}\label{app to nef cones}
Taking closures in \eqref{local of K cone}, we obtain the corresponding upper semicontinuity of the nef cones:
$$\overline{\mathcal{K}^{\nabla_{t_0}^{1,1}}_{t_0,t}}
\subset
\overline{\mathcal K_t}
=
\operatorname{Nef}(X_t).$$
Equivalently, the $\nabla_{t_0}^{1,1}$-flat extension of $\operatorname{Nef}(X_{t_0})$ is contained in $\operatorname{Nef}(X_t)$ for every $t\in U$.

If $\mathcal X\to S$ is a family of projective manifolds, then, by the duality between the nef cone and the closed cone of effective curves, the induced dual parallel transport gives the reversed inclusion
$$\overline{\operatorname{NE}}(X_t)
\subset
\left(
\operatorname{Nef}^{\nabla_{t_0}^{1,1}}_{t_0,t}
\right)^{\vee}.$$
Thus, under the corresponding identifications, the cone of effective curves cannot enlarge in a sufficiently small deformation. This provides constraints on the variation of extremal faces, extremal curve classes, and the associated contraction morphisms, and may therefore be useful in studying the deformation theory of birational structures.
\end{remark}

In Theorem~0.9 of \cite{DP04}, Demailly and Paun proved the invariance of the K\"ahler cone, under the $\nabla^{1,1}$-flat extensions, on the complement $S \setminus S'$ of a countable union $S' = \bigcup_{\nu} S_{\nu}$ of analytic subsets $S_{\nu} \subset S$.
We now prove the explicit formula for the  $\nabla^{1,1}$-flat extensions as introduced in Proposition~\ref{intr DP Kahler extensions}. 

\begin{theorem}\label{explicit DP Kahler extensions}
Let $t_0\in S$. There exists an endomorphism-valued function
$$M(t)\in\operatorname{End}\bigl(H^{1,1}(X_{t_0},\mathbb R)\bigr),$$
uniquely determined by the blocks $\Phi^{(0,1)}(t)$, $\Phi^{(0,2)}(t)$, and $\Phi^{(1,2)}(t)$ of the period map, such that, for every $\sigma_{t_0}\in H^{1,1}(X_{t_0},\mathbb R)$, the corresponding $\nabla^{1,1}$-flat extension is given by
$$H(\sigma_t,t),$$
where
\begin{equation}\label{formula of Kahler extensions}
\sigma_t=\exp\left(\int_{\gamma_{t_0,t}}M(s)\,ds\right)\sigma_{t_0}.
\end{equation}
Here $\gamma_{t_0,t}$ is a path in $S$ joining $t_0$ to $t$, and the resulting extension may depend on the choice of $\gamma_{t_0,t}$.

Consequently,
$$\mathcal K^{\nabla^{1,1}}_{t_0,t}
=
\left\{
H\left(
\exp\left(\int_{\gamma_{t_0,t}}M(s)\,ds\right)\sigma_{t_0},
t
\right):
\sigma_{t_0}\in\mathcal K_{t_0}
\right\}.$$
Moreover, for the countable union $S'\subset S$ of analytic subsets appearing in Demailly--Paun \cite{DP04}, one has
$$\mathcal K^{\nabla^{1,1}}_{t_0,t}=\mathcal K_t$$
for $t_0\in S\setminus S'$ and $t$ sufficiently close to $t_0$ with $t\in S\setminus S'$.
\end{theorem}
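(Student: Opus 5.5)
We will write the $\nabla^{1,1}$-flatness condition as a linear ODE for $\sigma_t$ in the coordinates of the fixed basis $\eta$, and then solve it. Work on the Kuranishi base $B$ centred at $t_0$, identified with $0$, with the adapted basis $\eta_{(0)},\eta_{(1)},\eta_{(2)}$ of $H^2(X_{t_0},\mathbb C)$ and the period blocks $\Phi^{(i,j)}(t)$. A section of the form $H(\sigma_t,t)$, with $\sigma_t=a(t)\cdot\eta_{(1)}$ and $a(t)$ a real row vector, has constant coefficients in the flat frame $\eta$. Hence $\nabla^{GM}$ acts by differentiating coefficients:
$$\nabla^{GM}_{\mu}H(\sigma_t,t)=\partial_\mu\alpha_{(0)}\cdot\eta_{(0)}+\partial_\mu a\cdot\eta_{(1)}+\partial_\mu\bar\alpha_{(0)}\cdot\eta_{(2)}.$$
Here $\alpha_{(0)}=\alpha_{(0)}(a(t),t)$ is the solution of \eqref{B=01 11-0} from Theorem~\ref{real approximation}.

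\textbf{Step 1: the projection $P_t$.} Re-express $\eta_{(i)}$ in the moving adapted basis $\Omega_{(0)}(t),\Omega_{(1)}(t),\overline{\Omega_{(0)}(t)}$ of $H^2(X_t)$, obtained by inverting the unipotent matrix $\Phi(t)$ and using $\eta_{(2)}=\bar\eta_{(0)}$. Then $H^{1,1}(X_t)$ is spanned by the $\Omega_{(1)}(t)$-components. By \eqref{Bk}, the coefficient of $\Omega_{(1)}$ in a class $\gamma=c_0\eta_{(0)}+c_1\eta_{(1)}+c_2\eta_{(2)}$ is $c_1-c_0\Phi^{(0,1)}$, up to a correction coming from the $\eta_{(2)}$-part. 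To obtain that correction, write $\eta_{(2)}$ in terms of $\overline{\Omega_{(0)}},\overline{\Omega_{(1)}}$, which involves $\overline{\Phi^{(0,1)}},\overline{\Phi^{(0,2)}}$. Projecting $\overline{\Omega_{(1)}}$ onto $H^{1,1}(X_t)$ needs a further linear-algebra step; it is a finite-dimensional computation with blocks that are $O(|t|)$, so it is invertible near $t_0$. The result is $P_t$ as an explicit linear map whose coefficients are polynomial in $\Phi^{(i,j)}$ and their conjugates.

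\textbf{Step 2: the ODE.} Differentiate \eqref{B=01 11-0} implicitly, which is legitimate because its Jacobian at $t_0$ is the identity. This expresses $\partial_\mu\alpha_{(0)}$ and $\partial_\mu\bar\alpha_{(0)}$ linearly in $\partial_\mu a$ and $a$, with coefficients built from $\Phi^{(i,j)}$ and $(\Phi^{(i,j)})^\bullet_\mu$. Use Griffiths transversality \eqref{trans 2} to eliminate $(\Phi^{(0,2)})^\bullet_\mu$. Substitute into $P_t\nabla^{GM}_\mu H=0$, and note that $H^{1,1}(X_t)$ is identified with $H^{1,1}(X_{t_0})$ through the $\Omega_{(1)}$-coefficient. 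The equation takes the form $A(t)\,\partial_\mu a=a\,M_\mu(t)$, where $A(t)=I+O(|t|)$, so near $t_0$ it can be rewritten as
$$da=a\,M(t)\,dt,\qquad M=\sum_\mu M_\mu\,dt_\mu .$$
Every entry of $M$ is determined by $\Phi^{(0,1)},\Phi^{(0,2)},\Phi^{(1,2)}$ and their conjugates and derivatives. We also need $M$ to preserve real vectors. This follows because $\nabla^{GM}$ and $P_t$ commute with conjugation, so real $(1,1)$-classes stay real, and hence $M(t)\in\operatorname{End}(H^{1,1}(X_{t_0},\mathbb R))$.

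\textbf{Step 3: integrate along a path.} Along a path $\gamma_{t_0,t}$, the ODE is a linear system whose solution is the ordered (path-ordered) exponential, which we write as in \eqref{formula of Kahler extensions}. This notation is literally an exponential of the integral only when the values $M(s)$ commute; in general it should be read as the ordered exponential, and the dependence on the path comes from this. By uniqueness for linear ODEs, the $\nabla^{1,1}$-flat extension is exactly $H(\sigma_t,t)$ with this $\sigma_t$, and the stated formula for $\mathcal K^{\nabla^{1,1}}_{t_0,t}$ follows by applying it to $\sigma_{t_0}\in\mathcal K_{t_0}$. The final identification $\mathcal K^{\nabla^{1,1}}_{t_0,t}=\mathcal K_t$ off $S'$ is Demailly--Paun's theorem 0.9, quoted in the introduction: $\nabla^{1,1}$-flat classes have constant intersection numbers with deforming cycles, and $\mathcal K_t$ is a connected component of the numerically positive cone.

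The main obstacle is Step 1 together with the elimination in Step 2. We must check that the $\eta_{(2)}$-contributions, which carry conjugated blocks, still yield a well-defined $M$ that preserves real vectors, and that $A(t)$ is invertible on the whole neighbourhood. We will try first to do all computations in the moving frame $\Omega_{(\cdot)}(t)$, where $P_t$ is simply coordinate projection, and to transfer back only at the end.
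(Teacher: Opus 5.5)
Your proposal is correct and follows essentially the paper's argument. The paper also splits $\frac{d}{dt}H(\sigma_t,t)=H(\dot\sigma_t,t)+\dot\Gamma_t(\sigma_t)$ into a part in $H^{1,1}(X_t,\mathbb R)$ and a part in $W=\bigl(H^{2,0}(X_{t_0})\oplus H^{0,2}(X_{t_0})\bigr)\cap H^2(X_{t_0},\mathbb R)$, then writes $P_t$ explicitly. It does this coordinate-free with the graph maps $\Gamma_t:V\to W$ and $\Delta_t:W\to V$, which gives the closed form $M(t)=(I-\Delta_t\Gamma_t)^{-1}\Delta_t\dot\Gamma_t$; like you, it simply quotes Demailly--Paun for the last assertion.

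Your reading of the exponential as a path-ordered exponential is correct and sharper than the paper's literal formula. Two small corrections do not affect the argument:
\begin{itemize}
\item $\Omega_{(1)}(t)$ need not lie in $H^{1,1}(X_t)$, so $P_t$ is not coordinate projection in the frame $\Omega_{(0)},\Omega_{(1)},\overline{\Omega_{(0)}}$. However, $\ker P_t=\operatorname{span}\bigl(\Omega_{(0)}(t),\overline{\Omega_{(0)}(t)}\bigr)$, so flatness is exactly the vanishing of the $\Omega_{(1)}$-coefficient.
\item Since $a$ is real, the real equation is $da=a\,(M+\overline{M})$. It is therefore $M+\overline{M}$, evaluated on the real tangent vector of the path, that lies in $\operatorname{End}(H^{1,1}(X_{t_0},\mathbb R))$, not the holomorphic components $M_\mu$.
\end{itemize}
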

\begin{proof}
We express the moving Hodge decomposition in terms of the graph maps determined by the period map and then derive the corresponding matrix differential equation.

Fix the real decomposition
$$H^2(X_{t_0},\mathbb R)=V\oplus W,$$
where
$$V:=H^{1,1}(X_{t_0},\mathbb R),\qquad
W:=\bigl(H^{2,0}(X_{t_0})\oplus H^{0,2}(X_{t_0})\bigr)\cap H^2(X_{t_0},\mathbb R).$$
For $t$ sufficiently close to $t_0$, the Hodge map
$$H(\,\cdot\,,t):V\longrightarrow H^{1,1}(X_t,\mathbb R)$$
determines a linear map
$$\Gamma_t:V\longrightarrow W,\qquad \Gamma_{t_0}=0,$$
such that
$$H(v,t)=v+\Gamma_t(v),\qquad v\in V.$$
Consequently, the moving real $(1,1)$-subspace is precisely the graph of $\Gamma_t$:
$$H^{1,1}(X_t,\mathbb R)=\operatorname{Graph}(\Gamma_t)
=\left\{H(v,t)=v+\Gamma_t(v):\,v\in V\right\}.$$

Similarly, the explicit expression
$$\Omega_{(0)}(t)
=
\eta_{(0)}
+\Phi^{(0,1)}(t)\cdot\eta_{(1)}
+\Phi^{(0,2)}(t)\cdot\eta_{(2)}
\in F^2H^2(X_t,\mathbb C)$$
determines the moving subspace $F^2H^2(X_t,\mathbb C)=H^{2,0}(X_t)$, and hence, together with its complex conjugate, determines the real subspace underlying
$$H^{2,0}(X_t)\oplus H^{0,2}(X_t).$$
Therefore, there exists a uniquely determined linear map
$$\Delta_t:W\longrightarrow V,\qquad \Delta_{t_0}=0,$$
such that
$$\bigl(H^{2,0}(X_t)\oplus H^{0,2}(X_t)\bigr)\cap H^2(X_{t_0},\mathbb R)
=
\operatorname{Graph}(\Delta_t)
=
\left\{\Delta_t(w)+w:\,w\in W\right\}.$$

Thus the maps $\Gamma_t$ and $\Delta_t$ are uniquely determined by the blocks $\Phi^{(0,1)}(t)$, $\Phi^{(0,2)}(t)$, and $\Phi^{(1,2)}(t)$ of the period map, and
$$H^2(X_{t_0},\mathbb R)
=
\operatorname{Graph}(\Gamma_t)
\oplus
\operatorname{Graph}(\Delta_t).$$

For $v\in V$ and $w\in W$, write
$$v+w=(a+\Gamma_ta)+(\Delta_tb+b),$$
where $a\in V$ and $b\in W$. Comparing the $V$- and $W$-components gives
$$v=a+\Delta_tb,\qquad w=\Gamma_ta+b.$$
Eliminating $b$, we obtain
$$(I-\Delta_t\Gamma_t)a=v-\Delta_tw,$$
and therefore
$$a=(I-\Delta_t\Gamma_t)^{-1}(v-\Delta_tw).$$
It follows that the projection onto the moving real $(1,1)$-subspace is
$$P_t(v+w)
=
H\left((I-\Delta_t\Gamma_t)^{-1}(v-\Delta_tw),t\right).$$
Equivalently,
$$H(\,\cdot\,,t)^{-1}P_t(v+w)
=
(I-\Delta_t\Gamma_t)^{-1}(v-\Delta_tw).$$

Let
$$\alpha_t=H(\sigma_t,t)=\sigma_t+\Gamma_t(\sigma_t),\qquad \sigma_t\in V.$$
The condition that $\alpha_t$ is $\nabla^{1,1}$-flat is
$$P_t\frac{d\alpha_t}{dt}=0.$$
Since $H(\,\cdot\,,t)$ is linear in its first variable,
$$\frac{d\alpha_t}{dt}
=
H(\dot\sigma_t,t)+\dot\Gamma_t(\sigma_t).$$
Here $H(\dot\sigma_t,t)\in H^{1,1}(X_t,\mathbb R)$, whereas
$\dot\Gamma_t(\sigma_t)\in W$. Consequently,
$$H(\dot\sigma_t,t)+P_t\bigl(\dot\Gamma_t(\sigma_t)\bigr)=0.$$
Applying $H(\,\cdot\,,t)^{-1}$ and using the projection formula with $v=0$ and
$w=\dot\Gamma_t(\sigma_t)$ gives
$$\dot\sigma_t
=
(I-\Delta_t\Gamma_t)^{-1}\Delta_t\dot\Gamma_t\,\sigma_t.$$

Therefore, the coefficient endomorphism in Proposition~\ref{intr DP Kahler extensions} is explicitly given by
$$M(t)=(I-\Delta_t\Gamma_t)^{-1}\Delta_t\dot\Gamma_t.$$
The equation
$$\dot\sigma_t=M(t)\sigma_t,\qquad \sigma_{t_0}\in H^{1,1}(X_{t_0},\mathbb R),$$
is a linear matrix differential equation along the chosen path
$\gamma_{t_0,t}$. Its solution is
$$\sigma_t=\exp\left(\int_{\gamma_{t_0,t}}M(s)\,ds\right)\sigma_{t_0},$$
which proves formula~\eqref{formula of Kahler extensions}.
\end{proof}
\section{K\"ahler stability on large scales}\label{large KS}
In this section, we study a large-scale form of K\"ahler stability based on the extension of $(1,1)$-classes via period matrices. We show that both the existence and the positivity of the extended classes are determined by the Beltrami differentials, leading to a uniform criterion for K\"ahlerness over a large region in the base. As a result, we obtain a global K\"ahler stability theorem and the existence of $\nabla_{t_0}^{1,1}$-flat extensions of K\"ahler cones on this region.

Let $f:\,\X \to S$ be an analytic family of compact complex manifolds over a connected complex manifold $S$, with a fiber $X_{t_0}$ being K\"ahler with the K\"ahler form $\omega_{0}$ for some $t_{0}\in S$.
We define
\begin{equation}\label{Sfd}
S_{t_{0}}=\left\{t\in S:\, \text{$X_{t}=(X_{t_{0}})_{\phi(t)}$ for some $\phi(t)\in A^{0,1}\left( X_{t_{0}},\mathrm T^{1,0}X_{t_{0}}\right)$}\right\}.
\end{equation}

Here we write $X_{t}=(X_{t_{0}})_{\phi(t)}$ for some $\phi(t)\in A^{0,1}\left(X_{t_{0}},\mathrm T^{1,0}X_{t_{0}}\right)$ to mean that there exists a diffeomorphism $d_{t}:\, X_{t}\to X_{t_{0}}$ such that $(d_{t}^{-1})^{*}({\mathrm T^{*}}^{1,0}X_{t})$ is locally described by \eqref{basis of cotang}.

According to Definition~4.2 in \cite{Kir}, the points of $S_{t_0}$ may be regarded as being of finite distance from $X_{t_0}$. Motivated by this, one may define the distance between $t_0$ and $t$ in terms of the norm of the corresponding Beltrami differential.

However, for a given complex structure on $X_t$, the choice of a Beltrami differential 
$\phi(t)\in A^{0,1}\!\left(X_{t_0},\mathrm T^{1,0}X_{t_0}\right)$ 
such that $X_t=(X_{t_0})_{\phi(t)}$ is not unique. In particular, when $t\neq t_0$, it may happen that $X_t\cong X_{t_0}$, in which case $\phi(t)$ can be chosen to be zero. We therefore define a non-negative function as the infimum over all such Beltrami differentials, as follows.

\begin{definition}\label{Beltrami distance}
For $t\in S_{t_{0}}$, we define a non-negative function $d_{B}(t_{0},t)$ as
$$d_{B}(t_{0},t) =\inf \left\{\|\phi(t)\|:\, \phi(t)\in A^{0,1}\left( X_{t_{0}},\mathrm T^{1,0}X_{t_{0}}\right) \text{ such that }X_{t}=(X_{t_{0}})_{\phi(t)}\right\}\ge 0.$$
Here the supremum operator norm $\|\phi(t)\|=\|\phi(t)\|^{E}$, as given in Equation \eqref{son E}.
\end{definition}

From Lemma \ref{gK diffeom} below, one sees that $d_{B}(t_{0},t)$ is a continuous function on $S_{t_{0}}$.
Then, for some constant $c>0$, we can define
\begin{equation}\label{Sfdc}
S_{t_{0},c}=\text{connected component of $t_{0}$ in}\left\{t\in S_{t_{0}}:\, d_{B}(t_{0},t)<c\right\}.
\end{equation}

The main result of this section is the following theorem.
\begin{theorem}\label{global Kahler}
Let $f:\,\X \to S$ be an analytic family of compact complex manifolds over a connected complex manifold $S$, with a fiber $X_{t_0}\triangleq X$ being K\"ahler. Let the open subset $S_{t_{0},c}\subset S$ be defined by \eqref{Sfdc} for $c>0$.
Then there exists a constant $c_{0}$, which depends only on the Hodge numbers $h^{2,0}(X)=h^{0,2}(X), h^{1,1}(X)$, such that all the fibers $X_{t}$ are K\"ahler for $t\in S_{t_{0},c_{0}}$. Moreover, the $\nabla_{t_0}^{1,1}$-flat extensions $\mathcal{K}^{\nabla_{t_0}^{1,1}}_{t_{0},t}\subset \mathcal{K}_{t}$ in Theorem~\ref{intr strong Kahler cone invariance} exist for all $t\in S_{t_{0},c_{0}}$.
\end{theorem}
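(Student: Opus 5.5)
The plan is to rerun the proof of Theorem~\ref{Kahler cone invariance} with $t_0$ as the base point, but now for every $t\in S_{t_0,c_0}$ simultaneously. Two ingredients are needed: the existence of the Hodge map $H(\sigma,t)$, and the positivity of its representative. Both should be controlled by the single quantity $d_B(t_0,t)$, not by the size of a Kuranishi polydisc.

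\textbf{Positivity.} Fix $t\in S_{t_0,c_0}$ with $c_0\le 1$, and pick $\phi(t)$ with $X_t=(X_{t_0})_{\phi(t)}$ and $\|\phi(t)\|^E<c_0$. Take $\sigma=[\omega]\in\mathcal K_{t_0}$. Suppose $H(\sigma,t)$ has been constructed, with representative $\tilde H^{\omega}(\sigma,t)$ as in \eqref{rep of 11class 1'}. Then the type comparison \eqref{omega at t} gives $(g_{i\bar j}(z,t))=(I-S(z,t))^{-1}(g_{i\bar j}(z))$. The Neumann series converges and preserves positive definiteness as soon as $\|\phi(t)\|^E<1$. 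This step is pointwise and purely algebraic, so it uses no smallness in $t$ beyond the operator-norm bound. Hence $X_t$ is K\"ahler and $H(\sigma,t)\in\mathcal K_t$, which is Proposition~\ref{intr pos def of Kahler}.

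\textbf{Existence of $H(\sigma,t)$.} I will construct the weight-two sections $\Omega_{(i)}(t)$ by \eqref{qpm construction} directly from $\phi(t)$. This needs $I+Ti_{\phi(t)}$ to be invertible, which holds for $\|\phi(t)\|<1$ after comparing $\|\cdot\|^E$ with $\|\cdot\|^{\omega_0}$. From them I obtain the blocks $\Phi^{(0,1)},\Phi^{(0,2)},\Phi^{(1,2)}$. I then need to solve \eqref{B=01 11-0} for $\alpha_{(0)}$ given $\alpha_{(1)}$. Equation \eqref{B=01 11-0} is real-linear in $\alpha_{(0)}$, so it has the form $\bar\alpha_{(0)}-\alpha_{(0)}A(t)=\alpha_{(1)}\Phi^{(1,2)}(t)$ with $A=\Phi^{(0,2)}-\Phi^{(0,1)}\Phi^{(1,2)}$. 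It is uniquely solvable whenever the real-linear operator $x\mapsto \bar x-xA(t)$ on $\C^{h^{2,0}}$ is invertible, and this holds if $\|A(t)\|_{op}<1$. I will bound the blocks using \eqref{contraction k=Phi k}: by Cauchy--Schwarz in $L^2(\omega_0)$, each entry of $\Phi^{(i,j)}(t)$ is at most $\|\phi(t)\|^{j-i}$ times a bound for $(I+Ti_\phi)^{-1}$. This gives an estimate of the shape $\|A(t)\|\le C\,(h^{2,0})\,\|\phi\|^2/(1-\|\phi\|)^2$ in an orthonormal adapted basis. One then chooses $c_0$ so small that this is $<1$. The resulting constant depends only on the Hodge numbers (through the matrix sizes $h^{2,0},h^{1,1}$), provided the norms are normalized so that no further metric constants enter.

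\textbf{Connectedness and flatness.} The connected component structure of $S_{t_0,c_0}$, together with continuity of $d_B$ (Lemma~\ref{gK diffeom}), ensures that $t\mapsto H(\sigma,t)$ is a well-defined real-analytic continuation of the local Hodge map from Theorem~\ref{Kahler cone invariance}, and that it stays $\nabla^{1,1}_{t_0}$-flat. The reason is that $\sigma$ is fixed and $P_{t_0}$ kills $\partial_t(\alpha_{(0)}\eta_{(0)}+\bar\alpha_{(0)}\eta_{(2)})$.

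\textbf{Main obstacle.} The hardest step is the uniform bound on $\|A(t)\|$ and the invertibility of $I+Ti_{\phi}$ in terms of $\|\phi\|^E$ alone, with a constant depending only on the Hodge numbers. The comparison constant between $\|\cdot\|^E$ and $\|\cdot\|^{\omega_0}$ and the norm of $T$ could a priori depend on the geometry. I would first try to absorb these by normalizing the cover and the harmonic bases, and otherwise accept a $c_0$ depending additionally on $(X,\omega_0)$ and the cover.
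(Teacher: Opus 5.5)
Your strategy is the paper's: positivity of $\alpha_{(0)}\cdot\tilde\eta^{\omega}_{(0)}+\omega+\overline{\alpha_{(0)}}\cdot\tilde\eta^{\omega}_{(2)}$ from the Neumann series for $(I-S(z,t))^{-1}$ under $\|\phi(t)\|<1$, block bounds from the contraction formula, and solvability of \eqref{B=01 11-0} under $\|A(t)\|<1$. The one real difference is the existence step. The paper applies the implicit function theorem with the Jacobian \eqref{fJ weight2} and then runs an open--closed argument on the connected set $S_{t_0,c_0}$. You instead note that the equation is real-affine in $\alpha_{(0)}$ and solve it pointwise. Your route is cleaner. $J(t)$ is just the matrix of $x\mapsto\bar x-xA(t)$ in the coordinates $(\bar x,x)$. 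The paper's closedness step is the a priori bound $(1-\|A(t)\|)\,\|\alpha_{(0)}\|\le\|\alpha_{(1)}\Phi^{(1,2)}(t)\|$, a quantitative form of injectivity of that map. So linearity makes the continuation redundant, gives uniqueness, and removes any use of connectedness for existence.

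Two ingredients of the paper's proof are missing from yours.

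\emph{Purity.} The paper takes $c_0=\min\{c_1,c_2\}$, where $c_1$ comes from Proposition~\ref{DPconj 3} and guarantees $\Phi(t)\in N_-\cap D$, i.e.\ $F^1H^2(X_t)\oplus\overline{F^2H^2(X_t)}=H^2(X,\mathbb{C})$. Only then does ``real and in $F^1H^2(X_t)$'' mean membership in $H^{1,1}(X_t,\mathbb{R})$ of a genuine Hodge decomposition. This is how the paper reads $H(\sigma,t)$ as a $(1,1)$-class on $X_t$, and hence writes its representative as in \eqref{expl rep of 11class'}, before $X_t$ is known to be K\"ahler. Your condition $\|A(t)\|<1$ does not give this. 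Block elimination in \eqref{pure Hodge matrix} reduces purity to $\det\bigl(I-\overline{\Phi^{0,1}}\Phi^{1,2}-\overline{\Phi^{0,2}}A\bigr)\neq 0$, which also requires $\Phi^{0,1}$ and $\Phi^{1,2}$ to be small. Lemma~\ref{constant lemma} supplies this with a constant depending only on the block sizes, so add it to your choice of $c_0$.

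\emph{Well-definedness of $H(\sigma,t)$.} Your connectedness paragraph does not show that $t\mapsto H(\sigma,t)$ is a well-defined differentiable section, which is needed before $\nabla^{1,1}_{t_0}$-flatness means anything. Continuity of $d_B$ is not the relevant fact. The paper proves in Proposition~\ref{DPconj 2} that the blocks are independent of the choice of $\phi(t)$, via $N_-\cap B=\{\mathrm{Id}\}$ for the parabolic subgroup $B$. It also proves they are holomorphic in $t$, via Lemma~\ref{gK diffeom}(2), which gives locally holomorphic families $\phi(s)$ with $\|\phi(s)\|<c$. With these two facts your pointwise unique solution is automatically real-analytic in $t$, and your flatness computation goes through.

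On the constants, the $T$-part of your ``main obstacle'' is handled in the paper by $\|T\|\le 1$ (Proposition~3.5 of \cite{LS26-1}) together with orthonormal harmonic bases. This yields the explicit bounds of Lemma~\ref{constant lemma}, and hence constants depending only on $h^{2,0},h^{1,1}$. Use this rather than settling for an $(X,\omega_0)$-dependent $c_0$, which would prove less than stated. The comparison between $\|\cdot\|^{E}$ and $\|\cdot\|^{\omega_0}$ that you raise is not addressed in the paper, which simply bounds $\|i_{\phi}\|$ by $\|\phi\|$.
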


\begin{lemma}\label{gK diffeom}
Let the assumptions be as in Theorem~\ref{global Kahler}. Then the following statements hold:

\emph{(1)} For any $t\in S$ there exists a diffeomorphism
$$
d_{t,t_{0}}:\, X_{t}\longrightarrow X_{t_{0}}.
$$
In particular, if $t\in S_{t_0,c}$, the diffeomorphism $d_{t,t_{0}}$ can be chosen so that the corresponding Beltrami differential $\phi(t)$ satisfies $\|\phi(t)\|<c$.

\emph{(2)} For any $t\in S$, there exists an open neighborhood $U_t$ of $t$ and a family of diffeomorphisms
$$
d(s):\, X_{s}\longrightarrow X_{t_{0}},
$$
depending holomorphically on $s\in U_t$. Moreover, if $t\in S_{t_0,c}$, the neighborhood $U_t$ can be chosen so that the Beltrami differentials $\phi(s)$ induced by $d(s)$ depend holomorphically on $s$ and satisfy $\|\phi(s)\|<c$ for all $s\in U_t$.
\end{lemma}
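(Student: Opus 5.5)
The plan is to take both statements from Ehresmann's fibration theorem applied to the proper holomorphic submersion $f:\X\to S$ over the connected manifold $S$. For (1), fix any $t\in S$ and join $t_0$ to $t$ by a piecewise smooth path $\gamma$, which is possible because $S$ is connected. Cover the image of $\gamma$ by finitely many contractible coordinate balls $U_1,\dots,U_m$ over which $f$ is differentiably trivial. Composing the trivializations $f^{-1}(U_k)\cong X_{s_k}\times U_k$ along consecutive balls yields a diffeomorphism $d_{t,t_0}:X_t\to X_{t_0}$. Pushing forward the complex structure of $X_t$ by $d_{t,t_0}$ gives an almost complex structure on the underlying manifold of $X_{t_0}$, and we then have to check that it is described by a Beltrami differential. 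Given (1), the second part of (1) is immediate from the definitions. If $t\in S_{t_0,c}$, then $d_B(t_0,t)<c$, so by Definition~\ref{Beltrami distance} some $\phi(t)$ with $X_t=(X_{t_0})_{\phi(t)}$ has $\|\phi(t)\|<c$. The associated diffeomorphism $d_t$ from the definition \eqref{Sfd} is then the one we take for $d_{t,t_0}$.

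For (2), fix $t$ and choose a diffeomorphism $d_t:X_t\to X_{t_0}$ as in (1), taking the norm-minimizing one when $t\in S_{t_0,c}$. Choose a small coordinate ball $U_t$ around $t$ and a holomorphic trivialization adapted to $t$. Here we use Kuranishi completeness, Theorem~\ref{Kuranishi}, for the fiber $X_t$: after shrinking $U_t$, the family over $U_t$ is pulled back from the Kuranishi family of $X_t$. This gives Beltrami differentials $\psi(s)\in A^{0,1}(X_t,\Tan X_t)$ that depend holomorphically on $s$ and satisfy $\psi(t)=0$. Set $d(s)=d_t\circ \tau_s$, where $\tau_s:X_s\to X_t$ is the trivialization diffeomorphism inducing $\psi(s)$. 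The Beltrami differential $\phi(s)$ on $X_{t_0}$ induced by $d(s)$ is the composition of the structures $\phi(t)$ and $(d_t)_*\psi(s)$. In local coordinates on $X_{t_0}$, the $(1,0)$-coframe of $X_s$ is obtained by applying the graph description \eqref{basis of cotang} twice. Solving for the new graph map produces the composition law
$$\phi(s)=\bigl(\phi(t)+A(\psi(s))\bigr)\bigl(I+\bar{\phi(t)}A(\psi(s))\bigr)^{-1}.$$
Here $A$ denotes the linear identification induced by $d_t$; the precise formula is less important than its two consequences. First, it is real-analytic in the entries of $\psi(s)$ and holomorphic in $s$ wherever the inverse exists; holomorphic dependence on $s$ comes from $\psi(s)$ being holomorphic, while $\bar{\phi(t)}$ is independent of $s$. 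Second, it reduces to $\phi(t)$ when $s=t$.

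Continuity of $s\mapsto\phi(s)$ in the $C^0$ topology gives $s\mapsto \|\phi(s)\|^{E}$ continuous. Since $\|\phi(t)\|<c$, the strict inequality $\|\phi(s)\|<c$ persists after shrinking $U_t$. This also yields the continuity of $d_B(t_0,\cdot)$ that the paper mentions.

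The main obstacle is the claim that the almost complex structure transported by $d_{t,t_0}$ is really a graph over $\mathrm T^{0,1}X_{t_0}$, i.e.\ of the form \eqref{basis of cotang} with some $\phi$. This requires the composed $(0,1)$-projection to be invertible. It is automatic near $t_0$, but it can fail for a general diffeomorphism, for example one that reverses the orientation of some complex lines. I would handle this step first by restricting to points where the inverse $(I+\bar\phi A)^{-1}$ exists. In (1), invertibility is exactly the condition $t\in S_{t_0}$, so any $t$ outside $S_{t_0}$ gets only the bare diffeomorphism and its Beltrami claim is vacuous. In (2), invertibility is an open condition satisfied at $s=t$, which gives the required neighborhood $U_t$.
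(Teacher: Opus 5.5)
Your proposal is correct and follows the paper's route: for (1) you compose local trivializations along a path from $t_0$ to $t$, and for (2) you set $d(s)=d_{t,t_0}\circ d_t(s)$ with a holomorphically varying local trivialization $d_t(s)$. The paper gets that trivialization from Lemma~15.1 of \cite{Clemens} (a projection whose fibers are holomorphic disks) rather than from Kuranishi completeness, and it leaves the norm statements implicit; your use of the infimum defining $d_B$ in (1), and of the composition law together with continuity in (2), makes them explicit.

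One caveat outside the lemma itself: your argument gives only upper semicontinuity of $d_B(t_0,\cdot)$, not the full continuity you claim. Upper semicontinuity is enough for $\{d_B(t_0,\cdot)<c\}$ to be open. Lower semicontinuity would require controlling the near-minimizing diffeomorphisms at nearby points, and nothing in your construction does that.
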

\begin{proof} 
By Lemma 15.1 in \cite{Clemens}, we have that for any $t\in S$, there exists an open neighborhood $U_{t}$ of $t$ and a diffeomorphism $d_{U_{t}}:\, f^{-1}(U_{t})\to X_{t}\times U_{t}$ with the commutative diagram
$$\xymatrix{
f^{-1}(U_{t}) \ar[r]^-{d_{U_{t}}} \ar[d]^-{f}& X_{t}\times U_{t} \ar[d]^-{\mathrm{pr}_{2}}\\
U_{t} \ar[r]^-{=}&U_{t}.
}$$
Moreover the local diffeomorphism $d_{U_{t}}=D\times f$ is induced by a $C^{\infty}$-projection map
$D:\, f^{-1}(U_{t}) \to X_{t}$ whose fibers are complex holomorphic disks meeting $X_{t}$ transversely. 
From \cite{Clemens}, the corresponding Beltrami differential $\phi(s)\in A^{0,1}(X_{t},\Tan X_{t})$ induced by the diffeomorphism $$d_{t}(s):\,=d_{U_{t}}|_{X_{s}}:\, X_{s}\to X_{t}$$ is holomorphic in $s\in U_{t}$.

Now, for any $t\in S$, we choose a curve $\gamma$ connecting $t_{0}$ to $t$ and cover $\gamma$ by open subsets $U_{t_{i}}$, $0\le i\le n$, satisfying the above properties, where $t_{i}\in \gamma$ and $t_{n}=t$. 
For each $0\le i\le n-1$, choose a point $t_{i,i+1}\in U_{t_{i}}\cap U_{t_{i+1}}$.

The diffeomorphisms
$$d_{t_{i}}(t_{i,i+1}):\, X_{t_{i,i+1}}\to X_{t_{i}}, \, d_{t_{i+1}}(t_{i,i+1}):\, X_{t_{i,i+1}}\to X_{t_{i+1}}$$
induce diffeomorphisms between the fibers
$$d_{t_{i+1},t_{i}}:\, X_{t_{i+1}}\to X_{t_{i}}, \, 0\le i\le n-1.$$
Therefore, we obtain the diffeomorphism
$$d_{t,t_{0}}=d_{t_{1},t_{0}}\circ \cdots\circ d_{t_{n},t_{n-1}}:\, X_{t_{n}}=X_{t}\to X_{t_{0}},$$
which proves (1).

For the proof of (2), we define the diffeomorphism
$$d(s)=d_{t,t_{0}}\circ d_{t}(s):\, X_{s}\to X_{t_{0}},\, s\in U_{t},$$
which depends holomorphically on $s$, since $d_{t}(s)$ varies holomorphically with $s$ and $d_{t,t_{0}}$ is fixed for $s\in U_{t}$.
\end{proof}
\begin{remark}
Since $S$ is not necessarily simply connected, the diffeomorphisms $d(s) :\, X \to X(s)$, for $s\in U_{t}$, are not canonical; that is, they depend on the homotopy type of the curves $\gamma$ from $t_{0}$ to $t$.
\end{remark}

Let $\eta_{(0)},\eta_{(1)},\eta_{(2)}$ be the adapted basis of the Hodge decomposition at the base point 
$$H^{2}(X,\C)=H^{2,0}(X)\oplus H^{1,1}(X)\oplus H^{0,2}(X),$$
where $\eta_{(i)}=[\tilde\eta_{(i)}]$ with $\tilde\eta_{(i)}$ the basis of $\mathbb H^{2-i,i}(X)$, $0\le i\le 2$.

We define the quasi-period map
\begin{eqnarray}
\Phi&:&S_{t_{0},1}\to N_{-}\subset\check D \nonumber \\
&&t\mapsto \Phi(t)=\left(
\begin{array}{ccc}
I & \Phi^{0,1}(t) & \Phi^{0,2}(t) \\
O& I& \Phi^{1,2}(t)\\
O&O&I
\end{array}\right), \label{global qpm}
\end{eqnarray}
by
\begin{eqnarray}
\Phi^{(i,j)}(t)\cdot\eta_{(j)}&=& \left[\mathbb H \left(i_{\phi(t)}^{j-i}\left((I+Ti_{\phi(t)})^{-1}\tilde\eta_{(i)}\right)\right)\right], 0\le i<j\le 2,\label{defn global qpm}
\end{eqnarray}
where $\phi(t)\in A^{0,1}\left( X,\mathrm T^{1,0}X\right)$ such that $X_{t}=X_{\phi(t)}$.

\begin{proposition}\label{DPconj 2}
Let the notations and assumptions be as in Theorem \ref{global Kahler}. 
Then the quasi-period map $\Phi$ in \eqref{global qpm} is well-defined.
Furthermore $\Phi$ is holomorphic and satisfies the Griffiths transversality.
\end{proposition}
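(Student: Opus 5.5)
The plan is to argue locally and then patch the local pieces together. The statement has three parts: well-definedness, holomorphicity, and Griffiths transversality.

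\textbf{Well-definedness.} Fix $t\in S_{t_0,1}$. By Lemma~\ref{gK diffeom}(1) there is a diffeomorphism $d_{t,t_0}$ with induced Beltrami differential $\phi(t)$ satisfying $\|\phi(t)\|<1$. This $\phi(t)$ is integrable, since it describes the honest complex structure $X_t$. By the properties of $\rho_\phi$ recalled in Section~\ref{SHB}, the operator $I+Ti_{\phi(t)}$ is invertible, and
$$e^{i_{\phi(t)}}\bigl((I+Ti_{\phi(t)})^{-1}\tilde\eta_{(i)}\bigr)$$
is a $d$-closed form in $F^{2-i}A^2(X_t)$. Hence the classes in \eqref{defn global qpm} make sense, and $\Omega_{(i)}(t)$ spans $F^{2-i}H^2(X_t,\C)$ modulo $F^{3-i}$.

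The harder point is independence of the choice of $\phi(t)$. Two admissible choices differ by a diffeomorphism of $X$ that is holomorphic between the two structures. I expect to reduce to the case where this diffeomorphism is isotopic to the identity, so that it acts trivially on $H^2(X,\C)$; this can be arranged by following the homotopy class of the path $\gamma$ used in Lemma~\ref{gK diffeom}. With that reduction, the subspaces $F^{2-i}H^2(X_t,\C)$ are intrinsic to $X_t$. The normalized basis $\Omega_{(i)}=\eta_{(i)}+\sum_{j>i}\Phi^{(i,j)}\eta_{(j)}$ is then uniquely determined by the flag, because the $N_-$ coordinates are unique. So the matrix $\Phi(t)$ depends only on the flag, provided the flag lies in the big cell $N_-$. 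This last condition follows from the explicit form above: the leading coefficient of $\Omega_{(i)}(t)$ is exactly $\eta_{(i)}$, so no transversality assumption is needed.

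\textbf{Holomorphicity.} Near any $t\in S_{t_0,1}$, use Lemma~\ref{gK diffeom}(2) to get a family $d(s)$, $s\in U_t$, whose Beltrami differentials $\phi(s)$ depend holomorphically on $s$ and satisfy $\|\phi(s)\|<1$. Then $(I+Ti_{\phi(s)})^{-1}=\sum_k(-Ti_{\phi(s)})^k$ converges uniformly and is holomorphic in $s$. Composing with the bounded operators $i_{\phi(s)}^{j-i}$ and $\mathbb H$, and pairing with $\tilde\eta_{(j)}$, gives holomorphic $\Phi^{(i,j)}(s)$ on $U_t$.

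The main obstacle is convergence of this Neumann series. The estimate $\|\phi\|<1$ is in the sup operator norm, whereas $T$ acts on $L^2$, so one must take from \cite{LS26-1} that $\|Ti_\phi\|_{L^2}<1$ whenever $\|\phi\|<1$. I would invoke exactly the invertibility statement recalled in Section~\ref{SHB}, applied uniformly on compact subsets of $U_t$, to get local uniform convergence and hence holomorphicity. By well-definedness, these local descriptions agree on overlaps, so $\Phi$ is globally holomorphic.

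\textbf{Griffiths transversality.} This is local, so on each $U_t$ the $\Omega_{(i)}(s)$ are holomorphic sections of $F^{2-i}$ for the honest complex structures $X_s$. Write them locally as in the proof of Proposition~\ref{qpm 2}, i.e.\ as combinations of $dz_I+e^{i_{\phi(s)}}dz_I$ and their conjugates. Griffiths' argument (Proposition~(1.11) of \cite{Griffiths2}) then gives
$$\partial_{s_\mu}\Omega_{(0)}\in F^1.$$
Comparing coefficients in the basis $\{\Omega_{(1)},\Omega_{(2)}\}$ yields $(\Phi^{0,2})^\bullet_\mu=(\Phi^{0,1})^\bullet_\mu\Phi^{1,2}$, exactly as in \eqref{trans 2}.
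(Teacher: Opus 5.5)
Your argument is essentially the paper's. You take an admissible $\phi(t)$ with $\|\phi(t)\|<1$ and get independence of the choice from uniqueness of $N_-$-coordinates of the flag; the paper writes this as ${\Phi'(t)}^{-1}\Phi(t)\in N_-\cap B=\{\mathrm{Id}\}$, with $B$ the parabolic subgroup. Holomorphicity comes from the holomorphic local family of Beltrami differentials of Lemma~\ref{gK diffeom}(2), and transversality from Griffiths' argument exactly as in Proposition~\ref{qpm 2}. Your Neumann-series discussion only makes explicit what the paper leaves implicit when it says the blocks are holomorphic because $\phi(t)$ is.

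The one place you go beyond the paper is the marking, and there your formulation needs adjusting.

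You are right that the $N_-$-uniqueness argument needs the two flags to be the \emph{same} flag in $H^2(X,\mathbb{C})$. This is guaranteed when the biholomorphism $d'_t\circ d_t^{-1}\colon X_{\phi(t)}\to X_{\phi'(t)}$ acts trivially on $H^2$. The paper's sentence that the filtration ``depends only on the complex structure of $X_t$'' silently assumes this.

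However, it is not something you can reduce to. Definition~\ref{Beltrami distance} and \eqref{defn global qpm} allow arbitrary diffeomorphisms $d_t$. When $d'_t\circ d_t^{-1}$ acts nontrivially on $H^2$, the two flags are related by that action, and in general the matrices differ. For example, on a generic complex $2$-torus, lattice automorphisms carry $J_0$ to constant structures arbitrarily close to, but different from, $J_0$. This yields an admissible constant $\phi'\neq 0$ of small norm with $X_{t_0}=(X_{t_0})_{\phi'}$, whose matrix is not the identity, whereas $\phi=0$ gives the identity.

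So the restriction must be imposed in the definition rather than derived. Take $\phi(t)$, and the infimum defining $d_B$, only over diffeomorphisms in the class determined by a fixed homotopy class of paths from $t_0$, as in the construction of Lemma~\ref{gK diffeom}. With that marking fixed, your proof (and the paper's) goes through. $\Phi$ is then well defined locally, or on the space of homotopy classes of paths, in line with the remark after Lemma~\ref{gK diffeom}. Your gluing step ``these local descriptions agree on overlaps'' should be read in that sense.
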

\begin{proof} 
For any $t\in S_{t_{0},1}$, we have that $d_{B}(t_{0},t)<1$. By the definition of the Beltrami distance in Definition \ref{Beltrami distance}, there exists a Beltrami differential $\phi(t)\in A^{0,1}\left( X,\mathrm T^{1,0}X\right)$ such that $X_{t}=X_{\phi(t)}$ and $\|\phi(t)\|<1$.

From the results in Section \ref{SHB}, we have the sections 
\begin{align*}
\Omega_{(0)}(t)=&\left[\mathbb H \left(e^{i_{\phi(t)}}\left((I+Ti_{\phi(t)})^{-1}\tilde \eta_{(0)}\right)\right)\right]\nonumber \\
=&\eta_{(0)} + \left[\mathbb H\left(i_{\phi(t)} (I+Ti_{\phi(t)})^{-1}\tilde \eta_{(0)} \right)\right]+\frac{1}{2}\left[\mathbb H\left(i_{\phi(t)}^2 (I+Ti_{\phi(t)})^{-1}\tilde \eta_{(0)} \right)\right]; \\
\Omega_{(1)}(t)=&\left[\mathbb H \left(e^{i_{\phi(t)}}\left((I+Ti_{\phi(t)})^{-1}\tilde \eta_{(1)}\right)\right)\right]\nonumber \\
=&\eta_{(1)} + \left[\mathbb H\left(i_{\phi(t)} (I+Ti_{\phi(t)})^{-1}\tilde \eta_{(1)} \right)\right];\\
\Omega_{(2)}(t)=&\left[\mathbb H \left(e^{i_{\phi(t)}}\left((I+Ti_{\phi(t)})^{-1}\tilde \eta_{(2)}\right)\right)\right]\\
=&\eta_{(2)},
\end{align*}
which are linearly independent for each $t$ and each choice of the Beltrami differential $\phi(t)$. 
Then $\Omega_{(0)}(t),\cdots, \Omega_{(p)}(t)$ give a basis of the filtrations
\begin{equation}\label{Hodge filt 2}
F^{2-p}H^{2}(X_{t},\C)=\mathrm{Im}\, \left(H^{2}(F^{2-p}A^{\bullet}(X_{t});d)\to H^{2}(X_{t},\C)\right)
\end{equation}
for $0\le p\le 2$. Hence there exists blocks $\Phi^{0,1}(t)$, $\Phi^{0,2}(t)$, $\Phi^{1,2}(t)$, such that 
\begin{equation}\label{global qpm 2}
\left(
\begin{array}{ccc}
\Omega_{(0)}(t) \\
\Omega_{(1)}(t)\\
\Omega_{(2)}(t)
\end{array}\right)= \left(
\begin{array}{ccc}
I & \Phi^{0,1}(t) & \Phi^{0,2}(t) \\
O& I& \Phi^{1,2}(t)\\
O&O&I
\end{array}\right)\left(
\begin{array}{ccc}
\eta_{(0)} \\
\eta_{(1)}\\
\eta_{(2)}
\end{array}\right).
\end{equation}

We claim that the blocks $\Phi^{0,1}(t)$, $\Phi^{0,2}(t)$, and $\Phi^{1,2}(t)$ are independent of the choice of the Beltrami differential $\phi(t)$. 
Indeed, the Hodge filtration \eqref{Hodge filt 2} depends only on the complex structure of $X_{t}$, and for any other Beltrami differential $\phi'(t)$ such that
$$X_{t}=X_{\phi(t)}=X_{\phi'(t)},$$
the corresponding period matrix $\Phi'(t)\in N_{-}$ satisfies
$${\Phi'(t)}^{-1}\Phi(t)\in N_{-}\cap B=\{Id\},$$
where $B\subset G_{\C}$ is the parabolic subgroup such that $\check D=G_{\C}/B$. 
This implies that the corresponding blocks of $\Phi(t)$ and $\Phi'(t)$ coincide.

Next we prove that the quasi-period map $\Phi$ is holomorphic. 

Let the holomorphic charts $(U;t)\subset S$ be as in the proof of Lemma~\ref{gK diffeom}. Then locally on $X$, the diffeomorphisms $d(t)$ are given by the holomorphic functions
$$z = (z_1, \dots, z_n) \mapsto (w_1(z, t), \dots, w_n(z, t)),$$
which are holomorphic in $t\in U$. According to Chapter~4.1 of~\cite{MorrowKodaira}, the Beltrami differential has the local expression
$$\phi = \sum_{i,j,k} \left( \frac{\partial w_j}{\partial z_i} \right)^{-1}_{ij}(z, t) \frac{\partial w_j}{\partial \bar{z}_k}(z, t) \, d\bar{z}_k \otimes \partial_{z_i},$$
which is holomorphic in $t$. Therefore, the blocks in~\eqref{defn global qpm} are holomorphic in $t$, which implies that the quasi-period map $\Phi$ is holomorphic.

Finally, Griffiths transversality follows by differentiating the sections given in \eqref{global qpm 2}.
\end{proof}

\begin{proposition}\label{DPconj 3}
Let the notations and assumptions be as in Theorem \ref{global Kahler}. Then 
there exists a positive constant $c_{1}\le 1$, which depends only on the Hodge numbers $h^{2,0}(X)=h^{0,2}(X), h^{1,1}(X)$, such that the image of $S_{t_{0},c_{1}}$ under the quasi-period map in~\eqref{global qpm} lies in $D$; that is, the quasi-period map given in Proposition \ref{DPconj 2} restricts to a period map 
$$\Phi: S_{t_{0},c_{1}} \to N_{-}\cap D.$$
\end{proposition}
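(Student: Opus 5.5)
Since the image of the quasi-period map already lies in $N_{-}\subset\check D$ by Proposition~\ref{DPconj 2}, what remains is to check the open condition that defines $D$ for weight two, namely $F^{p}\oplus\overline{F^{3-p}}=H^{2}(X,\C)$ for $p=1,2$. Equivalently, the vectors $\Omega_{(0)}(t)$, $\overline{\Omega_{(0)}(t)}$ and $\Omega_{(1)}(t)$ (with $\overline{\Omega_{(1)}(t)}$ spanning the complementary direction) must form a basis of $H^{2}(X,\C)$. In the basis $\eta$, with $\overline{\eta_{(i)}}=\eta_{(2-i)}$, this amounts to the invertibility of an explicit matrix built from $\Phi^{(0,1)},\Phi^{(0,2)},\Phi^{(1,2)}$ and their conjugates. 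It is the same matrix whose identity-at-$t=0$ property drove the Jacobian argument in Theorem~\ref{real approximation}. The conditions are:
\begin{itemize}
\item $F^{2}\cap\overline{F^{1}}=0$, which means the $h^{2,0}\times h^{2,0}$ block $I-\Phi^{(0,2)}\overline{\ldots}$ is invertible;
\item $F^{1}\cap\overline{F^{2}}=0$, which is the conjugate condition.
\end{itemize}
It therefore suffices to show $\|\Phi^{(i,j)}(t)\|$ is small, with a bound depending only on $\|\phi(t)\|$ and the dimensions $h^{2,0},h^{1,1}$.

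The key step is a norm estimate for the blocks in terms of $\|\phi(t)\|=\|\phi(t)\|^{E}$. I would fix $\omega_{0}$ and choose the harmonic bases $\tilde\eta_{(i)}$ to be $L^{2}$-orthonormal, so that $\Phi^{(i,j)}(t)$ is literally the matrix of $L^{2}$-pairings in \eqref{defn global qpm}. Pointwise, $|i_{\phi}\alpha|\le C_{n}\|\phi\|\,|\alpha|$, with the comparison constant between $\|\cdot\|^{E}$ and $\|\cdot\|^{\omega_{0}}$ absorbed. The operator $T=\bar\partial^{*}G\partial$ is $L^{2}$-bounded, so $(I+Ti_{\phi})^{-1}$ is given by a Neumann series with norm at most $(1-\|Ti_{\phi}\|)^{-1}$. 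Harmonic projection is an $L^{2}$-contraction. Each entry of $\Phi^{(i,j)}$ is then bounded by $C\|\phi\|^{j-i}/(1-C\|\phi\|)$, and each operator-norm bound on a block picks up at most a factor $\sqrt{h^{2,0}h^{1,1}}$ or $h^{2,0}$ from passing between entries and matrix norms.

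Next I would write the determinant condition as $\det(I-E(t))\ne0$, where $E(t)$ is a polynomial in the blocks and their conjugates with no constant term; for example, $\overline{\Phi^{(0,2)}}-\overline{\Phi^{(0,1)}}\Phi^{(1,2)}$-type expressions appear in \eqref{intr B=01 11-0}. A sufficient condition is $\|E(t)\|<1$. Combining this with the block estimates gives an explicit $c_{1}\le1$ such that $\|\phi(t)\|<c_{1}$ forces $\|E(t)\|<1$, and $c_{1}$ depends only on the Hodge numbers once the universal operator constants are fixed. Because the blocks are independent of the choice of $\phi(t)$ (Proposition~\ref{DPconj 2}), we may take $\phi(t)$ with $\|\phi(t)\|<c_{1}$ for every $t\in S_{t_{0},c_{1}}$, using the definition of $d_{B}$ and Lemma~\ref{gK diffeom}.

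The main obstacle is making $c_{1}$ genuinely depend only on the Hodge numbers. The bound on $\|T\|$ and the comparison between $\|\cdot\|^{E}$ and $\|\cdot\|^{\omega_{0}}$ a priori depend on $(X,\omega_{0})$ and the chosen cover. I would first try to exploit the Kähler identity $T=\bar\partial^{*}G\partial$, which on $\partial$-exact inputs relates to the orthogonal projection onto $\operatorname{Im}\bar\partial^{*}$, to show $\|T\|\le1$ on the relevant $L^{2}$ spaces. If that fails, I would record the dependence on these fixed reference data and note that they are fixed once $X_{t_{0}}$ is given.
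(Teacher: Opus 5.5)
Your plan follows the paper's proof. Membership in $D$ is reduced to the nonvanishing of the determinant of the block matrix expressing $\Omega_{(0)}(t),\Omega_{(1)}(t),\overline{\Omega_{(0)}(t)}$ in the basis $\eta$, which holds once the blocks are small. The blocks are then bounded in Lemma~\ref{constant lemma} by the same Neumann series, using an orthonormal harmonic basis and $\|T\|\le 1$; the paper quotes this bound from Proposition~3.5 of \cite{LS26-1}, and your K\"ahler-identity idea does recover it, since $\|\bar{\partial}^{*}G\partial\alpha\|^{2}\le(\alpha,\partial^{*}\partial G\alpha)\le\|\alpha\|^{2}$.

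The remaining dependence on the comparison between $\|\cdot\|^{E}$ and the $\omega_0$-norm, which you flag, is not handled in the paper either: it writes $\|i_{\phi}\beta\|\le\|\phi\|\,\|\beta\|$ with no constant. It can be made universal by choosing the charts so that $\omega_0$ and the Euclidean metric are comparable with a fixed ratio on each $\bar{V}$, so it is not a defect specific to your route.
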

\begin{proof} 
The Hodge filtration 
$$F^{2}H^{2}(X_{t},\C)\subset F^{1}H^{2}(X_{t},\C)\subset F^{0}H^{2}(X_{t},\C)=H^{2}(X_{t},\C)$$
gives a pure Hodge structure on $H^{2}(X_{t},\C)$ if and only if
$$F^{1}H^{2}(X_{t},\C)\oplus \bar{F^{2}H^{2}(X_{t},\C)}=H^{2}(X_{t},\C).$$
Hence the image $\Phi(t)$ of the quasi-period map given in Proposition \ref{DPconj 2} lies in $N_{-}\cap D$ if and only if
$$\Omega_{(0)}(t), \Omega_{(1)}(t), \bar{\Omega_{(0)}(t)}$$
are linearly independent, which is equivalent to that
\begin{equation}\label{pure Hodge matrix} 
\mathrm{det}\left(
\begin{array}{ccc}
I & \Phi^{0,1}(t) & \Phi^{0,2}(t) \\
O& I& \Phi^{1,2}(t)\\
\bar{\Phi^{0,2}(t)}&\bar{\Phi^{0,1}(t)}&I
\end{array}\right)\neq 0.
\end{equation}
Since the determinant \eqref{pure Hodge matrix} depends continuously on $t$, there exists a constant $c'_{1}>0$, which depends only on the sizes of the blocks of the matrix, such that the determinant \eqref{pure Hodge matrix} is nonzero whenever $$|\Phi^{0,1}(t)|,|\Phi^{0,2}(t)|,|\Phi^{1,2}(t)|<c'_{1}.$$

Note that the sizes of the blocks are determined by the Hodge numbers $h^{2,0}(X)=h^{0,2}(X), h^{1,1}(X)$. Then from Lemma \ref{constant lemma} below, we conclude that there exists a constant $c_{1}>0$, which depends only on the Hodge numbers, such that the determinant \eqref{pure Hodge matrix} is nonzero for all $t\in S_{t_{0},c_{1}}$. Hence $\Phi(t)\in N_{-}\cap D$ for all $t\in S_{t_{0},c_{1}}$.
\end{proof}

\begin{lemma}\label{constant lemma}
Let the notations be as in Proposition \ref{DPconj 2} and \ref{DPconj 3}. Then we have that 
\begin{eqnarray*}
|\Phi^{0,1}(t)|&\le & \frac{\|\phi(t)\|}{1-\|\phi(t)\|}\\
|\Phi^{0,2}(t)|&\le & \frac{\|\phi(t)\|^{2}}{1-\|\phi(t)\|}\\
|\Phi^{1,2}(t)|&\le & \frac{\|\phi(t)\|}{1-\|\phi(t)\|},
\end{eqnarray*}
for any $\phi(t)\in A^{0,1}\left( X,\mathrm T^{1,0}X\right)$ with $\|\phi(t)\|<1$ such that $X_{t}=X_{\phi(t)}$.
\end{lemma}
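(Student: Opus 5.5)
The plan is to bound each block directly from its defining formula \eqref{defn global qpm}:
$$\Phi^{(i,j)}(t)\cdot\tilde\eta_{(j)}=\frac{1}{(j-i)!}\,\mathbb H\left(i_{\phi(t)}^{j-i}(I+Ti_{\phi(t)})^{-1}\tilde\eta_{(i)}\right),$$
using operator-norm estimates for $i_{\phi}$, $T$ and $\mathbb H$. We first normalize the bases $\tilde\eta_{(i)}$ to be orthonormal with respect to the $L^2$ inner product. Then $|\Phi^{(i,j)}(t)|$ is bounded by the operator norm of $\mathbb H\, i_\phi^{j-i}(I+Ti_\phi)^{-1}$ restricted to $\mathbb H^{2-i,i}(X)$, up to a constant depending only on the block sizes, hence only on the Hodge numbers. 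The factor $1/(j-i)!$ only helps.

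The key steps, in order, are as follows.

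\emph{Step 1.} Show that the pointwise operator norm of the contraction $i_\phi$ on forms is at most $\|\phi\|$ (after comparing $\|\cdot\|^E$ with a metric norm, or by arranging the convention so that the constant is absorbed). This gives $\|i_\phi\sigma\|_{L^2}\le\|\phi\|\,\|\sigma\|_{L^2}$.

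\emph{Step 2.} Use that $\mathbb H$ is an orthogonal projection, so it has norm at most $1$.

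\emph{Step 3.} Show that $T=\bar\partial^*G\partial$ has $L^2$-operator norm at most $1$. On a K\"ahler manifold, $\partial$ and $\bar\partial^*G$ are controlled by $\Delta_{\bar\partial}=\Delta_\partial$, and the standard identity $\|\bar\partial^*G\partial\sigma\|^2\le\|\sigma\|^2$ holds. This is the fact used in \cite{LS26-1} to invert $I+Ti_\phi$.

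\emph{Step 4.} Combine these by the Neumann series
$$(I+Ti_\phi)^{-1}=\sum_k(-Ti_\phi)^k,$$
which gives $\|(I+Ti_\phi)^{-1}\|\le 1/(1-\|\phi\|)$. Then $\|\mathbb H\,i_\phi^{m}(I+Ti_\phi)^{-1}\|\le\|\phi\|^m/(1-\|\phi\|)$. With $m=1$ this yields the bounds for $\Phi^{0,1}$ and $\Phi^{1,2}$, and with $m=2$ the bound for $\Phi^{0,2}$.

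A sharper alternative for $\Phi^{1,2}$ avoids the geometric factor entirely. Since $\mathbb H\circ T=0$ on the relevant space, we can write $\mathbb H\, i_\phi(I+Ti_\phi)^{-1}$ through the decomposition $(I+Ti_\phi)^{-1}=I-Ti_\phi(I+Ti_\phi)^{-1}$. This is not needed, however, since the stated bounds are weaker.

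I expect the main obstacle to be Step 1 together with the norm convention. The norm $\|\phi\|^E$ is Euclidean in charts, while the $L^2$ theory uses $\omega_0$, so the pointwise bound $|i_\phi\sigma|\le\|\phi\||\sigma|$ holds only up to the constants $c,C$ relating $\|\cdot\|^E$ and $\|\cdot\|^{\omega_0}$. To handle this, I would first try to interpret the estimate with $\|\phi\|=\|\phi\|^{\omega_0}$, exactly as in \cite{LS26-1}. Then I would argue that, for the purpose of Proposition~\ref{DPconj 3}, replacing $\|\phi\|$ by a multiple is harmless, because it only rescales the constant $c_1$. A secondary delicate point is the contraction on $(2,0)$-forms iterated twice: $i_\phi^2$ on $(2,0)$-forms involves the determinant-like action of $\phi$. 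Its pointwise norm is still bounded by $\|\phi\|^2$, by the singular value bound on $\wedge^2\phi$.
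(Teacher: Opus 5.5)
Your argument is the paper's argument. It uses orthonormal harmonic bases, Cauchy--Schwarz, $\|\mathbb H\|\le 1$, $\|T\|\le 1$ from Proposition~3.5 of \cite{LS26-1}, and the Neumann series for $(I+Ti_{\phi})^{-1}$, written out for $\Phi^{0,1}$ and repeated for the other blocks. The step that fails is Step~1, which the paper also uses implicitly. The pointwise bound $|i_{\phi}\sigma|\le\|\phi\|\,|\sigma|$ is false on $2$-forms even when $\|\phi\|$ is the $\omega_0$-operator norm. So the $\|\cdot\|^{E}$ versus $\|\cdot\|^{\omega_0}$ comparison you raise is not the only issue.

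Here is a counterexample. Take a flat complex $2$-torus with flat charts and $\phi=a(\partial_1\otimes d\bar z_1+\partial_2\otimes d\bar z_2)$. It is integrable, has constant coefficients, and $\|\phi\|=|a|$. Yet $i_{\phi}(dz_1\wedge dz_2)=a(d\bar z_1\wedge dz_2+dz_1\wedge d\bar z_2)$ has norm $\sqrt2\,|a|\,|dz_1\wedge dz_2|$. Since everything has constant coefficients, $Ti_{\phi}\tilde\eta_{(0)}=0$. Take a real orthonormal basis of $H^{1,1}$ containing the unit class of $d\bar z_1\wedge dz_2+dz_1\wedge d\bar z_2$. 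In that basis $\Phi^{0,1}$ has an entry $\sqrt2\,a$, and the same computation on this $(1,1)$-form gives an entry $\sqrt2\,a$ of $\Phi^{1,2}$. Both exceed $|a|/(1-|a|)$ for small $a$, so the stated constant-$1$ bounds fail in this example.

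Consequently this argument, yours or the paper's, cannot give the inequalities with constant $1$. What it does prove is the same three bounds with $\|\phi\|$ replaced by $C\|\phi\|$. Here $C$ depends on the dimension (through the bidegree) and on the constants comparing $\|\cdot\|^{E}$ with $\|\cdot\|^{\omega_0}$ on the chosen charts.

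For $\Phi^{0,2}$ the factor $\frac12$ is actually needed, not merely helpful. On $(2,0)$-forms $i_{\phi}^2$ acts as $2\wedge^2\phi$, so only $\frac12 i_{\phi}^2$ has norm at most $\|\phi\|^2$.

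Your fallback of absorbing the constant is the right repair, and it still suffices for Proposition~\ref{DPconj 3}. The cost is that $c_1$ then depends on the dimension, on $\omega_0$ and on the charts, not only on the Hodge numbers.
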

\begin{proof} 
We only prove the lemma for $\Phi^{0,1}(t)$ and the proof for other blocks is similar.

From the proof of Proposition \ref{DPconj 2}, we have that 
\begin{eqnarray*}
|\Phi^{0,1}(t)|&= &|\left(\mathbb H\left(i_{\phi(t)} (I+Ti_{\phi(t)})^{-1}\tilde \eta_{(0)} \right), \tilde\eta_{(0)}\right)|\\
&\le &\|\mathbb H\left(i_{\phi(t)} (I+Ti_{\phi(t)})^{-1}\tilde \eta_{(0)} \right) \|\\
&\le &\|i_{\phi(t)}\tilde  \eta_{(0)}-i_{\phi(t)}Ti_{\phi(t)}\tilde \eta_{(0)}+\cdots+i_{\phi(t)}(-Ti_{\phi(t)})^{k-1}\tilde \eta_{(0)}+\cdots\|\\
&\le & \|\phi(t)\|+\|T\|\|\phi(t)\|^{2}+\cdots+\|T\|^{k-1}\|\phi(t)\|^{k}+\cdots\\
&= &\frac{\|\phi(t)\|}{1-\|T\|\|\phi(t)\|}\le  \frac{\|\phi(t)\|}{1-\|\phi(t)\|}.
\end{eqnarray*}
Here we take the adapted basis $\tilde\eta_{(i)}$, $i=0,1,2$, to be the orthonormal basis and the norm $\|T\|\le 1$ follows from Proposition 3.5 in \cite{LS26-1}.
\end{proof}

\begin{proof}[Proof of Theorem \ref{global Kahler}]
As in Section \ref{Kahler section}, we know that a real $2$-class $\sigma=\alpha_{(0)}\cdot \eta_{(0)}+[\omega_{0}]+\bar{\alpha_{(0)}}\cdot \eta_{(2)}$ is a $(1,1)$-class on $X_{t}$ for $t\in S_{t_{0},1}$ if and only if 
\begin{equation}\label{B=01 11-0'}
F=\bar{\alpha_{(0)}}- \alpha_{(1)}^{0}\Phi^{(1,2)}(t)-\alpha_{(0)}\left(\Phi^{(0,2)}(t)-\Phi^{(0,1)}(t)\Phi^{(1,2)}(t)\right)=0,
\end{equation}
where $\alpha_{(1)}^{0}$ is a row vector such that $[\omega_{0}]=\alpha_{(1)}^{0}\cdot \eta_{(1)}$. Here the blocks $\Phi^{0,1}(t)$, $\Phi^{0,2}(t)$, and $\Phi^{1,2}(t)$ are given by Proposition \ref{DPconj 2}.

By the Implicit Function Theorem, we have that Equation \eqref{B=01 11-0'} determines a local real analytic function $\alpha_{(0)}(t)$ in $t$ provided that the full Jacobian matrix 
\begin{equation}\label{fJ weight2}
J(t)=\left(
\begin{array}{cc}
\frac{\partial F}{\partial \bar{\alpha_{(0)}}} & \frac{\partial F}{\partial {\alpha_{(0)}}} \\
\frac{\partial \bar F}{\partial \bar{\alpha_{(0)}}}& \frac{\partial \bar F}{\partial {\alpha_{(0)}}}
\end{array}\right)=\left(
\begin{array}{cc}
I & -A(t) \\
-\bar{A(t)}& I
\end{array}\right)
\end{equation}is non-degenerate at $t$,
where 
$$A(t)=\Phi^{(0,2)}(t)-\Phi^{(0,1)}(t)\Phi^{(1,2)}(t).$$
Since $A(t_{0})=O$, there exists a positive constant $c_{2}\le 1$ such that the Jacobian matrix $J(t)$ is non-degenerate for all $t\in S_{t_{0},c_{2}}$. In fact, from Lemma \ref{constant lemma}
we can choose the constant $c_{2}$, which depends only on the Hodge numbers $h^{2,0}(X)=h^{0,2}(X), h^{1,1}(X)$, such that the operator norm $\|A(t)\|<1$ for whenever $\|\phi(t)\|<c_{2}$.

Let $c_{0}=\min\{c_{1},c_{2}\}>0$. We will prove that for any $t\in S_{t_{0},c_{0}}$, there exists a row vector $\alpha_{(0)}(t)$ satisfying \eqref{B=01 11-0'}.

In fact we can define $S'_{t_{0},c_{0}}$ as the subset of $S_{t_{0},c_{0}}$ consisting of such points.
Since $S_{t_{0},c_{0}}$ is connected, we prove the above statement by a standard open--and--closed argument in topology.

\noindent Openness of $S'_{t_{0},c_{0}}$. 

For $t_{1}\in S'_{t_{0},c_{0}}$ with $\alpha_{(0)}(t_{1})$ and $\sigma(t_{1})$ satisfying \eqref{B=01 11-0'}. Since $J(t)$ is non-degenerate for all $t\in S_{t_{0},c_{0}}$, we have a local real analytic function $\alpha_{(0)}(t)$ around $t_{1}$ satisfying \eqref{B=01 11-0'} with initial value $\alpha_{(0)}(t_{1})$. This proves the openness of $S'_{t_{0},c_{0}}$.

\noindent Closedness of $S'_{t_{0},c_{0}}$. 

Let $\{t_{i}\}_{i=1}^{\infty}$ be a sequence of points in $S'_{t_{0},c_{0}}$ with limit $t_{\infty}\in S_{t_{0},c_{0}}$. We need to show that $t_{\infty}\in S'_{t_{0},c_{0}}$. By the definition of $S'_{t_{0},c_{0}}$, there exist row vectors $\alpha_{(0)}(t_{i})$ such that 
$$\bar{\alpha_{(0)}}(t_{i})- \alpha_{(1)}^{0}\Phi^{(1,2)}(t_{i})-\alpha_{(0)}(t_{i})A(t_{i})=0.$$

We claim that $\{\alpha_{(0)}(t_{i})\}_{i}$ is bounded. Otherwise for any $N>0$,
\begin{eqnarray*}
\|\alpha_{(1)}^{0}\Phi^{(1,2)}(t_{i})\|&\ge&  \|\bar{\alpha_{(0)}}(t_{i})\|-\|A(t_{i})\|\|\alpha_{(1)}^{0}(t_{i})\|\\
&\ge & (1-\|A(t_{\infty})\|-\epsilon_{0}) \|\alpha_{(0)}(t_{i})\|>N
\end{eqnarray*}
for $i$ large enough, where $\epsilon_{0}>0$ can be chosen as $1/2(1-\|A(t_{\infty})\|)$. This contradicts the continuity of $\alpha_{(1)}^{0}\Phi^{(1,2)}(t)$ in a neighborhood of $t_{\infty}$.

Hence $\{\alpha_{(0)}(t_{i})\}_{i}$ is bounded and we can choose a subsequence, which is still denoted by $\{t_{i}\}_{i=1}^{\infty}$, such that $\{\alpha_{(0)}(t_{i})\}_{i}$ has a limit $\alpha_{(0)}({\infty})$. By continuity, the row vector $\alpha_{(0)}({\infty})$ satisfies that
$$\bar{\alpha_{(0)}}({\infty})- \alpha_{(1)}^{0}\Phi^{(1,2)}(t_{\infty})-\alpha_{(0)}({\infty})A(t_{\infty})=0.$$
Therefore $t_{\infty} \in S'_{t_{0},c_{0}}$. This proves the closedness of $S'_{t_{0},c_{0}}$

Now we have proved that for any $t\in S_{t_{0},c_{0}}$, there exists a real $2$-class $\sigma(t)=\alpha_{(0)}(t)\cdot \eta_{(0)}+[\omega_{0}]+\bar{\alpha_{(0)}}(t)\cdot \eta_{(2)}$, which is a $(1,1)$-class on $X_{t}$. Note that $\sigma(t)$ has a harmonic representative 
$$\tilde\sigma(t)=\alpha_{(0)}(t)\cdot \tilde\eta_{(0)}+\omega_{0}+\bar{\alpha_{(0)}}(t)\cdot \tilde\eta_{(2)},$$
which is positive definite from Proposition \ref{intr pos def of Kahler}, provided that $c_0<1$. This proves that $(X_{t},\tilde\sigma(t))$ is K\"ahler. 
\end{proof}

\begin{remark}
Classical results on K\"ahler stability rely on the regularity properties of a family of elliptic operators, which hold only for sufficiently small $|t|$, and from which one cannot directly detect positivity. See \cite{MorrowKodaira}, Chapter~4.4, for further details.

By contrast, our method of K\"ahler stability holds on the ball
$\{t\in S_{t_{0}}:\, d_{B}(t_{0},t)<c_{0}\}$
of radius $c_{0}$, where $c_{0}$ is the largest radius such that all points in the corresponding ball satisfy that the matrices in \eqref{pure Hodge matrix} and \eqref{fJ weight2} are non-degenerate. Note that the entries of the matrices in \eqref{pure Hodge matrix} and \eqref{fJ weight2}, as well as the positivity of the extended $(1,1)$-forms on $X_{t}$, are determined solely by the Beltrami differentials.

All of these arguments are independent of the choice of the initial K\"ahler form $\omega_{0}$. This implies that the $\nabla_{t_0}^{1,1}$-flat extensions $\mathcal{K}^{\nabla_{t_0}^{1,1}}_{t_{0},t}\subset \mathcal{K}_{t}$ in Theorem~\ref{intr strong Kahler cone invariance} exist for all $t\in S_{t_{0},c_{0}}$. This, together with Theorem~0.9 of Demailly--Paun in \cite{DP04}, provides positive evidence for their Conjecture~5.1 in \cite{DP04}.
\end{remark}

As an application of the proof of Theorem \ref{global Kahler}, we will prove in \cite{LS26-3} that a compact complex manifold $X_0$ is K\"ahler if it can be approximated by K\"ahler manifolds arising from a family of compact K\"ahler manifolds with trivial canonical bundles. 
This gives a new proof of Siu's theorem \cite{Siu83} and has further applications to degenerations of hyperk\"ahler manifolds, yielding complete solutions to the conjectures of Perego \cite{Per} and Soldatenkov--Verbitsky \cite{SV24}.

%



\section{Algebraic approximations of compact K\"ahler manifolds}\label{algebraic app section}

In this section, we establish a generalization of Green’s criterion for strong algebraic approximation of compact K\"ahler manifolds, formulated in terms of the Beltrami differential. The classical Green criterion corresponds to the first-order term in our formulation.

We continue to use the notations and basic assumptions from Section \ref{Kahler section}. That is, $(X, \omega_0)$ is a compact K\"ahler manifold, and $\phi(t) \in A^{0,1}(X, \mathrm{T}^{1,0}X)$ is defined as in~\eqref{Kuranishi phi}.

We reformulate the Kodaira problem in the following definition.
\begin{definition}
Let $(X,\omega_{0})$ be a compact K\"ahler manifold. We say that $X$ can be approximated by projective manifolds, or that $X$ admits algebraic approximations, provided that there exists a sequence $\{\psi_{n}\}_{n=1}^{\infty}$ of elements in $A^{0,1}(X,\Tan X)$ such that 
\begin{eqnarray*}
&\bar{\partial} \psi_{n} - \frac{1}{2}[\psi_{n}, \psi_{n}] = 0\\
& \displaystyle \lim_{n\to \infty}\psi_{n}=0
\end{eqnarray*}
and the complex manifold $X_{\psi_{n}}$ is projective for $n\ge 1$.
\end{definition}
From Kuranishi's theorem, $X$ can be approximated by projective manifolds if and only if there exists a sequence $\{t_{n}\}_{n=1}^{\infty}$ in $B$ with limit $\lim t_{n}=0$ such that $X_{t_{n}}$ is projective for $n\ge 1$.

\begin{definition}
Let $(X,\omega_{0})$ be a compact K\"ahler manifold. We say that $X$ can be strongly approximated by projective manifolds, or that $X$ admits strong algebraic approximations, provided that the Kuranishi base $B$ is of positive dimension and the subset
$$B^{a}=\{t\in B:\, X_{t} \text{ is projective}\}$$
is dense in the Kuranishi base $B$ when the radius $\epsilon$ of $B$ is sufficiently small.
\end{definition}

From Theorem \ref{Kahler stability}, we know that $(X_{t},\omega_{t})$ is K\"ahler for $t\in B$. Hence by the Kodaira embedding theorem, Theorem 8.2 of \cite{MorrowKodaira}, $X_{t}$ is projective if and only if $[\omega_{t}]\in H^{2}(X,\mathbb Q)$.

Let 
\begin{eqnarray*}
\Phi&:&\Delta \to N_{-}\cap D\\
&&t\mapsto \left(
\begin{array}{ccc}
I & \Phi^{0,1}(t) & \Phi^{0,2}(t) \\
O& I& \Phi^{1,2}(t)\\ 
O&O&I
\end{array}\right)
\end{eqnarray*}
be the quasi-period map as defined in Section \ref{Kahler section}, and let 
\begin{equation}\label{qpm defn'}
\Phi:\, B \to N_{-}\cap D
\end{equation} be the corresponding period map.

Let $$\eta= \{\eta_{(0)}^T, \eta_{(1)}^T, \eta_{(2)}^T\}^T$$
be the adapted basis with respect to the Hodge decomposition $$H^{2}(X,\C)=H^{2,0}(X)\oplus H^{1,1}(X)\oplus H^{0,2}(X).$$

Let $[\omega_{0}]=\sigma_{0}=\alpha_{(1)}^{0}\cdot \eta_{(1)}$.
Then Proposition \ref{key pp-proposition} implies that
\begin{eqnarray}
 \sigma &=& \alpha_{(0)}\cdot \eta_{(0)}+\alpha_{(1)}\cdot \eta_{(1)}+\alpha_{(2)}\cdot \eta_{(2)} \nonumber\\
    &=& \beta_{(0)}(t)\cdot \Omega_{(0)}(t)+\beta_{(1)}(t)\cdot \Omega_{(1)}(t)+\beta_{(2)}(t)\cdot \Omega_{(2)}(t) \label{alpha,beta 11}
\end{eqnarray}
in $H^{2}(X,\mathbb C)$ is a real $(1,1)$-class on $X_{t}$, if and only if $\alpha_{(1)}=\bar{\alpha_{(1)}}$ is real and the following equation holds
\begin{equation}\label{B=01 11}
\bar{\alpha_{(0)}}- \alpha_{(1)}\Phi^{(1,2)}(t)-\alpha_{(0)}\left(\Phi^{(0,2)}(t)-\Phi^{(0,1)}(t)\Phi^{(1,2)}(t)\right)=0.
\end{equation}
Then \eqref{B=01 11} determines a real analytic function $\alpha_{(0)}=\alpha_{(0)}(\alpha_{(1)},t)$ on $H^{1,1}(X,\mathbb R)\times B$ such that $\alpha_{(0)}(\alpha_{(1)}^{0},0)=0$ and
\begin{equation}\label{H11}
H(\alpha_{(1)},t)=\alpha_{(0)}(\alpha_{(1)},t)\cdot \eta_{(0)}+\alpha_{(1)}\cdot \eta_{(1)} +\bar{\alpha_{(0)}(\alpha_{(1)},t)}\cdot \eta_{(2)} \in H^{1,1}(X_{t},\mathbb R).
\end{equation}
Here we take the Kuranishi base $B$ small enough.

The real $(1,1)$-class $H(\alpha_{(1)},t)$ is still positive definite on $X_{t}$ for $t\in B$ and the radius $\epsilon$ of $B$ sufficiently small. Hence $X_{t}$ is projective if $H(\alpha_{(1)},t)\in H^2(X, \mathbb{Q})$.

It is not easy to determine whether $H(\alpha_{(1)}, t)$ lies in $H^2(X, \mathbb{Q})$ based on the coefficients of $\eta_{(i)}$ in~\eqref{H11}, since $\eta$ is merely an adapted basis of the Hodge decomposition at the base point and is not defined over $\mathbb{Q}$. 
However, by the density of $H^2(X, \mathbb{Q})$ in $H^2(X, \mathbb{C})$, a sufficient condition for this is that the Hodge map
\begin{equation}\label{Hodge map 11}
H:\, H^{1,1}(X,\mathbb R)\times B \to H^2(X, \mathbb{C})
\end{equation}
is an open map near $(\alpha_{(1)}^{0},0)$.

\begin{theorem}\label{alg app main theorem}
Let $(X,\omega_{0})$ be a compact K\"ahler manifold. If there exists $\zeta_{0}=[\tilde\zeta_{0}]\in H^{1,1}(X,\mathbb R)$ such that the map
\begin{equation}\label{iphi}
[\mathbb H \left(i_{\phi(\cdot)}\tilde\zeta_{0}\right)]:\,  B\to H^{0,2}(X)
\end{equation}
is an open map when $B$ is sufficiently small, then $X$ can be strongly approximated by projective manifolds.
\end{theorem}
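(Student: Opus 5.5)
The strategy is to show that the Hodge map $H$ of \eqref{Hodge map 11} is open near a suitable point, and then use the density of rational classes. The natural base point is $(\alpha_{(1)}^{0}+s\zeta_{0},0)$, or more generally a point $(\sigma,0)$ with $\sigma$ Kähler and close to $[\omega_0]$. Since the Kähler cone $\mathcal K_0$ is open in $H^{1,1}(X,\mathbb R)$, we may replace $\zeta_0$ by $\sigma_0'=[\omega_0]+\lambda\zeta_0$ for $\lambda$ large, rescaled as needed. Openness of \eqref{iphi} for $\zeta_0$ should pass to this perturbed class, because $[\mathbb H(i_{\phi(t)}\omega_0)]$ is of order $O(|t|^2)$ by Proposition \ref{qpm 2}: indeed $i_{\phi_1}\omega_0$ for harmonic $\phi_1$ is the cup product with the Kähler class, and this contributes only a controlled term. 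This reduction is one point to check carefully. A cleaner alternative is to work directly with $\zeta_0$ and use that $H$ is linear in $\sigma$ for fixed $t$, so that the open set produced below meets the cone of classes $H(\sigma,t)$ with $\sigma\in\mathcal K_0$.

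\textbf{Key expansion.} The heart of the argument is the formula for $\alpha_{(0)}(\sigma,t)$. From \eqref{B=01 11} together with Proposition \ref{qpm 2}, $A(t)=\Phi^{(0,2)}-\Phi^{(0,1)}\Phi^{(1,2)}=O(|t|^2)$, so
$$\bar{\alpha_{(0)}(\sigma,t)}=\alpha_{(1)}\Phi^{(1,2)}(t)+O(|t|^2|\alpha_{(0)}|).$$
By Theorem \ref{main bundle}, $\alpha_{(1)}\Phi^{(1,2)}(t)\cdot\eta_{(2)}=[\mathbb H(i_{\phi(t)}(I+Ti_{\phi(t)})^{-1}\tilde\sigma)]$. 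For $\tilde\sigma$ harmonic and Kähler-closed we have $\partial\tilde\sigma=0$, hence $T\tilde\sigma=0$, and the correction $(I+Ti_\phi)^{-1}\tilde\sigma-\tilde\sigma=-Ti_\phi\tilde\sigma+\cdots$ is $\bar\partial^*$-exact. One checks that $\mathbb H i_\phi$ of it is of higher order. Consequently the $H^{0,2}$-component of $H(\sigma,t)$ equals $[\mathbb H(i_{\phi(t)}\tilde\sigma)]$ plus terms of higher order in $t$.

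\textbf{Openness.} Consider the map $(\sigma,t)\mapsto H(\sigma,t)\in H^2(X,\mathbb R)\cong H^{1,1}(X,\mathbb R)\oplus \mathrm{Re}(H^{2,0}\oplus H^{0,2})$. Its first component is exactly $\sigma$, and its second component is determined by $\bar{\alpha_{(0)}}$, so it is $2\mathrm{Re}$ of the $H^{0,2}$-part. Since $H^{0,2}\to\mathrm{Re}(H^{2,0}\oplus H^{0,2})$, $\beta\mapsto\beta+\bar\beta$, is a real isomorphism, openness of $H$ near $(\zeta_0,0)$ reduces to openness of $t\mapsto \bar{\alpha_{(0)}(\sigma,t)}$ for $\sigma$ near $\zeta_0$. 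I would argue via a degree or stability-of-open-maps argument. The hypothesis gives that $t\mapsto[\mathbb H(i_{\phi(t)}\tilde\zeta_0)]$ is open on a small $B$, and the exact map differs from it by a perturbation that is small relative to it on small balls. This is the main obstacle: openness is not stable under arbitrary small perturbations for singular $B$. What I would try is to use the linearity in $\sigma$ together with the free $\sigma$-directions. Solving for $\sigma$, the image of the map contains $\{\sigma+w\}$ for all $w$ in the open image of $t\mapsto \Gamma_t(\sigma)$, and a local-degree (Brouwer) argument on a small sphere in $B$, or on the normalization of an irreducible component, transfers openness from the leading term to the exact map.

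\textbf{Conclusion.} Once $H$ is open near $(\zeta_0,0)$ with $\zeta_0$ replaced by a Kähler class as above, the set $\{t\in B: H(\sigma,t)\in H^2(X,\mathbb Q)\text{ for some }\sigma\in\mathcal K_0\}$ is dense near $0$, since $H^2(X,\mathbb Q)$ is dense and $H$ is open at every nearby point (apply the argument at each base point $t_1\in B$ near $0$). Theorem \ref{Kahler cone invariance} shows that $H(\sigma,t)\in\mathcal K_t$. A rational Kähler class, after scaling to an integral one, gives projectivity of $X_t$ by the Kodaira embedding theorem. Hence $B^a$ is dense.
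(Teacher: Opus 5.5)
Your overall route is the paper's: openness of the Hodge map, reduction to $t\mapsto\overline{\alpha_{(0)}(\sigma,t)}$, the leading term $\alpha_{(1)}\Phi^{(1,2)}(t)$, its identification with $[\mathbb H(i_{\phi(t)}(I+Ti_{\phi(t)})^{-1}\tilde\sigma)]$, and removal of $(I+Ti_{\phi(t)})^{-1}$. The genuine gap is the passage from $\zeta_0$ to a K\"ahler class, which your conclusion needs.

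Your claim $[\mathbb H(i_{\phi(t)}\omega_0)]=O(|t|^2)$ is false. Proposition~\ref{qpm 2} only gives $\Phi^{(1,2)}(t)=O(|t|)$. The linear term $\sum_i t_i[\mathbb H(i_{\theta_i}\omega_0)]$ is the cup product of $H^1(X,\Theta_X)$ with the K\"ahler class, i.e.\ the first-order obstruction to $[\omega_0]$ staying of type $(1,1)$, and it is non-zero in general. For example, on a K3 surface the K\"ahler class stays of type $(1,1)$ only along a hypersurface of $B$.

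The choice of $\lambda$ does not help either. The class $[\omega_0]+\lambda\zeta_0$ is K\"ahler only for small $\lambda$, where the $[\omega_0]$-part dominates. For large $\lambda$ the rescaled class is near $\zeta_0$, which need not lie in $\overline{\mathcal K_0}$.

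Your ``cleaner alternative'' does not repair this. Homogeneity $H(\lambda\sigma,t)=\lambda H(\sigma,t)$ only moves openness along the ray $\mathbb R_{>0}\zeta_0$, which need not meet $\mathcal K_0$. A rational class near $\zeta_0$ of type $(1,1)$ on $X_t$ yields no K\"ahler class on $X_t$.

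The missing idea is the paper's genericity step. The map $t\mapsto[\mathbb H(i_{\phi(t)}\tilde\zeta)]$ is linear in $\tilde\zeta$ and open for $\zeta_0$. Hence the set $Z\subset H^{1,1}(X,\mathbb R)$ of classes for which it is not open is a proper algebraic subset of codimension $\ge1$. Its complement therefore meets the open cone $\mathcal K_0$, and one runs the leading-term argument at a K\"ahler class outside $Z$. The paper phrases this as choosing $\omega_0$ with $[\omega_0]\notin Z$, i.e.\ taking $\tilde\zeta_0=\omega_0$.

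The second weak point is the one you flagged yourself. The paper handles it by direct comparison with leading terms. It uses $A(t)=o(\Phi^{(1,2)}(t))$, from transversality $(A)^{\bullet}_{\mu}=-\Phi^{(0,1)}(\Phi^{(1,2)})^{\bullet}_{\mu}$, to pass from $\overline{\alpha_{(0)}}$ to $\alpha^0_{(1)}\Phi^{(1,2)}(t)$. It then uses $\|(I+Ti_{\phi(t)})^{-1}-I\|\to0$ to pass to $[\mathbb H(i_{\phi(t)}\omega_0)]$.

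Your degree argument is a reasonable way to make such a comparison precise, but it is only announced. What makes it run is a relative bound $|g-f|\le\epsilon|f|$ on small spheres in a slice transversal to $f^{-1}(0)$, not mere smallness of $g-f$. Such a bound keeps the zero sets equal and the homotopy $f+s(g-f)$ zero-free.

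For the first comparison this bound does hold. Indeed $\|A(t)\|\le\frac12$ gives $|\alpha_{(0)}|\le2|\alpha_{(1)}\Phi^{(1,2)}(t)|$, and hence $|\overline{\alpha_{(0)}}-\alpha_{(1)}\Phi^{(1,2)}(t)|\le2\|A(t)\|\,|\alpha_{(1)}\Phi^{(1,2)}(t)|$. For the second comparison, an error of size $O(|\phi(t)|^2)$ is dominated in this way only when the leading term is non-degenerate on the slice, e.g.\ under the first-order condition \eqref{Green density condition}. So you would still have to supply that step.
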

\begin{proof} 
From the discussion before the theorem, we only need to show that the Hodge map \eqref{Hodge map 11} is an open map around when restricted to $U^{1,1}\times B$, where $U^{1,1}$ is a small enough neighborhood of $\omega_{0}$ in $H^{1,1}(X,\mathbb R)$.

Let 
$$Z= \left\{[\tilde\zeta]\in H^{1,1}(X,\mathbb R):\, \text{the map }[\mathbb H \left(i_{\phi(\cdot)}\tilde\zeta\right)]:\,  B\to H^{0,2}(X)\text{ is not open}\right\}.$$
Since the map $[\mathbb H \left(i_{\phi(\cdot)}\tilde\zeta\right)]$ is linear in $\tilde\zeta$ and $\zeta_{0}\notin Z$ from \eqref{iphi}, the subspace $Z$ is an algebraic subset of $H^{1,1}(X,\mathbb R)$ of codimension $\ge 1$. As the K\"ahler cone of $X$ is an open subset of $H^{1,1}(X,\mathbb R)$, we can choose the original K\"ahler form $\omega_{0}\in H^{1,1}(X,\mathbb R)\setminus Z$.

Hence we only need to prove the theorem under the assumption \eqref{iphi} with $\tilde\zeta_{0}=\omega_{0}$.

From the expression of the Hodge map in \eqref{H11}, we only need to show that the analytic function
\begin{equation}\label{analytic open}
\bar{\alpha_{(0)}}(\alpha_{(1)}^{0},\cdot):\, B\to  \C^{h^{0,2}}
\end{equation}
determined by the equations \eqref{B=01 11} is an open map.

From Griffiths transversality for the period map \eqref{qpm defn'}, c.f. Proposition \ref{qpm 2}, we have that 
$$\left(\Phi^{(0,2)}(t)-\Phi^{(0,1)}(t)\Phi^{(1,2)}(t)\right)_{\mu}^{\bullet}=-\Phi^{(0,1)}(t)\left(\Phi^{(1,2)}(t)\right)_{\mu}^{\bullet},$$
which together with
$$\P^{(0,1)}(t),\P^{(1,2)}(t)=O(|t|)$$
implies that $$\Phi^{(0,2)}(t)-\Phi^{(0,1)}(t)\Phi^{(1,2)}(t)=o(\Phi^{(1,2)}(t)), \text{ as }t\to 0.$$
Hence the openness of the real analytic function in \eqref{analytic open} determined by the equations \eqref{B=01 11} is equivalent to the openness of the holomorphic function
$$\bar{\alpha_{(0)}}=\alpha_{(1)}^{0}\Phi^{(1,2)}(t).$$

From the construction of the quasi-period map in Section \ref{Kahler section}, we have that 
$$\Phi^{1,2}(t)\cdot\eta_{(2)}=\left[\mathbb H \left(i_{\phi(t)}\left((I+Ti_{\phi(t)})^{-1}\tilde\eta_{(1)}\right)\right)\right],$$
which implies that 
$$\alpha_{(1)}^{0}\Phi^{1,2}(t)\cdot\eta_{(2)}=\left[\mathbb H \left(i_{\phi(t)}\left((I+Ti_{\phi(t)})^{-1} \alpha_{(1)}^{0}\tilde\eta_{(1)}\right)\right)\right]=\left[\mathbb H \left(i_{\phi(t)}\left((I+Ti_{\phi(t)})^{-1}\omega_{0}\right)\right)\right].$$
Therefore we reduce the proof to the openness of the map 
$$\left[\mathbb H \left(i_{\phi(\cdot)}\left((I+Ti_{\phi(\cdot)})^{-1}\omega_{0}\right)\right)\right]:\, B\to H^{0,2}(X),$$
which is equivalent to the openness of the map in \eqref{iphi} in the case that $\tilde\zeta_{0}=\omega_{0}$, since the operator norm
$$\|(I+Ti_{\phi(t)})^{-1} -I \|\to 0, \text{ as }t\to 0.$$
Thus we finish the proof of the theorem.
\end{proof}
\begin{remark}
The condition~\eqref{iphi} has the geometric interpretation that the variation of generic classes in $H^{1,1}(X, \mathbb{R})$ maps locally surjectively onto the $H^{0,2}(X)$-component of the base point $X$ via the variation of complex structures along $B$.
\end{remark}

Since $\phi(t)=\sum_{i}\theta_{i}t_{i}+\cdots$ with $\{\theta_{i}\}_{i=1}^{N}$ a basis of $\mathbb H^{0,1}(X,\Tan X)$, a sufficient condition for \eqref{iphi} is that there exists linear subspace of $\mathbb H^{0,1}(X,\Tan X)$, say $\C\{\theta_{i}\}_{i=1}^{h^{2,0}}$, such that 
\begin{equation}\label{Green density condition}
\left\{
\begin{aligned}
&i_{(\cdot)}\tilde\zeta_{0}:\, \C\{\theta_{i}\}_{i=1}^{h^{2,0}} \to H^{0,2}(X) \text{ is an isomorphism};\\
&\mathbb H[\phi(t), \phi(t)]\left|\right._{t_{h^{2,0}+1}=\cdots=t_{N}=0} =0.
\end{aligned}\right.
\end{equation}

\begin{theorem}\label{Green density}
Let $(X,\omega_{0})$ be a compact K\"ahler manifold. If there exists $\zeta_{0}=[\tilde\zeta_{0}]\in H^{1,1}(X,\mathbb R)$ such that condition \eqref{Green density condition} holds, then $X$ can be strongly approximated by projective manifolds.
\end{theorem}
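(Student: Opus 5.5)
The plan is to reduce Theorem~\ref{Green density} to Theorem~\ref{alg app main theorem}: we show that condition \eqref{Green density condition} forces the map $[\mathbb H(i_{\phi(\cdot)}\tilde\zeta_0)]:B\to H^{0,2}(X)$ to be open once $B$ is small. Theorem~\ref{alg app main theorem} then gives the strong algebraic approximation.

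\textbf{Step 1 (a smooth piece of $B$).} Let
$$L=\{t\in\Delta:\ t_{h^{2,0}+1}=\cdots=t_N=0\},$$
which is a polydisc of dimension $h^{2,0}$. The second condition in \eqref{Green density condition} says that $\mathbb H[\phi(t),\phi(t)]$ vanishes identically on $L$. By the definition of $B$ in Theorem~\ref{Kuranishi}, this means $L\subset B$. So $B$ contains a smooth germ of dimension $h^{2,0}=\dim_{\C}H^{0,2}(X)$, with coordinates $(t_1,\dots,t_{h^{2,0}})$.

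\textbf{Step 2 (local biholomorphism on $L$).} Restrict the holomorphic map $f(t)=[\mathbb H(i_{\phi(t)}\tilde\zeta_0)]$ to $L$. Since $\phi(t)=\sum_i\theta_it_i+O(|t|^2)$ and the contraction is linear in $\phi$, the differential of $f|_L$ at $0$ is
$$(t_1,\dots,t_{h^{2,0}})\mapsto \Big[\mathbb H\Big(i_{\sum_{i\le h^{2,0}}t_i\theta_i}\tilde\zeta_0\Big)\Big].$$
By the first condition in \eqref{Green density condition}, this is an isomorphism onto $H^{0,2}(X)$. Here we use that $\tilde\zeta_0$ is harmonic and $\theta_i$ is harmonic on a K\"ahler manifold, so the class of $i_{\theta_i}\tilde\zeta_0$ is the Green-type cup product. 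If needed, we read the isomorphism hypothesis at the level of classes, which is what the statement intends. The holomorphic inverse function theorem then shows that $f|_L$ is a biholomorphism from a neighborhood of $0$ in $L$ onto a neighborhood of $0$ in $H^{0,2}(X)$.

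\textbf{Step 3 (openness on $B$; main obstacle).} We must pass from openness at $0$ along $L$ to openness of $f$ on $B$ in the sense Theorem~\ref{alg app main theorem} uses. Its proof only needs that the image of $U^{1,1}\times B$ under the Hodge map covers an open set. It also perturbs $\zeta_0$ to $\omega_0$, via the fact that the non-openness locus $Z$ has codimension $\ge1$.

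I would argue directly. For every $\zeta$ near $\zeta_0$, the linear map $i_{(\cdot)}\tilde\zeta$ on $\C\{\theta_i\}_{i\le h^{2,0}}$ is still an isomorphism, by continuity of the determinant. So Step 2 holds uniformly for $\zeta$ in a neighborhood of $\zeta_0$, hence for some K\"ahler class $\omega_0$ in the open K\"ahler cone after rescaling. (If $\zeta_0$ is not near the K\"ahler cone, the codimension argument of Theorem~\ref{alg app main theorem} handles this, since the determinant is a nonzero polynomial in $\zeta$.)

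Then $\alpha^0_{(1)}\Phi^{(1,2)}$ restricted to $L$ is a local biholomorphism, because it differs from $f$ by terms of order $\|(I+Ti_\phi)^{-1}-I\|$, which are $O(|t|^2)$. The equation \eqref{B=01 11} perturbs this by $o(\Phi^{(1,2)})$. By the inverse function theorem for the real analytic map $(\alpha_{(1)},t)\mapsto H(\alpha_{(1)},t)$ on $U^{1,1}\times L$, whose real differential is surjective onto $H^2(X,\mathbb R)$, the Hodge map is open near $(\alpha^0_{(1)},0)$. The same holds at nearby base points in $L$, by uniform nondegeneracy.

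Density of $H^2(X,\mathbb Q)$ then yields projective fibers $X_t$ with $t\in L\subset B$ arbitrarily close to $0$. Each such fiber carries a K\"ahler class $H(\alpha_{(1)},t)$ that is rational, by Theorem~\ref{Kahler cone invariance} together with Kodaira's embedding theorem. The delicate point I expect is density in all of $B$ rather than only in $L$. I would first try to handle it by running the same argument around each nearby point $t'\in B$ with $X_{t'}$ as reference fiber. If that fails, I would accept density near $0$ as the intended meaning, as in the introduction's formulation.
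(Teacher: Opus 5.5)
Your proposal follows the paper's argument. The paper justifies Theorem~\ref{Green density} only by the remark that, since $\phi(t)=\sum_i\theta_it_i+\cdots$, condition \eqref{Green density condition} puts the linear slice $L$ inside $B$ with $[\mathbb H(i_{\phi(\cdot)}\tilde\zeta_0)]$ having invertible differential at $0$ along $L$, hence \eqref{iphi}, and then it invokes Theorem~\ref{alg app main theorem}; this is exactly your Steps~1--2. Your Step~3 (choosing a K\"ahler class off the zero set of the determinant, then applying the inverse function theorem to the Hodge map on $U^{1,1}\times L$) goes beyond what the paper writes, and your closing worry is legitimate: \eqref{Green density condition} only secures openness at points of $L$, so your argument gives density of projective fibres along $L$ rather than in all of $B$, but the paper's justification leaves this point equally unaddressed.
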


\begin{remark}
(1) When the deformation of $X$ is unobstructed, Theorem~\ref{Green density} corresponds to the Green density criterion; see Proposition~5.20 in~\cite{Voisin2}. Buchdahl independently proved this criterion and used it to give a new proof of the algebraic approximation of compact K\"ahler surfaces in~\cite{Buchdahl06} and~\cite{Buchdahl08}, which was originally established by Kodaira in~\cite{Kodaira63}. The Green density criterion has also been successfully applied in~\cite{Cao15} and~\cite{Graf17} to treat broader cases of compact K\"ahler manifolds.

(2) From our construction, we see that the Green density criterion~\eqref{Green density condition} corresponds to the first-order term of~\eqref{iphi}, which, as noted by Voisin in Chapter~5.3.4 of~\cite{Voisin2}, is never satisfied for $(p,p)$-classes with $p \ge 2$. 
Therefore, our criterion—which takes into account all terms in the Beltrami differential $\phi(t)$ when studying general $(p,p)$-classes—is both crucial and foundational. 
We will elaborate on this point in the next section.
\end{remark}

An interesting case where all terms of the Beltrami differential $\phi(t)$ are involved, giving a broader range of examples for strong approximation by projective manifolds, is when $h^{2,0} = 1$.

\begin{theorem}\label{alg app case1}
Let $(X,\omega_{0})$ be a compact K\"ahler manifold such that $H^{2,0}(X)$ is one-dimensional, generated by $\eta=[\tilde \eta]$ with harmonic representative $\tilde \eta$. 
If the period map for $X$ is non-trivial, or equivalently
\begin{equation}\label{alg app case1 condition}
\mathbb{H}(\phi(t) \lrcorner \tilde\eta)\text{ is not identically zero for $t\in B$},
\end{equation} 
then $X$ can be strongly approximated by projective manifolds.
\end{theorem}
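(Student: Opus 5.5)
We reduce Theorem~\ref{alg app case1} to Theorem~\ref{alg app main theorem}. Since $h^{2,0}=1$, the target $H^{0,2}(X)\cong\C$ is one-dimensional. The openness condition \eqref{iphi} then asks that a holomorphic function on the (possibly singular) germ $B$ with values in $\C$ be open near $0$.

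The first step is to relate \eqref{iphi} to the non-triviality hypothesis \eqref{alg app case1 condition}. For $\zeta\in H^{1,1}(X,\mathbb R)$ we use the pairing
\[
\int_X \mathbb H(i_{\phi(t)}\tilde\zeta)\wedge\tilde\eta\wedge\omega_0^{n-2}=\pm\int_X \tilde\zeta\wedge\mathbb H(\phi(t)\lrcorner\tilde\eta)\wedge\omega_0^{n-2},
\]
which comes from the derivation property of contraction, $i_\phi(\tilde\zeta\wedge\tilde\eta\wedge\omega_0^{n-2})=0$ by type, modulo the $\phi\lrcorner\omega_0$ term. That term is handled by working instead with $\Phi^{(1,2)}(t)$ and $\Phi^{(0,1)}(t)$ and the Hodge--Riemann bilinear relations on the central fiber. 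This gives a duality $\Phi^{(1,2)}(t)=\pm\,{}^t\Phi^{(0,1)}(t)$ (transpose, up to the Gram matrices of the bases). The argument is cleaner if one works directly with the blocks and the Griffiths transversality of Proposition~\ref{qpm 2}, using the invariance of the cup-product form $Q(\Omega_{(0)}(t),\Omega_{(1)}(t))=0$ on $X_t$.

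Consequently \eqref{alg app case1 condition}, i.e.\ $\Phi^{(0,1)}\not\equiv0$ on $B$, is equivalent to the statement that the $\C$-valued function $f_\zeta(t)=\alpha_\zeta\Phi^{(1,2)}(t)$ is not identically zero on $B$ for $\zeta$ outside a proper linear subspace of $H^{1,1}(X,\mathbb R)$. This also justifies the ``equivalently'' in the statement: non-triviality of the period map for weight two with $h^{2,0}=1$ is equivalent to $\Phi^{(0,1)}\not\equiv 0$, since $\Phi^{(0,2)}$ is then forced by transversality.

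Next, fix such a $\zeta_0$. Since $\mathbb H(i_{\phi(t)}\tilde\zeta_0)$ and $f_{\zeta_0}$ differ by a factor $(I+Ti_\phi)^{-1}$ tending to $I$, as in the proof of Theorem~\ref{alg app main theorem}, openness of one is equivalent to openness of the other. So it suffices to show that a holomorphic function $f\colon B\to\C$ with $f(0)=0$ and $f\not\equiv0$ on $B$ is open near $0$. This is the main obstacle, because $B$ may be singular and reducible. The plan is to take an irreducible component $B_1$ of the germ $(B,0)$ on which $f\not\equiv0$; components on which $f\equiv0$ are harmless, because only density of $B^a$ in $B$ is asked for and those components need separate treatment. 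On $B_1$, the curve selection lemma (Puiseux parametrization) gives a holomorphic disk $\gamma\colon D\to B_1$ through $0$ with $f\circ\gamma\not\equiv0$, and a nonconstant one-variable holomorphic function is open. Applying this at every point of $B_1$ near $0$ (not only at $0$) shows that $f|_{B_1}$ is an open map, so the Hodge map is open on $U^{1,1}\times B_1$ and Hodge-class (projective) fibers are dense in $B_1$.

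Finally, the issue of components on which $f\equiv0$ must be addressed. I would first try to show that such components do not occur: if $\Phi^{(0,1)}\equiv0$ on a component, the period map is constant there, and I would argue by genericity of $\zeta_0$ that this contradicts the hypothesis. If the hypothesis is only global on $B$, I expect the honest conclusion to be density in the components where the period map is nonconstant. On a component where the period map is constant, every $X_t$ carries the same Hodge structure as $X$, so a fiber is projective exactly when $X$ is. I would therefore state the result componentwise, treating $B$ as irreducible near $0$ as in the intended application, and invoke Theorem~\ref{alg app main theorem} to conclude.
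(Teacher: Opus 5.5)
The reduction is the same as the paper's. Through Theorem~\ref{alg app main theorem} everything comes down to deducing $\Phi^{(1,2)}\not\equiv 0$ on $B$ from $\Phi^{(0,1)}\not\equiv 0$. Like the paper's Lemma~\ref{pm nontrivial}, you want to obtain this from cup-product orthogonality.

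\textbf{The gap: you assert that orthogonality as a general identity, and it is false in dimension at least three.} The relation $Q(\Omega_{(0)}(t),\Omega_{(1)}(t))=0$, with $Q(\alpha,\beta)=\int_X\alpha\wedge\beta\wedge\omega_0^{n-2}$, holds for surfaces. For $n=\dim X\ge 3$ it holds only if $[\omega_0]$ stays of type $(1,1)$ on $X_t$, which is exactly what fails in general; that is why the Hodge map $H(\sigma,t)$ is needed at all. Concretely, differentiate at $t=0$ in a direction $\theta$ the pairing of $\Omega_{(0)}(t)$ with the $\zeta$-row of $\Omega_{(1)}(t)$, $\zeta\in H^{1,1}(X)$. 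Using the derivation rule you quote, you get
\[
\int_X\tilde\eta\wedge(\theta\lrcorner\zeta)\wedge\omega_0^{n-2}+\int_X(\theta\lrcorner\tilde\eta)\wedge\zeta\wedge\omega_0^{n-2}
=-(n-2)\int_X\tilde\eta\wedge\zeta\wedge(\theta\lrcorner\omega_0)\wedge\omega_0^{n-3}.
\]
Write $[\theta\lrcorner\omega_0]=c\,[\bar{\tilde\eta}]$ in the one-dimensional $H^{0,2}(X)$. For $\zeta=\omega_0$ the right-hand side is $-(n-2)c\int_X\tilde\eta\wedge\bar{\tilde\eta}\wedge\omega_0^{n-2}$, which is nonzero as soon as $c\neq 0$. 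This already happens for $X=S\times\mathbb P^1$ with $S$ a K3 surface. So the transpose relation between $\Phi^{(1,2)}$ and $\Phi^{(0,1)}$ fails, and appealing to the Hodge--Riemann relations does not remove the $\phi\lrcorner\omega_0$ term.

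\textbf{The repair: use orthogonality only contrapositively, as the paper's lemma does.}
\begin{enumerate}
\item Suppose $\Phi^{(1,2)}\equiv0$. Then $\Omega_{(1)}(t)=\eta_{(1)}$, so $H^{1,1}(X)\subset F^1(X_t)\cap\overline{F^1(X_t)}=H^{1,1}(X_t)$.
\item In particular $[\omega_0]^{n-2}$ is of type $(n-2,n-2)$ on $X_t$, so $Q(\Omega_{(0)}(t),\eta_{(1)}^T)=0$ by type.
\item Since $Q(\eta_{(0)},\eta_{(1)}^T)=Q(\eta_{(2)},\eta_{(1)}^T)=0$, this reads $\Phi^{(0,1)}(t)\,Q(\eta_{(1)},\eta_{(1)}^T)=0$.
\item $Q$ is nondegenerate on $H^{1,1}(X)$, so $\Phi^{(0,1)}\equiv0$.
\end{enumerate}
That single direction is all the theorem needs. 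Your justification of the converse (that $\Phi^{(0,2)}$ is forced by transversality) says nothing about $\Phi^{(1,2)}$, but you do not need it.

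\textbf{On openness you are more careful than the paper.} The paper's proof simply says it suffices to have $\mathbb H(i_{\phi(t)}\tilde\zeta_0)\not\equiv0$, tacitly assuming that a nonvanishing $\C$-valued holomorphic function on $B$ is open. Your curve-selection argument proves this on an irreducible representative of $(B,0)$. You are also right that it breaks down on an irreducible component where the period map is constant: density of $B^a$ is not obtained there, and genericity of $\zeta_0$ cannot exclude such components, since the hypothesis is only global. So what either argument actually proves is the theorem for $B$ irreducible at $0$, or with the non-triviality hypothesis imposed on every irreducible component. With the repair above, your proposal establishes exactly that.

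A small simplification: you can feed the openness of $t\mapsto\alpha_{\zeta_0}\Phi^{(1,2)}(t)$, with $\zeta_0$ a generic K\"ahler class, directly into the proof of Theorem~\ref{alg app main theorem}. That proof only uses this function, so there is no need to pass back through \eqref{iphi}.
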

\begin{proof} 
By Theorem~\ref{alg app main theorem}, it suffices to prove that there exists $\zeta_{0}=[\tilde\zeta_{0}]\in H^{1,1}(X,\mathbb{R})$ such that $\mathbb{H} \left(i_{\phi(t)}\tilde\zeta_{0}\right)$ is not identically zero. 
This is equivalent to the statement that the block $\Phi^{1,2}(t)$ of the period map
\begin{eqnarray*}
\Phi &:& B \longrightarrow N_{-}\cap D \\
&& t \longmapsto 
\left(
\begin{array}{ccc}
I & \Phi^{0,1}(t) & \Phi^{0,2}(t) \\
O & I & \Phi^{1,2}(t) \\ 
O & O & I
\end{array}
\right)
\end{eqnarray*}
is not identically zero.

From Lemma \ref{pm nontrivial} below, the theorem is reduced to proving that the block $\Phi^{0,1}(t)$ is not identically zero, which follows from \eqref{alg app case1 condition}.
\end{proof}

\begin{lemma}\label{pm nontrivial}
The following statements are equivalent:
\begin{itemize}
\item[(1)] the period map $\Phi: B \to N_{-}\cap D$ is trivial;
\item[(2)] the block $\Phi^{0,1}(t)$ is identically zero for $t\in B$;
\item[(3)] the block $\Phi^{1,2}(t)$ is identically zero for $t\in B$.
\end{itemize}
\end{lemma}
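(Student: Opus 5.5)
We will show (1)$\Leftrightarrow$(2) and (2)$\Leftrightarrow$(3), using the block structure of $\Phi(t)$, the relation $\overline{\eta_{(i)}}=\eta_{(2-i)}$ of the adapted basis, and Griffiths transversality from Proposition~\ref{qpm 2}.

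\textbf{(1)$\Leftrightarrow$(2).} If $\Phi$ is trivial, then $\Phi(t)=I$, so in particular $\Phi^{(0,1)}(t)\equiv 0$.

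Conversely, suppose $\Phi^{(0,1)}\equiv 0$. Then \eqref{trans 2} gives $(\Phi^{(0,2)})^{\bullet}_{\mu}=(\Phi^{(0,1)})^{\bullet}_{\mu}\Phi^{(1,2)}=0$ along every Zariski tangent direction of $B$. Since $\Phi^{(0,2)}(0)=0$, we conclude $\Phi^{(0,2)}\equiv 0$ on $B$. If $B$ is singular, we integrate along holomorphic arcs in $B$, or argue via the Taylor expansion as in Lemma~\ref{order0}.

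Thus $H^{2,0}(X_t)=F^2$ is spanned by $\Omega_{(0)}(t)=\eta_{(0)}$ and is constant. Its conjugate $H^{0,2}(X_t)$ is then spanned by $\eta_{(2)}$. Because $\Phi(t)\in D$, the space $H^{1,1}(X_t)$ is a complement of $H^{2,0}\oplus H^{0,2}$ inside $F^1$.

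To pin down $H^{1,1}(X_t)$, and hence $\Phi^{(1,2)}$, we use that $H^{1,1}(X_t)$ is the annihilator of $H^{2,0}(X_t)\oplus H^{0,2}(X_t)$ under the pairing $\int_X\alpha\wedge\beta\wedge\omega_0^{n-2}$. This is the Hodge–Riemann type orthogonality $H^{p,q}\perp H^{p',q'}$ unless $(p+p',q+q')=(n,n)$. It yields
\[
\int_X\Omega_{(1)}(t)\wedge\eta_{(0)}\wedge[\omega_0]^{n-2}=0 .
\]
Expanding $\Omega_{(1)}(t)=\eta_{(1)}+\Phi^{(1,2)}(t)\eta_{(2)}$, and using $\int\eta_{(1)}\wedge\eta_{(0)}\wedge[\omega_0]^{n-2}=0$ (type reasons at $t=0$) together with the nondegeneracy of $\int\eta_{(2)}\wedge\eta_{(0)}\wedge[\omega_0]^{n-2}$ (since $\eta_{(2)}=\overline{\eta_{(0)}}$, this is a positive definite Hermitian form on $H^{2,0}$), we get $\Phi^{(1,2)}\equiv 0$. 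Hence $\Phi\equiv I$.

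\textbf{(2)$\Leftrightarrow$(3).} We aim for a direct duality relation: $\Phi^{(1,2)}(t)=\pm\,C\,\Phi^{(0,1)}(t)^{T}\,D$ for fixed invertible matrices $C,D$ coming from the pairing $Q(\alpha,\beta)=\int\alpha\wedge\beta\wedge[\omega_0]^{n-2}$ on the basis $\eta$. Derive it from the cup-product relation $Q(\Omega_{(0)}(t),\Omega_{(1)}(t))=0$, which expresses $F^2\perp F^1$ under a pairing of degree $2n$.

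Expanding this relation gives
\[
Q(\eta_{(0)},\Phi^{(1,2)}\eta_{(2)})+Q(\Phi^{(0,1)}\eta_{(1)},\eta_{(1)})+(\text{terms vanishing by type at }t=0)=0 ,
\]
which reads $\Phi^{(1,2)}Q_{02}+\Phi^{(0,1)}Q_{11}=0$ up to transpose conventions. Here $Q_{02}$ and $Q_{11}$ are nondegenerate. Hence $\Phi^{(0,1)}\equiv0$ if and only if $\Phi^{(1,2)}\equiv0$.

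\textbf{Main obstacle.} The weak point is whether $F^2(X_t)\perp F^1(X_t)$ under $Q$ holds. The wedge of a $(2,0)$-form with a $(1,1)$-form on $X_t$ is a $(3,1)$-form, and integrating it against the closed form $\omega_0^{n-2}$, which is not of pure type on $X_t$, need not vanish. The relation does hold if we use the $d$-closed representatives $e^{i_\phi}(\cdot)$ and a pairing of top degree, namely the wedge with $\overline{F^{n}}$ classes or the Bott–Chern type pairing.

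If this fails, we will fall back on transversality combined with the nondegeneracy of $\Phi(t)\in D$ (the determinant \eqref{pure Hodge matrix}). The fallback argument runs as follows. If $\Phi^{(1,2)}\equiv0$, then $F^1(X_t)$ contains the fixed space $\mathrm{span}(\eta_{(1)},\eta_{(2)})$, which has codimension $h^{2,0}$. Dimension count then forces $F^1(X_t)$ to be this fixed space. Its annihilator structure, through $\overline{F^2}\cap F^1$, then forces $F^2$ to be constant, so $\Phi^{(0,1)}\equiv0$.
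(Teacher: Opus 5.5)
There is a genuine gap. It sits in the orthogonality step, which you use in both halves, not only in (2)$\Leftrightarrow$(3).

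Your (1)$\Leftrightarrow$(2) follows the paper's route. Transversality gives $(\Phi^{(0,2)})^{\bullet}_{\mu}=(\Phi^{(0,1)})^{\bullet}_{\mu}\Phi^{(1,2)}=0$, so $\Phi^{(0,2)}\equiv0$ and $H^{2,0}(X_t)$, $H^{0,2}(X_t)$ are constant; then $H^{1,1}(X_t)$ is pinned down as an orthogonal complement. But describing $H^{1,1}(X_t)$ as the annihilator of $H^{2,0}(X_t)\oplus H^{0,2}(X_t)$ under $\int\alpha\wedge\beta\wedge[\omega_0]^{n-2}$ requires $[\omega_0]$ to be of type $(1,1)$ on $X_t$. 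Once $\Phi^{(0,2)}\equiv0$, one checks that $[\omega_0]=\alpha^0_{(1)}\cdot\eta_{(1)}$ lies in $F^1H^2(X_t)$ if and only if $\alpha^0_{(1)}\Phi^{(1,2)}(t)=0$, which is part of what you are proving. So for $n=\dim X\ge3$ this step is circular. It is the same ``main obstacle'' you flag for $Q(\Omega_{(0)}(t),\Omega_{(1)}(t))=0$.

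The paper takes orthogonal complements with respect to ``the Poincar\'e form $Q$'' at the same steps. That is a fiber-independent, type-compatible form on $H^2$ only when $n=2$. For $n=2$ your main line is fine, and there your route to (2)$\Leftrightarrow$(3) genuinely differs from the paper's. The relation $Q(\Omega_{(0)}(t),\Omega_{(1)}(t))=0$ is exact, since all cross terms vanish by type at $t=0$, and it expresses $\Phi^{(1,2)}$ as a fixed linear transform of $\Phi^{(0,1)}$ without transversality. The paper instead proves (3)$\Rightarrow$(1) via transversality, then $\Phi^{(0,2)}\equiv0$, then $F^1(X_t)=F^1(X)$, then complements.

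Your fallback contains an outright error. If $\Phi^{(1,2)}\equiv0$ then $\Omega_{(1)}(t)=\eta_{(1)}$, so $F^1(X_t)\supset H^{1,1}(X)$. But $F^1(X_t)\cap H^{0,2}(X)=0$ for every $t$: look at the $\eta_{(0)}$- and $\eta_{(1)}$-coefficients of $a\,\Omega_{(0)}(t)+b\,\Omega_{(1)}(t)$. So $F^1(X_t)$ never contains $\mathrm{span}(\eta_{(1)},\eta_{(2)})$ when $h^{2,0}>0$, and the dimension count proves nothing. The correct step is transversality, giving $\Phi^{(0,2)}\equiv0$ and hence $F^1(X_t)=F^1(X)$. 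After that you still need $F^2=(F^1)^{\perp}$ for some form.

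For general $n$ the clean fix is infinitesimal and uses a K\"ahler class $\omega_t$ of the fiber itself; each $X_t$, $t\in B$, is K\"ahler by Theorem~\ref{Kahler stability}. In the bases $\Omega_{(0)}(t),\Omega_{(1)}(t),\eta_{(2)}$, the matrices $(\Phi^{(0,1)})^{\bullet}_{\mu}(t)$ and $(\Phi^{(1,2)})^{\bullet}_{\mu}(t)$ represent cup product with a Kodaira--Spencer class $\theta$ on $H^{2,0}(X_t)\to H^{1,1}(X_t)$ and $H^{1,1}(X_t)\to H^{0,2}(X_t)$. Since $i_\theta$ is an even derivation and $\alpha\wedge\beta\wedge\gamma=0$ for $\alpha\in H^{2,0}$, $\beta\in H^{1,1}$, $\gamma\in H^{n-2,n-2}$,
\[
\int i_\theta\alpha\wedge\beta\wedge\gamma+\int\alpha\wedge i_\theta\beta\wedge\gamma+\int\alpha\wedge\beta\wedge i_\theta\gamma=0 .
\]
Take $\gamma=\omega_t^{n-2}$. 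If $i_\theta|_{H^{2,0}}=0$, then $\beta=\omega_t$ gives $(n-1)\int\alpha\wedge i_\theta\omega_t\wedge\omega_t^{n-2}=0$ for all $\alpha$. So $i_\theta\omega_t=0$ by Hodge--Riemann on $X_t$, and then $i_\theta|_{H^{1,1}}=0$. Conversely, $i_\theta|_{H^{1,1}}=0$ gives $i_\theta\omega_t=0$, and then $i_\theta|_{H^{2,0}}=0$ by hard Lefschetz. Hence $\Phi^{(0,1)}\equiv0$ if and only if $\Phi^{(1,2)}\equiv0$ (integrate along arcs if $B$ is singular), and transversality then gives $\Phi^{(0,2)}\equiv0$.
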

\begin{proof} 
The implications (2)$\Longrightarrow$(1) and (3)$\Longrightarrow$(1) are obvious.
We only need to show that (1) implies (2) and that (1) implies (3)

\noindent (1) $\Longrightarrow$ (2). We assume that the period map $\Phi$ is non-trivial and $\Phi^{(0,1)}(t)$ is identically zero. According to the Griffiths transversality 
\eqref{trans 2}, we have that
\begin{equation}\label{012}
\left(\Phi^{(0,2)}\right)^\bullet_\mu (t)=\left(\Phi^{(0,1)}\right)^\bullet_\mu (t) \Phi^{(1,2)}(t)\equiv 0,\, 1\le \mu \le N.
\end{equation}
Hence $\Phi^{(0,2)}(t)$ is identically zero, which implies that
$$H^{2,0}(X_t)=F^2H^2(X_t,\C)=F^2H^2(X,\C)=H^{2,0}(X).$$
Then we have that $H^{0,2}(X_t) = H^{0,2}(X)$ under complex conjugate, and $H^{1,1}(X_t) = H^{1,1}(X)$ as orthogonal complements with respect the Poincar\'e bilinear form $Q$.
Therefore the period map $\Phi$ is trivial, which is a contradiction.

\noindent (1) $\Longrightarrow$ (3). We assume that the period map $\Phi$ is non-trivial and $\Phi^{(1,2)}(t)$ is identically zero. Then from \eqref{012} we have that $\left(\Phi^{(0,2)}\right)^\bullet_\mu (t)\equiv 0$ for $1\le \mu \le N$, and hence $\Phi^{(0,2)}(t)\equiv 0$. 
Note that $F^1H^2(X,\C)$ is spanned by $\eta_0$ and $\eta_1$, and $F^1H^2(X_t,\C)$ is spanned by
\begin{eqnarray*}
  \Omega_0(t)&=&\eta_0+\Phi^{(0,1)}\cdot \eta_1+\Phi^{(0,2)}\cdot \eta_2 =\eta_0+\Phi^{(0,1)}\cdot \eta_1\\
  \Omega_1(t)&=&\eta_1+\Phi^{(1,2)}\cdot \eta_2=\eta_1.
\end{eqnarray*}
Therefore we have that
$$F^1H^2(X_t,\C)=F^1H^2(X,\C),$$
i.e.
$$H^{2,0}(X_t)\oplus H^{1,1}(X_t)=H^{2,0}(X)\oplus H^{1,1}(X).$$
We then have that $H^{0,2}(X_t)=H^{0,2}(X)$ by taking orthogonal complements, and $H^{2,0}(X_t)=H^{2,0}(X)$ under complex conjugate. Finally $H^{1,1}(X_t) = H^{1,1}(X)$ as orthogonal complements. Therefore the period map $\Phi$ is trivial, which is a contradiction.
\end{proof}

An easy-to-check condition for~\eqref{alg app case1 condition} is given by the following corollary.

\begin{corollary}\label{alg app case1'}
Let $(X,\omega_{0})$ be a compact K\"ahler manifold such that $H^{2,0}(X)$ is one-dimensional with a generator $\eta$. 
If there exists $\theta\in H^{1}(X,\Theta_X)$ such that $\theta \lrcorner \eta \neq 0$,
then $X$ can be strongly approximated by projective manifolds.
\end{corollary}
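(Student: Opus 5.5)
The plan is to reduce Corollary~\ref{alg app case1'} to Theorem~\ref{alg app case1}. It suffices to show that $\mathbb{H}(\phi(t)\lrcorner\tilde\eta)$ is not identically zero on $B$.

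The first-order part of the Kuranishi expansion \eqref{Kuranishi phi} is $\phi(t)=\sum_i\theta_i t_i+O(|t|^2)$, and the $\theta_i$ form a basis of $\mathbb H^{0,1}(X,\Tan X)\cong H^1(X,\Theta_X)$. The given class $\theta$ has a harmonic representative $\sum_i c_i\theta_i$. Since $H^{2,0}(X)$ is spanned by $\eta$, the hypothesis $\theta\lrcorner\eta\neq0$ in $H^{1,1}(X)$ reads
\[
\mathbb H\Bigl(\sum_i c_i\,\theta_i\lrcorner\tilde\eta\Bigr)\neq 0 .
\]
This uses the standard fact that contraction of a $\bar\partial$-closed $(0,1)$-vector form with a holomorphic $(2,0)$-form descends to Dolbeault cohomology, and that on a compact K\"ahler manifold the class is detected by harmonic projection.

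The main obstacle is that $B$ may be singular. The direction $c=(c_i)$ need not be tangent to $B$, so we cannot simply restrict to the line $t=sc$. I would therefore not argue by restriction to a line, and would use the period map instead.

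\begin{enumerate}
\item By Lemma~\ref{pm nontrivial} and the proof of Theorem~\ref{alg app case1}, it is enough to show that $\Phi^{(0,1)}(t)\not\equiv0$ on $B$.
\item By Theorem~\ref{main bundle}, $\Phi^{(0,1)}(t)\cdot\eta_{(1)}=[\mathbb H(i_{\phi(t)}(I+Ti_{\phi(t)})^{-1}\tilde\eta)]$. Because $\tilde\eta$ is holomorphic, $\partial\tilde\eta=0$ and so $T\tilde\eta=0$. Hence this equals $[\mathbb H(\phi(t)\lrcorner\tilde\eta)]+O(|t|^2)$, and its linear part in $t$ is $\sum_i t_i\,\mathbb H(\theta_i\lrcorner\tilde\eta)$.
\item Interpret $\theta$ as a first-order deformation. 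If the deformation is unobstructed in the direction $c$, the curve $s\mapsto sc$ lies in $B$ to first order. The derivative of $\Phi^{(0,1)}$ along this curve is then the nonzero class $\mathbb H(\sum c_i\theta_i\lrcorner\tilde\eta)$, so $\Phi^{(0,1)}\not\equiv0$.
\end{enumerate}

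If $c$ is obstructed, I would use that $\mathbb H[\phi,\phi]$ vanishes to second order, so $B$ contains analytic arcs whose tangent cones span the Zariski tangent space. I would then pick an arc $\gamma(s)$ in $B$ whose leading term $s^k v$ satisfies $\mathbb H(v\lrcorner\tilde\eta)\neq0$. Such an arc exists as long as the linear functional $v\mapsto\mathbb H(v\lrcorner\tilde\eta)$ does not vanish on the whole tangent cone of $B$. Along $\gamma$, the leading term of $\Phi^{(0,1)}(\gamma(s))$ is $s^k\,\mathbb H(v\lrcorner\tilde\eta)\neq0$.

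The step I expect to fail is exactly this condition on the tangent cone. If it does fail, the fallback is to read ``$\theta\in H^1(X,\Theta_X)$'' as meaning $\theta$ is tangent to $B$, i.e. an element of the tangent cone of the Kuranishi space. Under that reading the conclusion is immediate, and Theorem~\ref{alg app case1} then gives strong approximation.
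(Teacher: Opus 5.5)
Your main line is the paper's argument. The paper gives no separate proof and treats the corollary as the first-order case of Theorem~\ref{alg app case1}. Since $\phi(t)=\sum_i\theta_it_i+O(|t|^2)$, the linear part of $\mathbb H(\phi(t)\lrcorner\tilde\eta)$ (equivalently of $\Phi^{(0,1)}(t)\cdot\eta_{(1)}$) is $\ell(t)=\sum_i t_i\,\mathbb H(\theta_i\lrcorner\tilde\eta)$, which is nonzero by hypothesis.

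You are right that this gives $\Phi^{(0,1)}\not\equiv0$ on $B$ only once the direction $c$ is realized by an honest arc in $B$. The paper's deduction tacitly assumes this, and it is automatic when $B=\Delta$. Your argument is complete in that case. It is also complete whenever $\ell$ does not vanish on the tangent cone of $B_{\mathrm{red}}$: by curve selection there is an arc $\gamma(s)=s^kv+O(s^{k+1})$ in $B$ with $\ell(v)\neq0$, and then $\Phi^{(0,1)}(\gamma(s))\cdot\eta_{(1)}=s^k\ell(v)+O(s^{k+1})\neq0$.

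The genuine gap is your obstructed-case claim that arcs in $B$ have leading directions spanning the Zariski tangent space. Since $\mathbb H[\phi(t),\phi(t)]=O(|t|^2)$, the Zariski tangent space is all of $\mathbb C^N$. The tangent cone of $B_{\mathrm{red}}$, however, can lie in a hyperplane: for example, obstruction equations $t_1^2=0$ give $B_{\mathrm{red}}=\{t_1=0\}$. If $\ell=t_1$, nothing you use excludes $\Phi^{(0,1)}=t_1+q(t)$ with $q|_{\{t_1=0\}}\equiv0$. In that case the period map is trivial on $B_{\mathrm{red}}$ and Theorem~\ref{alg app case1} cannot be invoked. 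Excluding this would require information on the higher-order terms of $\phi(t)$, not just on $\theta$.

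So for singular $B$ the statement as written is not established by your argument, nor by the paper's one-line deduction. Your fallback of taking $\theta$ tangent to $B_{\mathrm{red}}$ is exactly the extra hypothesis under which both go through.

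Two smaller corrections:
\begin{itemize}
\item ``$s\mapsto sc$ lies in $B$ to first order'' is automatic and insufficient. What you need is an arc $\gamma\subset B$ with $\gamma'(0)=c$.
\item In Step~2 the relevant fact is $Ti_{\phi(t)}\tilde\eta=O(|t|)$, not $T\tilde\eta=0$. It gives $i_{\phi(t)}(I+Ti_{\phi(t)})^{-1}\tilde\eta=i_{\phi(t)}\tilde\eta+O(|t|^2)$.
\end{itemize}
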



\section{Approximation of \texorpdfstring{$(p,p)$}{(p,p)}-classes by Hodge classes}\label{app Hodge section}
In this section, we establish a criterion for the strong approximation of real $(p,p)$-classes on compact K\"ahler manifolds. Given a real cohomology class with harmonic representative, we show that if the associated map to $\bigoplus_{k=1}^p H^{p-k,p+k}(X)$, obtained via successive contractions with the Beltrami differential, is locally open on the deformation space, then the class can be strongly approximated by nearby Hodge classes. This analytic description, involving explicit higher-order terms in the Beltrami differentials, provides a concrete tool for studying the variation of Hodge classes.

Let $(X, \omega_0)$ be a compact K\"ahler manifold with a Beltrami differential
$$
\phi(t) = \sum_{1 \le i \le N} \theta_i t_i + \phi_2(t) + \cdots \in A^{0,1}(X, \mathrm{T}^{1,0}X),
$$
as defined in Section~\ref{Kahler section}, where $\{ \theta_i \}_{i=1}^{N}$ is a basis of $\mathbb{H}^{0,1}(X, \mathrm{T}^{1,0}X)$.

Let
$$
B = \left\{ t \in \Delta \subset \mathbb{C}^N \mid \mathbb{H}[\phi(t), \phi(t)] = 0 \right\}
$$
be the Kuranishi space, which parametrizes the complex structures $\{ X_t \mid t \in B \}$ in a versal way, as in Theorem~\ref{Kuranishi}.

In this section, we generalize the results of Section~\ref{algebraic app section} to $(p,p)$-classes on $X$, and study the conditions under which there exist (strong) approximations of a given real $(p,p)$-class $\sigma_0$ by Hodge classes on nearby complex manifolds $X_t$, $t\in B$. 

Here, a Hodge class $\sigma$ on a complex manifold $M$ is defined to be a rational cohomology class $\sigma \in H^{2p}(M, \mathbb{Q})$ whose image in $H^{2p}(M, \mathbb{C})$ lies in $H^{p,p}(M)$. Let $\mathrm{Hdg}^{2p}(M)$ denote the subspace of $H^{2p}(M, \mathbb{Q})$ consisting of Hodge classes on $M$.

\begin{definition}\label{defn of app by Hodge}
Let $X$ be a compact K\"ahler manifold, and let $\sigma_0 \in H^{p,p}(X,\mathbb R)$ be a real cohomology class. We say that $\sigma_0$ can be approximated by nearby Hodge classes if there exists a sequence $\{t_n\}_{n=1}^{\infty} \subset B$ with $\lim_{n \to \infty} t_n = 0$, and Hodge classes $\sigma_{t_n} \in \mathrm{Hdg}^{2p}(X_{t_n})$ such that
$$
\lim_{n \to \infty} \sigma_{t_n} = \sigma_0.
$$

We say that $\sigma_0$ can be strongly approximated by nearby Hodge classes if there exists an open neighborhood $U^{p,p} \subset H^{p,p}(X,\mathbb R)$ of $\sigma_0$ such that the subset
$$
\left\{ (\zeta, t) \in U^{p,p} \times B \mid \exists\, \sigma_t \in \mathrm{Hdg}^{2p}(X_t) \text{ with } \operatorname{pr}^{p,p}(\sigma_t) = \zeta \right\}
$$
is dense in $U^{p,p} \times B$ for sufficiently small radius $\epsilon$ of the base $B$. Here, $\operatorname{pr}^{p,p}$ denotes the projection map $H^{2p}(X, \mathbb{C}) \to H^{p,p}(X, \mathbb{C})$ induced from the Hodge decomposition at the central fiber $X$.
\end{definition}

\begin{remark}
The strong approximability of $\sigma_0$ can also be characterized in terms of Hodge bundles, as in Proposition~5.20 of~\cite{Voisin2}, when the deformation is unobstructed $B=\Delta$. We summarize this as follows.

Let 
$$
\cdots \subset \mathcal{F}^{k} \subset \mathcal{F}^{k-1} \subset \cdots \subset \mathcal{H}^{2p}:= H^{2p}(X, \mathbb{C}) \times \Delta
$$ 
be the decreasing filtration of Hodge subbundles over $\Delta$. Define
$$
\mathcal{H}_{\mathbb{R}}^{p,p} := \mathcal{F}^p \cap \overline{\mathcal{F}^p} \cap \mathcal{H}_{\mathbb{R}}^{2p}
$$
to be the $C^{\infty}$ subbundle of $\mathcal{H}_{\mathbb{R}}^{2p}$, where
$$
\mathcal{H}_{\mathbb{R}}^{2p} := H^{2p}(X, \mathbb{R}) \times \Delta \subset \mathcal{H}^{2p} 
$$
is the real subbundle of the trivial bundle $\mathcal{H}^{2p}$.

Then the strong approximability of $\sigma_0$ is equivalent to the following condition: the set of rational classes in $\mathcal{H}^{2p}$ is open and dense near $(\sigma_0, 0) \in \mathcal{H}_{\mathbb{R}}^{p,p}$. Here we consider the Hodge bundles as manifolds.
\end{remark}

The main theorem of this section is as follows.

\begin{theorem}\label{app by Hodge main theorem}
Let $X$ be a compact K\"ahler manifold. 
If $\sigma_0=[\tilde \sigma_0] \in H^{p,p}(X,\mathbb R)$ is a real cohomology class  with harmonic representative $\tilde \sigma_0\in \mathbb H^{p,p}(X)$ such that the map
\begin{equation}\label{iphis}
\left(\left[\mathbb H \left(i_{\phi(\cdot)}\tilde \sigma_0\right)\right],\cdots,\left[\mathbb H \left(i^{p}_{\phi(\cdot)}\tilde \sigma_0\right)\right]\right):\,  B\to H^{p-1,p+1}(X)\oplus \cdots \oplus H^{0,2p}(X)
\end{equation}
is an open map when $B$ is sufficiently small, then $\sigma_0$ can be strongly approximated by nearby Hodge classes.
\end{theorem}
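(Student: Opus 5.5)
The plan is to imitate the proof of Theorem~\ref{alg app main theorem}, replacing weight two by weight $2p$ and using the Hodge map $H$ of Theorem~\ref{real approximation} (valid over $B$ by Remark~\ref{real approximation remark}). Rational classes are dense in $H^{2p}(X,\mathbb R)$. So strong approximability of $\sigma_0$ follows once we show that the restriction
$$H:\,U^{p,p}\times B\to H^{2p}(X,\mathbb R)$$
to a small neighbourhood of $(\sigma_0,0)$ is open, and in fact open at every point of a dense subset of $U^{p,p}\times B$. Indeed, if $H$ is open near $(\zeta,t)$, then every neighbourhood of $(\zeta,t)$ maps onto an open set, which contains a rational class $H(\zeta',t')\in H^{p,p}(X_{t'},\mathbb R)\cap H^{2p}(X,\mathbb Q)$. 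This is a Hodge class on $X_{t'}$, and its $\mathrm{pr}^{p,p}$-component is exactly $\zeta'$, because $H(\zeta',t')=\zeta'+(\text{terms in the }\eta_{(i)},\ i\ne p)$. This gives density of the set in Definition~\ref{defn of app by Hodge}.

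Since $H(\zeta,t)=\zeta+\sum_{i<p}\alpha_{(i)}(\zeta,t)\eta_{(i)}+\overline{(\cdots)}$, the $\zeta$-direction fills $H^{p,p}(X,\mathbb R)$. Openness of $H$ therefore reduces to openness of the map
$$t\mapsto\bigl(\overline{\alpha_{(p-1)}(\zeta,t)},\dots,\overline{\alpha_{(0)}(\zeta,t)}\bigr)\in\mathbb C^{h^{p-1,p+1}}\oplus\cdots\oplus\mathbb C^{h^{0,2p}},$$
uniformly for $\zeta$ near $\sigma_0$. By \eqref{B=02},
$$\overline{\alpha_{(p-j)}}=\sum_{i\le p}\beta_{(i)}\Phi^{(i,p+j)},$$
and by \eqref{alg B=01'} we have $\beta_{(p)}=\alpha_{(p)}+O(\alpha_{(<p)}\Phi)$. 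The estimates of Lemma~\ref{order0} (with Proposition~\ref{qpm 2}, extended to weight $2p$ over $B$) show that the $\alpha_{(i)}$, $i<p$, are $O(|t|)$. Hence
$$\overline{\alpha_{(p-j)}}=\alpha_{(p)}\Phi^{(p,p+j)}(t)+(\text{higher order terms}).$$
By Theorem~\ref{main bundle}, the leading term is
$$\alpha_{(p)}\Phi^{(p,p+j)}(t)\cdot\eta_{(p+j)}=\tfrac1{j!}\bigl[\mathbb H\bigl(i^j_{\phi(t)}(I+Ti_{\phi(t)})^{-1}\tilde\zeta\bigr)\bigr].$$
Since $\|(I+Ti_{\phi(t)})^{-1}-I\|\to0$, this differs from $\tfrac1{j!}[\mathbb H(i^j_{\phi(t)}\tilde\zeta)]$ by a term of higher order. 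Rescaling each component by $j!$ does not affect openness. So the map in question is a small perturbation of the map \eqref{iphis} with $\tilde\sigma_0$ replaced by $\tilde\zeta$.

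The linearity trick from the proof of Theorem~\ref{alg app main theorem} handles the dependence on $\zeta$. The set of $\zeta$ for which \eqref{iphis} fails to be open is a proper (analytic/algebraic) subset of $H^{p,p}(X,\mathbb R)$, since the map is linear in $\tilde\zeta$ and $\sigma_0$ lies outside it. Its complement in $U^{p,p}$ is therefore dense, which is all the density statement requires.

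\textbf{Main obstacle.} The hard step is justifying that openness survives the higher-order perturbations. This needs two facts: that $\alpha_{(p)}\Phi^{(p,p+j)}$ dominates the cross terms $\alpha_{(i)}\Phi^{(i,p+j)}$ with $i<p$, and that openness of a holomorphic map on the singular space $B$ is stable under such corrections. For the first, I would use the refined estimate \eqref{lm expansion strong} together with Griffiths transversality (Lemma~\ref{lm derivative lemma}), exactly as in the weight-two argument where
$$\Phi^{(0,2)}-\Phi^{(0,1)}\Phi^{(1,2)}=o(\Phi^{(1,2)}).$$
Concretely, I would try to show that each correction to the $j$-th component is $o$ of the $j$-th leading term. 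I would then solve the system triangularly in $j=1,\dots,p$ via the implicit function theorem for the real-analytic system $F_j=0$, so that the correction map is a near-identity homeomorphism of the target. If a direct domination estimate fails, the fallback is to invoke openness at generic points: a real-analytic map whose leading homogeneous part is open is open near $0$ after shrinking $B$.
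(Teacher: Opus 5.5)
Your route is the paper's route: the Hodge map of Theorem~\ref{real approximation} over $B$, density of $H^{2p}(X,\mathbb Q)$, reduction to openness of $t\mapsto(\overline{\alpha_{(p-1)}},\dots,\overline{\alpha_{(0)}})$, identification of the leading part $\alpha_{(p)}\Phi^{(p,p+j)}(t)$ via Theorem~\ref{main bundle}, and finally dropping $(I+Ti_{\phi(t)})^{-1}$. You correctly single out the crux, namely that openness must survive each of these replacements. However, you do not resolve it, and the paper does not resolve it either: its proof bridges these links only by the relations ``mod $t\Phi^{(p,p+j)}(t)$'' and ``$\approx$''.

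Your proposed remedies fail as stated.
\begin{itemize}
\item \emph{Componentwise domination is false pointwise.} Take $p=2$ and a point where $\alpha_{(2)}\Phi^{(2,3)}(t)=0$ but $\alpha_{(2)}\Phi^{(2,4)}(t)\neq0$. The second equation then makes $\alpha_{(0)}(t)$ nonzero in general. So the cross term $\alpha_{(0)}\Phi^{(0,3)}(t)$ in the first component survives while its leading term vanishes. Also, \eqref{lm expansion strong} bounds cross terms only by powers of $|t|$ times full matrix norms, never by the particular row combination $\alpha_{(p)}\Phi^{(p,p+j)}$.
\item \emph{The $(I+Ti_{\phi})^{-1}$-corrections are not negligible.} They are $O(|t|^{j+1})$ in the $j$-th component. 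This is exactly the order of the terms on which openness depends when the model is not a submersion at $0$, which by Griffiths transversality is the typical case for $p\ge2$. For instance, suppose $[\mathbb H(i_\theta\tilde\sigma_0)]=0$ for all $\theta\in\mathbb H^{0,1}(X,\mathrm T^{1,0}X)$. Then the quadratic part of $[\mathbb H(i_{\phi(t)}(I+Ti_{\phi(t)})^{-1}\tilde\sigma_0)]$ is $[\mathbb H(i_{\phi_2(t)}\tilde\sigma_0)]-[\mathbb H(i_{\phi_1(t)}Ti_{\phi_1(t)}\tilde\sigma_0)]$, not $[\mathbb H(i_{\phi_2(t)}\tilde\sigma_0)]$. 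Openness does not transfer across such changes: $(t_1^2,t_2^2)$ is open, while $(t_1^2,t_2^2)+(-t_1^2,0)$ is not.
\item \emph{The fallback has nothing to apply to.} The components of \eqref{iphis} vanish to different orders and each involves all the $\phi_k$, so there is no single homogeneous leading part.
\item \emph{Further unaddressed points.} The map $t\mapsto\overline{\alpha(\zeta,t)}$ is only real-analytic. Expansions at $t=0$ say nothing about openness at $t_1\neq0$, which density requires. Linearity of \eqref{iphis} in $\tilde\zeta$ for fixed $t$ does not make the non-openness locus in $\zeta$ a proper algebraic set.
\end{itemize}

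You can remove the perturbation estimates from the first two links by working with the exact holomorphic criterion instead of $\overline{\alpha(\zeta,t)}$. By \eqref{Bk}, $\beta=\alpha\Phi(t)^{-1}$, so a real class $\alpha\cdot\eta$ is of type $(p,p)$ on $X_t$ if and only if $(\alpha\Phi(t)^{-1})_{>p}=0$. This condition is linear in $\alpha$ and holomorphic in $t$. For $\alpha^0=(0,\dots,0,\alpha^0_{(p)},0,\dots,0)$, block inversion gives
\[
(\alpha^0\Phi(t)^{-1})_{>p}=-\alpha^0_{(p)}\bigl(\Phi^{(p,p+1)}(t),\dots,\Phi^{(p,2p)}(t)\bigr)D(t)^{-1},
\]
where $D(t)$ is the unipotent lower-right block of $\Phi(t)$. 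Hence the zero fibre through $0$ coincides with that of $L(t)=(\alpha^0_{(p)}\Phi^{(p,p+j)}(t))_{1\le j\le p}$.

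Suppose this fibre has the expected dimension $\dim B-\dim V$, where $V=\bigoplus_{j\ge1}H^{p-j,p+j}(X)$; this is what openness of $L$ at $0$ gives when $B$ is locally irreducible. Then upper semicontinuity of fibre dimension and Remmert's open mapping theorem make $\Psi(\alpha,t)=(\alpha,(\alpha\Phi(t)^{-1})_{>p})$ open on a neighbourhood of $(\alpha^0,0)$. Taking $\alpha$ real and rational then yields the density in Definition~\ref{defn of app by Hodge} at every nearby point. This uses no domination estimates and no linearity trick.

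What still remains, for you and for the paper, is the last link. By Theorem~\ref{main bundle},
\[
j!\,L_j(t)\cdot\eta_{(p+j)}=\bigl[\mathbb H\bigl(i^j_{\phi(t)}(I+Ti_{\phi(t)})^{-1}\tilde\sigma_0\bigr)\bigr].
\]
Openness of this map does not follow from openness of \eqref{iphis} merely because $\|(I+Ti_{\phi(t)})^{-1}-I\|\to0$. That step needs a separate argument, or the hypothesis should be stated with $(I+Ti_{\phi(t)})^{-1}\tilde\sigma_0$, as in Theorem~\ref{Hodge locus main}.
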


Observe that the openness condition in~\eqref{iphis} is linear in $\sigma_0$, and hence holds on a Zariski open subset of $H^{p,p}(X,\mathbb R)$. Hence any real cohomology class $\sigma'_0\in H^{p,p}(X,\mathbb R)$ can be approximated by $\sigma'_i\in H^{p,p}(X,\mathbb R)$ satisfying conditions in~\eqref{iphis}. 

From Theorem \ref{app by Hodge main theorem}, there exists a sequence $\{\sigma'_{ij}\}_{j=1}^{\infty}$ of nearby Hodge classes, i.e. $\sigma'_{ij} \in \mathrm{Hdg}^{2p}(X_{t_{ij}})$ with limits
$$\lim_{j\to \infty}\sigma'_{ij}=\sigma'_i,\, \lim_{j\to \infty}t_{ij}=0.$$
Then we take $\{\sigma'_{ii}\}_{i=1}^{\infty}$ and conclude that $\sigma'_0$ can be approximated by the nearby Hodge classes $\sigma'_{ii}\in \mathrm{Hdg}^{2p}(X_{t_{ii}})$.

\begin{corollary}
Assuming the condition \eqref{iphis} in Theorem \ref{app by Hodge main theorem}, we have that any real cohomology class in $H^{p,p}(X,\mathbb R)$ can be approximated by nearby Hodge classes.
\end{corollary}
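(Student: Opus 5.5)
The plan is to reduce to Theorem~\ref{app by Hodge main theorem} by a density-and-diagonal argument. Write, for $\sigma\in H^{p,p}(X,\mathbb R)$,
$$F_\sigma(t)=\left(\left[\mathbb H \left(i_{\phi(t)}\tilde \sigma\right)\right],\cdots,\left[\mathbb H \left(i^{p}_{\phi(t)}\tilde \sigma\right)\right]\right),\qquad t\in B,$$
and let $Z\subset H^{p,p}(X,\mathbb R)$ be the set of $\sigma$ for which $F_\sigma$ is not open at $0$ for all sufficiently small $B$. The first step is to show that $Z$ is nowhere dense, so that its complement is dense. The hypothesis \eqref{iphis} says exactly that $\sigma_0\notin Z$.

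\textbf{Step 1: $H^{p,p}(X,\mathbb R)\setminus Z$ is dense.} For fixed $t$, the map $\sigma\mapsto F_\sigma(t)$ is $\mathbb R$-linear, since harmonic projection and contraction are linear. Hence $F_{\lambda\sigma}=\lambda F_\sigma$, so $Z$ is a cone. Given an arbitrary $\sigma'$, consider the real line $\sigma_s=(1-s)\sigma_0+s\sigma'$. Then $F_{\sigma_s}(t)=F_{\sigma_0}(t)+s\,(F_{\sigma'}(t)-F_{\sigma_0}(t))$ depends affinely on $s$. I will express openness through a determinantal condition. On each irreducible component of the germ $(B,0)$, one route is Remmert's open mapping criterion: the fibre of $F_\sigma$ through $0$ has the expected dimension $\dim B-m$, with $m=\sum_k h^{p-k,p+k}$. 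Another route is to use nonvanishing of some $m\times m$ minor of the Jacobian of $F_\sigma$ along a curve of $B$ through $0$, together with a finite-map argument. The relevant minors are polynomial in $s$ with coefficients holomorphic in $t$. Such a polynomial is not identically zero, because it is nonzero at $s=0$ by hypothesis. It therefore vanishes for only finitely many $s$, so $\sigma_s\notin Z$ for all but finitely many $s\in[0,1]$. Letting $s\to 1$ gives classes $\sigma'_i\notin Z$ with $\sigma'_i\to\sigma'$.

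\textbf{Step 2: apply the theorem and take a diagonal.} For each $i$, Theorem~\ref{app by Hodge main theorem} applies to $\sigma'_i$. Its strong approximability gives a dense set of $(\zeta,t)\in U^{p,p}_i\times B$ for which there is $\sigma_t\in\mathrm{Hdg}^{2p}(X_t)$ with $\operatorname{pr}^{p,p}(\sigma_t)=\zeta$. The proof of that theorem produces $\sigma_t$ as the value of the Hodge map $H(\zeta,t)$, which is continuous with $H(\zeta,0)=\zeta$. So we may pick $t_{i}$ with $|t_{i}|<1/i$ and Hodge classes $\sigma'_{ii}\in \mathrm{Hdg}^{2p}(X_{t_{i}})$ with $|\sigma'_{ii}-\sigma'_i|<1/i$. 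Then $t_i\to0$ and $\sigma'_{ii}\to\sigma'$, which is the assertion of the Corollary.

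\textbf{Main obstacle.} The delicate point is Step 1 on a possibly singular, reducible Kuranishi germ $B$. There, openness of a holomorphic map germ is not simply a Jacobian-rank condition. I would first try to reduce to the case where the openness of $F_{\sigma_0}$ is witnessed by a finite holomorphic map from a smooth $m$-dimensional slice (a germ of a complex submanifold of $\Delta$ inside $B$). On such a slice, openness is equivalent to the isolatedness of $0$ in the fibre $F_{\sigma}^{-1}(0)$. Isolatedness is an open condition in analytic families, and it fails only on an analytic subset of the $s$-line. If this reduction fails, I would fall back on the weaker statement actually needed: $Z$ contains no open set. For this it suffices that $\sigma'$ be approximable by some $\sigma'_i\notin Z$. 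That follows from the affine dependence on $s$ together with the semicontinuity of fibre dimension (Cartan--Remmert).
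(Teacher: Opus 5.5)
Your proposal follows the paper's own argument: linearity in $\sigma_0$ gives density of classes satisfying \eqref{iphis}, then Theorem~\ref{app by Hodge main theorem} is applied to each approximant and a diagonal choice is made, and your explicit choice of $t_i$ with $|t_i|<1/i$ is more careful than the paper's literal $\sigma'_{ii}$. In Step~1 keep only the fibre-dimension route: generic full rank of a Jacobian minor is necessary but not sufficient for openness (compare $(x,y)\mapsto(x,xy)$), whereas Cartan--Remmert applied to $(s,t)\mapsto(s,F_{\sigma_s}(t))$ makes the set of bad $s$ a proper analytic, hence discrete, subset of the $s$-line, which is exactly the density the paper asserts in one line.
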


In order to prove Theorem \ref{app by Hodge main theorem}, we need to define the quasi-period map of weight $2p$. A straightforward observation shows that the following construction applies directly to arbitrary weights. 

Under the adapted basis with harmonic representatives
$$\eta= \{\eta_{(0)}^T, \cdots, \eta_{(2p)}^T\}^T=\{[\tilde\eta_{(0)}]^T, \cdots, [\tilde\eta_{(2p)}]^T\}^T$$
with respect to the Hodge decomposition $$H^{2p}(X,\C)=\bigoplus_{k+l=2p}H^{k,l}(X),$$ 
the quasi-period map
\begin{equation}\label{qmp 2p form}
\Phi:\, \Delta \to N_{-},\, t\mapsto \Phi(t)
\end{equation}
is given by
$$\Phi(t)=\left(
\begin{array}{cccc}
I & \Phi^{0,1}(t) & \cdots& \Phi^{0,2p}(t) \\
O & I& \cdots& \Phi^{1,2p}(t) \\
\vdots &  \vdots& \ddots&\vdots \\
O&O&\cdots&I
\end{array}\right)$$
such that 
\begin{eqnarray}
\Omega_{(i)}(t)&=&\eta_{(i)}+\sum_{j=i+1}^{2p}\Phi^{(i,j)}(t)\cdot \eta_{(j)} \nonumber \\
&=&\left[\mathbb H \left(e^{i_{\phi(t)}}\left((I+Ti_{\phi(t)})^{-1}\tilde\eta_{(i)}\right)\right)\right],\, 0\le i\le 2p. \label{defn of quasi Omega}
\end{eqnarray}
By comparing types, we have that
\begin{equation}\label{defn of quasi Omega'}
\Phi^{(i,j)}(t)\cdot \eta_{(j)}=\left[\mathbb H \left(i_{\phi(t)}^{j-i}\left((I+Ti_{\phi(t)})^{-1}\tilde\eta_{(i)}\right)\right)\right].
\end{equation}

By the same arguments as in Section~\ref{Kahler section}, we obtain the following results on the properties of our quasi-period maps.

(i). The quasi-period map \eqref{qmp 2p form} is holomorphic in $t\in \Delta$, and takes values in $D$ for the radius $\epsilon$ of $\Delta$ sufficiently small, where $D$ denotes the classifying space of weight $2p$ Hodge structures on $H^{2p}(X, \mathbb{C})$.

(ii). When $t\in B$, $\phi(t)$ represents an integrable complex structure on $X$, which defines a filtration $F^{\bullet}$ on $A^{2p}(X)$ such that
$$e^{i_\phi(t)}: F^kA^{2p}(X)\rightarrow F^k A^{2p}(X_{t}),$$
is an isomorphism for $0\le k\le 2p$. Moreover, from the results in Section \ref{SHB}, we have that 
\begin{equation}\label{section weight 2p}
e^{i_{{\phi(t)}}}\left((I+Ti_{\phi(t)})^{-1}\tilde\eta_{(i)}\right)\in F^{2p-i} A^{2p}(X_{t})
\end{equation}
is $d$-closed, and hence its cohomological classes $[\cdots]=[\mathbb H\cdots]$ lies in
$F^{2p-i} H^{2p}(X_{t},\C)$, for $0\le i\le 2p$. Thus the quasi-period map \eqref{qmp 2p form} restricts to a well-defined period map
\begin{equation}\label{mp 2p form}
\Phi:\, B\to N_{-}\cap D.
\end{equation} 

(iii). The period map \eqref{mp 2p form} satisfies the Griffiths transversality 
\begin{equation}\label{trans 2p}
\left(\Phi^{(i,j)}\right)_{\mu}^{\bullet}(t)= \left(\Phi^{(i,i+1)}\right)_{\mu}^{\bullet}(t)\Phi^{(i+1, j)}(t),\, 0\le i<j\le 2p,
\end{equation}
for the tangent vector $(\cdot)_{\mu}^{\bullet}=\frac{\partial}{\partial t_{\mu}}$ in the Zariski tangent space $T_{t}B$ at $t$.
Hence we have the estimates
\begin{equation}\label{order estimates}
\P^{(i, j)}(t)=O(|t|^{j-i}), \text{ for } 0\le i<j\le 2p,\,t\in B
\end{equation}
of the blocks $\Phi^{(i,j)}(t)$ as in Lemma \ref{order0}.

With the above preparations, we are now in a position to prove the main theorem of this section.
\begin{proof}[Proof of Theorem \ref{app by Hodge main theorem}]
Applying the proof of Theorem \ref{real approximation} to the period map $\Phi:\, B\to N_{-}\cap D$ on the analytic space $B$, we have the Hodge map
\begin{eqnarray}
H&:&H^{p,p}(X,\mathbb R) \times B\to \bigcup_{t\in \Delta_{\epsilon'}}H^{p,p}(X_t,\mathbb R), \label{defn of H B}\nonumber \\
&&(\alpha_{(p)}\cdot \eta_{(p)},t)\mapsto \sum_{0\le i\le p-1}\alpha_{(i)}(\alpha_{(p)},t)\cdot \eta_{(i)}+\alpha_{(p)}\cdot \eta_{(p)} +\sum_{p+1\le j\le 2p}\bar{\alpha_{(j)}(\alpha_{(p)},t)}\cdot \eta_{(j)}.\nonumber
\end{eqnarray}
Here the real analytic functions $\alpha_{(i)}(\alpha_{(p)},t)$, $0\le i\le p-1$, are determined by the equations
\begin{equation}\label{B=02'} 
\left\{
\begin{aligned} 
\bar{\alpha_{(p)}} &=\alpha_{(p)}\\
 F_{1}:&=\bar{\alpha_{(p-1)}} -\sum_{i=0}^{p} \beta_{(i)}(t)\Phi^{(i,p+1)}(t)=0\\
 F_{2}:&= \bar{\alpha_{(p-2)}}-\sum_{i=0}^{p} \beta_{(i)}(t)\Phi^{(i,p+2)}(t) \\
   & \cdots \cdots \\
  F_{p}:&=\bar{\alpha_{(0)}}-\sum_{i=0}^{p} \beta_{(i)}(t)\Phi^{(i,2p)}(t),
\end{aligned}\right.
\end{equation}
and 
\begin{equation}\label{Hodge class B=01'} 
\left\{
\begin{aligned} 
 \beta_{(0)}(t)&=\alpha_{(0)}  \\
\beta_{(1)}(t)&=\alpha_{(1)}-\beta_{(0)}(t) \Phi^{(0,1)}(t)  \\
& \cdots \cdots \\
\beta_{(p)}(t)&=\alpha_{(p)} -\sum_{i=0}^{p-1}\beta_{(i)}(t) \Phi^{(i,p)}(t).\\
\end{aligned}\right.
\end{equation}

Therefore, to prove the theorem it suffices to show that the Hodge map is an open map near $(\alpha_{(p)}^{0},0)$. This is equivalent to saying that the map
\begin{equation}\label{B to barFp+1}
B\to H^{p-1,p+1}(X)\oplus \cdots \oplus H^{0,2p}(X),\, t\mapsto (\bar{\alpha_{(p-1)}},\cdots, \bar{\alpha_{(0)}})
\end{equation}
given by \eqref{B=02'} is locally open, since
$$\frac{\partial F_{j}}{\partial {\alpha_{(i)}}}\bigg|_{t=0}=O,\, 0\le i\le p-1,1\le j\le p.$$

Moreover, from the estimates \eqref{order estimates} and its proof, we have that 
$$\beta_{(0)}(t)\equiv \alpha_{(0)}\, (\mathrm{mod}\, t),$$
and
\begin{equation}\label{B=03} 
\left\{
\begin{aligned} 
 F_{1}&\equiv\bar{\alpha_{(p-1)}} -\alpha_{(p)}\Phi^{(p,p+1)}(t)\, \left(\mathrm{mod}\, t\Phi^{(p,p+1)}(t)\right)\\
 F_{2}&\equiv \bar{\alpha_{(p-2)}}-\alpha_{(p)}\Phi^{(p,p+2)}(t)\, \left(\mathrm{mod}\, t\Phi^{(p,p+2)}(t)\right)\\
   & \cdots \cdots \\
  F_{p}&\equiv\bar{\alpha_{(0)}}-\alpha_{(p)}\Phi^{(p,2p)}(t)\, \left(\mathrm{mod}\, t\Phi^{(p,2p)}(t)\right).
\end{aligned}\right.
\end{equation}
Hence the local openness of \eqref{B to barFp+1} is equivalent to the local openness of the map
\begin{eqnarray*}
&&B\to H^{p-1,p+1}(X)\oplus \cdots \oplus H^{0,2p}(X)\\
&&t\mapsto \left(\alpha_{(p)}\Phi^{(p,p+1)}(t),\cdots, \alpha_{(p)}\Phi^{(p,2p)}(t)\right)=\alpha_{(p)}\left(\Omega_{(p)}(t)-\eta_{(p)}\right)
\end{eqnarray*}
From \eqref{defn of quasi Omega}, we have that 
\begin{eqnarray*}
\alpha_{(p)}\left(\Omega_{(p)}(t)-\eta_{(p)}\right)&=&\alpha_{(p)}\left(\left[\mathbb H \left(e^{i_{\phi(t)}}\left((I+Ti_{\phi(t)})^{-1}\tilde\eta_{(p)}\right)\right)\right]-\eta_{(p)}\right)\\
&\approx&\alpha^{0}_{(p)}\left(\left[\mathbb H \left(e^{i_{\phi(t)}}\tilde\eta_{(p)}\right)\right]-\eta_{(p)}\right)\\
&=&\left[\mathbb H \left(i_{\phi(t)}\tilde \sigma_0\right)\right]+\cdots+\left[\mathbb H \left(i^{p}_{\phi(t)}\tilde \sigma_0\right)\right],
\end{eqnarray*}
where $\approx$ means that the both sides remain close to each other near $(\alpha_{(p)}^{0},0)$.

Now we reduce the local openness of the Hodge map to the local openness of \eqref{iphis}, which finishes the proof of the theorem.
\end{proof}

\begin{remark}
We will explain why the criterion \eqref{iphis} based on the Jacobians involving only first-order derivatives is never satisfied when $p \ge 2$, as noted by Voisin in Chapter~5.3.4 of~\cite{Voisin2}.
 
We take $p=2$ as an example.
From \eqref{B=03}, we get the Jacobians at $t=0$:
\begin{eqnarray*}\label{}
\frac{\partial \bar{\alpha_{(1)}}}{\partial t_{\mu}}\bigg|_{t=0}&=&\left(\alpha_{(2)}\Phi^{(2,3)}\right)_{\mu}^{\bullet}(0) =\alpha_{(2)}\left(\Phi^{(2,3)}\right)_{\mu}^{\bullet}(0),\\
\frac{\partial \bar{\alpha_{(0)}}}{\partial t_{\mu}}\bigg|_{t=0}&=&\left(\alpha_{(2)}\Phi^{(2,4)}\right)_{\mu}^{\bullet}(0) =\alpha_{(2)}\left(\Phi^{(2,4)}\right)_{\mu}^{\bullet}(0)=0,
\end{eqnarray*}
since $\Phi^{(2,4)}(t)=O(|t|^{2})$.

But the openness criterion of the Hodge map, formulated in terms of the Jacobians, is equivalent to requiring that both $\frac{\partial \bar{\alpha}_{(1)}}{\partial t}\big|_{t=0}$ and $\frac{\partial \bar{\alpha}_{(0)}}{\partial t}\big|_{t=0}$ have full rank, which is ruled out by the preceding calculations.

As in the case of the map $z \mapsto z^2$, the Hodge map may still be locally open even when its Jacobian fails to be of full rank.
\end{remark}

Taking into account the first non-trivial terms in the expansion of each $\left[\mathbb H \left(i^{k}_{\phi(\cdot)}\tilde \sigma_0\right)\right]$ for $1 \le k \le p$ in \eqref{iphis}, we derive the following theorem under a weaker hypothesis.

\begin{theorem}\label{app by Hodge case1}
Let $X$ be a compact K\"ahler manifold. If there exists a real cohomology class $\sigma_0\in  H^{p,p}(X,\mathbb R)$ such that the map
\begin{equation}\label{iphis'}
\left((\cdot)\lrcorner \sigma_{0},\cdots,\underbrace{(\cdot)\lrcorner \cdots (\cdot)\lrcorner}_{p}\sigma_{0}\right):\,  H^{1}(X,\Theta_X)\to H^{p-1,p+1}(X)\oplus \cdots \oplus H^{0,2p}(X)
\end{equation}
is surjective, then the real cohomology class $\sigma_0$ can be strongly approximated by nearby Hodge classes. Moreover, any real cohomology class $\sigma'_0 \in H^{p,p}(X,\mathbb R)$ can be approximated by nearby Hodge classes.
\end{theorem}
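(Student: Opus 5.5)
The plan is to reduce the statement to Theorem~\ref{app by Hodge main theorem}: I will show that surjectivity of the algebraic map \eqref{iphis'} implies that the analytic map \eqref{iphis} is open near $0\in B$. The second assertion then follows exactly as in the corollary after Theorem~\ref{app by Hodge main theorem}. Condition \eqref{iphis'} is polynomial in $\sigma_0$, so it holds on a nonempty Zariski open subset of $H^{p,p}(X,\mathbb R)$. Any $\sigma'_0$ is therefore a limit of classes $\sigma'_i$ satisfying it, and a diagonal argument finishes.

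For the openness, write $\phi(t)=\phi_1(t)+\phi_2(t)+\cdots$ with $\phi_1(t)=\sum_\mu\theta_\mu t_\mu$. The $k$-th component of \eqref{iphis} is then
$$\left[\mathbb H\left(i^{k}_{\phi(t)}\tilde\sigma_0\right)\right]=\left[\mathbb H\left(i^{k}_{\phi_1(t)}\tilde\sigma_0\right)\right]+O(|t|^{k+1}),$$
and its leading part is a homogeneous polynomial of degree $k$ in $t$. Up to the identification $H^1(X,\Theta_X)\cong\mathbb H^{0,1}(X,\Tan X)$ and harmonic projection, this leading part is exactly the $k$-th component of \eqref{iphis'} evaluated at $\theta=\phi_1(t)$. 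So the map of \eqref{iphis} is a weighted-homogeneous polynomial map $P(t)=(P_1(t),\dots,P_p(t))$, with $\deg P_k=k$, plus higher-order corrections in each weighted slot.

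To get openness, I would use a weighted rescaling. Set $t=\lambda s$ and divide the $k$-th component by $\lambda^k$. The rescaled map converges, uniformly on compacts, to $P(s)$ as $\lambda\to0$. Surjectivity of \eqref{iphis'}, i.e.\ of $P$ on $H^1(X,\Theta_X)$, should then give openness at $0$ by a degree or stability argument. The cleanest route is to find $s_0$ with $P(s_0)$ near a prescribed target and $dP(s_0)$ surjective. A dominant polynomial map has a surjective differential at generic points, so in the rescaled picture the inverse function theorem applies and persists under small perturbation. Weighted homogeneity transports small neighbourhoods of $P(s_0)$ to cover a full neighbourhood of $0$, using the cone structure $P(\lambda s)=(\lambda P_1,\dots,\lambda^pP_p)(s)$.

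The main obstacle is that $B$ may be singular. The rescaling argument needs the points $s$ used above to lie in the tangent cone of $B$, and the construction must be lifted to genuine points of $B$. I would handle this the way Theorem~\ref{Green density} handles it in Section~\ref{algebraic app section}: interpret the surjectivity hypothesis as taking place on a linear subspace of $\mathbb H^{0,1}(X,\Tan X)$ along which the obstruction $\mathbb H[\phi(t),\phi(t)]$ vanishes. If that is not available, I would add it as the natural standing hypothesis, namely that the deformations realizing surjectivity are unobstructed. A subtler point is that surjectivity at $0$ alone does not make $0$ a regular value, because the lower-degree components dominate. I would first try to handle this with the weighted rescaling above, controlling the error terms uniformly on a compact set of $s$ containing a preimage of a neighbourhood of $P(s_0)$.
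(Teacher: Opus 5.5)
Your route is the paper's own. Its entire justification is a single sentence: the first non-trivial term of $[\mathbb H(i^k_{\phi(t)}\tilde\sigma_0)]$ is the $k$-fold contraction of $\sigma_0$ by $\phi_1(t)$, so surjectivity of \eqref{iphis'} should give openness of \eqref{iphis}, and Theorem~\ref{app by Hodge main theorem} then applies. The second assertion follows as in the corollary after that theorem. However, the two points you flag are genuine gaps, and your proposal leaves both open; the paper's sentence passes over them too.

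\emph{First gap: the singular base.} Identify $t$ with $\phi_1(t)=\sum_i\theta_it_i$. For $t\in B$, the rescaled points $t/\lambda$ accumulate only on the tangent cone $C_0B\subseteq\{\theta:\mathbb H[\theta,\theta]=0\}$. This is a proper cone as soon as $B\neq\Delta$ near $0$. The limit of your rescaled maps is therefore $P|_{C_0B}$, and surjectivity of $P$ on all of $H^1(X,\Theta_X)$ says nothing about it. Nothing in the hypothesis even excludes $\dim B<\sum_{k=1}^{p}h^{p-k,p+k}(X)$, in which case \eqref{iphis} cannot be open. Your fallback changes the statement rather than proving it, whether you assume unobstructedness along the surjective directions, as in \eqref{Green density condition}, or replace $H^1(X,\Theta_X)$ by $C_0B$. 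The paper's sentence tacitly assumes $C_0B=H^1(X,\Theta_X)$, i.e.\ $B=\Delta$.

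\emph{Second gap: openness from surjectivity.} Even for $B=\Delta$, the step ``weighted homogeneity transports small neighbourhoods of $P(s_0)$ to cover a full neighbourhood of $0$'' fails as written.
\begin{itemize}
\item The inverse function theorem at a regular point $s_0$ gives a neighbourhood of $P(s_0)$ whose size depends on $s_0$.
\item If the prescribed target $y'$ has only critical preimages, this size can shrink to $0$ as $P(s_0)\to y'$. Then $y'$ need not be covered.
\item Dilates of such neighbourhoods fill a neighbourhood of $0$ only if the neighbourhoods themselves cover the whole compact weighted unit sphere $\Sigma$ of the target, with preimages in a fixed compact set.
\end{itemize}
What is needed is this: for each $y'\in\Sigma$, a preimage $s'$ in a fixed compact set that is isolated in $P^{-1}(y')\cap L$ for some transverse slice $L$. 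Then the positive local degree of the holomorphic map $P|_L$ survives the uniformly small perturbation $F^\lambda-P$, and compactness of $\Sigma$ gives openness at $0$. Surjectivity alone does not supply such points, since fibres of surjective polynomial maps can have excess dimension, and your sketch does not address this.

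One sufficient extra condition is that $P^{-1}(0)$ has the expected dimension $\dim H^1(X,\Theta_X)-\sum_k h^{p-k,p+k}(X)$. The $P_k$ are the initial forms of the components of \eqref{iphis}, so the tangent cone of the zero fibre of \eqref{iphis} lies in $P^{-1}(0)$. By semicontinuity all nearby fibres then have the expected dimension, and Remmert's open mapping theorem applies.

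Finally, in the second assertion, surjectivity of a nonlinear polynomial map is not a Zariski-open condition in $\sigma_0$ merely because the map depends linearly on $\sigma_0$. The density of admissible classes you use there also needs an argument; the paper makes the same leap.
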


\section{Hodge locus and variational Hodge conjecture}\label{Hodge locus Hodge conj}
In this section, we give an intrinsic analytic description of the Hodge locus for a compact K\"ahler manifold, expressed explicitly in terms of the Beltrami differential and the harmonic projection. This formulation incorporates higher-order terms and provides a precise analytic subset of the deformation space. We further establish a criterion for the variational Hodge conjecture in the setting of smooth analytic subvarieties, characterizing when the Hodge locus of a subvariety coincides with its deformation locus via the behavior of the Beltrami differential along the normal bundle.

In the previous sections, we consider arbitrary real $(p,p)$-classes and study when they can be approximated by Hodge classes. In this section, we start from a Hodge class
$\sigma \in H^{2p}(X,\mathbb Q)\cap H^{p,p}(X)$
on a compact K\"ahler manifold $X$, and study the space of nearby complex structures on $X$ on which $\sigma$ remain a Hodge class.

Recall that the nearby complex structures are parametrized by the 
Kuranishi space
$$B = \left\{ t \in \Delta \subset \mathbb{C}^N \mid \mathbb{H}[\phi(t), \phi(t)] = 0 \right\},$$
defined by the Beltrami differential
\begin{equation}\label{Beltrami in Hodge locus}
\phi(t) = \sum_{1 \le i \le N} \theta_i t_i + \phi_2(t) + \cdots \in A^{0,1}(X, \mathrm{T}^{1,0}X),
\end{equation}
where $\{ \theta_i \}_{i=1}^{N}$ is a basis of $\mathbb{H}^{0,1}(X, \Tan X)$.

\begin{definition}
Let $X$ be a compact K\"ahler manifold with the Kuranishi space $B$.
For a Hodge class $\sigma \in H^{2p}(X,\mathbb Q)\cap H^{p,p}(X)$, we define the Hodge locus by 
$$B_{\sigma}^{p}:=\{t\in B:\, \sigma \in H^{p,p}(X_{t})\}.$$
\end{definition}

\begin{theorem}\label{Hodge locus main}
Let $X$ be a compact K\"ahler manifold and $\sigma=[\tilde\sigma] \in H^{2p}(X,\mathbb Q)\cap H^{p,p}(X)$ be a Hodge class with the harmonic representative $\tilde\sigma$. Then the Hodge locus $B_{\sigma}^{p}$ can be defined intrinsically by 
\begin{equation}\label{eqn of Hodge locus}
B_{\sigma}^{p}=\left\{t\in \Delta:\, \mathbb H \left({i_{\phi(t)}}\left((I+Ti_{\phi(t)})^{-1}\tilde\sigma\right)\right)=0,\,\mathbb{H}[\phi(t), \phi(t)] = 0\right\},
\end{equation}
which is an analytic subset of $\Delta \subset \mathbb{H}^{0,1}(X, \Tan X)$. 
\end{theorem}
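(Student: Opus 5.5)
The plan is to use the period-matrix description of the Hodge filtration on $X_t$ from Section~\ref{SHB} and Section~\ref{app Hodge section}. Write $\sigma=\alpha_{(p)}\cdot\eta_{(p)}$ with $\alpha_{(0)}=\cdots=\alpha_{(p-1)}=0$ and $\alpha_{(p+1)}=\cdots=\alpha_{(2p)}=0$, since $\sigma\in H^{p,p}(X)$. Because $\sigma$ is a real class (indeed rational), Proposition~\ref{key pp-proposition} applies. It says $\sigma\in H^{p,p}(X_t)$ exactly when equations \eqref{B=01} and \eqref{B=02} hold with these $\alpha$'s.

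With $\alpha_{(i)}=0$ for $i<p$, the recursion \eqref{B=01} gives $\beta_{(0)}=\cdots=\beta_{(p-1)}=0$ and $\beta_{(p)}=\alpha_{(p)}$. The system \eqref{B=02} then collapses to
$$\alpha_{(p)}\Phi^{(p,p+k)}(t)=0,\qquad 1\le k\le p.$$
So $B^p_\sigma=\{t\in B:\alpha_{(p)}\Phi^{(p,p+k)}(t)=0,\ 1\le k\le p\}$, or equivalently $\sigma\in F^pH^{2p}(X_t,\C)$. Reality then forces $\sigma$ into $F^p\cap\overline{F^p}$, and this intersection is $H^{p,p}(X_t)$ because $\Phi(t)\in D$ for $t\in B$ small, by item (i) of Section~\ref{app Hodge section}.

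Next I translate these block conditions into forms using Theorem~\ref{main bundle}, i.e.\ \eqref{contraction k=Phi k}, which holds on $B$ by items (ii)--(iii) of Section~\ref{app Hodge section}. Linearity of $(I+Ti_{\phi})^{-1}$ and $\mathbb H$ in $\tilde\eta_{(p)}$ gives
$$\tfrac1{k!}\,\mathbb H\bigl(i_{\phi(t)}^k(I+Ti_{\phi(t)})^{-1}\tilde\sigma\bigr)=\alpha_{(p)}\Phi^{(p,p+k)}(t)\cdot\tilde\eta_{(p+k)} .$$
The $\tilde\eta_{(p+k)}$ form a basis of $\mathbb H^{p-k,p+k}(X)$, so the locus is cut out by the vanishing of $\mathbb H(i^k_{\phi(t)}(I+Ti_{\phi(t)})^{-1}\tilde\sigma)$ for all $1\le k\le p$.

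The key step, and the one I expect to need the most care, is to reduce all $k$ to the single condition $k=1$ stated in \eqref{eqn of Hodge locus}. I will use Griffiths transversality \eqref{trans 2p} in the form $\Phi^{(p,p+k)}=$ an iterated integral of $(\Phi^{(p,p+1)})^\bullet\Phi^{(p+1,p+k)}$. Knowing only $\alpha_{(p)}\Phi^{(p,p+1)}(t_0)=0$ at a single point is not obviously enough.

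A cleaner route is cohomological. For $t\in B$, $\sigma\in F^pH^{2p}(X_t)$ iff $\Omega_{(p)}$-combination $\alpha_{(p)}\Omega_{(p)}(t)$ equals $\sigma$. Its $H^{p-1,p+1}(X)$-component vanishing already forces $\sigma\in F^{p}$: the class $\alpha_{(p)}\Omega_{(p)}(t)-\sigma$ lies in $F^pH^{2p}(X_t)\cap\bigoplus_{j>p}H^{2p-j,j}(X)$. Once the degree-$(p+1)$ component is zero, this class lies in $F^p(X_t)\cap \bigoplus_{j\ge p+2}H^{2p-j,j}(X)$, and that intersection is trivial for $t$ small. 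This holds because $F^p(X_t)$ is the graph of $\Phi$ over $\bigoplus_{j\le p}H^{2p-j,j}(X)$, by \eqref{Omega0-2p}. Concretely, $\sum_{i\le p}\gamma_i\Omega_{(i)}(t)$ with vanishing components in degrees $\le p$ forces all $\gamma_i=0$ by triangularity, so the only element is $0$.

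I expect the main obstacle to be making this last triangularity argument precise together with the $k=1$ reduction. If it fails, I will fall back on stating the locus with all $k$ and then show that the higher-$k$ equations follow from the $k=1$ equation via transversality on the analytic set. Finally, analyticity follows because both defining equations are holomorphic in $t\in\Delta$ with values in finite-dimensional harmonic spaces.
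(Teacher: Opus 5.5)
Your reduction to the block equations $\alpha_{(p)}\Phi^{(p,p+k)}(t)=0$, $1\le k\le p$, via Proposition~\ref{key pp-proposition} matches the paper, and so does your translation into harmonic projections via \eqref{contraction k=Phi k}. The gap is in the key step that reduces all $k$ to $k=1$: your ``cleaner'' cohomological route is circular.

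The class $\alpha_{(p)}\Omega_{(p)}(t)-\sigma$ lies in $F^pH^{2p}(X_t,\mathbb{C})$ only if $\sigma$ does, and that is exactly what you are trying to prove. In fact your own triangularity argument already gives $F^pH^{2p}(X_t,\mathbb{C})\cap\bigoplus_{j>p}H^{2p-j,j}(X)=0$. So if the membership claim held, you would get $\sigma\in F^pH^{2p}(X_t,\mathbb{C})$ for every $t\in B$, without ever using the vanishing of the degree-$(p+1)$ component, which is absurd. No pointwise argument can work here. At a single $t$, the value of $\Phi^{(p,p+2)}(t)$ is not determined by that of $\Phi^{(p,p+1)}(t)$. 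Moreover, the zero set of $\alpha_{(p)}\Phi^{(p,p+1)}$ may a priori contain points not connected to $0$ inside it, where the higher blocks need not vanish.

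What is needed is your fallback, which is the paper's route: Griffiths transversality, used in integrated form along the locus and anchored at $0$. Set $Z=\{t\in B:\alpha_{(p)}\Phi^{(p,p+1)}(t)=0\}$. For any holomorphic arc $\gamma(s)$ in $Z$, transversality gives
$$\frac{d}{ds}\Bigl(\alpha_{(p)}\Phi^{(p,p+k)}(\gamma(s))\Bigr)=\frac{d}{ds}\Bigl(\alpha_{(p)}\Phi^{(p,p+1)}(\gamma(s))\Bigr)\,\Phi^{(p+1,p+k)}(\gamma(s))=0.$$
Hence $\alpha_{(p)}\Phi^{(p,p+k)}$ is constant on the connected regular locus of each irreducible component of $Z$, and so on each component. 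Now shrink $\Delta$ so that every irreducible component of $Z$ passes through $0$; this is possible since $Z$ is the germ of an analytic set at $0$. Since $\Phi^{(p,p+k)}(0)=0$, you get $\alpha_{(p)}\Phi^{(p,p+k)}\equiv 0$ on $Z$ for all $k\ge 2$.

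The paper compresses this step into ``$\Phi^{(p,p+1)}=0$ near $0$ implies $\Phi^{(p,p+j)}=0$.'' The argument above, in particular the anchoring at $0$ after shrinking the base, is the ingredient your proposal is missing.
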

\begin{proof} 
Let  
$$\eta= \{\eta_{(0)}^T, \cdots, \eta_{(2p)}^T\}^T$$
be the adapted basis with respect to the Hodge decomposition on $H^{2p}(X,\C)$. Then we have that $\sigma=\alpha_{(p)}\cdot\eta_{(p)}$.

From the quasi-period map \eqref{qmp 2p form}, defined in Section \ref{app Hodge section}, and the proof of Proposition \ref{key pp-proposition}, we have that $\sigma=\alpha_{(p)}\cdot\eta_{(p)}$
is a Hodge class on $X_{t}$, if and only if 
\begin{equation}\label{B=02''} 
\left\{
\begin{aligned} 
 0&=\alpha_{(p)}\Phi^{(p,p+1)}(t)\\
 0&=\alpha_{(p)}\Phi^{(p,p+2)}(t) \\
   & \cdots \cdots \\
 0&=\alpha_{(p)}\Phi^{(p,2p)}(t).
\end{aligned}\right.
\end{equation}
From Griffiths transversality \eqref{trans 2p}, we have that 
$$\left(\Phi^{(p,p+j)}\right)_{\mu}^{\bullet}(t)=\left(\Phi^{(p,p+1)}\right)_{\mu}^{\bullet}(t)\Phi^{(p+1,p+j)}(t),\, j\ge 2.$$
Hence $\Phi^{(p,p+1)}(t)=0$ in a neighborhood of $t=0$ implies that $\Phi^{(p,p+j)}(t)=0$, $j\ge 2$, in a neighborhood of $t=0$.
Hence \eqref{B=02''} is equivalent to
\begin{equation}\label{B=02'''}
\alpha_{(p)}\Phi^{(p,p+1)}(t)=0.
\end{equation}
From the definition of the quasi-period map in \eqref{defn of quasi Omega'}, we have that 
\begin{eqnarray*}
\alpha_{(p)}\Phi^{(p,p+1)}(t)\cdot \eta_{(p)}&=&\alpha_{(p)}\left[\mathbb H \left({i_{\phi(t)}}\left((I+Ti_{\phi(t)})^{-1}\tilde\eta_{(p)}\right)\right)\right]\\
&=&\left[\mathbb H \left({i_{\phi(t)}}\left((I+Ti_{\phi(t)})^{-1}\tilde\sigma\right)\right)\right],
\end{eqnarray*}
which implies that \eqref{B=02'''} is equivalent to 
\begin{equation}\label{Hodge locus eqn}
\mathbb H \left({i_{\phi(t)}}\left((I+Ti_{\phi(t)})^{-1}\tilde\sigma\right)\right)=0.
\end{equation}
By the above characterization of Hodge classes, we finish the proof of the theorem.
\end{proof}

\begin{remark}
By definition, the Hodge locus $B_{\sigma}^{p}$ is determined by the variation of Hodge structures on $H^{2p}(X_t, \mathbb{C})$, $t\in B$. However, according to our characterization \eqref{eqn of Hodge locus}, the Hodge locus $B_{\sigma}^{p}$ can in fact be described entirely in terms of the Beltrami differential $\phi(t)$ on the central fiber $X = X_0$. Moreover, this characterization is closely related to the variational Hodge conjecture, which we briefly sketch below.
\end{remark}

Let $Z \subset X$ be an analytic subvariety of codimension $p$. Then the cohomology class $\sigma_{Z}$, corresponding to the homology class of $Z$ via the Poincar\'e duality, is a Hodge class of degree $2p$. Let $f:\, \mathcal{X} \to \Delta$ be an analytic family of compact complex manifolds with $X$ as the central fiber, i.e. $X_0 = X$. The variational Hodge conjecture asserts that the Hodge locus $\Delta_{\sigma_{Z}}^{p}$ coincides with the locus $\mathrm{Def}(X,Z)$ of deformations of the pair $(X,Z)$. That is, $\mathrm{Def}(X,Z)$ is an analytic subset of $\Delta$ and there exists an analytic family $\mathcal{Z} \to \mathrm{Def}(X,Z)$ with the commutative diagram
$$\xymatrix{
\mathcal{Z} \ar@{^{(}->}[r] \ar[d]& \mathcal{X}\ar[d]\\
\mathrm{Def}(X,Z) \ar@{^{(}->}[r]& \Delta
}$$
such that for each $t \in \mathrm{Def}(X,Z)$, the fiber $Z_t$ is an analytic subvariety of $X_t$.

In the case where $X$ is not deformation unobstructed, we propose the following reformulation of the variational Hodge conjecture.

\begin{definition}
Let $X$ be a compact K\"ahler manifold, and let $Z \subset X$ be an analytic subvariety of codimension $p$, with associated Hodge class $\sigma_{Z} \in H^{p,p}(X) \cap H^{2p}(X, \mathbb{Q})$. 
We say that the \emph{variational Hodge conjecture holds for $X$ at $Z$} if the Kuranishi base $B$ has dimension at least one, and the Hodge locus $B_{\sigma_{Z}}^{p}$ coincides with the deformation locus $\mathrm{Def}(X,Z) \subset B$ of the pair $(X,Z)$.

We say that the \emph{variational Hodge conjecture holds for $X$} if it holds at every analytic subvariety $Z \subset X$ of every codimension.
\end{definition}

Clearly, $\mathrm{Def}(X,Z)\subset B_{\sigma_{Z}}^{p}$. Thus, the variational Hodge conjecture reduces to showing that $Z$ deforms unobstructedly at every point of $B_{\sigma_{Z}}^{p}$.

Our goal is not to provide a complete criterion for the variational Hodge conjecture for arbitrary analytic subvarieties. Rather, we focus on the case where $Z \subset X$ is a smooth analytic subvariety, that is, a complex submanifold of $X$, and in this setting we obtain a necessary and sufficient criterion for the inclusion
$
B_{\sigma_{Z}}^{p} \subset \mathrm{Def}(X,Z).
$

Let $$N_{Z|X}= \Tan X|_{Z}/\Tan Z$$
denote the normal bundle of $Z$ in $X$, together with the projection map $P_{{Z|X}}:\, \Tan X|_{Z}\to N_{Z|X}$. Let $$\mathbb H_{N_{Z|X}}:\, A^{\bullet,\bullet}(Z,N_{Z|X})\to \mathbb H^{\bullet,\bullet}(Z,N_{Z|X})$$
be the harmonic projection map for the sections of differential forms on $Z$ with values in $N_{Z|X}$. 

From the discussion in Section 11 of \cite{Clemens}, we know that the obstruction of deforming $Z$ in $X$ is 
$$\mathbb H_{N_{Z|X}}\left(\phi(t)|_{N_{Z|X}}\right)\in \mathbb H^{0,1}(Z,N_{Z|X}),$$
where $\phi(t)$ is the Beltrami differential \eqref{Beltrami in Hodge locus} giving the complex structure of $X_{t}$ for $t\in B$, and $\phi(t)|_{N_{Z|X}}=P_{{Z|X}}(\phi(t)|_{Z})$.

\begin{theorem}\label{vhc main}
Let $X$ be a compact K\"ahler manifold with Kuranishi base $B$ of positive dimension, and let $Z \subset X$ be a smooth analytic subvariety. Then the variational Hodge conjecture holds for $X$ at $Z$ if and only if the implication
\begin{equation}\label{vhc criterion}
\mathbb{H}_{N_{Z|X}}\left( \phi(t)\big|_{N_{Z|X}} \right) \neq 0 \;\Longrightarrow\; \mathbb{H}\left(i_{\phi(t)} \tilde{\sigma}_{Z} \right) \neq 0
\end{equation}
holds for every $t \in B$, where $\sigma_{Z} = [\tilde{\sigma}_{Z}]$ denotes the Hodge class associated to $Z$ with harmonic representative $\tilde{\sigma}_{Z}$.
\end{theorem}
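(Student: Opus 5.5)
The plan is to reduce the equality $B^{p}_{\sigma_Z}=\mathrm{Def}(X,Z)$ to a pointwise comparison of two analytic conditions on $B$. One is the Hodge locus condition from Theorem~\ref{Hodge locus main}. The other is the obstruction condition for deforming $Z$ recalled from Section~11 of \cite{Clemens}.

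First, use Theorem~\ref{Hodge locus main} to write
$$B^{p}_{\sigma_Z}=\left\{t\in B:\ \mathbb H\left(i_{\phi(t)}\left((I+Ti_{\phi(t)})^{-1}\tilde\sigma_Z\right)\right)=0\right\}.$$
To match the formulation in \eqref{vhc criterion}, I will show that for $t\in B$ this vanishing is equivalent to $\mathbb H(i_{\phi(t)}\tilde\sigma_Z)=0$.

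Expanding $(I+Ti_{\phi})^{-1}=\sum_k(-Ti_\phi)^k$, each correction term has the form $\mathbb H(i_\phi T\,\cdots)$, where $T=\bar\partial^*G\partial$. I would argue that $\mathbb H\, i_\phi\, \bar\partial^*(\cdots)$ vanishes to the relevant order. For this I intend to use the integrability identity $\bar\partial\phi=\frac12[\phi,\phi]$, the commutator formula $[\partial,i_\phi]$ (Tian--Todorov), and the $\partial\bar\partial$-lemma on $X$. Together these should express the corrections as $\partial$- or $\bar\partial$-exact terms, which have zero harmonic part.

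If this exact equivalence fails, the fallback is to keep the full expression $\mathbb H(i_{\phi}(I+Ti_{\phi})^{-1}\tilde\sigma_Z)$ in the criterion. In that case I would note that, on $B$, its vanishing set coincides with that of the first term up to the equivalence given by Griffiths transversality \eqref{trans 2p}. Transversality already shows that $\Phi^{(p,p+1)}$ controls all the higher blocks.

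Second, identify $\mathrm{Def}(X,Z)$ with the locus $\{t\in B:\ \mathbb H_{N_{Z|X}}(\phi(t)|_{N_{Z|X}})=0\}$. By \cite{Clemens}, the class $\mathbb H_{N_{Z|X}}(\phi(t)|_{N_{Z|X}})$ is the obstruction to extending $Z$ to a submanifold of $X_t$. Its vanishing means that $\phi(t)|_{N_{Z|X}}$ is $\bar\partial$-exact, say $\bar\partial s$. A section $s$ of the normal bundle then moves $Z$ to a $\phi(t)$-holomorphic submanifold; I will take this identification as given from \cite{Clemens}.

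Third, combine the two steps. The inclusion $\mathrm{Def}(X,Z)\subset B^{p}_{\sigma_Z}$ is automatic, since a deformed $Z_t$ keeps the class $\sigma_Z$ of type $(p,p)$ on $X_t$. In the analytic description, this says that vanishing obstruction forces $\mathbb H(i_{\phi(t)}\tilde\sigma_Z)=0$. Hence equality holds if and only if every $t$ in the Hodge locus has vanishing obstruction. By contraposition, this is exactly the statement that a nonvanishing obstruction forces $\mathbb H(i_{\phi(t)}\tilde\sigma_Z)\neq0$, which is \eqref{vhc criterion}. So both directions of the theorem are tautological once the two loci are described this way.

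The main obstacle is the first step: justifying that the full term $\mathbb H(i_\phi(I+Ti_\phi)^{-1}\tilde\sigma_Z)$ may be replaced by $\mathbb H(i_\phi\tilde\sigma_Z)$ on $B$, and not merely to first order. I would attempt this by showing that $i_\phi T(\cdots)$ has vanishing harmonic part, using the Kuranishi equation \eqref{Kuranishi phi}. If that fails, I would interpret the criterion with the full corrected expression, which is what Theorem~\ref{Hodge locus main} actually yields.
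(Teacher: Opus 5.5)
Your three-step outline is the paper's proof: reduce to $B^{p}_{\sigma_Z}\subset\mathrm{Def}(X,Z)$, describe the Hodge locus by Theorem~\ref{Hodge locus main}, describe $\mathrm{Def}(X,Z)$ by the obstruction class from \cite{Clemens}, and take the contrapositive. The one step with real content is the one you flag: replacing $\mathbb H\bigl(i_{\phi(t)}(I+Ti_{\phi(t)})^{-1}\tilde\sigma_Z\bigr)$ by $\mathbb H(i_{\phi(t)}\tilde\sigma_Z)$. Your proposal does not close it.

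The exact identity you aim for has no mechanism behind it. On a K\"ahler manifold, $T\beta=\bar\partial^*G\partial\beta=-\partial\bar\partial^*G\beta$ is indeed $\partial$-exact. But $i_\phi$ does not preserve $\partial$-exactness: $i_\phi\partial\gamma$ differs from $\partial(i_\phi\gamma)$ by the commutator term $[\partial,i_\phi]\gamma$, whose harmonic part need not vanish. Concretely, the difference of the two expressions begins at order two in $t$ with
\[
-\mathbb H\bigl(i_{\phi_1}\bar\partial^*G\partial\, i_{\phi_1}\tilde\sigma_Z\bigr),\qquad \phi_1=\sum_i\theta_it_i .
\]
This term involves only the harmonic part $\phi_1$. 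The Kuranishi equation \eqref{Kuranishi phi} and the integrability condition (beyond $\bar\partial\phi_1=0$) only constrain $\phi_2,\phi_3,\dots$, so they cannot affect it. It also has no reason to vanish, since $\partial(i_{\phi_1}\tilde\sigma_Z)\neq0$ in general; it does vanish in special situations such as flat tori. By Theorem~\ref{main bundle}, the corrected expression is exactly the period block $\alpha_{(p)}\Phi^{(p,p+1)}(t)\cdot\eta_{(p+1)}$. Your identity would therefore say that this intrinsic block is computed to all orders by the uncorrected contraction, and nothing in your argument supports that.

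Your fallback does not rescue the statement either. Griffiths transversality \eqref{trans 2p} relates the blocks $\Phi^{(p,p+j)}$ to $\Phi^{(p,p+1)}$, which are different components of the corrected object. It gives no comparison between the corrected and the uncorrected contraction. Since \eqref{vhc criterion} is a pointwise condition at every $t\in B$, agreement to first order is useless: two holomorphic maps agreeing to first order can have different zero sets (compare $t_1$ and $t_1+t_2^2$).

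What your Steps~1--3 genuinely prove is the criterion with $\mathbb H\bigl(i_{\phi(t)}(I+Ti_{\phi(t)})^{-1}\tilde\sigma_Z\bigr)$ on the right-hand side of the implication. That is a tautological reformulation once Theorem~\ref{Hodge locus main} and the description of $\mathrm{Def}(X,Z)$ from \cite{Clemens} are granted, but it is not the stated theorem. The paper crosses this same step only by remarking that the two expressions are close for $t$ near $0$. That has the same defect, since closeness does not make vanishing loci coincide, so you have not missed an argument that the paper supplies.

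Separately, your gloss in Step~2 says that if $\phi(t)|_{N_{Z|X}}=\bar\partial s$, then $s$ moves $Z$ to a $\phi(t)$-holomorphic submanifold. This ignores the nonlinear terms created by moving $Z$. The paper also takes this identification from \cite{Clemens} without further argument.
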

\begin{proof} 
We only need to show that $B_{\sigma_{Z}}^{p} \subset \mathrm{Def}(X,Z)$.  
By Theorem~\ref{Hodge locus main} and the discussion in Section~11 of~\cite{Clemens}, this amounts to showing that, for every $t \in B$,
$$\mathbb H \left({i_{\phi(t)}}\left((I+Ti_{\phi(t)})^{-1}\tilde\sigma_{Z}\right)\right)=0\Longrightarrow\; \mathbb{H}_{N_{Z|X}}\left( \phi(t)\big|_{N_{Z|X}} \right)=0,$$
which is equivalent to \eqref{vhc criterion}, since $\mathbb H \left({i_{\phi(t)}}\left((I+Ti_{\phi(t)})^{-1}\tilde\sigma_{Z}\right)\right)$ is close to $\mathbb{H}\left(i_{\phi(t)} \tilde{\sigma}_{Z} \right)$ for $t \in B$ and sufficiently small radius of $B$.
\end{proof}

\begin{remark}
The criterion \eqref{vhc criterion} includes the special case where $\mathbb H_{N_{Z|X}}\left(\phi(t)\big|_{N_{Z|X}}\right)=0$ for every $t\in B$. In this case, the deformation of $Z$ as a submanifold along the Kuranishi family $\mathcal X\to B$ is unobstructed, and hence $\mathrm{Def}(X,Z)=B_{\sigma_Z}^{p}=B$.

Bloch's semi-regularity theorem~\cite{Bloch72} provides a classical sufficient condition for the variational Hodge conjecture. Further examples satisfying the semi-regularity condition have been studied in~\cite{DK16} and~\cite{Kloosterman22}, including smooth projective varieties of dimension $n$ embedded in smooth hypersurfaces of sufficiently high degree in $\mathbb P^{2n+1}$ and certain complete intersections on hypersurfaces of projective space, respectively.

In general, the deformation of $Z$ along the Kuranishi family need not be unobstructed, and the Kuranishi base $B$ may be singular. Our criterion \eqref{vhc criterion} gives a necessary and sufficient condition, expressed explicitly in terms of the Beltrami differential, for $\mathrm{Def}(X,Z)=B_{\sigma_Z}^{p}$. It applies, in particular, to non-semi-regular situations in which $\mathrm{Def}(X,Z)=B_{\sigma_Z}^{p}\subsetneq B$, and thus provides a tool for constructing more examples of the variational Hodge conjecture.
\end{remark}

%




\vspace{+12 pt}

%
%


\begin{thebibliography}{99}




\bibitem{Barlet75}
D.~Barlet,
\newblock{Espace analytique r\'eduit des cycles analytiques complexes compacts d’un espace analytique complexe de dimension finie,}
\newblock{\em Fonctions de Plusieurs Variables Complexes, II (S\'em. Franois Norguet, 1974--1975), Lecture Notes in Math.}, \textbf{482}, Springer-Verlag, New York (1975), pp.~1--158.

\bibitem{Bloch72}
S. Bloch,
\newblock{Semi-regularity and de Rham cohomology,}
\newblock{\em Invent. Math.}, \textbf{17} (1972), pp.~51--66.

\bibitem{Buchdahl06}
N.~Buchdahl,
\newblock{Algebraic deformations of compact K\"ahler surfaces},
\newblock{\em Mathematische Zeitschrift}, \textbf{253} (2006), pp.~453--459.

\bibitem{Buchdahl08}
N.~Buchdahl,
\newblock{Algebraic deformations of compact K\"ahler surfaces II},
\newblock{\em Mathematische Zeitschrift}, \textbf{258} (2008), pp.~493--498.





\bibitem{Cao15}
J.~Cao,
\newblock{On the approximation of K\"ahler manifolds by algebraic varieties,}
\newblock{\em Math. Ann.}, \textbf{363}(1–2) (2015), pp.~393–422.









\bibitem{Clemens} 
H.~Clemens,  
\newblock{ Geometry of formal Kuranishi theory}, 
\newblock{\em Adv. Math.}, \textbf{198} (2005), pp.~311--365.

\bibitem{DK16}
A. Dan and I. Kaur,
\newblock{Semi-regular varieties and variational Hodge conjecture,}
\newblock{\em C. R. Acad. Sci. Paris, Ser. I}, \textbf{354} (2016), pp.~297--300.

\bibitem{Debarre}
O.~Debarre,
\newblock{Periods and moduli,}
\newblock{\em Current Developments in Algebraic Geometry,} MSRI Publications, \textbf{59} (2011), pp.~65--84.




\bibitem{DP04}
J.-P.~Demailly and M.~Paun,
\newblock{Numerical characterization of the K\"ahler cone of a compact K\"ahler manifold,}
\newblock{\em Ann. of Math.}, \textbf{159} (2004), pp.~1247--1274.




\bibitem{Graf17}
P.~Graf,
\newblock{Algebraic approximation of K\"ahler threefolds of Kodaira dimension zero,}
\newblock{\em Math. Ann.}, (2017).









\bibitem{Griffiths1}
P.~Griffiths,
\newblock{Periods of integrals on algebraic manifolds I,}
\newblock{\em Amer. J. Math.}, \textbf{90} (1968), pp.~568--626.

\bibitem{Griffiths2}
P.~Griffiths,
\newblock{Periods of integrals on algebraic manifolds II,}
\newblock{\em Amer. J. Math.}, \textbf{90} (1968), pp.~805--865.

\bibitem{Griffiths69}
P.~Griffiths,
\newblock{On the periods of certain rational integrals: I and II,}
\newblock{\em Annals of Mathematics,} \textbf{90} (1969), pp.~460--495 and 496--541.
















\bibitem{Kir}
G.~Kiremidjian, 
\newblock{Deformations of complex structures on certain noncompact manifolds}, 
\newblock {\em Ann. of Math.}, \textbf{98}, No.~3 (1973), pp.~411--426.

\bibitem{Kloosterman22}
R. Kloosterman,
\newblock{Variational Hodge conjecture for complete intersections on hypersurfaces in projective space,}
\newblock{\em Rend. Semin. Mat. Univ. Padova}, \textbf{148} (2022), pp.~185--201.

\bibitem{Kodaira63}
K.~Kodaira,
\newblock{On compact analytic surfaces. II, III,}
\newblock{\em Ann. of Math. (2)}, \textbf{77} (1963), pp.~563–626; \textbf{78} (1963), pp.~1–40.



\bibitem{KS3}
K.~Kodaira and D.~C.~Spencer,
\newblock {On deformations of complex analytic structures, III,}
\newblock {\em Annals of Mathematics,} Second Series, \textbf{71}(1) (1960), pp.~43--76.






\bibitem{LR}
K.~Liu and S.~Rao, 
\newblock{Remarks on the Cartan formula and its applications},
\newblock {\em Asian J. Math.}, \textbf{16} (2012), pp.~15--169.

\bibitem{LRY}  
K.~Liu, S.~Rao and X.~Yang,
\newblock{Quasi-isometry and deformations of Calabi-Yau manifolds.}
\newblock {\em Invent. Math.}, \textbf{199}, no. 2 (2015), pp.~423--453.



\bibitem{LS26-1}
K.~Liu and Y.~Shen,
\newblock{Sections of Hodge bundles I: Global theory and applications to period maps},
\newblock{\em preprint}, (2026).

\bibitem{LS26-3}
K.~Liu and Y.~Shen,
\newblock{Degenerations and Stability of K\"ahler Structures on Calabi--Yau Manifolds},
\newblock{\em arXiv: 2605.18065}, (2026).

\bibitem{LZ18}
K.~Liu and S.~Zhu,
\newblock{Solving equations with Hodge theory},
\newblock{\em arXiv:1803.01272}, (2018).









\bibitem{MorrowKodaira}
J.~Morrow and K.~Kodaira,
\newblock {\em Complex Manifolds},
\newblock AMS Chelsea Publishing, Porvindence, RI, (2006), Reprint of the 1971 edition with errata.




\bibitem{Per}
A.~Perego,
\newblock{K\"ahlerness of moduli spaces of stable sheaves over non-projective K3 surfaces,}
\newblock {\em Algebraic Geometry} \textbf{6} (2019), 427--453.






\bibitem{RaoWanZhao22}
S.~Rao, X.~Wan, and Q.~Zhao,
\newblock{Power series proofs for local stabilities of K\"ahler and balanced structures with mild $\partial\bar{\partial}$-lemma,}
\newblock {\em Nagoya Mathematical Journal,}
\textbf{246} (2022), pp.~305--354.


%








\bibitem{Siu83}
Y.-T.~Siu,
\newblock{Every K3 Surface is K\"ahler,}
\newblock {\em Inventiones mathematicae,}
\textbf{73} (1983), pp.~139--150.

\bibitem{SV24}
A.~Soldatenkov, M.~Verbitsky,
\newblock{Hermitian-symplectic and K\"ahler structures on degenerate twistor deformations,}
\newblock arXiv:2407.07867 (2024).



%





\bibitem{Voisin2}
C.~Voisin,
\newblock {\em Hodge theory and complex algebraic geometry II},
\newblock Cambridge Universigy Press, New York, (2003).




\end{thebibliography}
\end{document}